\documentclass[11pt,reqno,a4paper]{amsart}

\usepackage[T1]{fontenc}
\usepackage[margin=1in]{geometry}
\usepackage{amsmath,amssymb,amsthm,mathrsfs,mathtools}
\usepackage{xcolor}
\usepackage[hidelinks]{hyperref}

\numberwithin{equation}{section}
\newtheorem{theorem}{Theorem}[section]
\newtheorem{proposition}[theorem]{Proposition}
\newtheorem{lemma}[theorem]{Lemma}
\newtheorem{corollary}[theorem]{Corollary}
\newtheorem{lettertheorem}{Theorem}

\theoremstyle{definition}
\newtheorem{definition}[theorem]{Definition}
\newtheorem{notation}[theorem]{Notation}

\newtheorem{example}[theorem]{Example}
\theoremstyle{remark}
\newtheorem{remark}[theorem]{Remark}
\newcommand{\Fq}{\mathbb F_q}
\newcommand{\Cinf}{\mathbb C_\infty}
\newcommand{\OO}{\mathcal O}
\newcommand{\sfm}{\operatorname{sf}}
\newcommand{\Res}{\operatorname{res}}
\newcommand{\res}{\operatorname{res}}
\newcommand{\Ninf}{\mathrm{N}}
\newcommand{\one}{\mathbf I}
\newcommand{\two}{\mathbf {I\!I}}
\newcommand{\Tr}{\operatorname{Tr}}
\newcommand{\Cl}{\operatorname{Cl}}
\newcommand{\divisor}{\operatorname{div}}
\newcommand{\LL}{\mathbb L}
\newcommand{\LT}{\operatorname{LT}}
\newcommand{\Rcal}{\mathcal R}
\newcommand{\Hcal}{\mathcal H}
\newcommand{\Gcal}{\mathcal G}
\newcommand{\Lcal}{\mathcal L}
\newcommand{\Ia}{\mathfrak a}
\newcommand{\Ib}{\mathfrak b}

\title{Pellarin Values from Cauchy Coordinates on Arbitrary Curves}

\author{Chuangqiang Hu}
\address{School of Mathematics, Sun Yat-Sen University, Guangzhou, 510275, P.~R.~China}
\email{ \href{mailto:huchq5@mail.sysu.edu.cn}{huchq5@mail.sysu.edu.cn}}

\author{Stephen S.-T. Yau$^*$}
\thanks{ $^*$ Corresponding author.}
\address{Department of Mathematical Sciences, Tsinghua University, Beijing, 100084, P.~R.~China}
\email{ \href{mailto:yau@uic.edu}{yau@uic.edu}}

\author{Lishan Yu}
\address{Beijing Institute of Mathematical Sciences and Applications, Beijing, 101408, P.~R.~China}
\email{ \href{mailto:lishany@bimsa.cn}{lishany@bimsa.cn}}

\begin{document}
\begin{abstract}
 Let $A$ be the ring of functions regular outside a closed
point of degree $\mathrm{N}$ on a smooth projective curve over $\mathbb{F}_q$. We express the normalized weight-one ideal Pellarin value of a rank-one Drinfeld $A$-module as an ordinary Hilbert-class trace of rational Cauchy differentials. A pairing on projective period lattices identifies the differential associated with each ideal
class and computes its coefficients by finite torsion traces. For a nonfree lattice, the formula uses an isogeny with arbitrary
nonzero derivative to a module with free period lattice.
Each ideal-class subseries extends regularly to the characteristic
point with value one, so the full series takes the value $|\operatorname{Cl}(A)|$.
For every rank-one module, the quotient of the Anderson generating
differential by the Drinfeld exponential is a meromorphic lattice
sum whose value at zero is the Cauchy differential. Its Taylor
coefficients give the higher-weight lattice differentials.
For sign-normalized Hayes modules with free period lattice,
an ordinary Hilbert-class trace after projection to powers
divisible by $q^{\mathrm{N}}-1$ generates ideal values at
$n\equiv1\pmod{q^{\mathrm{N}}-1}$.
An algebraic parameter rescaling gives a trace formula for
character-twisted values at $n\equiv1\pmod{q-1}$.
The exact coefficient fields explain the difference between
these weight ranges. For sign-normalized modules, decomposition by ideal classes also relates the element-type and ideal-type series at every positive weight. Explicit computations include the Hu--Huang--Yau module with nonprincipal period lattice and an elliptic curve with an infinite point of degree three, including
the holomorphic term of its Cauchy differential. At rational
infinity, the elliptic formula recovers the Green--Papanikolas
identity. 
\end{abstract}

\subjclass[2020]{Primary 11G09; Secondary 11R58, 11M38, 14H05}
\keywords{Pellarin's $L$-series, Carlitz period, Drinfeld modules, Anderson generating functions,   
special functions, shtuka functions}

\normalcolor\maketitle
\normalcolor\tableofcontents
\section{Introduction}\label{sec:introduction}

 Pellarin's weight-one formula for $\Fq[t]$ expresses an element
sum in terms of the Carlitz period and the Anderson--Thakur special
function \cite[Theorem 1]{Pellarin12}; the residue normalization of
the special function comes from \cite[Section 2.5]{AT90}.
Over a general function ring, the corresponding ideal sum involves
the ideal class group. Decomposing it by classes leads to element
sums on inverse ideals, whose associated period lattices need not
be free. The problem is to identify the contribution of each class
and assemble these contributions into an explicit ideal-value
formula. 

 We express the weight-one ideal value as an ordinary
Hilbert-class trace of rational Cauchy differentials, for a curve
of arbitrary genus and a closed infinite point of arbitrary degree.
The formula applies to every rank-one Drinfeld module through an
isogeny to a module with free period lattice.
On each projective period lattice, the pairing identifies a
rational differential by its divisor and characteristic residue.
Finite torsion traces compute its coefficients over the module's
field of definition. This differential is the constant term of
a meromorphic generating family.
For sign-normalized modules with free period
lattice, the coefficient fields of this family determine which
higher-weight values admit an ordinary trace formula and which
require a character twist. 

 At rational infinity, Green--Papanikolas give an explicit
shtuka-coordinate formula for the ideal value on elliptic curves
\cite[Theorem 7.3]{GP18}, and Ferraro proves functional identities
for ideal lattices and an Anderson zeta trace identity in arbitrary
genus \cite[Theorems 7.27 and 8.9]{Ferraro24}.
Hu--Yau--Yu identify the rank-one pairing for a principal period
lattice with a Cauchy coordinate and compute it by finite torsion
traces \cite[Theorems 8.8 and 8.11]{HuCauchy}.
Their free-target isogeny construction gives element-value formulas
for arbitrary source modules
\cite[Proposition 8.15 and Theorem 8.16]{HuCauchy}.

The ideal sum requires a pairing for the ideal-isogeny targets,
whose lattices need not be free even when the source lattice is
free. We extend the Cauchy pairing to these projective lattices
and use the ideal-isogeny cocycle to assemble an ordinary
Hilbert-class trace.
The differential formulation also allows the module of regular
differentials to be nonprincipal.

 The ideal action is supplied by Drinfeld's and Hayes's theories
\cite{Drinfeld74,Hayes92}; see also \cite[Chapters 4, 7, and 8]{Goss96}.
The shtuka description goes back to Thakur \cite{Thakur93}.
Related arithmetic developments include functional identities and
Gauss--Thakur sums \cite{AP14,AP15}, values at one for Drinfeld
modules over Tate algebras \cite{APTR16}, and rationality of twisted
$L$-series over general curves \cite[Section 5]{ANDTR17}.
Pellarin's recent account places these identities in the theory
of zeta functions over curves
\cite[Sections 5--7 and the appendix]{Pellarin26}. 

\subsection{Notation and conventions}
Let $X/\Fq$ be a smooth projective geometrically connected
curve, let $\infty$ be a closed point of degree $\Ninf$, and put
$A=\Gamma(X\setminus\{\infty\},\OO_X)$ and $K=\Fq(X)$.
Write $K_\infty$ for the completion at $\infty$ and
$\Cinf$ for its completed algebraic closure, and fix an isometric
$\Fq$-embedding $\iota:K_\infty\hookrightarrow\Cinf$.
Write $\chi(a)=1\otimes a$ for the geometric copy of $a\in K$ and
$\Xi$ for the characteristic point, so that
$\chi(a)(\Xi)=\iota(a)$.
Throughout, $\varphi$ denotes a general rank-one Drinfeld
$A$-module, with exponential $\exp_\varphi$ and period lattice
$\Lambda_\varphi=\ker(\exp_\varphi)$.
We reserve $\psi$ for a module with free period lattice.

 For a nonzero ideal $\Ia\subset A$, let $\varphi_\Ia$ be the
monic ideal isogeny. Denote 
\[
u_\Ia^\varphi=\frac{\varphi_\Ia(\omega_\varphi)}{\omega_\varphi},
\]
where $\omega_\varphi$ is a nonzero special function. 
Let $ d_\Ia^\varphi=\partial\varphi_\Ia $ be the coefficient of $\tau^0$ in
$\varphi_\Ia$, equivalently its derivative as an additive polynomial. The ideal-type Pellarin series is defined by
\[
\LL_\varphi(n)=\sum_{0\ne\Ia\subset A}
                 \frac{u_\Ia^\varphi}{(d_\Ia^\varphi)^n}.
\]
The ratios $u_\Ia^\varphi$ are independent of this choice.
The series $\LL_\varphi(n)$ converges at $n=1$ for every $\varphi$,
and at every $n\ge1$ for sign-normalized $\varphi$.
The weight-one series is invariant under scalar isomorphisms.

 Let $\Hcal_\varphi$ be the Anderson generating differential
of \cite[Definition 4.19]{HuCauchy}, normalized by
$\Res_\Xi\Hcal_\varphi(\lambda)=\lambda$ for
$\lambda\in\Lambda_\varphi$.
Write $C_\varphi=K_\varphi(\Xi,\cdot)$, with
$\Res_\Xi C_\varphi=1$.
Here $K_\varphi$ is the differential coordinate of the normalized
Szeg\H{o} kernel attached to $\varphi$
\cite[Section 4.3]{HuCauchy}; we call it the Cauchy coordinate.
The geometric data and the normalization are specified in
Sections~\ref{sec:ideal-series} and~\ref{sec:projective-pairing}. 

\subsection{The ideal-value formula}
Let $H_A/K$ be the ordinary Hilbert class field, in which
$\infty$ splits completely, and view it inside $\Cinf$ over
$\iota(K)$. Then
$\operatorname{Gal}(H_A/K)\simeq\Cl(A)$.
For $\Lambda_\psi=\tilde\pi\iota(A)$, the free-lattice case of
our main formula reads
\[
\LL_\psi(1)\frac{\Hcal_\psi(\tilde\pi)}{\tilde\pi}
 =\Tr_{H_A/K}(C_\psi).
\]
For a general $\varphi$, choose a separable isogeny datum
\[
J:\varphi\longrightarrow\psi,
\qquad \Lambda_\psi=\tilde\pi\iota(A),
\]
with underlying isogeny $\eta_J$ and motive multiplier
$h_J=\eta_J(\omega_\varphi)/\omega_\varphi$. Put
\[
 \partial\eta_J=h_J(\Xi),\qquad
g_J=h_J/\partial\eta_J,\qquad z_J=\tilde\pi/\partial\eta_J.
\]
Then $\Lambda_\varphi=z_J\iota(\mathfrak i_J)$, where
$\mathfrak i_J$ is the annihilator of $\ker\eta_J$.
The transport formulas of \cite[Sections 6.2 and 6.4]{HuCauchy} give
\[
K_J(\Xi,\cdot)=\frac{C_\psi}{g_J},\qquad
\Gcal_J(z_J)=\frac{\Hcal_\psi(\tilde\pi)}{h_J},\qquad
\Res_\Xi\Gcal_J(z_J)=z_J.
\]
\begin{lettertheorem}\label{thm:intro-main}
For $J$ as above, $K_J(\Xi,\cdot)$ has coefficients in
$H_A$, and
\begin{equation}\label{eq:intro-main}
\LL_\varphi(1)\Gcal_J(z_J)
 =z_J\Tr_{H_A/K}\bigl(K_J(\Xi,\cdot)\bigr).
\end{equation}
\end{lettertheorem}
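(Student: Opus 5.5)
The plan is to reduce Theorem~\ref{thm:intro-main} to the free-lattice identity $\LL_\psi(1)\Hcal_\psi(\tilde\pi)/\tilde\pi=\Tr_{H_A/K}(C_\psi)$, and to prove that identity by splitting the ideal sum into ideal classes. For the reduction, substitute the transport formulas $K_J(\Xi,\cdot)=C_\psi/g_J$ and $\Gcal_J(z_J)=\Hcal_\psi(\tilde\pi)/h_J$. Using $z_J=\tilde\pi/\partial\eta_J$, $g_J=h_J/\partial\eta_J$, and the fact that $h_J$ and $\partial\eta_J$ lie in the field of definition (so they pass through $\Tr_{H_A/K}$ once we know the coefficients lie in $H_A$), \eqref{eq:intro-main} becomes $\LL_\varphi(1)\Hcal_\psi(\tilde\pi)=\tilde\pi\,\Tr(C_\psi)$. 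It remains to show $\LL_\varphi(1)=\LL_\psi(1)$. For this we use $\psi_\Ia\eta_J=\eta'\varphi_\Ia$, which follows from the ideal-action functoriality of Hayes. Comparing multipliers, $u^\psi_\Ia h_J=u^\varphi_\Ia\,\sigma_\Ia(h_J)$ and $d^\psi_\Ia\partial\eta_J=d^\varphi_\Ia\,\sigma_\Ia(\partial\eta_J)$. Here $\sigma_\Ia$ is the Artin action on coefficients, and $h_J/\partial\eta_J=g_J$ is what actually enters. This gives a cocycle relation, so the weight-one sum over each class is matched term by term up to the factor $g_J/\sigma_\Ia(g_J)$. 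Rather than claim literal equality of $\LL_\varphi(1)$ and $\LL_\psi(1)$, I would therefore prove the class-by-class version directly for $\varphi$.

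For the class-by-class computation, fix a class $[\Ib]$ and write each $\Ia\in[\Ib^{-1}]$ as $\Ia=a\Ib^{-1}$ with $a\in\Ib$, up to units. Then
\[
\sum_{\Ia\in[\Ib^{-1}]}\frac{u_\Ia^\varphi}{d_\Ia^\varphi}
\]
becomes an element sum over the lattice of the target module $\varphi'=\Ib^{-1}*\varphi$. This reduces to the element-type weight-one value on the possibly nonfree projective lattice $\Lambda_{\varphi'}$. By the extended pairing on projective lattices (Section~\ref{sec:projective-pairing}), that element value equals $\Res$-normalized $K_{\varphi'}(\Xi,\cdot)$ divided by $\Gcal$ evaluated at a period, in the same shape as \cite[Theorem 8.8]{HuCauchy}. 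We identify $K_{\varphi'}(\Xi,\cdot)$ by its divisor and its residue $1$ at $\Xi$. The Galois conjugate $\sigma_\Ib(K_J(\Xi,\cdot))$ has the same divisor data, because the Hayes action on coefficients is compatible with the ideal action. By uniqueness of that rational differential, the class contribution equals $\sigma_\Ib$ applied to $K_J(\Xi,\cdot)$, up to the transported normalizing factors. Summing over $\Cl(A)\simeq\operatorname{Gal}(H_A/K)$ then yields the trace.

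For rationality, the torsion-trace formula for the coefficients (the analogue of \cite[Theorem 8.11]{HuCauchy} on projective lattices) places the coefficients in the field generated by torsion of $\psi$, fixed by the subgroup acting through scalars. Hayes theory identifies the relevant fixed field with $H_A$ after normalizing by $g_J$. Alternatively, Galois invariance of the divisor-and-residue characterization under $\operatorname{Gal}(\overline K/H_A)$ shows directly that the coefficients lie in $H_A$.

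The main obstacle is the normalization bookkeeping. The constants $u_\Ia$, $d_\Ia$, $h_J$ and $\partial\eta_J$ must combine into an exact cocycle, so that each class contribution is literally $\sigma_\Ib(K_J(\Xi,\cdot))$ times the common factor $z_J/\Gcal_J(z_J)$, with no leftover Galois-varying scalar. I would settle this first by checking that $\sigma_\Ib(g_J)$ equals the multiplier of the composite isogeny $\Ib^{-1}*J$, and then apply uniqueness of the Cauchy differential.
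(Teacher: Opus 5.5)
Your outline has the right ingredients, and your first paragraph already contains the paper's route. Suppose $g_J\in H_A(X)$ and $r_\Ia^\varphi/r_\Ia^\psi=g_J/g_J^{\sigma_\Ia}$. Then \eqref{eq:class-subseries}, Theorem~\ref{thm:principal-class} and $C_{\Ia*\psi}=C_\psi^{\sigma_\Ia}$ give $\LL_{\varphi,[\Ia]}(1)\Hcal_\psi(\tilde\pi)=\tilde\pi\,g_J(C_\psi/g_J)^{\sigma_\Ia}$, and summing over classes proves the theorem; no second pairing or uniqueness argument is needed.

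\textbf{The gap.} You never prove either of those two facts, and they are exactly what makes the statement hold for an arbitrary source. Your way of setting them up also fails. If $\sigma_\Ia(\cdot)$ denotes the Artin conjugate, the relations $u^\psi_\Ia h_J=u^\varphi_\Ia\sigma_\Ia(h_J)$ and $d^\psi_\Ia\partial\eta_J=d^\varphi_\Ia\sigma_\Ia(\partial\eta_J)$ are false individually. Take the scalar datum $\eta_J=c$ with $c\in\iota(A)\setminus\Fq$, from a free module $\psi_0$ to $c\psi_0c^{-1}$. The induced isogeny between the $\Ia$-targets is $c^{q^{\deg\Ia}}$, while $\sigma_\Ia(c)=c$. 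For a general source, $h_J$ and $\partial\eta_J$ need not even have coefficients on which $\sigma_\Ia$ acts. Only the normalized ratio $g_J$ can obey an Artin law, and that needs an argument.

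The same point undermines two other steps:
\begin{itemize}
\item Your opening reduction fails. $g_J$ lies in $H_A(X)$ but is generally not fixed by $\operatorname{Gal}(H_A/K)$ (see \eqref{eq:hhy-normalized-datum}). So it cannot be pulled through $\Tr_{H_A/K}$, and $\LL_\varphi(1)\ne\LL_\psi(1)$ in general.
\item Appealing to ``Galois invariance of the divisor-and-residue characterization'' for rationality is circular. The divisor of $K_J(\Xi,\cdot)=C_\psi/g_J$ contains $\divisor(g_J)$, and its Galois stability is precisely what must be shown.
\end{itemize}

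\textbf{The missing idea.} Realize $g_J$ as an ideal ratio of a Hayes module, where rationality and the Artin law are already available. The paper does this with a reverse isogeny. For $0\ne a\in\mathfrak i_J$, factor $\varphi_a=\eta_a^\vee\eta_J$. Here $\eta_a^\vee:\psi\to\varphi$ has kernel annihilator $\mathfrak d=(a)\mathfrak i_J^{-1}$, derivative $\iota(a)/\partial\eta_J$ and multiplier $\chi(a)/h_J$. Hence $r_{\mathfrak d}^\psi=\chi(a)/(\iota(a)g_J)$.
\begin{itemize}
\item Corollary~\ref{cor:hayes-trace}(i) then gives $g_J\in H_A(X)$.
\item The normalized cocycle $r_{\Ia\mathfrak d}^\psi=r_\Ia^\psi(r_{\mathfrak d}^\psi)^{\sigma_\Ia}=r_{\mathfrak d}^\psi(r_\Ia^\psi)^{\sigma_{\mathfrak d}}$ is transferred from a sign-normalized model by scalar invariance. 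Combined with $\varphi\simeq\mathfrak d*\psi$ and the Galois invariance of $\chi(a)/\iota(a)$, it yields $r_\Ia^\varphi/r_\Ia^\psi=g_J/g_J^{\sigma_\Ia}$.
\end{itemize}
Equivalently, $g_J=r^\varphi_{\mathfrak i_J}$, and you may use the cocycle of a sign-normalized model of $\varphi$ instead.

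\textbf{A further error in your class step.} The sum over the class $[\Ib^{-1}]$ is an element sum over $\Ib$. Up to scaling, $\Ib$ is the lattice of an ideal-isogeny target of the free module $\psi$ in that class. It is not the lattice of $\Ib^{-1}*\varphi$, which is proportional to $\Ib\,\mathfrak i_J$. So Theorem~\ref{thm:projective-pairing} applied to $\Ib^{-1}*\varphi$ computes a different class unless $\mathfrak i_J$ is principal. Also, the class $[\Ib^{-1}]$ corresponds to $\sigma_\Ib^{-1}$, not $\sigma_\Ib$.
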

 The trace acts on arithmetic coefficients and has
$|\Cl(A)|$ terms. The formula holds for every such $J$, with
arbitrary nonzero derivative $\partial\eta_J$ and without sign normalization
of either module. Taking $J=\mathrm{id}_\psi$ recovers the
free-lattice formula above.
For a general source lattice, $z_J$ need not be a period.
At an actual source period $\iota(a)z_J$, $a\in\mathfrak i_J$,
the transported differential satisfies
$\Hcal_\varphi(\iota(a)z_J)=\chi(a)\Gcal_J(z_J)$.
These statements are proved in Theorem~\ref{thm:general-value}
and Corollary~\ref{cor:hayes-trace}. 

The residue normalization gives a characteristic value
for each class separately.
\begin{lettertheorem}\label{thm:intro-characteristic}
Each ordinary ideal-class contribution extends regularly
to $\Xi$ and takes the value one. In particular,
\begin{equation}\label{eq:intro-characteristic}
\LL_\varphi(1)|_\Xi=|\Cl(A)|\cdot1.
\end{equation}
\end{lettertheorem}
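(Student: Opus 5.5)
We split $\LL_\varphi(1)$ into ordinary ideal-class subseries and evaluate each one at $\Xi$ through its element-sum description. Fix a class $[\Ib]\in\Cl(A)$ and write
\[
\LL_\varphi^{[\Ib]}(1)=\sum_{\Ia\in[\Ib]}\frac{u_\Ia^\varphi}{d_\Ia^\varphi}.
\]
For $\Ia\in[\Ib]$ we write $\Ia=x\Ib$ with $x\in\Ib^{-1}$ nonzero, unique up to $\Fq^\times$. The ideal isogeny cocycle $\varphi_{x\Ib}=(\text{scalar})\cdot(\varphi_\Ib\text{-twist})_x\varphi_\Ib$ then rewrites the subseries. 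Up to a factor depending only on $\Ib$, it becomes an element sum
\[
\sum_{x\in\Ib^{-1}\setminus\{0\}}\frac{\chi(x)}{\iota(x)}
\]
attached to the target module $\varphi'=\Ib*\varphi$. The period lattice of $\varphi'$ is proportional to $\iota(\Ib^{-1})$, so the sum is, up to that factor, $\sum_{\lambda\in\Lambda_{\varphi'}\setminus 0}\chi(\lambda)/\lambda$ in the natural normalization.

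The key step identifies this element sum, as a function on $X\times\Cinf$, with the Cauchy coordinate. We use the pairing on projective period lattices (the extension of \cite[Theorems 8.8 and 8.11]{HuCauchy} developed in Section~\ref{sec:projective-pairing}). It gives
\[
\sum_{\lambda\in\Lambda_{\varphi'}}\frac{\chi(\lambda/z)}{\lambda}\cdot(\text{normalizing }\Gcal)=K_{\varphi'}(\Xi,\cdot),
\]
where the left side is a rational differential with a simple pole at $\Xi$ of residue $1$ (the normalization $\Res_\Xi C_\varphi=1$). The $\lambda=0$ term and the residue normalization $\Res_\Xi\Gcal_J(z_J)=z_J$ cancel the prefactor. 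The subseries therefore equals
\[
\frac{z\,K(\Xi,\cdot)}{\Gcal(z)},
\]
and we evaluate this quotient at $\Xi$. Since $K(\Xi,\cdot)$ has a simple pole at $\Xi$ with residue $1$, and $\Gcal(z)$ is a differential with a simple pole at $\Xi$ with residue $z$, the quotient of the two differentials is a rational function. Its principal parts cancel, so it is regular at $\Xi$ and takes the value
\[
\frac{z\cdot 1}{z}=1.
\]
Both differentials have divisor supported away from $\Xi$ apart from the pole, so no zero of $\Gcal(z)$ at $\Xi$ interferes. Regularity of the class subseries at $\Xi$ thus follows from the ratio-of-residues argument, not from convergence of the series at $\Xi$.

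Summing over the $|\Cl(A)|$ classes gives \eqref{eq:intro-characteristic}. In the form of Theorem~\ref{thm:intro-main}, each Galois conjugate $\sigma(K_J(\Xi,\cdot))$ is again a differential with residue $1$ at $\Xi$, because $\sigma$ fixes $K$ and therefore fixes the characteristic residue. Dividing by $\Gcal_J(z_J)$, whose residue is $z_J$, and multiplying by $z_J$ gives value $1$ for each of the $|\Cl(A)|$ trace terms.

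The main obstacle is matching each trace term $\sigma_{[\Ib]}(K_J(\Xi,\cdot))$ with the corresponding ordinary class subseries, rather than with some rearrangement of classes. The value at $\Xi$ needs this termwise statement, not only the total. We would obtain the matching from the Artin reciprocity action of $\Ib$ on the coefficients of $K_J$, computed by finite torsion traces. That action is the transport of $K_J$ along the ideal isogeny $\varphi_\Ib$, which is how Theorem~\ref{thm:general-value} is assembled. Proving the termwise identity requires checking that the cocycle scalars are fixed by $\sigma$ and do not alter the residue. We expect this to hold because these scalars are $d_\Ib$-normalized through $\partial\eta_J$.
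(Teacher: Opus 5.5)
Your endgame matches the paper. The proof of Corollary~\ref{cor:class-number} divides the per-class identity \eqref{eq:general-class}, $\LL_{\varphi,[\Ia_i]}(1)\Gcal_J(z_J)=z_JK_J(\Xi,\cdot)^{\sigma_{\Ia_i}}$, and reads off $1+O(t)$ from the residues $1$ and $z_J$ at $\Xi$. The gap is in how you reach an identity of this shape for an arbitrary $\varphi$.

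You assert that the lattice of $\varphi'=\Ib*\varphi$ is proportional to $\iota(\Ib^{-1})$. By \eqref{eq:isogeny-uniformization} it equals $d_\Ib^\varphi\,\iota(\Ib^{-1})\Lambda_\varphi$. If $\Lambda_\varphi=\tilde\pi\iota(\mathfrak j)$ with $\mathfrak j\ne A$, Theorem~\ref{thm:projective-pairing} applied to $\varphi'$ therefore produces $\Lcal_{\Ib^{-1}\mathfrak j}(1)$. That is not the sum $\Lcal_{\Ib^{-1}}(1)$ appearing in \eqref{eq:class-subseries}. Your fallback through the trace terms of Theorem~A rests on a class-by-class matching that you leave as an expectation.

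The sign bookkeeping is also off. There is no $\lambda=0$ term. The value is $+1$ because the factor $1/(q-1)=-1$ from the $\Fq^\times$-orbits in \eqref{eq:class-subseries} cancels the minus sign in \eqref{eq:projective-pairing}.

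The repair is short and removes your ``main obstacle''. Write $\Ia_i$ for your integral representative $\Ib$. The sum $\Lcal_{\Ia_i^{-1}}(1)$ does not involve $\varphi$, so you may apply Theorem~\ref{thm:projective-pairing} to any module whose lattice is proportional to $\iota(\Ia_i^{-1})$. For instance, take $\varphi_i=\Ia_i*\psi$ with $\Lambda_\psi=A\tilde\pi$ and $d_i=\partial\psi_{\Ia_i}$; its lattice is $d_i\tilde\pi\,\iota(\Ia_i^{-1})$. Combined with \eqref{eq:class-subseries}, this gives
\[
\LL_{\varphi,[\Ia_i]}(1)\,\Hcal_{\varphi_i}(d_i\tilde\pi)=r_{\Ia_i}^\varphi\,d_i\tilde\pi\,C_{\varphi_i}.
\]
By Lemma~\ref{lem:motive-multiplier}, $r_{\Ia_i}^\varphi=u_{\Ia_i}^\varphi/d_{\Ia_i}^\varphi$ is rational and regular at $\Xi$ with value one. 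The two differentials have simple poles at $\Xi$ with residues $d_i\tilde\pi$ and $1$. Hence every class subseries is $1+O(t)$, with no Galois identification at all.

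After substituting $\Hcal_{\varphi_i}(d_i\tilde\pi)=d_ir_{\Ia_i}^\psi\Hcal_\psi(\tilde\pi)$, this is exactly the first equality in the proof of Theorem~\ref{thm:general-value}(ii). The paper then uses the cocycle \eqref{eq:ideal-cocycle}, coefficient covariance and the Hayes correspondence to rewrite $r_{\Ia_i}^\varphi C_{\varphi_i}/r_{\Ia_i}^\psi$ as $g_J(C_\psi/g_J)^{\sigma_{\Ia_i}}$. Theorem~A needs that step; the characteristic value does not.

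Contrary to your remark, the total $|\Cl(A)|\cdot1$ does not need the termwise statement. The summed formula of Theorem~A already suffices, since each of its $|\Cl(A)|$ trace terms is separately $1+O(t)$, whichever class it is matched with.
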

 This is Corollary~\ref{cor:class-number}; the class number
is interpreted in the coefficient field. The regular value refers
to the meromorphic continuation of the ideal series.

\subsection{Lattice pairings and finite traces}
The passage from element sums to ideal values requires a
pairing for every projective rank-one period lattice.
Special functions and period lattices are related by
\cite[Theorems 3.3 and 3.11]{GM21}; tensoring with the module
$\Omega_A=\Omega^1_{A/\Fq}$ gives the canonical residue comparison of
\cite[Theorem 5.8]{GM24}.
Poonen's exponential duality \cite[Theorem 10]{Poonen96}
underlies the lattice pairing, whose rationality in arbitrary
rank is proved in \cite[Theorem 7.3]{Ferraro25}.
In rank one, we identify the pairing explicitly with the Cauchy coordinate and obtain a finite computation.

\begin{lettertheorem}\label{thm:intro-pairing}
Let $\varphi$ be defined over $H\subset\Cinf$, with
$\iota(K)\subset H$. Choose $0\ne\tilde\pi\in\Lambda_\varphi$
and write $\Lambda_\varphi=\tilde\pi\iota(\mathfrak j)$,
where $\mathfrak j$ is a fractional ideal containing $A$.
Then
\[
C_\varphi=-\tilde\pi^{-1}
       \left(\sum_{0\ne c\in\mathfrak j}
                    \frac{\chi(c)}{\iota(c)}\right)
       \Hcal_\varphi(\tilde\pi).
\]
For $b\in A\setminus\Fq$ and $c\in A$, its residue-dual
evaluation is the finite torsion trace
\[
\langle c/b,C_\varphi\rangle _\infty
 =-\iota(b)^{-1}\Tr_{\mathscr T_b^\varphi/H}
          \left(\frac{\varphi_c(Z)}{Z}\right),
\]
where $
\mathscr T_b^\varphi=H[Z]/(\varphi_b(Z)/Z)$.
\end{lettertheorem}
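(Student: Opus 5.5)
We treat the two displayed identities of Theorem~\ref{thm:intro-pairing} separately. Both reduce to the element-type Pellarin sum on the lattice $\Lambda_\varphi=\tilde\pi\iota(\mathfrak j)$ and to the residue duality at $\infty$.

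\emph{First identity.} We start from the characterization of the Cauchy coordinate in \cite[Section 4.3]{HuCauchy}. There $C_\varphi=K_\varphi(\Xi,\cdot)$ is the unique rational differential with the prescribed divisor bound (poles only at $\Xi$ and the $\infty$-data attached to $\varphi$) and residue $1$ at $\Xi$.

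We then take the meromorphic lattice sum that defines the Anderson generating differential,
\[
\Hcal_\varphi(z)=\sum_{\lambda\in\Lambda_\varphi}\frac{\text{(kernel in }\chi)}{z-\lambda},
\]
normalized by $\Res_\Xi\Hcal_\varphi(\lambda)=\lambda$. Next we apply the Pellarin-type partial-fraction identity on the lattice. Writing $\lambda=\tilde\pi\iota(c)$ with $c\in\mathfrak j$, we have
\[
\sum_{0\ne c\in\mathfrak j}\chi(c)/\iota(c)\cdot\Hcal_\varphi(\tilde\pi)
\]
as a rational differential: the factor $\Hcal_\varphi(\tilde\pi)$ rationalizes the $\chi$-weighted lattice sum. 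We would obtain this by evaluating the generating family $\Hcal_\varphi(z)/\exp_\varphi(z)$ at $z=0$. Its Laurent expansion at $z=0$ has constant term $-\tilde\pi^{-1}\sum_c\chi(c)\iota(c)^{-1}\Hcal_\varphi(\tilde\pi)$. This uses $\Hcal_\varphi(\tilde\pi\iota(c))=\chi(c)\Hcal_\varphi(\tilde\pi)$ for $c\in\mathfrak j$, which is the $A$-linearity of the generating differential, extended from $A$ to $\mathfrak j$ because $\mathfrak j\tilde\pi\subset\Lambda_\varphi$.

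We then check that this constant term satisfies both defining conditions of $C_\varphi$:
\begin{itemize}
\item the divisor bound, by a meromorphy/growth argument in $\chi$;
\item residue $1$ at $\Xi$, since $\Res_\Xi\chi(c)=\iota(c)\cdot(\text{simple pole normalization})$ collapses the sum.
\end{itemize}
Uniqueness then gives the identity. The main obstacle is this residue computation. Convergence of $\sum\chi(c)/\iota(c)$ near $\Xi$ fails termwise, so we would compute the residue from the $z\to0$ expansion instead: the leading term $1/z$ of $1/\exp_\varphi$ pairs with $\Res_\Xi\Hcal_\varphi$ to give $1$.

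\emph{Second identity.} We use residue duality $\langle f,\omega\rangle_\infty=\Res_\infty(f\omega)$ applied to $f=c/b$. We insert the first identity and expand the lattice sum over $\mathfrak j$. Pairing $\chi(c/b)$ against $\chi(c')/\iota(c')$ picks out, via the orthogonality $\Res_\infty$ of $K/A$ against $\Omega_A$, a finite sum over the classes $c'\in b^{-1}A/A$ (modulo the lattice). This finite sum is
\[
\sum_{0\ne\mu\in\varphi[b]}\frac{\varphi_c(\mu)}{\mu},
\qquad \mu=\exp_\varphi(\tilde\pi\iota(c')/\iota(b)),
\]
and it equals $\Tr_{\mathscr T_b^\varphi/H}(\varphi_c(Z)/Z)$, since the roots of $\varphi_b(Z)/Z$ are exactly the nonzero $b$-torsion points.

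The prefactor $-\iota(b)^{-1}$ comes from the $1/\iota(b)$ rescaling together with the sign from the first identity. We would follow the principal-lattice argument of \cite[Theorem 8.11]{HuCauchy}, noting that only torsion of $A$-submodules enters. Hence $\mathfrak j\supset A$ causes no change beyond replacing the torsion by $\varphi[b]$.
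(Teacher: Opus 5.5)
Your argument for the first identity has a genuine gap, and it sits where the content of the theorem lies.

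You identify $-\tilde\pi^{-1}\Lcal_{\mathfrak j}(1)\Hcal_\varphi(\tilde\pi)$ with the value at $z=0$ of $\Hcal_\varphi(z)/\exp_\varphi(z)$. That is the generating identity $\Hcal_\varphi(w)/\exp_\varphi(w)=\sum_{\lambda\ne0}\Hcal_\varphi(\lambda)/(w-\lambda)$ of Theorem~\ref{thm:lattice-generating}, and it is not available to you. The function $I_\varphi=\exp_\varphi\sum_{\lambda\ne0}\Hcal_\varphi(\lambda)/(w-\lambda)$ satisfies $\tau_wI_\varphi-f_\varphi I_\varphi=-f_\varphi\Rcal_\varphi^{[0]}\exp_\varphi$, with $\Rcal_\varphi^{[0]}=-\tilde\pi^{-1}\Lcal_{\mathfrak j}(1)\Hcal_\varphi(\tilde\pi)$. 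The function $\Hcal_\varphi$ satisfies the same equation with $C_\varphi$ in place of $\Rcal_\varphi^{[0]}$. So the uniqueness step needs exactly the identity you are trying to prove. (Also, $\Hcal_\varphi(z)$ is entire in $z$; the meromorphic lattice sum is its quotient by $\exp_\varphi$.)

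Your plan is also redundant. If you had that constant term, no divisor check would be needed. The constant term is the linear Taylor coefficient of $\Hcal_\varphi$, which equals $C_\varphi$ directly by Lemma~\ref{lem:generating-frobenius} (Proposition~\ref{prop:recurrence}).

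The verification you propose instead does not work as stated. The lattice sum is a priori only an element of $\mathbb M_A$. A ``meromorphy/growth argument in $\chi$'' shows neither that it is a rational differential nor that its divisor is at least $V_\varphi-\Xi-q_\varphi^{(-1)}$. Your residue computation needs $\Res_\Xi$ of the linear coefficient of $\Hcal_\varphi(w)$, which the normalization on lattice points alone does not determine.

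The missing mechanism is the one in the proof of Theorem~\ref{thm:projective-pairing}:
\begin{itemize}
\item Put $\Rcal_\varphi^{[k]}=-\sum_{\lambda\ne0}\Hcal_\varphi(\lambda)^{(k)}/\lambda$, so that $\Rcal_\varphi^{[k]}=F_\varphi^{[k]}\Rcal_\varphi^{[0]}$.
\item For $k\ge1$, the negative-index vanishing identity \cite[Lemma 8.2]{HuCauchy} kills the residue-dual functional of $\Rcal_\varphi^{[k]}$ on $\mathfrak m_\infty^{m_k}$, with $m_k=\lceil(k+1)/\Ninf\rceil$. Hence $\Rcal_\varphi^{[k]}\in\Cinf\otimes_{\Fq}H^0(X,\Omega_X^1(m_k\infty))$.
\item The clearing identity $(\chi(a)-\iota(a))\Rcal_\varphi^{[0]}=\sum_{i\ge1}\alpha_i\Rcal_\varphi^{[i]}$ continues $\Rcal_\varphi^{[0]}$ rationally, with at most a simple affine pole at $\Xi$.
\item The $F_\varphi^{[k]}$, $k\ge1$, span $\Gamma(U_{\Cinf},\OO(V_\varphi-\Xi))$, which forces the zeros along $V_\varphi$.
\item The pole orders of $F_\varphi^{[j+1]}$ at $q_\varphi^{(j)}$ settle the points above $\infty$.
\end{itemize}

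The residue $1$ comes from the torsion trace itself. With $c=1$ and $b=a$, the trace of $1$ is $-1$, so $\langle1/a,(\chi(a)-\iota(a))\Rcal_\varphi^{[0]}\rangle_\infty=-1$. The global residue theorem then gives $\Res_\Xi\Rcal_\varphi^{[0]}=1$. So the logical order is the reverse of yours: the second identity must be proved first, for the lattice sum $\Rcal_\varphi^{[0]}$, without using the first.

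For the second identity your final finite sum is correct, but the mechanism is not an orthogonality. You need:
\begin{itemize}
\item the evaluation $\langle u,\Hcal_\varphi(\lambda)\rangle_\infty=\exp_\varphi(\iota(u)\lambda)$;
\item termwise evaluation of the sum, justified by continuity;
\item grouping over $\Lambda_\varphi/\iota(b)\Lambda_\varphi$, not over $b^{-1}A/A$;
\item the logarithmic-derivative identity $\sum_{\mu\in\iota(b)\Lambda_\varphi}(\lambda_0+\mu)^{-1}=(\iota(b)\exp_\varphi(\lambda_0/\iota(b)))^{-1}$, which is where $\iota(b)^{-1}$ comes from.
\end{itemize}
The minus sign is the one built into $\Rcal_\varphi^{[0]}$, not a consequence of the first identity.
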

The pairing $\langle\cdot,\cdot\rangle _\infty$ is the residue
duality on $K_\infty/A$ recalled in
Section~\ref{sec:projective-pairing}; the algebra
$\mathscr T_b^\varphi$ is finite \'etale.
Theorem~\ref{thm:projective-pairing} identifies the differential
by its divisor and characteristic residue, and
Proposition~\ref{prop:torsion-trace} gives the displayed trace.
After clearing the characteristic pole, finitely many such
evaluations reconstruct $C_\varphi$ in a Riemann--Roch basis
(Corollary~\ref{cor:finite-algorithm}).
Lemma~\ref{lem:dual-denominator} bounds the degree of a suitable $b$.
Applying this pairing to the ideal-isogeny targets and using the
ideal-isogeny cocycle proves Theorem~A.

\subsection{Generating functions and higher-weight traces}
Generating functions connect Drinfeld modules with their
periods and logarithms \cite{Anderson86,EP14}.
For the Carlitz module, Perkins's identity generates Pellarin
values from the Anderson generating function
\cite[Theorem 1.1 and Corollary 1.3]{Perkins14}; related functional
identities are developed in \cite{PP16}.
Here the Cauchy differential of Theorem~C is the value at zero
of a meromorphic generating family for every rank-one module.

\begin{lettertheorem}[{The Perkins' generating identity}]\label{thm:intro-generating}
For every rank-one module $\varphi$, one has the meromorphic identity
\begin{equation}\label{eq:intro-generating}
\mathscr C_\varphi(w):=
\frac{\Hcal_\varphi(w)}{\exp_\varphi(w)}
 =\sum_{0\ne\lambda\in\Lambda_\varphi}
       \frac{\Hcal_\varphi(\lambda)}{w-\lambda}.
\end{equation}
The singularity at zero is removable, with value $C_\varphi$.
For $|w|<r_\varphi=\min_{0\ne\lambda\in\Lambda_\varphi}|\lambda|$,
\[
\mathscr C_\varphi(w)=\sum_{n\ge1}C_{\varphi,n}w^{n-1},
\qquad
C_{\varphi,n}=-\sum_{0\ne\lambda\in\Lambda_\varphi}
                      \frac{\Hcal_\varphi(\lambda)}{\lambda^n}.
\]
\end{lettertheorem}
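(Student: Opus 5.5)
We prove the identity by the Mittag-Leffler method: compare the poles of both sides and show that their difference is an entire function which tends to zero.

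\emph{Step 1: the poles of the left side.} Since $\varphi$ has rank one, $\exp_\varphi$ is an entire $\Fq$-linear function with simple zeros exactly at $\Lambda_\varphi$, and $\exp_\varphi'\equiv1$. The Anderson generating differential $w\mapsto\Hcal_\varphi(w)$ is entire in $w$, with coefficients in differentials on the curve, and it vanishes at $w=0$. So $\mathscr C_\varphi(w)$ is meromorphic with at most simple poles at the nonzero $\lambda\in\Lambda_\varphi$. Write $w=\lambda+\epsilon$. Then $\exp_\varphi(w)=\exp_\varphi(\epsilon)$, and the residue at $\lambda$ is $\Hcal_\varphi(\lambda)$. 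The point $w=0$ is removable because $\Hcal_\varphi(0)=0$.

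\emph{Step 2: the right side and the difference.} The lattice sum converges locally uniformly away from $\Lambda_\varphi$. The norm $|\Hcal_\varphi(\lambda)|$ grows at most like $|\lambda|$ times a bounded factor; we take this from the growth estimates for $\Hcal_\varphi$ on periods in \cite[Section 4]{HuCauchy}, or from the residue normalization $\Res_\Xi\Hcal_\varphi(\lambda)=\lambda$ together with linearity in $\lambda$. The lattice is discrete of rank one, so the number of lattice points of norm at most $R$ grows only polynomially in $R$. Hence the terms tend to zero nonarchimedeanly, and this suffices for convergence. The difference $D(w)$ between the left and right sides is therefore entire.

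To prove $D\equiv0$, use the $\Fq$-linearity in $w$. Then evaluate on circles $|w|=R$ chosen to avoid $|\Lambda_\varphi|$, with $R\to\infty$. On such a circle $|\exp_\varphi(w)|$ is the product $|w|\prod|1-w/\lambda|$, while $\Hcal_\varphi(w)=\sum_i \Hcal$-coefficients $\cdot\exp$-type terms. Concretely, $\Hcal_\varphi(w)=\sum_{j\ge0}\exp_\varphi(w/\theta^{j+1})$-type expansions twisted by $\chi$, in the form $\sum_j \exp_\varphi(\cdot)\,\chi(\cdot)$. Dividing, the ratio is bounded by $R^{-1}$ times a constant on these circles. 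The lattice sum is also small there, since each term has norm at most $|\Hcal_\varphi(\lambda)|/\max(R,|\lambda|)$. The maximum principle then gives $D=0$.

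\emph{Step 3: the value at zero and the Taylor coefficients.} For $|w|<r_\varphi$ expand
\[
\frac1{w-\lambda}=-\sum_{n\ge1}\frac{w^{n-1}}{\lambda^n}.
\]
Exchanging the sums is legitimate because the double series converges absolutely in the nonarchimedean sense. This gives the formula for $C_{\varphi,n}$. The value at zero is then
\[
C_{\varphi,1}=-\sum_{\lambda\ne0}\frac{\Hcal_\varphi(\lambda)}{\lambda}.
\]
Write $\lambda=\tilde\pi\iota(c)$ with $c\in\mathfrak j$. Linearity of $\Hcal_\varphi$ over $A$, together with the transport identity $\Hcal_\varphi(\tilde\pi\iota(c))=\chi(c)\Hcal_\varphi(\tilde\pi)$, extended to $c\in\mathfrak j$ as in the transport formulas, gives
\[
C_{\varphi,1}=-\tilde\pi^{-1}\Bigl(\sum_{0\ne c\in\mathfrak j}\frac{\chi(c)}{\iota(c)}\Bigr)\Hcal_\varphi(\tilde\pi).
\]
By Theorem~\ref{thm:intro-pairing} this equals $C_\varphi$.

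\emph{Main obstacle.} The hard part is the growth estimate in Step 2 showing that $\mathscr C_\varphi$ decays on suitable circles. This requires a uniform bound for $\Hcal_\varphi(w)/\exp_\varphi(w)$ away from the lattice, as a differential-valued function. I would try first to write $\Hcal_\varphi(w)$ via the Anderson generating series
\[
\sum_{m}\exp_\varphi\!\bigl(w/\iota(a_m)\bigr)\chi(a_m)
\]
and compare each term with $\exp_\varphi(w)$ using the product formula. If that fails, I would prove instead that $D$ is $\Lambda_\varphi$-periodic, then entire and bounded, hence constant. The constant is then fixed as zero by comparing values at $w=0$ using Theorem~\ref{thm:intro-pairing}.
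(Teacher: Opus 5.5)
\textbf{There is a gap: the step $D\equiv0$ is not proved.} Your Steps 1 and 3 are fine. Step 3 is the paper's Corollary~\ref{cor:sumH} together with $C_{\varphi,1}=C_\varphi$ from Theorem~\ref{thm:projective-pairing}. However, neither of your routes delivers $D\equiv0$ as stated.

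\emph{The decay estimate.} The decay of $\Hcal_\varphi(w)/\exp_\varphi(w)$ on circles is only asserted. The series $\sum_j\exp_\varphi(w/\theta^{j+1})t^j$ you rely on is the Carlitz Anderson generating function. For general $A$ and $w\notin\Lambda_\varphi$, the paper defines $\Hcal_\varphi(w)$ by orbital evaluation of the normalized Cauchy kernel, and no such series is available. Even the residue-dual recipe $c+A\mapsto\exp_\varphi(\iota(c)w)$ is not well defined on $K_\infty/A$ unless $w\in\Lambda_\varphi$.

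\emph{The fallback.} This fails for a structural reason. $D$ is indeed $\Lambda_\varphi$-periodic: under $w\mapsto w+\mu$ both sides change by $\Hcal_\varphi(\mu)/\exp_\varphi(w)$, using $\sum_{\lambda\in\Lambda_\varphi}(w-\lambda)^{-1}=1/\exp_\varphi(w)$. But $\Lambda_\varphi$ is not cocompact in $\Cinf$, so a periodic entire function need not be bounded. The function $\exp_\varphi$ itself is periodic, entire and unbounded.

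\emph{The Step~2 bound.} This is wrong in a way that matters. If $\|\Hcal_\varphi(\lambda)\|$ grew like $|\lambda|$, the terms $\Hcal_\varphi(\lambda)/(w-\lambda)$ would not tend to zero. Then neither convergence nor smallness of the lattice sum on circles would follow. The correct input is the uniform bound $\|\Hcal_\varphi(\lambda)\|\le M_0$. It comes from $A$-linearity on finitely many generators and $\|\chi(a)\eta\|\le\|\eta\|$. Lattice-point counts play no role in nonarchimedean convergence.

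\textbf{The missing idea is to replace growth estimates by uniqueness for a Frobenius difference equation.} Put $I_\varphi(w)=\exp_\varphi(w)\sum_{\lambda\ne0}\Hcal_\varphi(\lambda)/(w-\lambda)$. The paper proves three things.
\begin{enumerate}
\item[(a)] $I_\varphi$ is entire and $\Fq$-linear. For each finite $\Fq$-subspace $V\subset\Lambda_\varphi$, put $E_V(w)=w\prod_{0\ne\lambda\in V}(1-w/\lambda)$. The truncation $E_V(w)\sum_{0\ne\lambda\in V}\Hcal_\varphi(\lambda)/(w-\lambda)$ is the Lagrange interpolant of the $\Fq$-linear map $\Hcal_\varphi|_V$. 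It therefore coincides with the Moore-determinant $\Fq$-linear interpolant, and one passes to the limit.
\item[(b)] $\tau_wI_\varphi-f_\varphi I_\varphi=-f_\varphi C_\varphi\exp_\varphi$. This uses $\Hcal_\varphi(\lambda)^{(1)}=f_\varphi\Hcal_\varphi(\lambda)$, the Hasse-derivative identity $D_w^{[q-1]}(w-\lambda)^{-1}=(w-\lambda)^{-q}$, and $I_\varphi'(0)=C_\varphi$. The last is where Theorem~\ref{thm:projective-pairing} enters.
\item[(c)] $\Hcal_\varphi$ satisfies the same equation, by the kernel identity for the Cauchy coordinate (Lemma~\ref{lem:generating-frobenius}).
\end{enumerate}
Write $\Delta=\Hcal_\varphi-I_\varphi=\sum_r\Delta_rw^{q^r}$. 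The homogeneous equation $\tau_w\Delta=f_\varphi\Delta$ forces $\Delta_0=0$ from the coefficient of $w$. Inductively, $\Delta_r=f_\varphi^{-1}\Delta_{r-1}^{(1)}=0$. No estimate of $\Hcal_\varphi$ off the lattice is needed, which is why the argument works for arbitrary $A$ and arbitrary $\Ninf$.
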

 Theorem~\ref{thm:lattice-generating} proves this identity
without a freeness or sign-normalization hypothesis.
For $\Lambda_\psi=\tilde\pi\iota(A)$, division by
$\Hcal_\psi(\tilde\pi)$ gives the arbitrary-curve analogue of
Perkins's $s=1$ identity (Corollary~\ref{cor:element-generating}).
The coefficients $C_{\varphi,n}$ are rational differentials;
Proposition~\ref{prop:higher-weight-recurrence} computes them
from $C_\varphi$, the shtuka function, and finitely many
coefficients of $\exp_\varphi$. 

To pass from this family to ideal values, let $\psi$ be
a sign-normalized Hayes module with free period lattice.
Put $Q=q^{\Ninf}$, let $\mathbb F_\infty\simeq\mathbb F_Q$ be the
residue field at infinity, identified with its lift in $K_\infty$,
and write
$G_0=\operatorname{Gal}(H_A^+/H_A)$, where $H_A^+$ is the sign
class field. For $n\ge1$ with $n\equiv1\pmod{q-1}$, the differential
$C_{\psi,n}$ is nonzero, and its smallest coefficient field
$H_n$ between $H_A$ and $H_A^+$ satisfies
(Proposition~\ref{prop:weight-field})
\[
[H_n:H_A]=\frac{Q-1}{\gcd(n-1,Q-1)}.
\]
Thus $C_{\psi,n}$ descends to $H_A$ precisely for
$n\equiv1\pmod{Q-1}$.
At the other weights $1\pmod{q-1}$, multiplication by a scalar
with the inverse $G_0$-character restores descent.

For $\mu\in(H_A^+)^\times$, the corresponding twisted ideal
series is
\[
\LL_\psi^\mu(n)=\sum_{0\ne\Ia\subset A}
        \sigma_\Ia(\mu)\frac{u_\Ia^\psi}{(d_\Ia^\psi)^n},
\]
where $\sigma_\Ia$ is the Artin automorphism of $\Ia$ on $H_A^+$.
Projection and parameter rescaling realize these two descent
procedures on the whole generating family.

\begin{lettertheorem}\label{thm:intro-generating-traces}
Let $\psi$ be a sign-normalized Hayes module with
$\Lambda_\psi=A\tilde\pi$, and put $Q=q^{\Ninf}$.
\begin{enumerate}
\item[(i)] The values at the weights $1\pmod{Q-1}$ are generated by
the Hilbert-class trace of the projected series:
\[
\sum_{k\ge0}\LL_\psi\bigl(1+k(Q-1)\bigr)z^{k(Q-1)}
 =\frac{\tilde\pi}{\Hcal_\psi(\tilde\pi)}
   \Tr_{H_A/K}\bigl((\mathcal P_Q\mathscr C_\psi)(w)\bigr)
   \Big|_{w=\tilde\pi z},
\]
where $(\mathcal P_QB)(w)=\frac1{Q-1}\sum_{\varepsilon\in
\iota(\mathbb F_\infty^\times)}B(\varepsilon w)$ retains the
powers divisible by $Q-1$;
\item[(ii)] let $\gamma\in(H_A^+)^\times$ have $G_0$-character
inverse to that of $C_{\psi,q}$, and choose
$\beta\in\Cinf^\times$ with $\beta^{q-1}=\gamma$.
The rescaled series
$\mathscr C_{\psi,\gamma}(w):=\mathscr C_\psi(\beta w)$ gives
\[
\sum_{k\ge0}\LL_\psi^{\gamma^k}\bigl(1+k(q-1)\bigr)z^{k(q-1)}
 =\frac{\tilde\pi}{\Hcal_\psi(\tilde\pi)}
   \Tr_{H_A/K}\bigl(\mathscr C_{\psi,\gamma}(w)\bigr)
   \Big|_{w=\tilde\pi z}.
\]
\end{enumerate}
Both identities hold near $z=0$. The traces act on the
Taylor coefficients, fixing $w$, before the substitution
$w=\tilde\pi z$. Finite sums over the ideal classes give
meromorphic continuations of the two generating series on the left.
\end{lettertheorem}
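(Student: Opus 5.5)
The plan is to expand both sides of each identity as power series in $z$ and compare coefficients, using Theorem~\ref{thm:intro-generating} for the Taylor coefficients $C_{\psi,n}$. Then we reduce each coefficient identity to a higher-weight analogue of Theorem~\ref{thm:intro-main} in the free-lattice case. The first step is the element/ideal bridge at weight $n$. Since $\Lambda_\psi=\tilde\pi\iota(A)$, we have $\Hcal_\psi(\iota(a)\tilde\pi)=\chi(a)\Hcal_\psi(\tilde\pi)$, so
\[
C_{\psi,n}=-\frac{\Hcal_\psi(\tilde\pi)}{\tilde\pi^{n}}\sum_{0\ne a\in A}\frac{\chi(a)}{\iota(a)^n}.
\]
We then decompose the ideal series $\LL_\psi(n)$ by ideal classes, as in the proof of Theorem~\ref{thm:general-value}. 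For a sign-normalized $\psi$, the ideal-isogeny cocycle $\psi_\Ia$ has $d_\Ia^\psi=\iota$ of a sign-normalized generator (the Hayes relation $\psi_{\Ia}$ versus $\psi^{\Ia}=\sigma_\Ia\psi$). This expresses the class-$[\Ia]$ subseries as $\sigma_\Ia$ applied to the element sum attached to the target $\sigma_\Ia\psi$, up to the factor $(d_\Ia^\psi)^{1-n}$ coming from the period of the target lattice. The summation over generators of a principal ideal is then a sum over $\Fq^\times$, or over $\mathbb F_\infty^\times$ after sign normalization. This is exactly where the congruence conditions on $n$ enter.

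The second step is the Galois bookkeeping, using Proposition~\ref{prop:weight-field}. Replacing an element $a$ by $\varepsilon a$ with $\varepsilon\in\mathbb F_\infty^\times$ multiplies the weight-$n$ term by $\varepsilon^{1-n}$, via the sign. The operator $\mathcal P_Q$ kills exactly those $w^{n-1}$ with $(Q-1)\nmid(n-1)$. For the remaining $n$, the differential $C_{\psi,n}$ lies over $H_A$, so $\Tr_{H_A/K}$ makes sense. Each conjugate $\sigma_\Ia(C_{\psi,n})$ equals $C_{\sigma_\Ia\psi,n}$, and by step one this corresponds, after dividing by $\Hcal_\psi(\tilde\pi)/\tilde\pi$, to the ideal-class subseries of $\LL_\psi(n)$ multiplied by $\tilde\pi^{\,n-1}$. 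The factor $\tilde\pi^{n-1}$ is absorbed by the substitution $w=\tilde\pi z$, since $w^{n-1}=\tilde\pi^{n-1}z^{n-1}$ with $n-1=k(Q-1)$. I would check the normalization at $n=1$ against Theorem~\ref{thm:intro-main} with $J=\mathrm{id}_\psi$; this fixes the signs and the factor $\tilde\pi/\Hcal_\psi(\tilde\pi)$. Summing over $k$ gives (i).

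For (ii), rescaling gives $\mathscr C_{\psi,\gamma}(w)=\sum_n C_{\psi,n}\beta^{n-1}w^{n-1}$. Only $n\equiv1\pmod{q-1}$ survive, because $\Fq^\times$-invariance forces $C_{\psi,n}=0$ otherwise. For these $n$, $\beta^{n-1}=\gamma^k$ with $n=1+k(q-1)$, which lies in $H_A^+$. The $G_0$-character of $C_{\psi,n}$ is the $k$-th power of that of $C_{\psi,q}$; I expect this from Proposition~\ref{prop:weight-field}, since the character is $\varepsilon\mapsto\varepsilon^{1-n}$. Hence $\gamma^kC_{\psi,n}$ is $G_0$-invariant and descends to $H_A$. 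Applying $\sigma_\Ia$ to $\gamma^k C_{\psi,n}$ gives $\sigma_\Ia(\gamma)^k\,\sigma_\Ia(C_{\psi,n})$, and the same class decomposition as in (i) gives the twisted series $\LL_\psi^{\gamma^k}(n)$. The meromorphic continuation follows because each class contribution is the finite Galois conjugate of the meromorphic function $\mathscr C_\psi$ of Theorem~\ref{thm:intro-generating}.

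The main obstacle is step one at higher weight: matching $\sigma_\Ia(C_{\psi,n})$ with the class-$[\Ia]$ subseries including the exact power of $d_\Ia^\psi$. The target lattice of $\psi_\Ia$ is $\tilde\pi\, d_\Ia^{-1}\iota(\Ia^{-1})$, which is nonfree. I would handle this with the projective-lattice version of Theorem~\ref{thm:intro-pairing} and the lattice-sum formula for $C_{\varphi,n}$. Both are valid for arbitrary lattices, so the projective pairing of Section~\ref{sec:projective-pairing} suffices. The identification $\sigma_\Ia\psi=\psi^{\Ia}$ from Hayes theory must then be checked to be compatible with the Artin action on the coefficients of these rational differentials.
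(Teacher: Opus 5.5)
Your plan for (i) follows the paper's route. You compare Taylor coefficients at $n=1+k(Q-1)$ and identify each ordinary class subseries with the lattice coefficient $C_{\varphi_i,n}$ of the nonfree target $\varphi_i=\Ia_i*\psi$; its lattice is $d_{\Ia_i}^\psi\tilde\pi\,\iota(\Ia_i^{-1})$ and $\Hcal_{\varphi_i}(d_{\Ia_i}^\psi\tilde\pi)=u_{\Ia_i}^\psi\Hcal_\psi(\tilde\pi)$. You then use $\sigma_{\Ia_i}(C_{\psi,n})=C_{\varphi_i,n}$ and descend to $H_A$ through the $G_0$-character. However, the identity that drives the class decomposition is misstated.

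It is false in general that $d_\Ia^\psi$ is $\iota$ of a sign-one generator. This fails for nonprincipal $\Ia$, and for principal ideals without a sign-one generator when $\Ninf>1$. Replacing $a$ by $\varepsilon a$ with $\varepsilon\in\mathbb F_\infty^\times$ is also not an operation on $\Ia_i^{-1}$, whose units are only $\Fq^\times$. What you need is the rescaling that follows from $\psi_{a\Ia_i}=\LT_\psi(a)^{-q^{\deg\Ia_i}}\psi_{\Ia_i}\psi_a$:
\[
\frac{u_{c\Ia_i}^\psi}{(d_{c\Ia_i}^\psi)^n}
 =\frac{u_{\Ia_i}^\psi}{(d_{\Ia_i}^\psi)^n}\,
  \iota(\operatorname{sgn}(c))^{(n-1)q^{\xi_\psi+\deg\Ia_i}}\,
  \frac{\chi(c)}{\iota(c)^n}
 \qquad(0\ne c\in\Ia_i^{-1}).
\]
The sign factor is a function of $c$, not a reindexing. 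It is identically one when $(Q-1)\mid(n-1)$, and this is what gives $\LL_{\psi,[\Ia_i]}(n)=-u_{\Ia_i}^\psi(d_{\Ia_i}^\psi)^{-n}\Lcal_{\Ia_i^{-1}}(n)=C_{\varphi_i,n}\tilde\pi^n/\Hcal_\psi(\tilde\pi)$. Also, dividing $C_{\varphi_i,n}$ by $\Hcal_\psi(\tilde\pi)/\tilde\pi$ gives the class subseries times $\tilde\pi^{1-n}$, not $\tilde\pi^{n-1}$. That factor is what $w^{n-1}=\tilde\pi^{n-1}z^{n-1}$ cancels.

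The genuine gap is in (ii). For $n=1+k(q-1)$ with $(Q-1)\nmid(n-1)$, the sign factors above do not vanish, so \emph{the same class decomposition as in (i)} is not available. The class subseries of $\LL_\psi^{\gamma^k}(n)$ is weighted by both $\sigma_\Ia(\gamma^k)$ and the signs. By contrast, $\Tr_{H_A/K}(\gamma^kC_{\psi,n})=\sum_i\sigma_{\Ia_i}(\gamma^k)C_{\varphi_i,n}$ has one scalar per ordinary class times the unweighted sum $\Lcal_{\Ia_i^{-1}}(n)$. You do establish that $\gamma^kC_{\psi,n}$ descends to $H_A$, but that only makes the trace well defined; it does not give the equality.

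The missing step is that the variation of $\sigma_\Ia(\gamma^k)$ across the narrow classes inside $[\Ia_i]$ exactly cancels the sign factors. Write $\sigma_{c\Ia_i}=\sigma_{\Ia_i}\sigma_{(c)}$ with $\sigma_{(c)}\in G_0$. Then $\sigma_{(c)}(\gamma^k)=\kappa_{\psi,n}(\sigma_{(c)})^{-1}\gamma^k=\iota(\operatorname{sgn}(c))^{-(n-1)q^{\xi_\psi}}\gamma^k$. Since $\sigma_{\Ia_i}$ acts on $\iota(\mathbb F_\infty)$ by $x\mapsto x^{q^{\deg\Ia_i}}$,
\[
\sigma_{c\Ia_i}(\gamma^k)
 =\iota(\operatorname{sgn}(c))^{-(n-1)q^{\xi_\psi+\deg\Ia_i}}\,
  \sigma_{\Ia_i}(\gamma^k).
\]
Multiplied into the rescaling, this removes the sign. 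It gives $\LL^{\gamma^k}_{\psi,[\Ia_i]}(n)=-\sigma_{\Ia_i}(\gamma^k)u_{\Ia_i}^\psi(d_{\Ia_i}^\psi)^{-n}\Lcal_{\Ia_i^{-1}}(n)=\sigma_{\Ia_i}(\gamma^kC_{\psi,n})\tilde\pi^n/\Hcal_\psi(\tilde\pi)$, which is the content of Theorem~\ref{thm:twisted-weight-trace}.

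Finally, $\sigma_{\Ia_i}$ acts only on coefficients and is in general not continuous for the absolute value on $H_A^+$. So conjugating $\mathscr C_\psi$ does not by itself produce a meromorphic function. You must identify the coefficientwise conjugate with $\mathcal P_Q\mathscr C_{\varphi_i}$ in (i), or with $\mathscr C_{\varphi_i}(\beta_iw)$ where $\beta_i^{q-1}=\sigma_{\Ia_i}(\gamma)$ in (ii). Then apply Theorem~\ref{thm:lattice-generating} to each $\varphi_i$ to get the meromorphic continuation.
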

These are Theorem~\ref{thm:ideal-generating-trace} and
Corollary~\ref{cor:twisted-generating-trace}.
Their constant terms recover the free-lattice case of Theorem~A.
Comparing coefficients gives the untwisted higher-weight formula
of Theorem~\ref{thm:higher-weight-trace} and the twists
$\mu=\gamma^k$.
Theorem~\ref{thm:twisted-weight-trace} allows any scalar with the
required inverse character.
For $q=2$, the twisted generating formula covers every positive
weight at every degree of infinity.

For a fixed sign function, the element-type Pellarin series attached to
a fractional ideal $\mathfrak j$ is
$L_+(\mathfrak j;n)=\sum_{c\in\mathfrak j,\,\operatorname{sgn}(c)=1}
\chi(c)/\iota(c)^n$.
For sign-normalized $\varphi$, decomposition by narrow ideal
classes expresses $\LL_\varphi(n)$ as a finite linear combination
of $L_+(\Ib^{-1};n)$ at every positive integer weight
(Corollary~\ref{cor:narrow-weight-comparison}).
At rational infinity, $L_+(A;n)$ is precisely the principal-class
contribution. For higher-degree infinity, the signs of generators
enter the comparison (Proposition~\ref{prop:all-weight-comparison}).

For the module $\psi$ of Theorem~E at rational infinity,
$\mathcal P_q$ is the identity on
$\mathscr C_\psi$, and
\[ 
C_{\psi,n}=L_+(A;n)\Hcal_\psi(\tilde\pi)/\tilde\pi^n\]
for $n\equiv1\pmod{q-1}$.
The ordinary trace then directly relates the two Pellarin series,
extending the polynomial-ring rationality of
\cite[Theorem 2]{Pellarin12}.

The examples make the trace coordinates and their
normalizations explicit. On $\mathbb P^1$, we give closed
weight-one values for every $\Ninf$, including the
Hu--Huang--Yau module with nonprincipal period lattice, and
compute a scalar $\gamma$ for the higher-weight twists.
On elliptic curves, we recover the Green--Papanikolas weight-one
identity at rational infinity and work out
$y^2+y=x^3+x+1$ over $\mathbb F_2$ with an infinite point of
degree three. The latter computation includes the holomorphic
term of the Cauchy differential and a residue-dual basis for
the finite trace algorithm.

\subsection{Organization of the paper}
We now describe the structure of the paper.
 Section~\ref{sec:ideal-series} compares the two Pellarin
series by ideal classes.
Section~\ref{sec:projective-pairing} proves the projective pairing
and the weight-one results, Theorems~A--C.
Section~\ref{sec:general-value} develops the generating family,
its coefficient fields, and the trace formulas of Theorems~D
and~E.
Sections~\ref{sec:scalar} and~\ref{sec:elliptic} give the
projective-line and elliptic computations, respectively.

\section{Two Pellarin series}\label{sec:ideal-series}

Let $X/\Fq$ be a smooth, projective, geometrically connected curve of
genus $g$, let $\infty$ be a closed point of degree $\Ninf\ge1$, and put
\[
U=X\setminus\{\infty\},\qquad A=\Gamma(U,\OO_X).
\]
Let $p=\operatorname{char}(\Fq)$, $K=\operatorname{Frac}(A)$, and
$\Omega_A=\Omega^1_{A/\Fq}$; put $\Omega_A^{\mathrm{rat}}=K\otimes_A\Omega_A$.
Write $K_\infty$ for the completion at $\infty$ and
$\Cinf=\widehat{\overline{K_\infty}}$ for its completed algebraic closure.
Let $\mathfrak m_\infty$ be the maximal ideal of the valuation ring
of $K_\infty$ and $\mathbb F_\infty\simeq\mathbb F_{q^{\Ninf}}$ its residue field.
We identify $\mathbb F_\infty$ with its coefficient-field lift
in $K_\infty$.
Fix an isometric $\Fq$-embedding $\iota:K_\infty\hookrightarrow\Cinf$;
it satisfies $|\iota(a)|=q^{-\Ninf v_\infty(a)}$ for $a\in K$.
Write $\chi(a)=1\otimes a$ for the geometric copy of $a\in K$.
The characteristic point $\Xi\in X(\Cinf)$ is determined by
$\chi(a)(\Xi)=\iota(a)$ for $a\in A$.
For a fractional ideal $\mathfrak j\subset K$, we abbreviate
$\tilde\pi\iota(\mathfrak j)$ as $\tilde\pi\mathfrak j$.
When $\Lambda_\varphi=\tilde\pi\mathfrak j$, we identify
$\tilde\pi^{-1}\Lambda_\varphi$ with the fractional ideal $\mathfrak j$
via $\iota^{-1}$.
Let $\tau$ act by the $q$th power on coefficients and fix the geometric
copy; for $i\in\mathbb Z$, a superscript $(i)$ denotes
the $i$th coefficient Frobenius twist, using $\tau^{-1}$ when $i<0$. Put
$\mathbb T_A=\Cinf\widehat\otimes_{\Fq}A$, with its Gauss norm.
Following \cite[Section 6.9]{HuCauchy}, put
\[
\mathbb M_A=\mathbb T_A\otimes_A\Omega_A
\simeq\Cinf\widehat\otimes_{\Fq}\Omega_A.
\]
Since $\Omega_A$ is finite projective, realize it as a direct summand
of $A^s$ and give $\mathbb M_A$ the norm induced from $\mathbb T_A^s$.
This module is complete, coefficient Frobenius is continuous, and
$\|\chi(a)\eta\|\le\|\eta\|$ for $a\in A$ and $\eta\in\mathbb M_A$.
We view $\mathbb M_A$ in the one-dimensional
$\operatorname{Frac}(\mathbb T_A)$-vector space
$\operatorname{Frac}(\mathbb T_A)\otimes_A\Omega_A$.
For differentials $\eta_1,\eta_2$ in this space with $\eta_2\ne0$,
the quotient $\eta_1/\eta_2$ denotes the unique scalar $r$
satisfying $\eta_1=r\eta_2$.

We use $\varphi$ for a general rank-one Drinfeld module and reserve
$\psi$ for a module whose period lattice is free of rank one over $A$.
Ideal-isogeny targets of $\psi$ are denoted by $\varphi_i$; their
lattices need not be free.

Let $H\subset\Cinf$ be a field containing $\iota(K)$ and
the coefficients of the Drinfeld modules under consideration.
We enlarge $H$ as
needed to include the coefficients of finitely many isogenies.
For the trace formulas, we use sign-normalized scalar models over
the sign class field to establish descent to the ordinary Hilbert
class field.
For the basic analytic theory and ideal action, we use
\cite[Sections 3--9]{Hayes92} and \cite[Chapter 4]{Goss96}; the two
copies of $K$, Frobenius twists, and differential notation follow
\cite[Section 2.1 and Appendix A]{HuCauchy}.

For $\iota(K)\subset E\subset\Cinf$, write
$E(X)=\operatorname{Frac}(E\otimes_{\Fq}A)$ and let
$\Omega^1_{E(X)/E}$ denote its space of rational differentials.
This space also embeds in
$\operatorname{Frac}(\mathbb T_A)\otimes_A\Omega_A$.
An $\iota(K)$-embedding $\sigma:E\hookrightarrow\Cinf$ acts by
\[
\sigma(c\otimes a)=\sigma(c)\otimes a,\qquad
\sigma(d\chi(a))=d\chi(a).
\]
In particular, coefficient Galois action fixes the geometric functions
and differentials.
We regard the class fields $H_A\subset H_A^+$ as subfields
of $\Cinf$ over $\iota(K)$. In their Galois
groups and field traces, the base field $K$ means its arithmetic
copy $\iota(K)$.

\subsection{Isogeny data and motive multipliers}

For each rank-one module and its ideal-isogeny targets, we fix
the geometric realization
of \cite[Section 2.2]{HuCauchy}. The shtuka construction goes back to
\cite[Section 0.3]{Thakur93}, with the higher-degree normalizations
corrected in \cite[item 13]{ThakurUpdates}.
We use the Cauchy and residue normalizations of \cite[Section 6]{HuCauchy}.
Choose the standard geometric motive data:
a divisor $V_\varphi$ on $X_{\Cinf}$, a point
$q_\varphi\in X(\Cinf)$ above $\infty$, and a nonzero rational
function $f_\varphi\in\Cinf(X)$, satisfying
\begin{gather*}
V_\varphi\ge0,\qquad \deg V_\varphi=g,\qquad h^0(V_\varphi)=1,\qquad
\Xi,q_\varphi^{(-1)}\notin\operatorname{supp}V_\varphi,\\
\divisor(f_\varphi)=V_\varphi^{(1)}-V_\varphi+\Xi-q_\varphi,
\qquad q_\varphi\mid\infty.
\end{gather*}
Set $F_\varphi^{[0]}=1$ and
$F_\varphi^{[k]}=\prod_{j=0}^{k-1}f_\varphi^{(j)}$ for $k\ge1$.
Let $s_\varphi$ denote the identity polynomial in the
polynomial realization of the motive. In the geometric realization,
$s_\varphi$ corresponds to the section $1$ of $\OO(V_\varphi)$,
and $\tau^ks_\varphi$ corresponds to $F_\varphi^{[k]}$. These functions form a $\Cinf$-basis of
$\Gamma(U_{\Cinf},\OO(V_\varphi))$ by \cite[Lemma 2.2]{HuCauchy}.
For Hayes modules this is the realization of
\cite[Sections 2.3 and 3.1]{ANDTR17}. For a scalar conjugate
$\varphi=z\varphi_0z^{-1}$, use $V_\varphi=V_{\varphi_0}$,
$q_\varphi=q_{\varphi_0}$, and $f_\varphi=z^{q-1}f_{\varphi_0}$; then
$F_\varphi^{[k]}=z^{q^k-1}F_{\varphi_0}^{[k]}$, so the same coordinate
identities hold \cite[Remark 2.1]{HuCauchy}.

For two such modules, we use the isogeny datum of
\cite[Notation 2.3]{HuCauchy}:
\[
J=[\eta_J;h_J;\mathfrak i_J,\mu_J]:\varphi\longrightarrow\varphi',
\qquad \eta_J^!(s_{\varphi'})=h_Js_\varphi.
\]
Here $\eta_J$ is a separable isogeny, and $\eta_J^!$ maps
the motive of $\varphi'$ to that of $\varphi$.
The ideal $\mathfrak i_J$ is the annihilator of $\ker\eta_J$, and
$\eta_J=\mu_J\varphi_{\mathfrak i_J}$ with $\mu_J\in\Cinf^\times$
and $\varphi_{\mathfrak i_J}$ monic. The rational multiplier
$h_J\in\Cinf(X)$ satisfies
\begin{equation}\label{eq:intertwiner}
h_J^{(1)}f_\varphi=h_Jf_{\varphi'}.
\end{equation}
For the projection $\pi:U_{\Cinf}\to U$, the affine divisors in these
rational frames satisfy
\[
\pi^*\operatorname{div}_U(\mathfrak i_J)
 =\bigl(\divisor(h_J)-V_{\varphi'}+V_\varphi\bigr)|_{U_{\Cinf}}.
\]

The module of special functions is
\[
\sfm(\varphi)=\{\omega\in\mathbb T_A:
             \varphi_a(\omega)=\chi(a)\omega\text{ for every }a\in A\}.
\]
For each module, fix $0\ne\omega_\varphi\in\sfm(\varphi)$. It satisfies
$\omega_\varphi^{(1)}=f_\varphi\omega_\varphi$;
see \cite[Lemma 3.6 and Remark 3.10]{ANDTR17},
\cite[Proposition 3.18]{GM21}, and
\cite[Section 2.2]{HuCauchy} for the motive interpretation.

\begin{lemma}\label{lem:motive-multiplier}
If $\eta_J=\sum_{i=0}^d\alpha_i\tau^i$, then for any nonzero $\omega_\varphi\in\sfm(\varphi)$,
\begin{equation}\label{eq:ideal-coordinate}
h_J=\sum_{i=0}^d\alpha_iF_\varphi^{[i]}
     =\frac{\eta_J(\omega_\varphi)}{\omega_\varphi}.
\end{equation}
It is regular at $\Xi$, with
$h_J(\Xi)=\partial\eta_J=\alpha_0$.
\end{lemma}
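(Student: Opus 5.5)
The plan is to compute $\eta_J(\omega_\varphi)$ termwise and then match the result with the motive-level definition of $h_J$.

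First, iterating the functional equation $\omega_\varphi^{(1)}=f_\varphi\omega_\varphi$ gives
\[
\omega_\varphi^{(i)}=F_\varphi^{[i]}\omega_\varphi \qquad (i\ge0).
\]
This is an induction on $i$, using that $\tau$ acts only on coefficients. In particular it applies to $f_\varphi\in\Cinf(X)$, so $\tau(F^{[i]}_\varphi\omega_\varphi)=F^{[i]\,(1)}_\varphi f_\varphi\omega_\varphi=F^{[i+1]}_\varphi\omega_\varphi$. Hence
\[
\eta_J(\omega_\varphi)=\sum_i\alpha_i\omega_\varphi^{(i)}
 =\Bigl(\sum_i\alpha_iF_\varphi^{[i]}\Bigr)\omega_\varphi .
\]
Since $\omega_\varphi\ne0$ in the domain $\mathbb T_A$, the quotient is $\sum_i\alpha_iF^{[i]}_\varphi$. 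This quotient is independent of the choice of $\omega_\varphi$, because any two nonzero special functions differ by a factor $\chi(a)/\chi(b)$ acting on the geometric side, and $\eta_J$ is $\chi$-linear.

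Second, I identify this with $h_J$. In the polynomial realization, $\eta_J^!$ sends $s_{\varphi'}$ to $s_{\varphi'}\circ\eta_J=\sum\alpha_i\tau^i s_\varphi$. This is the defining convention of \cite[Notation 2.3]{HuCauchy}, where $\eta^!$ is precomposition and $s$ is the identity polynomial. Under the geometric dictionary $\tau^ks_\varphi\leftrightarrow F^{[k]}_\varphi$, the element $\sum\alpha_i\tau^is_\varphi$ becomes the function $\sum\alpha_iF^{[i]}_\varphi$. Comparing with $\eta_J^!(s_{\varphi'})=h_Js_\varphi$, where $s_\varphi\leftrightarrow1$, gives $h_J=\sum\alpha_iF^{[i]}_\varphi$. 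As a consistency check I would verify the intertwining relation \eqref{eq:intertwiner}. Writing $\eta_J\varphi_a=\varphi'_a\eta_J$ and applying both sides to $\omega_\varphi$ shows that $\eta_J(\omega_\varphi)\in\sfm(\varphi')$. Then $h_J\omega_\varphi$ is a special function for $\varphi'$, and twisting gives
\[
h_J^{(1)}f_\varphi\omega_\varphi=f_{\varphi'}h_J\omega_\varphi ,
\]
which is \eqref{eq:intertwiner}. This argument also pins down $h_J$ up to the normalization of $\omega_{\varphi'}$, consistent with the frames.

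Third, I prove regularity at $\Xi$. From $\divisor(f_\varphi)=V^{(1)}_\varphi-V_\varphi+\Xi-q_\varphi$, the divisor of $F^{[i]}_\varphi$ telescopes to
\[
V^{(i)}_\varphi-V_\varphi+\sum_{j<i}\bigl(\Xi^{(j)}-q^{(j)}_\varphi\bigr).
\]
For $i\ge1$ this vanishes at $\Xi$ to order at least one. It has no pole there, since $\Xi\notin\operatorname{supp}V_\varphi$, and $\Xi\ne q^{(j)}_\varphi$ because $q_\varphi$ lies above $\infty$ and $\Xi$ does not. So $F^{[i]}_\varphi(\Xi)=0$ for $i\ge1$ and $F^{[0]}_\varphi=1$, which gives $h_J(\Xi)=\alpha_0=\partial\eta_J$.

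The main obstacle is the second step, namely making the sign and direction conventions of $\eta^!$ precise. I will rely on the cited construction for that, and on the intertwiner check as a safeguard.
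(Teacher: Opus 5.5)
Your proposal is correct and follows essentially the paper's proof. The motive dictionary $\tau^i s_\varphi\leftrightarrow F_\varphi^{[i]}$ gives the first equality, the iterated relation $\omega_\varphi^{(i)}=F_\varphi^{[i]}\omega_\varphi$ gives the second, and the telescoped divisor of $F_\varphi^{[k]}$ shows regularity and vanishing at $\Xi$ for $k\ge1$; your intertwiner check and separate independence-of-$\omega_\varphi$ argument are harmless but not needed.
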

\begin{proof}
 The motive realization sends $F_\varphi^{[i]}s_\varphi$ to
$\tau^i$, so $\eta_J^!(s_{\varphi'})$ has coordinate
$\sum_i\alpha_iF_\varphi^{[i]}$. This proves the first equality
and shows that $h_J$ is independent of $\omega_\varphi$.
The second equality follows from
$\omega_\varphi^{(i)}=F_\varphi^{[i]}\omega_\varphi$.
Telescoping the shtuka divisor gives
\begin{equation}\label{eq:product-divisor}
\divisor(F_\varphi^{[k]})
 =V_\varphi^{(k)}-V_\varphi
   +\sum_{j=0}^{k-1}(\Xi^{(j)}-q_\varphi^{(j)}).
\end{equation}
Since $V_\varphi$ avoids $\Xi$ and the characteristic is generic,
every $F_\varphi^{[k]}$ with $k\ge1$ is regular and vanishes at
$\Xi$. Evaluating \eqref{eq:ideal-coordinate} at $\Xi$ therefore
gives $h_J(\Xi)=\alpha_0$.
\end{proof}

The exponentials and period lattices of the source and target
are related by
\begin{equation}\label{eq:isogeny-uniformization}
\eta_J(\exp_\varphi w)=\exp_{\varphi'}(h_J(\Xi)w),\qquad
\Lambda_{\varphi'}=h_J(\Xi)\,\iota(\mathfrak i_J^{-1})\Lambda_\varphi;
\end{equation}
see \cite[Theorem 8.14]{Hayes92} for the ideal action on lattices and
\cite[Proposition 2.4]{HuCauchy} for these isogeny conventions.
For a nonzero integral ideal $\Ia$, write
$\varphi^\Ia=\Ia*\varphi$ for the target of $\varphi_\Ia$ and put
\[
d_\Ia^\varphi=\partial\varphi_\Ia,\qquad
u_\Ia^\varphi=\frac{\varphi_\Ia(\omega_\varphi)}{\omega_\varphi}.
\]
Thus $u_\Ia^\varphi=h_J$ for the monic datum
$J=[\varphi_\Ia;h_J;\Ia,1]$. This is the coordinate denoted $u_I$
in \cite[Section 3.1]{ANDTR17}. Lemma~\ref{lem:motive-multiplier} gives
$u_\Ia^\varphi(\Xi)=d_\Ia^\varphi\ne0$.

For a principal ideal $(a)$, let $\LT_\varphi(a)$ be the leading
$\tau$-coefficient of $\varphi_a$. Monic normalization
\cite[Lemma 4.4]{Hayes92} gives
\begin{equation}\label{eq:principal-normalization}
\varphi_{(a)}=\LT_\varphi(a)^{-1}\varphi_a,\qquad
u_{(a)}^\varphi=\frac{\chi(a)}{\LT_\varphi(a)},\qquad
d_{(a)}^\varphi=\frac{\iota(a)}{\LT_\varphi(a)}.
\end{equation}
Consequently $u_{(a)}^\varphi/d_{(a)}^\varphi=\chi(a)/\iota(a)$.

\subsection{The first Pellarin series}
Fix a sign function
$\operatorname{sgn}:K_\infty^\times\to\mathbb F_\infty^\times$
whose restriction to $\mathbb F_\infty^\times$ is the identity,
as in \cite[Section 2.1]{ANDTR17}, and put
$A_+=\{a\in A\setminus\{0\}:\operatorname{sgn}(a)=1\}$.
The two series below extend the element sum and the ideal sum of
\cite[Section 7, equations (90) and (95)]{GP18}.

\begin{definition}
For an integer $n\ge1$, the element-type Pellarin series is
\begin{equation}\label{eq:type-one}
L_+(A;n)=\sum_{a\in A_+}\frac{\chi(a)}{\iota(a)^n}.
\end{equation}
The number $ n $ is called the \emph{weight} of the series.
\end{definition}
For a fractional ideal $\mathfrak j\subset K$ and
$\varepsilon\in\mathbb F_\infty^\times$, we also use the sums
\begin{equation}\label{eq:sign-series}
L_\varepsilon(\mathfrak j;n)
 =\sum_{\substack{0\ne c\in\mathfrak j\\
                   \operatorname{sgn}(c)=\varepsilon}}
       \frac{\chi(c)}{\iota(c)^n},\qquad
L_+(\mathfrak j;n)=L_1(\mathfrak j;n).
\end{equation}
Thus the usual element-type series has no Drinfeld-module parameter.
For $\Ninf>1$, $A_+$ need not contain a generator of every
principal ideal: two generators differ only by an element of
$A^\times=\Fq^\times$, whereas their signs lie in
$\mathbb F_\infty^\times$.

Following \cite[Section 8.1]{HuCauchy}, write
\begin{equation}\label{eq:all-element-series}
\Lcal_{\mathfrak j}(n)
 =\sum_{0\ne c\in\mathfrak j}\frac{\chi(c)}{\iota(c)^n}.
\end{equation}
 These series converge unconditionally after multiplication
by a fixed geometric denominator. Choose $b\in A\setminus\{0\}$
with $b\mathfrak j\subset A$. Then
$\chi(b)\Lcal_{\mathfrak j}(n)=\iota(b)^n\Lcal_{b\mathfrak j}(n)$.
The numerators in $\Lcal_{b\mathfrak j}(n)$ have Gauss norm one,
and only finitely many denominators have bounded absolute value.
The summands therefore tend to zero. The same argument applies
to each sign subseries; compare \cite[Proposition 8.1]{HuCauchy}.
These sums belong to
$\chi(b)^{-1}\mathbb T_A\subset\operatorname{Frac}(\mathbb T_A)$.
For the ideal sums, we also use the completion of
$\operatorname{Frac}(\mathbb T_A)$ for the extended Gauss norm.

\begin{lemma}\label{lem:vanishing}
Let $\mathfrak j\subset K$ be a fractional ideal and
$n\ge1$.
\begin{enumerate}
\item[(i)] $\Lcal_{\mathfrak j}(n)=0$ unless $n\equiv1\pmod{q-1}$.
\item[(ii)] If $n\equiv1\pmod{q-1}$, then
\begin{equation}\label{eq:all-element-series-signed}
 \Lcal_{\mathfrak j}(n)
 =-\sum_{c\in\mathfrak j/\Fq^\times}\frac{\chi(c)}{\iota(c)^n},
\end{equation}
where the sum is over a set of representatives of the
$\Fq^\times$-orbits in $\mathfrak j\setminus\{0\}$.
\end{enumerate}
\end{lemma}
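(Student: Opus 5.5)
The plan is to group the nonzero elements of $\mathfrak j$ into $\Fq^\times$-orbits and use the different homogeneity of numerator and denominator under scalar multiplication. For $\zeta\in\Fq^\times$ and $0\ne c\in\mathfrak j$ we have $\zeta c\in\mathfrak j$. Since $\chi$ and $\iota$ are both $\Fq$-linear, $\chi(\zeta c)=\zeta\chi(c)$ and $\iota(\zeta c)=\zeta\iota(c)$. Hence
\[
\frac{\chi(\zeta c)}{\iota(\zeta c)^n}=\zeta^{1-n}\,\frac{\chi(c)}{\iota(c)^n}.
\]
Here the geometric copy $1\otimes\zeta c=\zeta(1\otimes c)$, because $\Fq$ acts through either tensor factor of $\Cinf\widehat\otimes_{\Fq}A$.

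Next I would justify rearranging the series. Multiplying by the fixed denominator $\chi(b)$ turns $\Lcal_{\mathfrak j}(n)$ into $\iota(b)^n\Lcal_{b\mathfrak j}(n)$. Its summands tend to zero in the Gauss norm, as noted just before the lemma. In a non-archimedean complete space such a series converges unconditionally, so it can be regrouped by finite orbits of size $q-1$. This gives
\[
\Lcal_{\mathfrak j}(n)=\sum_{c\in\mathfrak j/\Fq^\times}\Bigl(\sum_{\zeta\in\Fq^\times}\zeta^{1-n}\Bigr)\frac{\chi(c)}{\iota(c)^n}.
\]
Dividing back by $\chi(b)$ is legitimate in $\operatorname{Frac}(\mathbb T_A)$.

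The inner sum is the standard character sum over the cyclic group $\Fq^\times$. It equals $0$ unless $q-1\mid n-1$, and it equals $q-1=-1$ in $\Fq$ when $q-1\mid n-1$. The first case gives (i). The second case gives the sign $-1$ in \eqref{eq:all-element-series-signed}, proving (ii).

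The only point needing care is the rearrangement step. I expect no real obstacle there, since unconditional convergence was already established.
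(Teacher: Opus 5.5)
Your proof is correct and matches the paper's argument. Both rest on the homogeneity $\chi(\zeta c)/\iota(\zeta c)^n=\zeta^{1-n}\chi(c)/\iota(c)^n$ and on unconditional convergence to regroup the sum by $\Fq^\times$-orbits, and both use $q-1=-1$ for the sign in (ii). The only cosmetic difference is in part (i): the paper gets it from the reindexing identity $\Lcal_{\mathfrak j}(n)=\zeta^{1-n}\Lcal_{\mathfrak j}(n)$ with $\zeta^{1-n}\ne1$, while you evaluate the orbit character sum $\sum_{\zeta\in\Fq^\times}\zeta^{1-n}$. These amount to the same computation.
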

\begin{proof}
\emph{(i).} For $\zeta\in\mathbb F_q^\times$, the substitution $c\mapsto\zeta c$
preserves $\mathfrak j$ and gives
\[
\frac{\chi(\zeta c)}{\iota(\zeta c)^n}
=\zeta^{1-n}\frac{\chi(c)}{\iota(c)^n}.
\]
Since the sum converges unconditionally, it is invariant under this
reindexing, so
\[
\Lcal_{\mathfrak j}(n)=\zeta^{1-n}\Lcal_{\mathfrak j}(n)
\qquad(\zeta\in\mathbb F_q^\times).
\]
If $n\not\equiv1\pmod{q-1}$, choose $\zeta\in\mathbb F_q^\times$
with $\zeta^{1-n}\ne1$. Then $\Lcal_{\mathfrak j}(n)=0$.

\emph{(ii).} The quotient $\chi(c)/\iota(c)^n$ is invariant under
$c\mapsto\zeta c$ for $\zeta\in\Fq^\times$, because
$n\equiv1\pmod{q-1}$.  
 Each orbit has $q-1$ elements, so 
\[
\sum_{0\ne c\in\mathfrak j}\frac{\chi(c)}{\iota(c)^n} = (q-1) \sum_{c\in \mathfrak j/\Fq^\times} \frac{\chi(c)}{\iota(c)^n} = - \sum_{c\in \mathfrak j/\Fq^\times} \frac{\chi(c)}{\iota(c)^n}.
\]
\end{proof}

\begin{remark}\label{rem:Lplus-vs-L}
The all-element series $\Lcal_{\mathfrak j}(n)$ vanishes
when $n\not\equiv1\pmod{q-1}$, whereas the sign-one series need
not vanish. The character twists in Section~\ref{sec:twisted-traces}
apply within the range $n\equiv1\pmod{q-1}$: they remove the
additional obstruction to descent when
$n\not\equiv1\pmod{q^{\Ninf}-1}$.
\end{remark}

At rational infinity, each $\Fq^\times$-orbit in $\mathfrak j\setminus\{0\}$ has a unique element of sign one, so
\begin{equation}\label{eq:monic-all-element}
\Lcal_{\mathfrak j}(n)=
\begin{cases}
 -L_+(\mathfrak j;n),&n\equiv1\pmod{q-1},\\
 0,&n\not\equiv1\pmod{q-1}.
\end{cases}
\qquad(\Ninf=1).
\end{equation}
For integral $\mathfrak j$ at rational infinity,
$\Lcal_{\mathfrak j}(1)$ is the series $\zeta_{\mathfrak j}$ of
\cite[Section 5]{Ferraro24}.

\subsection{The second Pellarin series}
\begin{definition}
For an integer $n\ge1$ and a fixed rank-one module $\varphi$,
the ideal-type Pellarin series is
\begin{equation}\label{eq:ideal-series}
\LL_\varphi(n)=\sum_{0\ne\Ia\subset A}
                \frac{u_\Ia^\varphi}{(d_\Ia^\varphi)^n},
\end{equation}
whenever the sum converges. The sum runs over nonzero integral ideals.
\end{definition}
The numerator $u_\Ia^\varphi$ is the isogeny multiplier of
\cite[Section 3.1]{ANDTR17}. For a principal ideal,
\eqref{eq:principal-normalization} gives
\begin{equation}\label{eq:principal-weight}
\frac{u_{(a)}^\varphi}{(d_{(a)}^\varphi)^n}
 =\LT_\varphi(a)^{n-1}\frac{\chi(a)}{\iota(a)^n}.
\end{equation}
This expression is unchanged when $a$ is multiplied by an element
of $\Fq^\times$. If $\Ninf=1$ and $\varphi$ is sign-normalized,
then $\LT_\varphi(a)=1$ for $a\in A_+$, and the principal-ideal
subseries is exactly $L_+(A;n)$ for every $n\ge1$.

For the value at $n=1$, put
\begin{equation}\label{eq:ideal-definition}
\LL_\varphi(1)=\sum_{0\ne\Ia\subset A}r_\Ia^\varphi,\qquad
r_\Ia^\varphi=\frac{u_\Ia^\varphi}{d_\Ia^\varphi}.
\end{equation}

For a sign-normalized module $\psi$ with $\deg\infty=1$
and free period lattice, $\LL_\psi(1)$ is Ferraro's Anderson zeta function
\cite[Definition 8.7]{Ferraro24}.
That definition uses a lattice-dependent range of fractional ideals
for other lattice classes; here the range in \eqref{eq:ideal-series}
is always the integral ideals of $A$.

The ideal Pellarin $\LL$-series can be decomposed by ideal classes.
\begin{notation}
Choose integral representatives $\Ia_1,\ldots,\Ia_h$ for
$\Cl(A)$, where $h=h(A)=|\Cl(A)|$, and set $\mathfrak j_i=\Ia_i^{-1}$. For each ordinary ideal class, write
\[
\LL_{\varphi,[\Ia_i]}(n)
 :=\sum_{\substack{0\ne\Ia\subset A\\{[\Ia]=[\Ia_i]}}}
       \frac{u_\Ia^\varphi}{(d_\Ia^\varphi)^n},
\]
whenever the subseries converges. 
\end{notation}

Summing over the finitely many ideal classes
gives 
\begin{equation}\label{eq:class-decomposition-sum}
      \LL_\varphi(n) = \sum_{i=1}^h \LL_{\varphi,[\Ia_i]}(n).
\end{equation}

\subsection{Comparison at weight one}
For a nonzero integral ideal $\Ia$, put
$\deg\Ia=\dim_{\Fq}(A/\Ia)$.
\begin{lemma}\label{lem:rescaling}
If $c\in K^\times$ and both $\Ia,c\Ia$ are integral, then
\begin{equation}\label{eq:rescaling}
r_{c\Ia}^\varphi=\frac{\chi(c)}{\iota(c)}r_\Ia^\varphi.
\end{equation}
For every $z\in\Cinf^\times$, one also has
$r_\Ia^{z\varphi z^{-1}}=r_\Ia^\varphi$.
\end{lemma}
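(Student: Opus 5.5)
The plan is to prove \eqref{eq:rescaling} by reducing to the principal case \eqref{eq:principal-normalization}, using the multiplicativity of monic ideal isogenies. The second assertion will follow from the uniqueness of the monic ideal isogeny together with a direct scaling computation.

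\emph{Step 1 (cocycle for $r$).} For nonzero integral ideals $\Ia,\Ib$, Hayes's theory \cite[Sections 4--5]{Hayes92} gives the cocycle identity $\varphi_{\Ib\Ia}=(\varphi^\Ia)_\Ib\,\varphi_\Ia$ of monic isogenies. The product on the right is monic and has kernel $\varphi[\Ib\Ia]$, so uniqueness of the monic isogeny with a given kernel applies.
\begin{itemize}
\item Taking $\tau^0$-coefficients gives $d_{\Ib\Ia}^\varphi=d_\Ib^{\varphi^\Ia}d_\Ia^\varphi$.
\item Since $\varphi_\Ia$ intertwines $\varphi$ with $\varphi^\Ia$, the element $\omega'=\varphi_\Ia(\omega_\varphi)$ lies in $\sfm(\varphi^\Ia)$. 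It is nonzero because $u_\Ia^\varphi(\Xi)=d_\Ia^\varphi\ne0$.
\item We may therefore use $\omega'$ to compute $u_\Ib^{\varphi^\Ia}$; this is legitimate because $u$ is independent of the choice by Lemma~\ref{lem:motive-multiplier}. This yields $u_{\Ib\Ia}^\varphi=u_\Ib^{\varphi^\Ia}u_\Ia^\varphi$.
\end{itemize}
Hence $r_{\Ib\Ia}^\varphi=r_\Ib^{\varphi^\Ia}r_\Ia^\varphi$.

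\emph{Step 2 (reduction to principal ideals).} Write $c=a/b$ with $a,b\in A\setminus\{0\}$; for instance, choose $b$ with $bc\in A$ and set $a=bc$. Then $(b)\cdot c\Ia=(a)\cdot\Ia$ as integral ideals. By Step 1 and \eqref{eq:principal-normalization}, applied to the modules $\varphi^{c\Ia}$ and $\varphi^\Ia$, we get
\[
\frac{\chi(b)}{\iota(b)}\,r_{c\Ia}^\varphi
 =r_{(b)c\Ia}^\varphi
 =r_{(a)\Ia}^\varphi
 =\frac{\chi(a)}{\iota(a)}\,r_\Ia^\varphi .
\]
The relation $u_{(a)}/d_{(a)}=\chi(a)/\iota(a)$ holds for every rank-one module, so the specific targets do not matter. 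Since $\chi(b)/\iota(b)$ is invertible in $\operatorname{Frac}(\mathbb T_A)$, dividing by it gives \eqref{eq:rescaling}, because $\chi(c)=\chi(a)/\chi(b)$ and $\iota(c)=\iota(a)/\iota(b)$.

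\emph{Step 3 (scalar invariance).} Put $\varphi'=z\varphi z^{-1}$ and $d=\deg\Ia$.
\begin{itemize}
\item The isogeny $z\varphi_\Ia z^{-1}$ has kernel $z\varphi[\Ia]=\varphi'[\Ia]$ and leading coefficient $z^{1-q^d}$.
\item By uniqueness, $\varphi'_\Ia=z^{q^d}\varphi_\Ia z^{-1}$, so $d_\Ia^{\varphi'}=z^{q^d-1}d_\Ia^\varphi$.
\item The element $z\omega_\varphi$ lies in $\sfm(\varphi')$. Hence $u_\Ia^{\varphi'}=z^{q^d}\varphi_\Ia(\omega_\varphi)/(z\omega_\varphi)=z^{q^d-1}u_\Ia^\varphi$.
\end{itemize}
The factors $z^{q^d-1}$ cancel in the ratio, giving $r_\Ia^{\varphi'}=r_\Ia^\varphi$.

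The only point needing care is Step 1: the cocycle identity and the transport of special functions must be stated with the monic normalization. Everything else is bookkeeping.
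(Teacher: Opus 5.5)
Your proof is correct and follows essentially the same route as the paper. The paper also gets the principal case from composition of monic ideal isogenies: it pulls out $\chi(a)$ and $\iota(a)$ with a common scalar via $\varphi_\Ia\varphi_a=\varphi^\Ia_a\varphi_\Ia$, which is your cocycle $r_{\Ib\Ia}^\varphi=r_\Ib^{\varphi^\Ia}r_\Ia^\varphi$ specialized to $\Ib=(a)$; it then handles general $c=a/b$ through $b(c\Ia)=a\Ia$ and proves scalar invariance by the same computation $(z\varphi z^{-1})_\Ia=z^{q^d}\varphi_\Ia z^{-1}$.
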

\begin{proof}
For $a\in A\setminus\{0\}$, the monic ideal polynomial for $(a)$
is a scalar multiple of the action polynomial at $a$, for both
$\varphi$ and $\varphi^\Ia$. Ideal-isogeny composition and
$\varphi_\Ia\varphi_a=\varphi_a^\Ia\varphi_\Ia$ therefore give
$\varphi_{a\Ia}(\omega_\varphi)=\gamma\chi(a)\varphi_\Ia(\omega_\varphi)$
and $d_{a\Ia}^\varphi=\gamma \iota(a) d_\Ia^\varphi$ with the same scalar
$\gamma$. Taking the ratio proves \eqref{eq:rescaling} for
$c=a\in A\setminus\{0\}$. Write $c=a/b$ and compare
$b(c\Ia)=a\Ia$ to prove the general case. If
$\varphi^z=z\varphi z^{-1}$ and $d=\deg\Ia$, monic normalization gives
\[
 (\varphi^z)_\Ia=z^{q^d}\varphi_\Ia z^{-1},\qquad
 (\varphi^z)^\Ia=z^{q^d}\varphi^\Ia z^{-q^d}.
\]
The second equality follows by substituting the first in the
ideal-intertwining identity. Since $z\omega_\varphi$ is special for $\varphi^z$,
\[
u_\Ia^{\varphi^z}=z^{q^d-1}u_\Ia^\varphi,\qquad
d_\Ia^{\varphi^z}=z^{q^d-1}d_\Ia^\varphi.
\]
The common factor cancels in $r_\Ia^{\varphi^z}$.
\end{proof}
\begin{proposition}\label{prop:class-decomposition}
At weight one, each class subseries converges and satisfies
\begin{equation}\label{eq:class-subseries}
\LL_{\varphi,[\Ia_i]}(1)=-r_{\Ia_i}^\varphi\Lcal_{\mathfrak j_i}(1).
\end{equation}
\end{proposition}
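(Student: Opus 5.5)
The plan is to parametrize the integral ideals in the class $[\Ia_i]$ by elements of $\mathfrak j_i=\Ia_i^{-1}$, apply Lemma~\ref{lem:rescaling} termwise, and then use Lemma~\ref{lem:vanishing}(ii) at $n=1$ to recover the full lattice sum.

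\textbf{Parametrization.} An integral ideal $\Ia$ lies in the class of $\Ia_i$ exactly when $\Ia=c\Ia_i$ for some $c\in K^\times$. Integrality of $c\Ia_i$ is equivalent to $c\in\Ia_i^{-1}=\mathfrak j_i$, since $c\Ia_i\subset A$ iff $c\in(A:\Ia_i)=\Ia_i^{-1}$ for invertible ideals of the Dedekind ring $A$. Two elements $c,c'$ give the same ideal iff $c'/c\in A^\times=\Fq^\times$. Hence
\[
c\longmapsto c\Ia_i,\qquad c\in(\mathfrak j_i\setminus\{0\})/\Fq^\times,
\]
is a bijection onto the set of integral ideals in $[\Ia_i]$.

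\textbf{Termwise identity.} Lemma~\ref{lem:rescaling} applies because both $\Ia_i$ and $c\Ia_i$ are integral. It gives
\[
r^\varphi_{c\Ia_i}=\frac{\chi(c)}{\iota(c)}\,r^\varphi_{\Ia_i}.
\]
This expression is invariant under $c\mapsto\zeta c$ for $\zeta\in\Fq^\times$, so it is well defined on orbits. Summing over orbit representatives and applying Lemma~\ref{lem:vanishing}(ii) with $n=1$ (trivially $1\equiv1\pmod{q-1}$) gives
\[
\LL_{\varphi,[\Ia_i]}(1)=r^\varphi_{\Ia_i}\sum_{c\in\mathfrak j_i/\Fq^\times}\frac{\chi(c)}{\iota(c)}=-r^\varphi_{\Ia_i}\Lcal_{\mathfrak j_i}(1).
\]

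\textbf{Convergence (main obstacle).} Convergence must hold in the completed $\operatorname{Frac}(\mathbb T_A)$, and this is where the care goes. Choose $b\in A\setminus\{0\}$ with $b\mathfrak j_i\subset A$. As recalled before Lemma~\ref{lem:vanishing}, $\chi(b)\Lcal_{\mathfrak j_i}(1)$ converges unconditionally in $\mathbb T_A$: the summands have Gauss norm at most $|\iota(b)|/|\iota(bc)|$, and these tend to zero because only finitely many $bc\in A$ have bounded absolute value. The orbit subseries therefore also converges unconditionally, since it is a sub-sum of a family tending to zero in a non-archimedean complete space, and after multiplication by the single fixed scalar $r^\varphi_{\Ia_i}$ (an element of $\operatorname{Frac}(\mathbb T_A)$) the terms still tend to zero in the extended Gauss norm.

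\textbf{Conclusion.} Since the map from orbits to ideals is a bijection, the class subseries is a rearrangement of a convergent family with terms tending to zero. It therefore converges and equals the displayed expression, which proves~\eqref{eq:class-subseries}.
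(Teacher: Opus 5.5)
Your proof is correct and is essentially the paper's argument. You write the ideals in $[\Ia_i]$ as $c\Ia_i$ with $c\in\Ia_i^{-1}$ modulo $\Fq^\times$ and apply Lemma~\ref{lem:rescaling} termwise. You then use that each orbit has $q-1=-1$ elements, which is exactly Lemma~\ref{lem:vanishing}(ii) at $n=1$, and obtain convergence from that of $\Lcal_{\mathfrak j_i}(1)$ after clearing a denominator; your convergence paragraph just spells out what the paper cites as ``the convergence just established.''
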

\begin{proof}
The integral ideals in the class of $\Ia_i$ are exactly
$c\Ia_i$ with $0\ne c\in\Ia_i^{-1}$, modulo multiplication by
$A^\times=\Fq^\times$. The quantity $\chi(c)/\iota(c)$ is constant
on each such orbit. Each orbit has $q-1$ elements, and $q-1=-1$ in $H$.
Lemma~\ref{lem:rescaling} therefore gives
\eqref{eq:class-subseries}. The convergence just established for
$\Lcal_{\mathfrak j_i}(1)$ proves convergence of each class subseries and,
since there are finitely many classes, of the full ideal series.
\end{proof}

The right side of \eqref{eq:class-subseries} is therefore independent
of the representative $\Ia_i$.
Summing over the classes gives
\begin{equation}\label{eq:class-decomposition}
\LL_\varphi(1)=-\sum_{i=1}^h r_{\Ia_i}^\varphi \Lcal_{\mathfrak j_i}(1).
\end{equation}

\subsection{Comparison at arbitrary positive integers}
Let $\varphi$ be sign-normalized. Following
\cite[Section 2.2]{ANDTR17}, choose $\xi_\varphi\in\{0,\ldots,\Ninf-1\}$
such that
\[ 
\LT_\varphi(a)=\iota(\operatorname{sgn}(a))^{q^{\xi_\varphi}}.\]

Put $\xi_i=\xi_\varphi+\deg\Ia_i$, and choose representatives
$S\subset\mathbb F_\infty^\times$ of
$\mathbb F_\infty^\times/\Fq^\times$.

\begin{proposition}\label{prop:all-weight-comparison}
For every integer $n\ge1$, the ideal-type series of $\varphi$ converges
unconditionally, and each ideal-class subseries satisfies
\begin{equation}\label{eq:all-weight-comparison}
\LL_{\varphi,[\Ia_i]}(n)
 =\frac{u_{\Ia_i}^\varphi}{(d_{\Ia_i}^\varphi)^n}
   \sum_{\varepsilon\in S}
      \iota(\varepsilon)^{(n-1)q^{\xi_i}}
      L_\varepsilon(\Ia_i^{-1};n).
\end{equation}
\end{proposition}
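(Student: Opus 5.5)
The plan is to prove a weight-$n$ version of the rescaling Lemma~\ref{lem:rescaling} and then reindex the class subseries by the elements of $\Ia_i^{-1}$, exactly as in Proposition~\ref{prop:class-decomposition}. The difference is that the factor $\LT^{\,n-1}$ no longer cancels. Instead it contributes a sign character, and this character is what splits the sum into the pieces $L_\varepsilon(\Ia_i^{-1};n)$.

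\emph{Step 1 (integral multipliers).} Let $\Ia$ be integral and $a\in A\setminus\{0\}$. Ideal-isogeny composition together with monic normalization gives
\[
\varphi_{a\Ia}=\LT_{\varphi^\Ia}(a)^{-1}\,\varphi^\Ia_a\,\varphi_\Ia .
\]
Applying this to $\omega_\varphi$ and taking derivatives yields
\[
u_{a\Ia}=\frac{\chi(a)\,u_\Ia}{\LT_{\varphi^\Ia}(a)},\qquad
d_{a\Ia}=\frac{\iota(a)\,d_\Ia}{\LT_{\varphi^\Ia}(a)} .
\]
Hence
\[
\frac{u_{a\Ia}}{d_{a\Ia}^n}=\LT_{\varphi^\Ia}(a)^{n-1}\,\frac{\chi(a)}{\iota(a)^n}\,\frac{u_\Ia}{d_\Ia^n}.
\]

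\emph{Step 2 (sign twist of the target).} I will use Hayes's description of the ideal action on sign-normalized modules \cite{Hayes92}, in the form recorded in \cite[Section 2.2]{ANDTR17}. By that description, $\varphi^\Ia$ is again sign-normalized with twisting exponent $\xi_\varphi+\deg\Ia$ modulo $\Ninf$. Consequently
\[
\LT_{\varphi^\Ia}(a)=\iota(\operatorname{sgn}a)^{q^{\xi_\varphi+\deg\Ia}}.
\]
I expect this to be the main obstacle: the whole argument rests on correctly importing this twist shift. Only its value modulo $\Ninf$ matters, because $\varepsilon^{q^{\Ninf}}=\varepsilon$ for $\varepsilon\in\mathbb F_\infty$.

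\emph{Step 3 (extension to all of $K^\times$).} Define the multiplicative character $\lambda_i(c)=\iota(\operatorname{sgn}c)^{q^{\xi_i}}$ on $K^\times$. Suppose $c=a/b\in\Ia_i^{-1}$ is nonzero, so that $c\Ia_i$ is integral. Apply Step 1 to the identity $b(c\Ia_i)=a\Ia_i$. For $c\Ia_i$ the relevant exponent is $\xi_\varphi+\deg(c\Ia_i)$, and $\deg(c\Ia_i)-\deg\Ia_i$ is a multiple of $\Ninf$, since principal divisors have degree zero and the only remaining contribution is at $\infty$. So both sides use the same character $\lambda_i$. Dividing out the factor for $b$ then gives
\[
\frac{u_{c\Ia_i}}{d_{c\Ia_i}^n}=\lambda_i(c)^{n-1}\,\frac{\chi(c)}{\iota(c)^n}\,\frac{u_{\Ia_i}}{d_{\Ia_i}^n}.
\]

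\emph{Step 4 (summation).} As in Proposition~\ref{prop:class-decomposition}, the ideals in the class $[\Ia_i]$ are the ideals $c\Ia_i$ with $0\ne c\in\Ia_i^{-1}$, taken modulo $\Fq^\times$. Under $c\mapsto\zeta c$ with $\zeta\in\Fq^\times$, the term above picks up the factor $\zeta^{(n-1)q^{\xi_i}}\zeta^{1-n}=1$, so it is well defined on orbits. Since $\operatorname{sgn}(\zeta c)=\zeta\operatorname{sgn}(c)$, each $\Fq^\times$-orbit contains exactly one element whose sign lies in $S$. Choosing that element as the orbit representative and grouping by its sign $\varepsilon\in S$ gives \eqref{eq:all-weight-comparison}.

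\emph{Convergence.} Each $L_\varepsilon(\Ia_i^{-1};n)$ converges unconditionally by the remark following \eqref{eq:all-element-series}. There are finitely many $\varepsilon\in S$ and finitely many classes, so the class subseries, and hence the full ideal series, converge unconditionally.
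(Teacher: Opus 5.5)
Your proof is correct and follows the paper's argument. The paper likewise proves a weight-$n$ rescaling identity with sign factor $\iota(\operatorname{sgn}c)^{(n-1)q^{\xi_i}}$, extends it from $a\in A$ to $c=a/b$ using $\deg(c\Ia_i)\equiv\deg\Ia_i\pmod{\Ninf}$, and sums over orbit representatives with sign in $S$. The one difference is that the paper writes the composite as $\varphi_{\Ia_i}\varphi_a$, whose leading coefficient is $\LT_\varphi(a)^{q^{\deg\Ia_i}}$ because $\varphi_{\Ia_i}$ is monic; so your Step~2 is not really an obstacle, since comparing leading coefficients in $\varphi_\Ia\varphi_a=\varphi^\Ia_a\varphi_\Ia$ gives $\LT_{\varphi^\Ia}(a)=\LT_\varphi(a)^{q^{\deg\Ia}}$ directly, with no appeal to Hayes's theory.
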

\begin{proof}
For $a\in A\setminus\{0\}$, the monic isogeny attached to
the ideal $a\Ia_i$ is
\[
\varphi_{a\Ia_i}
 =\iota(\operatorname{sgn}(a))^{-q^{\xi_i}}\varphi_{\Ia_i}\varphi_a.
\]
Indeed, $\varphi_{\Ia_i}\varphi_a$ has kernel
$\varphi[a\Ia_i]$ and leading coefficient
$\LT_\varphi(a)^{q^{\deg\Ia_i}}$.
Taking the $\tau^0$ coefficients gives
$d_{a\Ia_i}^\varphi
 =\iota(a)\iota(\operatorname{sgn}(a))^{-q^{\xi_i}}d_{\Ia_i}^\varphi$.
For $c=a/b\in K^\times$ with $c\Ia_i$ integral, compare
$b(c\Ia_i)=a\Ia_i$. The degrees of $c\Ia_i$ and $\Ia_i$
differ by $-\Ninf v_\infty(c)$, so their Frobenius exponents
agree on $\mathbb F_\infty$. Hence
\begin{equation}\label{eq:hayes-principal-rescaling}
d_{c\Ia_i}^\varphi
 =\iota(c)\iota(\operatorname{sgn}(c))^{-q^{\xi_i}}d_{\Ia_i}^\varphi.
\end{equation}
Combining this with Lemma~\ref{lem:rescaling} yields
\begin{equation}\label{eq:weight-rescaling}
\frac{u_{c\Ia_i}^\varphi}{(d_{c\Ia_i}^\varphi)^n}
 =\frac{u_{\Ia_i}^\varphi}{(d_{\Ia_i}^\varphi)^n}
  \iota(\operatorname{sgn}(c))^{(n-1)q^{\xi_i}}
  \frac{\chi(c)}{\iota(c)^n}.
\end{equation}

The map $c\mapsto c\Ia_i$ identifies
$(\Ia_i^{-1}\setminus\{0\})/\Fq^\times$ with the integral ideals
in $[\Ia_i]$. Each orbit has exactly one representative whose sign
lies in $S$. Summing \eqref{eq:weight-rescaling} over those
representatives proves \eqref{eq:all-weight-comparison}.
The sign factors have absolute value one. Convergence of
the finitely many element sums \eqref{eq:sign-series} therefore
gives unconditional convergence of each class subseries and,
since there are finitely many ideal classes, of $\LL_\varphi(n)$.
\end{proof}

Both sides of \eqref{eq:all-weight-comparison} are independent of
the chosen integral representative of $[\Ia_i]$ and of $S$.
To check independence of $S$, replace $\varepsilon$ by
$\zeta\varepsilon$, with $\zeta\in\Fq^\times$. This multiplies
$L_\varepsilon$ by $\zeta^{1-n}$ and the sign factor by
$\zeta^{n-1}$.

Grouping ideals into narrow classes gives a formula in sign-one sums.
Let $I(A)$ be the group of nonzero fractional ideals and put
\[
P^+(A)=\{cA:c\in K^\times,\ \operatorname{sgn}(c)=1\},
\qquad \Cl^+(A)=I(A)/P^+(A).
\]
This narrow ideal class group has order
$h^+=h(q^{\Ninf}-1)/(q-1)$; see
\cite[Section 2.1]{ANDTR17}. Choose integral representatives
$\Ib_1,\ldots,\Ib_{h^+}$.

\begin{corollary}\label{cor:narrow-weight-comparison}
Let $\varphi$ be sign-normalized. For every $n\ge1$,
\begin{equation}\label{eq:narrow-weight-comparison}
\LL_\varphi(n)=\sum_{j=1}^{h^+}
       \frac{u_{\Ib_j}^\varphi}{(d_{\Ib_j}^\varphi)^n}
       L_+(\Ib_j^{-1};n).
\end{equation}
Each term is the subseries for the corresponding narrow ideal class.
\end{corollary}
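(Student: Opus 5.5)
We derive the corollary from Proposition~\ref{prop:all-weight-comparison} by splitting each ordinary class into narrow classes. The starting identity is the rescaling formula \eqref{eq:weight-rescaling}. If $c\in K^\times$ has $\operatorname{sgn}(c)=1$ and both $\Ib$ and $c\Ib$ are integral, the sign factor in \eqref{eq:weight-rescaling} is $1$, so
\[
\frac{u_{c\Ib}^\varphi}{(d_{c\Ib}^\varphi)^n}
=\frac{u_{\Ib}^\varphi}{(d_{\Ib}^\varphi)^n}\frac{\chi(c)}{\iota(c)^n}.
\]
Here the representative $\Ia_i$ in \eqref{eq:weight-rescaling} is replaced by an arbitrary integral ideal $\Ib$. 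The proof of \eqref{eq:hayes-principal-rescaling} used only $\LT_\varphi(a)=\iota(\operatorname{sgn}(a))^{q^{\xi_\varphi}}$ and the degree of the base ideal, so it applies to any integral $\Ib$, with $\xi=\xi_\varphi+\deg\Ib$.

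Next we parametrize the integral ideals in the narrow class of $\Ib_j$. They are exactly the ideals $c\Ib_j$ with $c\in\Ib_j^{-1}\setminus\{0\}$ and $\operatorname{sgn}(c)=1$. The map $c\mapsto c\Ib_j$ on these elements is a bijection onto that set of ideals. Indeed, if $c\Ib_j=c'\Ib_j$, then $c'/c\in A^\times=\Fq^\times$. Since $\operatorname{sgn}$ is the identity on $\Fq^\times\subset\mathbb F_\infty^\times$ and both $c,c'$ have sign one, this forces $c'/c=1$. This is the point where injectivity differs from the ordinary-class count: each $\Fq^\times$-orbit contributes at most one sign-one element, and no factor $q-1$ appears. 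Summing the displayed identity over these $c$ shows that the narrow-class subseries equals $\frac{u_{\Ib_j}^\varphi}{(d_{\Ib_j}^\varphi)^n}L_+(\Ib_j^{-1};n)$.

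Finally we assemble the full series. The nonzero integral ideals are partitioned by their narrow classes, and there are finitely many such classes. By Proposition~\ref{prop:all-weight-comparison}, $\LL_\varphi(n)$ converges unconditionally, so it may be regrouped by narrow class. This gives \eqref{eq:narrow-weight-comparison}. Convergence of each $L_+(\Ib_j^{-1};n)$ was established earlier.

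As a consistency check, one can recover \eqref{eq:all-weight-comparison} from the corollary. Take representatives $\varepsilon\in S$, lift them to elements $c_\varepsilon$ with $\operatorname{sgn}(c_\varepsilon)=\varepsilon$, and write the narrow classes inside $[\Ia_i]$ as $c_\varepsilon\Ia_i$. This is not needed for the proof. The only delicate step is the representative-independent form of \eqref{eq:hayes-principal-rescaling}, namely that the exponent $q^{\xi}$ is irrelevant once $\operatorname{sgn}(c)=1$, and that step is immediate.
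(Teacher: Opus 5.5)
Your proposal is correct and is essentially the paper's proof: you parametrize the integral ideals in the narrow class of $\Ib_j$ by the sign-one elements $c\in\Ib_j^{-1}$, get uniqueness from $A^\times=\Fq^\times$ together with $\operatorname{sgn}|_{\Fq^\times}=\mathrm{id}$, and apply \eqref{eq:weight-rescaling} with $\Ia_i$ replaced by $\Ib_j$, so that the sign factor is one. Your explicit regrouping of $\LL_\varphi(n)$ by narrow classes, justified by the unconditional convergence in Proposition~\ref{prop:all-weight-comparison}, is a step the paper leaves implicit.
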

\begin{proof}
Every integral ideal in the narrow class of $\Ib_j$ has a
unique expression $c\Ib_j$ with $c\in\Ib_j^{-1}$ and
$\operatorname{sgn}(c)=1$.
Uniqueness follows because the only unit of $A$ of sign one is $1$.
Apply \eqref{eq:weight-rescaling}, with $\Ia_i$ replaced by
$\Ib_j$: its sign factor is one. Summing gives the claimed subseries,
and then \eqref{eq:narrow-weight-comparison}.
\end{proof}

For every $\Ninf$, $L_+(A;n)$ is therefore the ideal-type subseries
over ideals generated by an element of sign one.
An ordinary ideal class combines $(q^{\Ninf}-1)/(q-1)$ narrow
classes; \eqref{eq:all-weight-comparison} expresses that combination
through the individual signs in a single inverse ideal.

\begin{corollary}\label{cor:type-one-type-two}
Let $\varphi$ be sign-normalized and assume $\Ninf=1$.
Then, for every $n\ge1$,
\begin{equation}\label{eq:type-one-type-two}
\LL_\varphi(n)=\sum_{i=1}^h
    \frac{u_{\Ia_i}^\varphi}{(d_{\Ia_i}^\varphi)^n}
       L_+(\Ia_i^{-1};n),\qquad
\LL_{\varphi,[A]}(n)=L_+(A;n).
\end{equation}
\end{corollary}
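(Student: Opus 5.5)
The plan is to specialize Proposition~\ref{prop:all-weight-comparison} to $\Ninf=1$. When $\infty$ is rational, $\mathbb F_\infty=\Fq$, so $\mathbb F_\infty^\times/\Fq^\times$ is trivial. We may therefore take $S=\{1\}$.

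With this choice, the sum over $\varepsilon\in S$ in \eqref{eq:all-weight-comparison} has the single term $\varepsilon=1$. Its sign factor is $\iota(1)^{(n-1)q^{\xi_i}}=1$, and $L_1(\Ia_i^{-1};n)=L_+(\Ia_i^{-1};n)$ by \eqref{eq:sign-series}. Hence
\[
\LL_{\varphi,[\Ia_i]}(n)=\frac{u_{\Ia_i}^\varphi}{(d_{\Ia_i}^\varphi)^n}L_+(\Ia_i^{-1};n)
\]
for every $n\ge1$. Summing over the $h$ classes via \eqref{eq:class-decomposition-sum} gives the first identity. Convergence is part of Proposition~\ref{prop:all-weight-comparison}.

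An alternative route to the same identity is Corollary~\ref{cor:narrow-weight-comparison}: when $\Ninf=1$ we have $h^+=h$, so narrow and ordinary classes coincide.

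For the second identity, take the representative $\Ia_i=A$ of the principal class. Both the term count and the statement are independent of representatives, as noted after Proposition~\ref{prop:all-weight-comparison}. The monic isogeny for the unit ideal is $\varphi_A=1$, so $u_A^\varphi=1$ and $d_A^\varphi=1$, and $\Ia_i^{-1}=A$. This gives $\LL_{\varphi,[A]}(n)=L_+(A;n)$. It is consistent with the earlier remark that $\LT_\varphi(a)=1$ for $a\in A_+$ via \eqref{eq:principal-weight}.

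There is no substantive obstacle. The only points to check are that $S=\{1\}$ is a legitimate choice of representatives and that $\varphi_A$ is the identity.
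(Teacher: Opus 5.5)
Your proposal is correct and follows the paper's own proof: take $S=\{1\}$ in Proposition~\ref{prop:all-weight-comparison}, which is legitimate because $\mathbb F_\infty=\Fq$ when $\Ninf=1$. Then use the representative $\Ia_i=A$ for the principal class, where $u_A^\varphi=d_A^\varphi=1$. Your added checks that $\varphi_A=1$ and that the class subseries does not depend on the representative are exactly the details the paper leaves implicit.
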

\begin{proof}
Take $S=\{1\}$ in \eqref{eq:all-weight-comparison}, and use
$\Ia_i=A$ for the principal class.
\end{proof}

Equation~\eqref{eq:type-one-type-two} extends the weight-one class
decomposition of
\cite[Section 7, equations (97)--(98)]{GP18}.
For $\Ninf>1$, the sign components in
\eqref{eq:all-weight-comparison} are necessary even for the principal class.
The equivalent sum over all nonzero elements is
\begin{equation}\label{eq:all-weight-signed}
\LL_{\varphi,[\Ia_i]}(n)
 =-\frac{u_{\Ia_i}^\varphi}{(d_{\Ia_i}^\varphi)^n}
  \sum_{0\ne c\in\Ia_i^{-1}}
     \iota(\operatorname{sgn}(c))^{(n-1)q^{\xi_i}}
     \frac{\chi(c)}{\iota(c)^n}.
\end{equation}
The minus sign is $1/(q-1)$ in the coefficient field. If
$n\equiv1\pmod{q^{\Ninf}-1}$, the sign factors disappear, and
\begin{equation}\label{eq:all-weight-unweighted}
\LL_{\varphi,[\Ia_i]}(n)
 =-\frac{u_{\Ia_i}^\varphi}{(d_{\Ia_i}^\varphi)^n}
       \Lcal_{\Ia_i^{-1}}(n).
\end{equation}
At $n=1$ this recovers \eqref{eq:class-subseries}.

\begin{remark}\label{rem:weight-scalar-model}
Let $\varphi_0$ be sign-normalized, take $z\in\Cinf^\times$,
and put $\varphi=z\varphi_0z^{-1}$. The computation in
Lemma~\ref{lem:rescaling} gives
\begin{equation}\label{eq:weight-scalar-model}
\frac{u_\Ia^\varphi}{(d_\Ia^\varphi)^n}
 =z^{(1-n)(q^{\deg\Ia}-1)}
       \frac{u_\Ia^{\varphi_0}}{(d_\Ia^{\varphi_0})^n}.
\end{equation}
For $n>1$, the ideal series converges unconditionally if and only if
$|z|\ge1$. For $|z|\ge1$, the extra factors have absolute value at
most one, so the preceding convergence proof applies. For $|z|<1$,
choose $a\in A\setminus\Fq$ and take $\Ia=(a^m)$. With
$D_m=m\deg(aA)$, the Gauss norm of this summand is
\[
 |z|^{(1-n)(q^{D_m}-1)}q^{-nD_m},
\]
which tends to infinity. In contrast, at $n=1$ the factor in
\eqref{eq:weight-scalar-model} is one for every $z$.
\end{remark}

\section{Projective lattice pairings and ideal values}
\label{sec:projective-pairing}

The class decomposition \eqref{eq:class-subseries} reduces
the weight-one ideal value to element sums on inverse ideals.
Pairing these sums with period differentials yields the rational
Cauchy differentials of the corresponding ideal-isogeny targets.
Coefficient descent then expresses the finite class sum as an
ordinary Hilbert-class trace.

\subsection{The generating differential and period maps}
For a rank-one module $\varphi$ with lattice $\Lambda_\varphi$,
the residue at $\Xi$, taken after meromorphic continuation, identifies
$\sfm(\varphi)\otimes_A\Omega_A$ with $\Lambda_\varphi$ \cite[Theorem 5.8]{GM24}; see also
\cite[Corollary 5.21 and Proposition 5.23]{HuCauchy}.
\begin{definition}
On the period lattice, the generating differential of
$\varphi$ is the inverse of this residue isomorphism:
\begin{equation}\label{eq:period-differential}
\begin{aligned}
\Lambda_\varphi & \longrightarrow\sfm(\varphi)\otimes_A\Omega_A
\subset\mathbb M_A,\\
\lambda & \longmapsto \Hcal_{\varphi}(\lambda).
\end{aligned}
\end{equation}
Thus $\Res_\Xi\Hcal_\varphi(\lambda)=\lambda$ for every
$\lambda\in\Lambda_\varphi$.
\end{definition}

Continuous residue duality \cite[Sections 5.1 and 5.6]{HuCauchy}
gives the topological isomorphism
\begin{equation}\label{eq:continuous-residue-duality}
\mathbb M_A
\simeq\operatorname{Hom}^{\mathrm{cont}}_{\Fq}(K_\infty/A,\Cinf).
\end{equation}
For $c\in K_\infty$ and $\nu\in\Omega_A$, the pairing is
$\langle c,\nu\rangle _\infty
=\Tr_{\mathbb F_\infty/\Fq}\res_\infty(c\nu)$;
it extends $\Cinf$-linearly and continuously in the
second argument to $\mathbb M_A$.
Under this identification, $\Hcal_\varphi(\lambda)$ corresponds to
$c+A\mapsto\exp_\varphi(\iota(c)\lambda)$.
An isogeny acts on $\mathbb M_A$ through its polynomial in coefficient
Frobenius. For a separable isogeny $\eta:\varphi\to\varphi'$, the period map satisfies
\begin{equation}\label{eq:period-isogeny}
\eta(\Hcal_{\varphi}(\lambda))
       =\Hcal_{\varphi'}(\partial\eta\,\lambda).
\end{equation}
Indeed, evaluation at $c+A\in K_\infty/A$ gives
\[
\bigl\langle c,\eta(\Hcal_\varphi(\lambda))\bigr\rangle _\infty
 =\eta\bigl(\exp_\varphi(\iota(c)\lambda)\bigr)
 =\exp_{\varphi'}(\iota(c)\partial\eta\,\lambda).
\]
The last equality is \eqref{eq:isogeny-uniformization}, and residue
duality proves \eqref{eq:period-isogeny}. In particular,
\begin{equation}\label{eq:period-scalar}
\Hcal_{z\varphi z^{-1}}(z\lambda)=z\Hcal_\varphi(\lambda).
\end{equation}
If $c=a/b\in K$ and $\iota(c)\lambda\in\Lambda_\varphi$, then
$A$-linearity gives
\[
\chi(b)\Hcal_\varphi(\iota(c)\lambda)
 =\Hcal_\varphi(\iota(a)\lambda)
 =\chi(a)\Hcal_\varphi(\lambda).
\]
Division by $\chi(b)$ gives
$\Hcal_\varphi(\iota(c)\lambda)=\chi(c)\Hcal_\varphi(\lambda)$
after extending coefficients to $\operatorname{Frac}(\mathbb T_A)$.
For $\lambda\ne0$, the meromorphic continuation of
$\Hcal_\varphi(\lambda)$ has a simple pole at $\Xi$ with residue $\lambda$
\cite[Proposition 4.23]{HuCauchy}.

The line bundle $\OO(V_\varphi-q_\varphi^{(-1)})$ is acyclic: a nonzero
section would be a nonzero constant and hence could not vanish at
$q_\varphi^{(-1)}$, and Riemann--Roch gives $h^1=0$.
The normalized shtuka differential of \cite[Definition 6.1]{HuCauchy}
is the rational differential characterized by
\[
\widetilde\lambda_\varphi\in
H^0\left(X_{\Cinf},\Omega_X^1
   (-V_\varphi+q_\varphi^{(-2)}+q_\varphi^{(-1)})\right),\qquad
\Res_\Xi\frac{\widetilde\lambda_\varphi^{(1)}}{f_\varphi}=1.
\]
The normalized Cauchy coordinate of \cite[Theorem 6.2]{HuCauchy}
has characteristic restriction
\begin{equation}\label{eq:cauchy-differential}
C_\varphi:=K_\varphi(\Xi,\cdot)
 =\frac{\widetilde\lambda_\varphi^{(1)}}{f_\varphi},\qquad
\Res_\Xi C_\varphi=1.
\end{equation}
The differential $C_\varphi$ is the unique element with residue
one at $\Xi$ in the one-dimensional space
\begin{equation}\label{eq:cauchy-line}
H^0\left(X_{\Cinf},\Omega_X^1(-V_\varphi+\Xi+q_\varphi^{(-1)})\right).
\end{equation}
Indeed, Serre duality gives
$H^0(X_{\Cinf},\Omega_X^1(-V_\varphi+q_\varphi^{(-1)}))=0$.
Allowing a simple pole at $\Xi$ adds one dimension.
The residue map at $\Xi$ therefore identifies the space in
\eqref{eq:cauchy-line} with $\Cinf$.
For an arbitrary parameter $w\in\Cinf$, we use the
Szeg\H{o}-kernel construction of \cite[Definition 4.19]{HuCauchy}:
\begin{equation}\label{eq:entire-generating-differential}
\Hcal_\varphi(w)=\mathcal E^\Xi_{\varphi,w}(K_\varphi s_\varphi).
\end{equation}
Here $\mathcal E^\Xi_{\varphi,w}$ is the orbital evaluation of the
first-variable motive, with $s_\varphi$ the identity frame.
This is an entire $\Fq$-linear differential-valued function of $w$.
Its restriction to $\Lambda_\varphi$ is
\eqref{eq:period-differential} by
\cite[Theorem 4.20 and Corollary 5.21]{HuCauchy}.
Thus the values away from the lattice are fixed by the normalized
Cauchy kernel.

 For a differential-valued power series, define the parameter
Frobenius by
\begin{equation}\label{eq:parameter-frobenius}
\tau_w\left(\sum_{j\ge0}B_jw^j\right)
 :=\sum_{j\ge0}B_j^{(1)}w^{qj}.
\end{equation}
Thus $\tau_w$ acts on both the arithmetic coefficients and $w$,
while fixing the geometric variable. For a fixed $w\in\Cinf$,
$(\tau_w B)(w)=B(w)^{(1)}$. In particular,
$\tau_w(\exp_\varphi)=\exp_\varphi^q$.

\begin{lemma}\label{lem:generating-frobenius}
For every rank-one module $\varphi$,
\begin{equation}\label{eq:generating-frobenius}
\tau_w (\Hcal_\varphi) -f_\varphi\Hcal_\varphi
       =-f_\varphi C_\varphi\exp_\varphi.
\end{equation}
\end{lemma}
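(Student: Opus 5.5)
We prove \eqref{eq:generating-frobenius} by residue duality, testing both sides as continuous $\Fq$-linear functionals on $K_\infty/A$ via \eqref{eq:continuous-residue-duality}. Since the identity is between entire $\Fq$-linear differential-valued functions of $w$, it suffices to work with fixed $w\in\Cinf$, where $(\tau_w\Hcal_\varphi)(w)=\Hcal_\varphi(w)^{(1)}$. The identity to prove then becomes
\[
\Hcal_\varphi(w)^{(1)}-f_\varphi\Hcal_\varphi(w)=-f_\varphi C_\varphi\exp_\varphi(w).
\]

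\textbf{Step 1 (the lattice case).} For $w=\lambda\in\Lambda_\varphi$ the right side vanishes. On the left, $\Hcal_\varphi(\lambda)$ lies in $\sfm(\varphi)\otimes_A\Omega_A$, and special functions satisfy $\omega^{(1)}=f_\varphi\omega$. Hence the left side is $0$ as well, so the lattice case is immediate.

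\textbf{Step 2 (general $w$, via the construction of $\Hcal_\varphi$).} We use \eqref{eq:entire-generating-differential}, $\Hcal_\varphi(w)=\mathcal E^\Xi_{\varphi,w}(K_\varphi s_\varphi)$, where the orbital evaluation is built from the functional $c+A\mapsto\exp_\varphi(\iota(c)w)$ applied to the first-variable motive.
\begin{itemize}
\item Applying coefficient Frobenius replaces $w$-data by its twist. On the motive side, $\tau s_\varphi$ corresponds to $f_\varphi s_\varphi$.
\item The discrepancy $\Hcal_\varphi^{(1)}-f_\varphi\Hcal_\varphi$ therefore comes only from the failure of the Szeg\H{o} kernel to commute with $\tau$, i.e. from the defining functional equation of $K_\varphi$. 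That equation has the form $K_\varphi^{(1)}$ versus $f_\varphi K_\varphi$ differing by a term concentrated at the characteristic point.
\item Evaluating that boundary term by the orbital map at $\Xi$ produces the characteristic restriction $C_\varphi=K_\varphi(\Xi,\cdot)$, multiplied by $f_\varphi$ and by the zeroth coefficient of the orbit, which is $\exp_\varphi(w)$.
\end{itemize}

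\textbf{Step 3 (alternative route avoiding the kernel equation).} Consider $D(w):=\Hcal_\varphi(w)^{(1)}-f_\varphi\Hcal_\varphi(w)$. It is $\Fq$-linear and entire in $w$, and vanishes on $\Lambda_\varphi$ by Step 1.
\begin{itemize}
\item Show that $D(w)$ lies in the line \eqref{eq:cauchy-line} twisted by $f_\varphi$, i.e. $D(w)/f_\varphi\in H^0(\Omega^1_X(-V_\varphi+\Xi+q_\varphi^{(-1)}))$. The pole of $\Hcal_\varphi(w)$ at $\Xi$ is cancelled or shifted by $f_\varphi$, using the divisor $\divisor f_\varphi=V_\varphi^{(1)}-V_\varphi+\Xi-q_\varphi$.
\item By the one-dimensionality of \eqref{eq:cauchy-line}, this gives $D(w)=-e(w)f_\varphi C_\varphi$ for a scalar function $e(w)$.
\item Compute $e(w)$ by taking $\Res_\Xi$ after dividing by $f_\varphi$. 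It is an $\Fq$-linear entire function whose kernel is $\Lambda_\varphi$, and which satisfies $e(\iota(a)w)=\varphi_a(e(w))$ from the $A$-equivariance of $\Hcal_\varphi$, with linear coefficient $1$ from $\Res_\Xi\Hcal_\varphi(w)=w$ to first order. These properties force $e=\exp_\varphi$.
\end{itemize}

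\textbf{Main obstacle.} The delicate point is the local analysis at $\Xi$ and at $q_\varphi^{(-1)}$: one must verify that the poles of $\Hcal_\varphi(w)$ for non-lattice $w$ are exactly those allowed. I would read these off from the Szeg\H{o}-kernel divisor conditions in \cite[Section 4.3]{HuCauchy}.
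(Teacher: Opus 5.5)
Your Step~2 names the mechanism the paper actually uses: the kernel's Frobenius relation, specialization at $\Xi$, and orbital evaluation. However, the step that carries the proof is asserted rather than proved, and as stated it is not right. The discrepancy is not a term ``concentrated at the characteristic point''.

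The correct comparison is between $f_\varphi(\one)K_\varphi^{(1)}$ and $f_\varphi(\two)K_\varphi$, with the shtuka function in \emph{different} variables. Under orbital evaluation in the first variable, the former becomes $\Hcal_\varphi(w)^{(1)}$ and the latter becomes $f_\varphi\Hcal_\varphi(w)$. Using $\divisor(f_\varphi)=V_\varphi^{(1)}-V_\varphi+\Xi-q_\varphi$, their first-variable divisor bounds are $-V_\varphi+[\Xi]-[\two]$ and $-V_\varphi+[q_\varphi^{(-1)}]-[\two]$. Their residues along the diagonal agree, so the difference has poles bounded by $V_\varphi$ in $\one$. Since $h^0(V_\varphi)=1$, it is independent of $\one$. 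Evaluating at $\one=\Xi$, where the first term vanishes, identifies it as $-f_\varphi C_\varphi$. Orbital evaluation of a first-variable constant then multiplies it by the evaluation of $s_\varphi$, namely $\exp_\varphi(w)$. The input $h^0(V_\varphi)=1$ is exactly what forces the discrepancy to be a multiple of $C_\varphi$, and it is absent from your sketch.

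Step~3 does not close this gap, for three reasons.
\begin{enumerate}
\item[(i)] For $w\notin\Lambda_\varphi$, $\Hcal_\varphi(w)$ is a transcendental element of $\mathbb M_A$. The claim $D(w)/f_\varphi\in H^0(X_{\Cinf},\Omega^1_X(-V_\varphi+\Xi+q_\varphi^{(-1)}))$ is therefore a rationality statement, including vanishing along $V_\varphi$. No local pole analysis at $\Xi$ and $q_\varphi^{(-1)}$ can give it; it is essentially the lemma itself. Even $\Res_\Xi\Hcal_\varphi(w)$ presupposes a meromorphic continuation that you only obtain from the lemma.
\item[(ii)] The linear coefficient of $\Res_\Xi\Hcal_\varphi(w)$ is not determined by lattice values, since adding $c\exp_\varphi(w)$ changes nothing on $\Lambda_\varphi$. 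In the paper, $B_{\varphi,0}=C_\varphi$ is a \emph{consequence} of the lemma.
\item[(iii)] Most seriously, $\Hcal_\varphi$ is $A$-linear only on $\Lambda_\varphi$. Off the lattice, $\Hcal_\varphi(\iota(a)w)=\varphi_a(\Hcal_\varphi(w))$ is not available and fails in general. Write $\varphi_a=\iota(a)+\alpha_1\tau+\cdots$. The $w^q$-coefficient of that identity, combined with Proposition~\ref{prop:recurrence}, would give $\alpha_1C_\varphi=C_\varphi^{(1)}\bigl(\alpha_1-(\iota(a)^q-\iota(a))/f_\varphi\bigr)$. This holds for the Carlitz module but fails for the Green--Papanikolas module, where it would force a nonconstant function with poles at $V_\psi$ and $-V_\psi^{(2)}$ to be constant. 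So your functional equation $e(\iota(a)w)=\varphi_a(e(w))$ has no source.
\end{enumerate}

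Finally, the obstacle you flag is misplaced. The paper never needs the poles of $\Hcal_\varphi(w)$ off the lattice, because all divisor bookkeeping is done on the rational kernel before orbital evaluation.
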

\begin{proof}
Apply the identity-kernel argument of \cite[Theorem 6.5]{HuCauchy}.
Write $\one$ and $\two$ for the first and second
geometric variables of the kernel.
The first-variable divisor bounds for
$f_\varphi(\one)K_\varphi^{(1)}$ and
$f_\varphi(\two)K_\varphi$ are, respectively,
\[
 -V_\varphi+[\Xi]-[\two],
\qquad
 -V_\varphi+[q_\varphi^{(-1)}]-[\two].
\]
Their diagonal residues agree, so their difference has poles
bounded by $V_\varphi$ in the first variable.
Since $h^0(V_\varphi)=1$, that difference is independent of
the first variable. Specializing at $\one=\Xi$ gives
$-f_\varphi C_\varphi$: the first term vanishes there.
Orbital evaluation, as in \cite[Theorem 6.14]{HuCauchy},
now yields \eqref{eq:generating-frobenius}, since evaluation
of $s_\varphi$ is $\exp_\varphi(w)$.
\end{proof}

\subsection{Residue-dual denominators}
A differential with poles bounded by $m\infty$ is determined
by finitely many residue pairings. The following bound places a
complete set of test classes in $b^{-1}A/A$. It follows from the
residue theorem and the degree of a canonical divisor; see
\cite[Chapter 6]{Rosen02}.
\begin{lemma}\label{lem:dual-denominator}
Let $D=m\infty$ with $m\ge1$. If $b\in A\setminus\{0\}$ satisfies
$\deg(bA)>2g-2+m\Ninf$, then the residue map
\[
A/bA\longrightarrow H^0(X,\Omega_X^1(D))^*,\qquad
c\longmapsto\bigl(\nu\mapsto\langle c/b,\nu\rangle _\infty\bigr)
\]
is surjective.
\end{lemma}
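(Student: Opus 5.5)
The surjectivity is equivalent to injectivity of the dual map. The pairing is $\Fq$-bilinear and both sides are finite-dimensional over $\Fq$, so I will show that no nonzero $\nu\in H^0(X,\Omega_X^1(D))$ pairs trivially with every class $c/b$, $c\in A$.

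\emph{Step 1: reduce the annihilation condition to a divisor condition.} Suppose $\langle c/b,\nu\rangle_\infty=0$ for all $c\in A$. The differential $\nu$ is regular on $U$, and $b^{-1}c\nu$ is regular on $U$ away from the zeros of $b$. By the residue theorem,
\[
\langle c/b,\nu\rangle_\infty=-\sum_{P\in U}\Tr_{\mathbb F_P/\Fq}\res_P(c\nu/b).
\]
Hence the functional $c\mapsto\langle c/b,\nu\rangle_\infty$ factors through $A/bA$. It is the residue pairing at the finite places dividing $b$, paired against the class of $\nu/b$ in $\bigoplus_{P\mid b}\Omega_{K_P}/\Omega_{\OO_P}$.

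Local residue duality at each $P$ is perfect, and by the Chinese remainder theorem $A/bA\simeq\prod_P\OO_P/P^{v_P(b)}$. Vanishing for all $c$ therefore forces $\nu/b$ to be regular at every $P\mid b$, that is, $\operatorname{div}(\nu)\ge\operatorname{div}_U(bA)$ on $U$. I expect this step, the perfectness of the local pairing $\OO_P/P^e\times P^{-e}\Omega_{\OO_P}/\Omega_{\OO_P}\to\Fq$ via trace of residue, to be the main point to cite carefully. It is standard; see \cite[Chapter 6]{Rosen02}.

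\emph{Step 2: degree count.} Such a $\nu$ satisfies
\[
\operatorname{div}(\nu)\ge\operatorname{div}_U(bA)-m\infty,
\]
because $\nu\in H^0(\Omega_X^1(m\infty))$. Taking degrees gives
\[
2g-2=\deg\operatorname{div}(\nu)\ge\deg(bA)-m\Ninf.
\]
This contradicts the hypothesis $\deg(bA)>2g-2+m\Ninf$ unless $\nu=0$. So the dual map $H^0(X,\Omega_X^1(D))\to(A/bA)^*$ is injective, and the residue map in the statement is surjective.

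The whole argument is elementary once Step 1 is in place. The only care needed is in the sign and trace conventions of $\langle\cdot,\cdot\rangle_\infty$, and these are irrelevant for the vanishing statement.
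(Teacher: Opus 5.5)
Your proposal is correct and follows the paper's proof essentially step for step. Both arguments pass to the transpose, use the global residue theorem with the Chinese remainder theorem and local residue duality at the zeros of $b$ to show that an annihilated $\nu$ has $\nu/b$ regular on $U$, and then reach a contradiction by counting zeros of a nonzero differential with poles bounded by $m\infty$ against the bound $2g-2+m\Ninf$.
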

\begin{proof}
Changing $c$ by $bA$ does not change the pairing, since a differential
regular on $U$ pairs trivially with $A$. Suppose $\nu$ is annihilated
by all these classes. The global residue theorem and local residue
duality at the zero divisor $Z_b$ of $b$ imply that $\nu/b$ is
regular on $U$. Indeed, the Chinese remainder theorem allows the
local residue test at each zero, with its full multiplicity, independently.
Thus $\nu$ vanishes on $Z_b$. A nonzero differential with poles bounded
by $m\infty$ can have at most $2g-2+m\Ninf$ finite zeros, contradicting
$\deg Z_b=\deg(bA)$ and the strict inequality. The transpose map is
therefore injective, which proves surjectivity. Riemann--Roch supplies
$b$ of sufficiently large pole order.
\end{proof}

\begin{remark}\label{rem:local-pairing-criterion}
Choose a uniformizer $t_\infty$ at $\infty$. For
$\nu\in\Omega_A$ with local expansion
$\sum_j a_jt_\infty^jdt_\infty$ and any integer $m$, vanishing of
the pairings of $\nu$ with $t_\infty^\ell\mathbb F_\infty$ for
every $\ell\ge m$ is equivalent to $a_j=0$ for $j<-m$. This uses the
nondegenerate residue-field trace and remains valid when
$p\mid\Ninf$.
\end{remark}

For $m$ and $b$ satisfying the lemma, choose an $\Fq$-basis
$\nu_1,\ldots,\nu_d$ of $H^0(X,\Omega_X^1(m\infty))$.
Surjectivity supplies $c_1,\ldots,c_d\in A$
with $\langle c_i/b,\nu_j\rangle _\infty=\delta_{ij}$.
These numerators are obtained by solving a finite linear system over $\Fq$.

\subsection{The lattice pairing}
Theorem~\ref{thm:projective-pairing} extends \cite[Theorem 8.11]{HuCauchy} to
projective period lattices. At rational infinity, the corresponding
relation between ideal-lattice series and special functions is given
in \cite[Theorem 7.27]{Ferraro24}. Here the Cauchy normalization and
finite torsion formula apply to infinite points of arbitrary degree.
No principality assumption is imposed on the period lattice
or on $\Omega_A$.

Choose $A$-generators $\gamma_1,\ldots,\gamma_r$ of $\Lambda_\varphi$.
For $\lambda=\sum_j\iota(a_j)\gamma_j$, the $A$-linearity of
\eqref{eq:period-differential} gives
\[
\Hcal_\varphi(\lambda)^{(k)}
 =\sum_j\chi(a_j)\Hcal_\varphi(\gamma_j)^{(k)},\]
with the norm estimate
 \[
 \|\Hcal_\varphi(\lambda)^{(k)}\|
\le M_k:=\max_j\|\Hcal_\varphi(\gamma_j)^{(k)}\|.
\]
Thus, for every $n\ge1$ and $k\ge0$,
\begin{equation}\label{eq:period-sum-bound}
\left\|\frac{\Hcal_\varphi(\lambda)^{(k)}}{\lambda^n}\right\|
\le\frac{M_k}{|\lambda|^n}\longrightarrow0
\qquad(|\lambda|\to\infty).
\end{equation}
The lattice has finitely many elements of bounded norm, so these
terms are unconditionally summable in $\mathbb M_A$.

\begin{notation}
Let $\Lambda_\varphi=\tilde{\pi}\mathfrak j$, where $\mathfrak j\subset K$ is a fractional
ideal containing $A$ and $0\ne\tilde{\pi}\in\Lambda_\varphi$.
For $k\ge0$, set
\[
\Rcal_\varphi^{[k]}
 =-\sum_{0\ne\lambda\in\Lambda_\varphi}
       \frac{\Hcal_\varphi(\lambda)^{(k)}}{\lambda}
\in\mathbb M_A.
\]
As in \cite[Section 8.3]{HuCauchy}, set
$\Rcal_\varphi=\Rcal_\varphi^{[0]}$ and write
$\langle u,\Rcal_\varphi^{[k]}\rangle _\infty$ for the
residue-dual evaluation on $u+A\in K_\infty/A$.
\end{notation}

After extending coefficients, $A$-linearity of the period map and
the substitution $\lambda=\tilde\pi\iota(c)$ give, in
$\operatorname{Frac}(\mathbb T_A)\otimes_A\Omega_A$,
\begin{equation}\label{eq:lattice-product}
\Rcal_\varphi^{[k]}
 =-\tilde\pi^{-1}\Lcal_{\mathfrak j}(1)
       \Hcal_\varphi(\tilde\pi)^{(k)}.
\end{equation}
Indeed, multiplication by a common denominator $\chi(b)$ with
$b\mathfrak j\subset A$ reduces the identity to equality of convergent
sums in $\mathbb M_A$. In particular, the product in
\eqref{eq:lattice-product} belongs to the domain of residue duality.

For $b\in A\setminus\Fq$, set
$\mathscr T_b^\varphi=H[Z]/(\varphi_b(Z)/Z)$.
Since $\varphi_b'(Z)=\iota(b)\ne0$, this algebra is
finite \'etale, and $Z$ is invertible in it because
$(\varphi_b(Z)/Z)|_{Z=0}=\iota(b)$.

\begin{proposition}\label{prop:torsion-trace}
For every $k\ge0$, $b\in A\setminus\Fq$, and $c\in A$,
\begin{equation}\label{eq:torsion-trace}
\langle c/b,\Rcal_\varphi^{[k]}\rangle _\infty
 =-\iota(b)^{-1}\Tr_{\mathscr T_b^\varphi/H}
                 \left(\frac{\varphi_c(Z)^{q^k}}{Z}\right).
\end{equation}
\end{proposition}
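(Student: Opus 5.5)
The plan is to evaluate the left side term by term using continuous residue duality \eqref{eq:continuous-residue-duality}, and then recognize the resulting lattice sum as a finite sum over $b$-torsion.

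\textbf{Step 1: termwise evaluation.} The sum defining $\Rcal_\varphi^{[k]}$ converges unconditionally in $\mathbb M_A$ by \eqref{eq:period-sum-bound}. The pairing $\langle c/b,\cdot\rangle_\infty$ is continuous and $\Cinf$-linear in the second variable, so it passes through the sum. Under the duality, $\Hcal_\varphi(\lambda)$ corresponds to $u+A\mapsto\exp_\varphi(\iota(u)\lambda)$. Coefficient Frobenius commutes with this identification: the pairing of $\nu^{(k)}$ with $u\in K_\infty$ equals the $q^k$-th power of the pairing of $\nu$ with $u$, because $\langle u,\cdot\rangle_\infty$ has values in $\Fq$ on $\Omega_A$ and the extension is $\Cinf$-linear. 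Hence
\[
\langle c/b,\Rcal_\varphi^{[k]}\rangle_\infty
=-\sum_{0\ne\lambda\in\Lambda_\varphi}\frac{\exp_\varphi(\iota(c/b)\lambda)^{q^k}}{\lambda}.
\]
Since $c\in A$, we have $\exp_\varphi(\iota(c)\iota(b)^{-1}\lambda)=\varphi_c(\exp_\varphi(\iota(b)^{-1}\lambda))$.

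\textbf{Step 2: grouping by torsion.} Write $\lambda\mapsto\exp_\varphi(\iota(b)^{-1}\lambda)$. Its image is $\varphi[b]$, and the fibres are cosets of $\iota(b)\Lambda_\varphi$. Terms with $\lambda\in\iota(b)\Lambda_\varphi$ have numerator zero. For a nonzero torsion point $\zeta=\exp_\varphi(\iota(b)^{-1}\lambda_0)$, the fibre sum is
\[
\varphi_c(\zeta)^{q^k}\sum_{\mu\in\Lambda_\varphi}\frac{1}{\lambda_0+\iota(b)\mu}.
\]
The inner sum is an Eisenstein-type sum; it should converge after grouping by absolute value, or be handled by a limiting argument. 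By the standard partial fraction identity $1/\exp_\varphi(w)=\sum_{\mu\in\Lambda}1/(w-\mu)$, it equals $\iota(b)^{-1}/\exp_\varphi(\iota(b)^{-1}\lambda_0)=\iota(b)^{-1}\zeta^{-1}$. This gives
\[
\langle c/b,\Rcal_\varphi^{[k]}\rangle_\infty=-\iota(b)^{-1}\sum_{0\ne\zeta\in\varphi[b]}\frac{\varphi_c(\zeta)^{q^k}}{\zeta}.
\]

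\textbf{Step 3: identification with the trace.} The algebra $\mathscr T_b^\varphi$ is finite étale. Over $\Cinf$ it splits as a product indexed by the roots of $\varphi_b(Z)/Z$, which are exactly the nonzero $b$-torsion points, each occurring once by separability. Therefore the trace of the class of $\varphi_c(Z)^{q^k}/Z$ is the sum of its values at these roots, and this matches the expression in Step 2.

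\textbf{Main obstacle.} The delicate point is the rearrangement in Step 2. The original sum converges unconditionally only in $\mathbb M_A$, whereas the scalar terms $\exp_\varphi(\cdot)^{q^k}/\lambda$ do not tend to zero individually. The plan is to avoid this problem by first proving the identity for truncations: sum over $\lambda$ in a finite box $\{|\lambda|\le R\}$ that is a union of $\iota(b)\Lambda_\varphi$-cosets intersected with balls, pair termwise, and only then let $R\to\infty$. On the left, the limit is justified by continuity of the pairing in $\mathbb M_A$. On the right, it is justified by the ultrametric convergence of the partial fraction expansion of $1/\exp_\varphi$. Alternatively, one could use the factorization $\exp_\varphi(w)=w\prod(1-w/\mu)$ over balls to obtain the partial fractions directly.
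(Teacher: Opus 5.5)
Your proposal is correct and follows the paper's proof essentially step for step. Both arguments evaluate the pairing termwise, regroup modulo $\iota(b)\Lambda_\varphi$, use the logarithmic-derivative identity $\sum_{\mu}1/(\lambda_0+\iota(b)\mu)=1/(\iota(b)x)$ coming from the product expansion of $\exp_\varphi$, and read the resulting sum over $\varphi[b]\setminus\{0\}$ as a trace in $\mathscr T_b^\varphi$.

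The obstacle you flag does not arise. For $u=c/b$, every numerator $\exp_\varphi(\iota(c/b)\lambda)^{q^k}$ lies in the finite set $\{\varphi_c(x)^{q^k}:x\in\varphi[b]\}$, so the scalar terms are $O(|\lambda|^{-1})$. The scalar series therefore converges unconditionally in $\Cinf$, and so does the inner coset sum. This is why the paper regroups into cosets directly; your truncation argument is valid but unnecessary.
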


\begin{proof}
By \eqref{eq:period-sum-bound}, continuity of residue-dual evaluation
allows us to evaluate the defining sum term by term:
\begin{equation}\label{eq:lattice-sum}
\langle u,\Rcal_\varphi^{[k]}\rangle _\infty
  =-\sum_{0\ne\lambda\in\Lambda_\varphi}
            \frac{\exp_\varphi(\iota(u)\lambda)^{q^k}}{\lambda},
\qquad u\in K_\infty.
\end{equation}
Unconditional convergence permits grouping into lattice cosets.
Take $u=c/b$ and group the sum modulo $\iota(b)\Lambda_\varphi$.
The zero coset contributes zero. For a nonzero coset represented by
$\lambda_0$, put $x=\exp_\varphi(\lambda_0/\iota(b))$.
The lattice exponential has the product expansion of
\cite[Section 8]{Hayes92} and derivative one; its logarithmic derivative,
evaluated at $\lambda_0/\iota(b)$, gives
\[
\sum_{\mu\in\iota(b)\Lambda_\varphi}\frac1{\lambda_0+\mu}
 =\iota(b)^{-1}
   \frac{\exp_\varphi'(\lambda_0/\iota(b))}
        {\exp_\varphi(\lambda_0/\iota(b))}
 =\frac1{\iota(b)x}.
\]
The numerator on this coset is $\varphi_c(x)^{q^k}$.
The map $\lambda\mapsto\exp_\varphi(\lambda/\iota(b))$
identifies $\Lambda_\varphi/\iota(b)\Lambda_\varphi$ with $\varphi[b]$.
Summing proves \eqref{eq:torsion-trace} after passing to a splitting
field. Thus the sum is the trace of the indicated element of
$\mathscr T_b^\varphi$ over $H$.
\end{proof}

\begin{theorem}\label{thm:projective-pairing}
Let $\Lambda_\varphi=\tilde{\pi}\mathfrak j$, where $\mathfrak j\supset A$
is a fractional ideal and $0\ne\tilde{\pi}\in\Lambda_\varphi$. Then
\begin{equation}\label{eq:projective-pairing}
  C_\varphi=-\tilde{\pi}^{-1}\Lcal_{\mathfrak j}(1) \Hcal_{\varphi}(\tilde{\pi}). 
\end{equation}
\end{theorem}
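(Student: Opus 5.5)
By \eqref{eq:lattice-product} with $k=0$, the right side of \eqref{eq:projective-pairing} equals $\Rcal_\varphi=\Rcal_\varphi^{[0]}$. So the task is to prove $\Rcal_\varphi=C_\varphi$. Since $C_\varphi$ is characterized as the unique element of residue one at $\Xi$ in the line \eqref{eq:cauchy-line}, it suffices to show three things: $\Rcal_\varphi$ extends to a rational differential on $X_{\Cinf}$; its divisor satisfies the bound of \eqref{eq:cauchy-line}; and $\Res_\Xi\Rcal_\varphi=1$.

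\emph{Rationality and divisor.} I would use the Frobenius equation of Lemma~\ref{lem:generating-frobenius} at lattice points. For $\lambda\in\Lambda_\varphi$ we have $\exp_\varphi(\lambda)=0$, so $\Hcal_\varphi(\lambda)^{(1)}=f_\varphi\Hcal_\varphi(\lambda)$. Dividing by $\lambda$ and summing does not give a functional equation for $\Rcal_\varphi$ itself, because the twist produces $\lambda^{q}$-denominators. Instead I would work with $\Hcal_\varphi(\tilde\pi)$, for which $\Hcal_\varphi(\tilde\pi)^{(1)}=f_\varphi\Hcal_\varphi(\tilde\pi)$. The quotient $\Rcal_\varphi/\Hcal_\varphi(\tilde\pi)=-\tilde\pi^{-1}\Lcal_{\mathfrak j}(1)$ lies in $\chi(b)^{-1}\mathbb T_A$.

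The cleaner route is to compare directly with the known principal case. Choose $b\in A\setminus\{0\}$ with $b\mathfrak j\subset A$, and put $\Lambda'=\iota(b)\Lambda_\varphi\subset\tilde\pi\iota(A)$. This is not a period lattice of $\varphi$, so I would instead use the isogeny $\varphi_{\mathfrak j^{-1}}$-type datum $J:\varphi\to\psi$ with $\Lambda_\psi=\tilde\pi\iota(A)$ free. Transport along $J$ via \eqref{eq:period-isogeny} gives $\eta_J(\Hcal_\varphi(\lambda))=\Hcal_\psi(\partial\eta_J\lambda)$. The free-lattice case, \cite[Theorem 8.11]{HuCauchy}, gives $C_\psi=-\tilde\pi^{-1}\Lcal_A(1)\Hcal_\psi(\tilde\pi)$. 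To return to $\varphi$ I would use the transport formula $K_J(\Xi,\cdot)=C_\psi/g_J$ from \cite[Section 6.2]{HuCauchy}. The bookkeeping is that $\eta_J$ acts on differentials through $h_J$, as a motive multiplier, while the lattice sums rescale by $\chi(c)/\iota(c)$.

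If this transport becomes messy, the fallback is a direct argument. I would prove that $\Rcal_\varphi$ is rational by showing $\Rcal_\varphi^{[1]}=f_\varphi\,\Rcal_\varphi+(\text{correction})$, and then check the divisor bound through residue-duality pairings. Concretely, Proposition~\ref{prop:torsion-trace} together with Remark~\ref{rem:local-pairing-criterion} controls the pole order at $\infty$. The torsion traces $\Tr(\varphi_c(Z)/Z)$ vanish for $\deg c$ small relative to $\deg b$, since they are power sums of torsion points, and this bounds the polar part at the $\infty$-points. The zeros along $V_\varphi$ and the pole at $q_\varphi^{(-1)}$ come from the corresponding properties of $\Hcal_\varphi(\tilde\pi)$ \cite[Proposition 4.23]{HuCauchy}, because $\Lcal_{\mathfrak j}(1)$ is regular away from $\Xi$ on $U$ after clearing $\chi(b)$.

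\emph{Residue.} Every $\Hcal_\varphi(\lambda)$ has a simple pole at $\Xi$ with residue $\lambda$. Hence formally $\Res_\Xi\Rcal_\varphi=-\sum_{\lambda\neq0}1=-(|\Lambda_\varphi\setminus 0|)$, which must be interpreted via continuation. More rigorously, I would compute $\Res_\Xi$ of $-\tilde\pi^{-1}\Lcal_{\mathfrak j}(1)\Hcal_\varphi(\tilde\pi)$ as $-\tilde\pi^{-1}\cdot\tilde\pi\cdot\Lcal_{\mathfrak j}(1)|_\Xi$. This reduces the claim to $\Lcal_{\mathfrak j}(1)|_\Xi=-1$, where the value means the meromorphic continuation of $\sum\chi(c)/\iota(c)$ evaluated at $\chi(c)\mapsto\iota(c)$. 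I expect this evaluation to be the main obstacle, since naive termwise evaluation diverges. My plan is to get it from the torsion-trace side: the residue at $\Xi$ is read off from the coefficient extraction in Proposition~\ref{prop:torsion-trace}, where $\Tr(\varphi_c(Z)/Z)$ with $c=1$ gives $\deg$-type constants equal to $-\iota(b)$ times the expected value. Alternatively, I would transport the known identity $\Res_\Xi C_\psi=1$ for the free target through $J$, using that $g_J(\Xi)=1$.
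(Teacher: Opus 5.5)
\textbf{There is a genuine gap.} Your reduction is the same as the paper's: by \eqref{eq:lattice-product} the right side is $\Rcal_\varphi^{[0]}$, and it suffices to place $\Rcal_\varphi^{[0]}$ in the line \eqref{eq:cauchy-line} with residue one at $\Xi$. The gap is in how you fill this in. Your main route, transport through a datum $J:\varphi\to\psi$ with free target, cannot produce $C_\varphi$. With $\Lambda_\varphi=z_J\iota(\mathfrak i_J)$, \eqref{eq:adapted-source-period}, and the free case for $\psi$, the period sum gives
\[
\Rcal_\varphi^{[0]}=-z_J^{-1}\Lcal_{\mathfrak i_J}(1)\,\Gcal_J(z_J)
 =\frac{\Lcal_{\mathfrak i_J}(1)}{\Lcal_A(1)}\,\frac{C_\psi}{g_J}
 =\frac{\Lcal_{\mathfrak i_J}(1)}{\Lcal_A(1)}\,K_J(\Xi,\cdot).
\]
But $K_J(\Xi,\cdot)$ is the Cauchy coordinate of $\OO(D_J)$. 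On $U_{\Cinf}$ one has $D_J=V_\varphi-\pi^*\divisor_U(\mathfrak i_J)$, so it may have poles along the divisor of $\mathfrak i_J$. For the HHY datum, $K_J(\Xi,\cdot)=\frac{t-\eta}{(\theta-\eta)(t-\theta)}\,dt$ has a double pole at the affine point $t=\infty$, whereas $C_\varphi$ has poles only at $\Xi$ and $q_\varphi^{(-1)}$.

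The missing factor $\Lcal_{\mathfrak i_J}(1)/\Lcal_A(1)$ is exactly the nonfree information the theorem supplies. When $\mathfrak i_J=(c)$ is principal it equals $\chi(c)/\iota(c)$, and you are already in the free case. The paper uses this theorem to treat nonfree targets, not the other way around. Likewise, $g_J(\Xi)=1$ yields only $\Res_\Xi K_J(\Xi,\cdot)=1$. Your other residue plan, reducing to $\Lcal_{\mathfrak j}(1)|_\Xi=-1$, merely restates the claim. It also presupposes a continuation of $\Lcal_{\mathfrak j}(1)$ beyond the Tate domain, which you have not produced.

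A direct proof needs two exact identities that your fallback lacks.
\begin{itemize}
\item[(a)] $\Rcal_\varphi^{[k]}=F_\varphi^{[k]}\Rcal_\varphi^{[0]}$, with no correction term. Since $\Rcal_\varphi^{[k]}$ twists only the differentials and not the $\lambda$, this follows from \eqref{eq:lattice-product} and $\Hcal_\varphi(\tilde\pi)^{(1)}=f_\varphi\Hcal_\varphi(\tilde\pi)$. Your $\lambda^q$ worry concerns $(\Rcal_\varphi^{[0]})^{(1)}$, which is a different object.
\item[(b)] The clearing identity \eqref{eq:clearing}, $(\chi(a)-\iota(a))\Rcal_\varphi^{[0]}=\sum_{i\ge1}\alpha_i\Rcal_\varphi^{[i]}$.
\end{itemize}

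Your trace-vanishing claim is correct only for $k\ge1$. Since $s_0=-1$ and $s_i=0$ for $0<i<\deg(bA)$, one gets $\langle c/b,\Rcal_\varphi^{[k]}\rangle_\infty=0$ whenever $\deg(cA)+k<\deg(bA)$. Density and continuity then give vanishing on $\mathfrak m_\infty^{m_k}$. This is a legitimate elementary substitute for the negative-index identity the paper cites.

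For $k=0$, however, the same computation gives $\langle c/b,\Rcal_\varphi^{[0]}\rangle_\infty=\iota(c/b)\ne0$, reflecting the pole at $\Xi$. So pole bounds must be proved for $\Rcal_\varphi^{[k]}$ with $k\ge1$ and then transferred to $\Rcal_\varphi^{[0]}$:
\begin{itemize}
\item rationality of $\Rcal_\varphi^{[0]}$ follows from $B_a/(\chi(a)-\iota(a))$, or from $\Rcal_\varphi^{[1]}/f_\varphi$;
\item the affine bound comes from $\sum_{k\ge1}\Cinf F_\varphi^{[k]}=\Gamma(U_{\Cinf},\OO(V_\varphi-\Xi))$;
\item at $q_\varphi^{(j)}$ with $0\le j\le\Ninf-2$, divide by $F_\varphi^{[j+1]}$, where $m_{j+1}=1$.
\end{itemize}
Properties of $\Hcal_\varphi(\tilde\pi)$, which lives on the Tate domain, say nothing at the point $q_\varphi^{(-1)}$ above $\infty$.

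For the residue, the key point is that $B_a$ is rational and regular on $U$, so the global residue theorem applies to it. Taking $c=1$, $b=a$, $k=0$ in \eqref{eq:torsion-trace} gives $\langle1/a,\Rcal_\varphi^{[0]}\rangle_\infty=1/\iota(a)$, because $\Tr_{\mathscr T_a^\varphi/H}(1)=-1$. Also $\langle1,\Rcal_\varphi^{[0]}\rangle_\infty=0$. Hence
\[
-1=\langle1/a,B_a\rangle_\infty=\sum_{p\mid\infty}\res_p\Rcal_\varphi^{[0]},
\]
and so $\Res_\Xi\Rcal_\varphi^{[0]}=1$. Without $B_a$, you have no way to convert a residue-dual pairing into the residue at $\Xi$.
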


\begin{proof}
Write $\Rcal^{[k]}=\Rcal_\varphi^{[k]}$ and
$\Hcal=\Hcal_\varphi(\tilde{\pi})$.
The equation $\Hcal^{(1)}=f_\varphi\Hcal$ gives
\begin{equation}\label{eq:pairing-product}
\Rcal^{[k]}=F_\varphi^{[k]}\Rcal^{[0]}.
\end{equation}
We identify $\Rcal^{[0]}$ with $C_\varphi$ by its divisor and residue.

\emph{Rationality and initial pole bounds.}
For $k>0$, the negative-index identity of
\cite[Proposition 5.20]{Ferraro25}, in the form
\cite[Lemma 8.2]{HuCauchy}, applies to this rank-one lattice:
\[
\sum_{0\ne\lambda\in\Lambda_\varphi}
\frac{\exp_\varphi(\iota(u)\lambda)}{\lambda^{q^{-k}}}=0
\qquad\text{if }|\iota(u)|\le q^{-(k+1)}.
\]
Raising to $q^k$ and using \eqref{eq:lattice-sum} shows that
the evaluation of $\Rcal^{[k]}$ vanishes
on $\mathfrak m_\infty^{m_k}$, where
$m_k=\lceil(k+1)/\Ninf\rceil$.
The functional
$u\mapsto\langle u,\Rcal^{[k]}\rangle _\infty$ therefore factors through
\[
Q_k=K_\infty/(A+\mathfrak m_\infty^{m_k}),
\]
which is finite because $K_\infty/A$ is compact and the image of
$\mathfrak m_\infty^{m_k}$ is open. Residue duality identifies the
$\Fq$-dual $Q_k^*$ with $H^0(X,\Omega_X^1(m_k\infty))$, as in the proof of
\cite[Theorem 8.8(i)]{HuCauchy}; the passage from the
vanishing of pairings to the pole order is
Remark~\ref{rem:local-pairing-criterion}.

The finite-dimensionality of $Q_k$ gives
$\operatorname{Hom}_{\Fq}(Q_k,\Cinf)=\Cinf\otimes_{\Fq}Q_k^*$,
so
\begin{equation}\label{eq:positive-bound}
\Rcal^{[k]}\in\Cinf\otimes_{\Fq}
H^0(X,\Omega_X^1(m_k\infty)).
\end{equation}
Choose an $\Fq$-basis of $H^0(X,\Omega_X^1(m_k\infty))$ and
residue-dual classes $c_j/b$ as in Lemma~\ref{lem:dual-denominator}.
Equation~\eqref{eq:torsion-trace} expresses each coefficient of
$\Rcal^{[k]}$ in this basis as a finite trace over $H$, so
$\Rcal^{[k]}$ has coefficients in $H$.

To continue $\Rcal^{[0]}$ rationally, take $a\in A\setminus\Fq$,
put $n=-v_\infty(a)$, and write
$\varphi_a=\iota(a)+\sum_{i=1}^{n\Ninf}\alpha_i\tau^i$.
The motive expansion and \eqref{eq:pairing-product} give
\begin{equation}\label{eq:clearing}
B_a:=(\chi(a)-\iota(a))\Rcal^{[0]}
     =\sum_{i=1}^{n\Ninf}\alpha_i\Rcal^{[i]}
     \in H\otimes_{\Fq}H^0(X,\Omega_X^1((n+1)\infty)).
\end{equation}
Since $|\iota(a)|>1$, the factor $\chi(a)-\iota(a)$ is invertible
on the Tate domain. Thus $B_a/(\chi(a)-\iota(a))$ defines a rational
continuation of $\Rcal^{[0]}$, independent of $a$ by
\eqref{eq:clearing}. The factors $\chi(a)-\iota(a)$ generate the ideal of $\Xi$.
At each affine point other than $\Xi$, at least one of the
functions $\chi(a)-\iota(a)$ is a local unit. At $\Xi$, one of these
functions generates the maximal ideal, since $X_H$ is smooth. Hence $\Rcal^{[0]}$ has no
affine pole except a possible simple pole at $\Xi$.
At every point above $\infty$, $\chi(a)-\iota(a)$ has
pole order $n$, while $B_a$ has pole order at most $n+1$.
Thus $\Rcal^{[0]}$ has at most simple poles at infinity.

\emph{The precise divisor.}
The Frobenius basis of the motive and the shtuka divisor imply
\[
\sum_{k\ge1}\Cinf F_\varphi^{[k]}
 =f_\varphi\Gamma(U_{\Cinf},\OO(V_\varphi^{(1)}))
 =\Gamma(U_{\Cinf},\OO(V_\varphi-\Xi)).
\]
By \eqref{eq:pairing-product} and \eqref{eq:positive-bound},
multiplication by $\Rcal^{[0]}$ sends
$\Gamma(U_{\Cinf},\OO(V_\varphi-\Xi))$ to regular differentials
on $U_{\Cinf}$.
Localization at an affine point therefore yields
$\divisor(\Rcal^{[0]})|_{U_{\Cinf}}
\ge V_\varphi|_{U_{\Cinf}}-\Xi$.

For $\Ninf>1$, let $p_j=q_\varphi^{(j)}$.
At $p_j$ with $0\le j\le\Ninf-2$, choose $k=j+1$.
In \eqref{eq:product-divisor}, the contribution from
$V_\varphi^{(k)}$ is zero because $p_j^{(-k)}=q_\varphi^{(-1)}$
avoids $V_\varphi$. The infinity terms are distinct, so
\[
\operatorname{ord}_{p_j}F_\varphi^{[k]}
 =-\operatorname{mult}_{p_j}V_\varphi-1.
\]
Here $1\le k\le\Ninf-1$, hence $m_k=1$ in
\eqref{eq:positive-bound}. It follows that
$\operatorname{ord}_{p_j}\Rcal^{[0]}\ge\operatorname{mult}_{p_j}V_\varphi$.
At $q_\varphi^{(-1)}$ the initial simple-pole bound suffices,
since this point avoids $V_\varphi$. It is the only infinity point
when $\Ninf=1$. We have proved that $\Rcal^{[0]}$ lies in
\eqref{eq:cauchy-line}.

\emph{Characteristic residue.}
Putting $c=1$, $b=a$, and $k=0$ in \eqref{eq:torsion-trace} gives
$\langle1/a,\Rcal^{[0]}\rangle _\infty=1/\iota(a)$:
the trace of one is $q^{\deg(aA)}-1=-1$ in $H$.
Also $\langle1,\Rcal^{[0]}\rangle _\infty=0$.
Multiplication by $\chi(a)$ is dual to multiplication by $a$,
so \eqref{eq:clearing} yields $\langle1/a,B_a\rangle _\infty=-1$.
Since $B_a$ is a rational differential regular on $U$, the residue
pairing gives
\[
 -1=\sum_{p\mid\infty}\res_p(B_a/\chi(a))
    =\sum_{p\mid\infty}\res_p(\Rcal^{[0]}).
\]
For the second equality, use
$B_a/\chi(a)=\Rcal^{[0]}-\iota(a)\Rcal^{[0]}/\chi(a)$;
the last term is regular at infinity because $n\ge1$.
The residue-field trace becomes the sum over the geometric points
above $\infty$. The global residue theorem now gives
$\Res_\Xi\Rcal^{[0]}=1$.
Uniqueness in \eqref{eq:cauchy-line} implies
$\Rcal^{[0]}=C_\varphi$, proving \eqref{eq:projective-pairing}.
\end{proof}

In particular, \eqref{eq:pairing-product} gives
$\Rcal_\varphi^{[1]}=f_\varphi C_\varphi
=\widetilde\lambda_\varphi^{(1)}$.

\subsection{An effective finite-trace algorithm}
Proposition~\ref{prop:torsion-trace} and the denominator bound in
Lemma~\ref{lem:dual-denominator} reconstruct $C_\varphi$ from its
torsion pairings; compare \cite[Theorem 8.8]{HuCauchy}.
The required basis of differentials can be obtained from a
Riemann--Roch computation, for example by the methods of \cite{Hess02}.

Choose $a\in A\setminus\Fq$, put $n=-v_\infty(a)$, and choose
an $\Fq$-basis $\nu_1,\ldots,\nu_d$ of
$H^0(X,\Omega_X^1((n+1)\infty))$.
Choose $b\in A\setminus\{0\}$ with
$\deg(bA)>2g-2+(n+1)\Ninf$.
Lemma~\ref{lem:dual-denominator} supplies $c_j\in A$ such that
$\langle c_i/b,\nu_j\rangle _\infty=\delta_{ij}$.

\begin{corollary}\label{cor:finite-algorithm}
With these choices,
\begin{equation}\label{eq:finite-algorithm}
 {C_\varphi=
 -\frac{\displaystyle\sum_{j=1}^d
\Tr_{\mathscr T_b^\varphi/H}
   \left(\frac{\varphi_{ac_j}(Z)-\iota(a)\varphi_{c_j}(Z)}Z\right)\nu_j}
 {\iota(b)(\chi(a)-\iota(a))}.}
\end{equation}
The resulting differential is invariant under scalar isomorphisms of $\varphi$.
\end{corollary}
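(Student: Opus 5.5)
The plan is to combine the clearing identity \eqref{eq:clearing} from the proof of Theorem~\ref{thm:projective-pairing} with the torsion traces of Proposition~\ref{prop:torsion-trace}, and then use the residue-dual basis to read off coordinates. By Theorem~\ref{thm:projective-pairing}, $C_\varphi=\Rcal_\varphi^{[0]}$. By \eqref{eq:clearing}, the cleared differential $B_a=(\chi(a)-\iota(a))C_\varphi$ lies in $H\otimes_{\Fq}H^0(X,\Omega_X^1((n+1)\infty))$. So it suffices to compute the coordinates of $B_a$ in the basis $\nu_1,\ldots,\nu_d$ and then divide by $\chi(a)-\iota(a)$.

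\emph{Step 1: coordinates from the dual classes.} Write $B_a=\sum_j\beta_j\nu_j$ with $\beta_j\in H$. Extend the pairing $\langle c_i/b,\cdot\rangle_\infty$ $H$-linearly in the second argument. The relation $\langle c_i/b,\nu_j\rangle_\infty=\delta_{ij}$ then gives $\beta_i=\langle c_i/b,B_a\rangle_\infty$.

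\emph{Step 2: evaluate the pairing by torsion traces.} Multiplication by $\chi(a)$ on $\mathbb M_A$ is dual to multiplication by $a$ on $K_\infty/A$. This uses the residue pairing identity $\res_\infty(c\,a\nu)=\res_\infty((ac)\nu)$, already used in the proof of Theorem~\ref{thm:projective-pairing}. Hence
\[
\langle c_i/b,B_a\rangle_\infty=\langle ac_i/b,C_\varphi\rangle_\infty-\iota(a)\langle c_i/b,C_\varphi\rangle_\infty .
\]
Since $ac_i,c_i\in A$, Proposition~\ref{prop:torsion-trace} with $k=0$ applies to both terms. Trace linearity over $H$ then combines them into
\[
-\iota(b)^{-1}\Tr_{\mathscr T_b^\varphi/H}\bigl((\varphi_{ac_i}(Z)-\iota(a)\varphi_{c_i}(Z))/Z\bigr).
\]
Substituting into $B_a=\sum_j\beta_j\nu_j$ and dividing by $\chi(a)-\iota(a)$ gives \eqref{eq:finite-algorithm}. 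The division is legitimate because this factor is a nonzero rational function, in fact invertible on the Tate domain.

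\emph{Step 3: invariance under scalar isomorphism.} One route is to note that $C_\varphi$ is characterized intrinsically as the unique residue-one element of the line \eqref{eq:cauchy-line}. The data $V_\varphi$ and $q_\varphi$ are unchanged by $\varphi\mapsto z\varphi z^{-1}$, so $C_\varphi$ is unchanged. Alternatively, one can check it from \eqref{eq:projective-pairing}: replace $\tilde\pi$ by $z\tilde\pi$ and use \eqref{eq:period-scalar}. The factors of $z$ cancel and $\mathfrak j$ is unchanged.

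A direct check on the trace formula itself also works. The torsion $\varphi^z[b]=z\varphi[b]$, and $(z\varphi z^{-1})_c(zx)=z\varphi_c(x)$. Hence each term $\varphi_c(x)/x$ is unchanged, so the trace is unchanged; the constant terms $\iota(a)$ and $\iota(b)$ are also unchanged.

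The only point needing care, and the expected main obstacle, is Step 2: extending the pairing to $H$-coefficients and justifying the duality of multiplication by $\chi(a)$. This is exactly the computation already made in the characteristic-residue part of the proof of Theorem~\ref{thm:projective-pairing}, so it can be cited rather than redone.
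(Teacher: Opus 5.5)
Your proposal is correct and follows the paper's proof. Steps~1--2 are exactly its computation of $\beta_j=\langle c_j/b,B_a\rangle_\infty$: it uses the duality between $\chi(a)$ and multiplication by $a$, then Proposition~\ref{prop:torsion-trace}, then divides in \eqref{eq:clearing}. Your direct check in Step~3 is the paper's substitution $Z=zW$, after enlarging $H$ to contain $z$.

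Your two alternative invariance arguments are also valid. One uses the line \eqref{eq:cauchy-line} together with the paper's convention $V_{z\varphi z^{-1}}=V_\varphi$, $q_{z\varphi z^{-1}}=q_\varphi$. The other uses \eqref{eq:projective-pairing} with \eqref{eq:period-scalar}.
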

\begin{proof}
Write $B_a=\sum_j\beta_j\nu_j$. By the choice of the dual classes,
\begin{align*}
\beta_j
 &=\langle c_j/b,B_a\rangle _\infty\\
 &=\langle ac_j/b,\Rcal_\varphi^{[0]}\rangle _\infty
   -\iota(a)\langle c_j/b,\Rcal_\varphi^{[0]}\rangle _\infty\\
 &=-\iota(b)^{-1}\Tr_{\mathscr T_b^\varphi/H}
       \left(\frac{\varphi_{ac_j}(Z)-\iota(a)\varphi_{c_j}(Z)}{Z}\right).
\end{align*}
Divide by $\chi(a)-\iota(a)$ in \eqref{eq:clearing} to obtain
\eqref{eq:finite-algorithm}. For $\varphi^z=z\varphi z^{-1}$,
extend the coefficient field to contain $z$. The substitution $Z=zW$
identifies the torsion algebra of $\varphi^z$ with that of $\varphi$ and gives
\[
\frac{\varphi^z_{ac_j}(zW)-\iota(a)\varphi^z_{c_j}(zW)}{zW}
 =\frac{\varphi_{ac_j}(W)-\iota(a)\varphi_{c_j}(W)}{W}.
\]
Finite traces commute with scalar extension; hence
$C_{\varphi^z}=C_\varphi$.
\end{proof}

The expression in \eqref{eq:finite-algorithm} computes $C_\varphi$
independently of the choices of $a$, $b$, the basis, and the dual classes.
The computation requires neither a torsion splitting field nor a period generator.

 Finite traces commute with extension of the coefficient field,
so the construction also determines the coefficient Galois action
on $C_\varphi$.

\begin{lemma}\label{lem:rational-galois-covariance}
For an $\iota(K)$-embedding $\sigma:H\hookrightarrow\Cinf$,
\begin{equation}\label{eq:rational-galois-covariance}
\sigma(C_\varphi)=C_{\varphi^\sigma},\qquad
\sigma(f_\varphi)=f_{\varphi^\sigma},\qquad
\sigma(r_\Ia^\varphi)=r_\Ia^{\varphi^\sigma}.
\end{equation}
\end{lemma}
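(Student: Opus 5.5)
The three identities in \eqref{eq:rational-galois-covariance} follow from the fact that each object is given by a finite algebraic construction over $H$ that commutes with $\sigma$. The plan is to take them in the order: the first (via Corollary~\ref{cor:finite-algorithm}), then the second (via the shtuka characterization), then the third (via monic ideal isogenies). Throughout, $\sigma$ fixes $\iota(K)$, so it fixes $\iota(a)$, $\iota(b)$, the geometric functions $\chi(a)$, and the $\Fq$-rational differentials $\nu_j\in H^0(X,\Omega_X^1((n+1)\infty))$. Here $\varphi^\sigma$ denotes the module obtained by applying $\sigma$ to the coefficients of every $\varphi_a$.

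\emph{The Cauchy differential.} Fix $a$, $b$, the basis $\nu_j$ and the dual numerators $c_j$ as in Corollary~\ref{cor:finite-algorithm}. These choices depend only on $X$, $\infty$ and $A$, so the same choices serve for $\varphi^\sigma$. Applying $\sigma$ to \eqref{eq:finite-algorithm} leaves the denominator $\iota(b)(\chi(a)-\iota(a))$ and each $\nu_j$ fixed. It sends the polynomial $(\varphi_{ac_j}(Z)-\iota(a)\varphi_{c_j}(Z))/Z$ to the corresponding polynomial for $\varphi^\sigma$. The isomorphism $H[Z]/(\varphi_b(Z)/Z)\otimes_{H,\sigma}\sigma(H)\simeq\sigma(H)[Z]/(\varphi^\sigma_b(Z)/Z)$ then gives $\sigma(\Tr_{\mathscr T_b^\varphi/H}(x))=\Tr_{\mathscr T_b^{\varphi^\sigma}/\sigma(H)}(\sigma x)$, because traces are computed from the characteristic polynomial of multiplication on the basis $1,Z,Z^2,\ldots$. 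Since Corollary~\ref{cor:finite-algorithm} holds for $\varphi^\sigma$ with $\sigma(H)$ as coefficient field, this yields $\sigma(C_\varphi)=C_{\varphi^\sigma}$.

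\emph{The shtuka function.} I would argue that the motive data can be chosen Galois-compatibly. The pair $(V_\varphi,q_\varphi)$ and $f_\varphi$ are determined up to the normalization of \cite[Section 2.2]{HuCauchy} by the condition $\varphi_a=\sum_i\alpha_i\tau^i$ with $\chi(a)=\sum_i\alpha_iF_\varphi^{[i]}$, which is Lemma~\ref{lem:motive-multiplier} applied to $\eta=\varphi_a$. Applying $\sigma$ to this identity, and using $\sigma(F_\varphi^{[i]})=F_{\sigma f_\varphi}^{[i]}$ (coefficient Frobenius commutes with $\sigma$ on coefficients), shows that $\sigma(f_\varphi)$ satisfies the defining equations for $\varphi^\sigma$. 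These equations are $\divisor=\sigma V^{(1)}-\sigma V+\Xi-\sigma q$ (note $\sigma\Xi=\Xi$) together with the same motive expansion. We then take $f_{\varphi^\sigma}:=\sigma(f_\varphi)$ with $V_{\varphi^\sigma}=\sigma V_\varphi$ and $q_{\varphi^\sigma}=\sigma q_\varphi$. The main obstacle is here. One must check that this choice is the standard one, or that $f$ is uniquely determined by $\varphi$ under the fixed normalization; if not, the equality should be read as a choice of data that is then used consistently.

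\emph{The ratios $r_\Ia$.} The monic ideal isogeny $\varphi_\Ia$ is the unique monic generator of the left ideal generated by $\{\varphi_a:a\in\Ia\}$. Applying $\sigma$ to coefficients preserves monicity and this ideal, so $\sigma(\varphi_\Ia)=(\varphi^\sigma)_\Ia$. Lemma~\ref{lem:motive-multiplier} gives $u_\Ia^\varphi=\sum_i\alpha_iF_\varphi^{[i]}$ and $d_\Ia^\varphi=\alpha_0$. Applying $\sigma$ and using the previous paragraph gives $\sigma(u_\Ia^\varphi)=u_\Ia^{\varphi^\sigma}$ and $\sigma(d_\Ia^\varphi)=d_\Ia^{\varphi^\sigma}$, hence $\sigma(r_\Ia^\varphi)=r_\Ia^{\varphi^\sigma}$. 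This last step depends on the $f$ choice only through the rational function, and $r_\Ia$ is scalar invariant by Lemma~\ref{lem:rescaling}, so any normalization ambiguity in $f$ should cancel.
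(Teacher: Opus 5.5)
Your treatment of $C_\varphi$ through Corollary~\ref{cor:finite-algorithm}, and of $r_\Ia^\varphi$ through uniqueness of the monic ideal polynomial and Lemma~\ref{lem:motive-multiplier}, matches the paper. The gap is the middle identity, and your third step inherits it because it uses $\sigma(F_\varphi^{[i]})=F_{\varphi^\sigma}^{[i]}$. Two ingredients are missing.

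\emph{Rationality.} You apply $\sigma$ to $f_\varphi$, $V_\varphi$ and $q_\varphi$ without first showing $f_\varphi\in H(X)$. The realization only places $f_\varphi$ in $\Cinf(X)$, while $\sigma$ is defined only on $H$.

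\emph{Uniqueness.} As you note yourself, showing that $\sigma(f_\varphi)$ satisfies the motive equations for $\varphi^\sigma$ gives $\sigma(f_\varphi)=f_{\varphi^\sigma}$ only if those equations determine the shtuka function of $\varphi^\sigma$. You leave that open. Your fallback of reading the identity ``as a choice of data'' is not available: $f_{\varphi^\sigma}$, $V_{\varphi^\sigma}$ and $q_{\varphi^\sigma}$ are already fixed by the standard realization of $\varphi^\sigma$. They enter $C_{\varphi^\sigma}$ through \eqref{eq:cauchy-line} and $u_\Ia^{\varphi^\sigma}$ through Lemma~\ref{lem:motive-multiplier}. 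The appeal to Lemma~\ref{lem:rescaling} does not help either. That lemma concerns replacing $\varphi$ by $z\varphi z^{-1}$, not an ambiguity in $f$ for a fixed $\varphi$.

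The paper avoids both issues by characterizing $f_\varphi$ canonically. By \eqref{eq:pairing-product} with $k=1$, $\Rcal^{[1]}_\varphi=f_\varphi C_\varphi$. Here $\Rcal^{[1]}_\varphi$ is a lattice sum attached to $\varphi$ alone, and its coordinates in a residue-dual basis of the space in \eqref{eq:positive-bound} are the finite traces \eqref{eq:torsion-trace} with $k=1$. So $\Rcal^{[1]}_\varphi$ is covariant exactly as $C_\varphi$ is. Hence $f_\varphi=\Rcal^{[1]}_\varphi/C_\varphi$ lies in $H(X)$ and satisfies $\sigma(f_\varphi)=f_{\varphi^\sigma}$ at once.

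If you want to stay with the motive, you can close the gap as follows. Let $c\otimes a$ act on $H\{\tau\}$ by $m\mapsto c\,m\,\varphi_a$. This module is torsion-free of rank one over the domain $H\otimes_{\Fq}A$, so $1$ and $\tau$ are dependent: there are $b\ne0$ and $e$ in $H\otimes_{\Fq}A$ with $b\cdot\tau=e\cdot1$. The realization is $\Cinf\otimes A$-linear and sends $1\mapsto1$ and $\tau\mapsto f_\varphi$, so $bf_\varphi=e$. Thus $f_\varphi=e/b\in H(X)$ is determined by $\varphi$ alone. Applying $\sigma$ coefficientwise to the relation in $H\{\tau\}$, which intertwines the $\varphi$- and $\varphi^\sigma$-actions, gives $f_{\varphi^\sigma}=\sigma(e)/\sigma(b)=\sigma(f_\varphi)$. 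With this in place, your argument for $u_\Ia^\varphi$ and $d_\Ia^\varphi$ goes through.
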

\begin{proof}
For $P\in\mathscr T_b^\varphi$, compatibility of finite
traces with extension of scalars gives
\[
\sigma\!\left(\Tr_{\mathscr T_b^\varphi/H}(P)\right)
 =\Tr_{\mathscr T_b^{\varphi^\sigma}/\sigma(H)}(P^\sigma).
\]
In \eqref{eq:finite-algorithm}, this identity gives covariance of
$C_\varphi$; in \eqref{eq:torsion-trace}, it gives covariance of
$\Rcal_\varphi^{[1]}$ after choosing a residue-dual basis in
\eqref{eq:positive-bound}. Taking the quotient
$f_\varphi=\Rcal_\varphi^{[1]}/C_\varphi$ shows that
$f_\varphi\in H(X)$ and $f_{\varphi^\sigma}=\sigma(f_\varphi)$.
Finally, $(\varphi_\Ia)^\sigma=(\varphi^\sigma)_\Ia$ by uniqueness
of the monic ideal polynomial. Applying
\eqref{eq:ideal-coordinate} to these polynomials gives covariance
of $u_\Ia^\varphi$. Their $\tau^0$ coefficients give covariance
of $d_\Ia^\varphi$, hence of
$r_\Ia^\varphi=u_\Ia^\varphi/d_\Ia^\varphi$.
\end{proof}
For sign-normalized modules, the middle identity of
\eqref{eq:rational-galois-covariance} is also
\cite[Corollary 3.2]{ANDTR17}.

Finite traces can be evaluated by multiplication matrices in
$\mathscr T_b^\varphi$. The following recursion computes the required
power traces directly from the coefficients of $\varphi_b$.

\begin{remark}\label{rem:trace-recursion}
The traces entering \eqref{eq:torsion-trace} and the
finite algorithm \eqref{eq:finite-algorithm} are linear combinations
of the scalars $s_i=\Tr(Z^{q^i-1})$, where
$\Tr=\Tr_{\mathscr T_b^\varphi/H}$: for $\varphi_c=\sum_i
c_i\tau^i$ one has
$\varphi_c(Z)^{q^k}/Z=\sum_i c_i^{q^k}Z^{q^{i+k}-1}$. We compute the
$s_i$ from the coefficients of $\varphi_b(Z)=\sum_{i=0}^D B_iZ^{q^i}$
alone; neither a splitting field of the torsion polynomial nor a
multiplication matrix of $\mathscr T_b^\varphi$ is needed. The
initial values are
\begin{equation}\label{eq:trace-recursion}
s_0=-1,\qquad s_i=0\quad(0<i<D),
\end{equation}
and, for $i\ge D$, the recurrence is
\begin{equation}\label{eq:trace-recursion2}
s_i=-\sum_{j=0}^{D-1}(B_j/B_D)^{q^{i-D}}s_{i-D+j}.
\end{equation}
Indeed, the recurrence follows by raising
$\varphi_b(x)=0$ to $q^{i-D}$, dividing by $x$, and summing:
the sum over the nonzero roots $x\in\varphi[b]$ is the trace in the
finite \'etale algebra $\mathscr T_b^\varphi$, and the isolated
leading term is $B_D^{q^{i-D}}s_i$.
For the initial values, put
\[
P_b(T)=B_D^{-1}T^{q^D}\varphi_b(T^{-1})
       =\prod_{0\ne x\in\varphi[b]}(1-xT).
\]
Then $-TP_b'(T)/P_b(T)=\sum_{m\ge1}\Tr(Z^m)T^m$, Newton's
identity for the nonzero torsion points. All exponents of $P_b$ except
$q^D-1$ are divisible by $p$,
so the first possible nonzero term of $-TP_b'(T)/P_b(T)$
has degree $q^D-1$.
This proves $s_i=0$ for $0<i<D$. Also, $s_0=\Tr(1)$
is the image in $H$ of $\dim_H\mathscr T_b^\varphi=q^D-1$,
and hence equals $-1$.
Thus the coefficients of $\varphi_b$ and of the action
polynomials in the trace input suffice to evaluate
\eqref{eq:torsion-trace} and \eqref{eq:finite-algorithm}.
\end{remark}

\subsection{Galois trace formula}\label{sec:galois-trace}
The inverse ideals in \eqref{eq:class-subseries} are,
up to scaling, the period lattices of the ideal-isogeny targets
of a module with free period lattice.
Let $\psi$ be a rank-one module with free period lattice
$\Lambda_\psi=A\tilde{\pi}$. For the ideal representatives
of Proposition~\ref{prop:class-decomposition}, put
$\varphi_i=\Ia_i*\psi$ and $d_i=\partial\psi_{\Ia_i}$.
Equations~\eqref{eq:isogeny-uniformization} and
\eqref{eq:period-isogeny} give
\[
\Lambda_{\varphi_i}=d_i\tilde{\pi}\Ia_i^{-1},\qquad
\Hcal_{\varphi_i}(d_i\tilde{\pi})
 =\psi_{\Ia_i}(\Hcal_\psi(\tilde{\pi}))
 =d_i r_{\Ia_i}^\psi\Hcal_\psi(\tilde{\pi}).
\]

\begin{theorem}\label{thm:principal-class}
For every $i$,
\begin{equation}\label{eq:individual-class}
\LL_{\psi,[\Ia_i]}(1)\Hcal_\psi(\tilde{\pi})
 =\tilde{\pi}C_{\varphi_i}.
\end{equation}
\end{theorem}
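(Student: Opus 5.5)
We combine the class decomposition of Proposition~\ref{prop:class-decomposition} with the projective pairing of Theorem~\ref{thm:projective-pairing}, applied to the ideal-isogeny target $\varphi_i=\Ia_i*\psi$. By \eqref{eq:class-subseries},
\[
\LL_{\psi,[\Ia_i]}(1)=-r_{\Ia_i}^\psi\,\Lcal_{\mathfrak j_i}(1),\qquad \mathfrak j_i=\Ia_i^{-1}\supset A,
\]
so the left side of \eqref{eq:individual-class} equals $-r_{\Ia_i}^\psi\Lcal_{\mathfrak j_i}(1)\Hcal_\psi(\tilde\pi)$. The task is therefore to recognize this product as $\tilde\pi C_{\varphi_i}$.

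We apply Theorem~\ref{thm:projective-pairing} to $\varphi_i$. Its lattice is $\Lambda_{\varphi_i}=d_i\tilde\pi\,\Ia_i^{-1}$, as recorded just before the statement. We take $\tilde\pi_i:=d_i\tilde\pi$ as the chosen nonzero period and $\mathfrak j=\mathfrak j_i=\Ia_i^{-1}$ as the fractional ideal. This choice is admissible: $\mathfrak j_i\supset A$ because $\Ia_i$ is integral, and $\tilde\pi_i\in\Lambda_{\varphi_i}$ because $1\in\mathfrak j_i$. Also $d_i\ne0$ by Lemma~\ref{lem:motive-multiplier}, which gives $d_\Ia^\psi=u_\Ia^\psi(\Xi)\ne0$. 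Theorem~\ref{thm:projective-pairing} then yields
\[
C_{\varphi_i}=-(d_i\tilde\pi)^{-1}\Lcal_{\mathfrak j_i}(1)\,\Hcal_{\varphi_i}(d_i\tilde\pi).
\]

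Next we substitute the transport identity displayed before the statement,
\[
\Hcal_{\varphi_i}(d_i\tilde\pi)=\psi_{\Ia_i}\bigl(\Hcal_\psi(\tilde\pi)\bigr)=d_i r_{\Ia_i}^\psi\Hcal_\psi(\tilde\pi).
\]
This identity comes from \eqref{eq:period-isogeny} together with the fact that $\Hcal_\psi(\tilde\pi)\in\sfm(\psi)\otimes_A\Omega_A$. Since $\psi_{\Ia_i}$ acts on special functions as multiplication by $u_{\Ia_i}^\psi$ (Lemma~\ref{lem:motive-multiplier}), it multiplies $\Hcal_\psi(\tilde\pi)$ by $u_{\Ia_i}^\psi=d_ir_{\Ia_i}^\psi$. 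After substitution the factor $d_i$ cancels, giving
\[
\tilde\pi\,C_{\varphi_i}=-r_{\Ia_i}^\psi\,\Lcal_{\mathfrak j_i}(1)\,\Hcal_\psi(\tilde\pi),
\]
which is exactly the left side computed in the first step.

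The main point needing care is that all of these products make sense in the same space. The series $\Lcal_{\mathfrak j_i}(1)$ lies only in $\chi(b)^{-1}\mathbb T_A$, and $r_{\Ia_i}^\psi$ is a rational function with possible poles. Two facts are needed:
\begin{itemize}
\item Multiplying by $u_{\Ia_i}^\psi$ commutes with the product in \eqref{eq:lattice-product}.
\item The identification of $\psi_{\Ia_i}$ applied to a special differential with multiplication by $u^\psi_{\Ia_i}$ holds inside $\operatorname{Frac}(\mathbb T_A)\otimes_A\Omega_A$.
\end{itemize}
To handle this, we clear the denominator by multiplying both sides by $\chi(b)$ with $b\mathfrak j_i\subset A$, as in the justification of \eqref{eq:lattice-product}. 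This reduces the claim to an equality of convergent sums in $\mathbb M_A$, where the manipulations above are legitimate.
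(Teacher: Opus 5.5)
Your proposal is correct and follows essentially the same route as the paper's proof. Both apply Theorem~\ref{thm:projective-pairing} to $\varphi_i$ with $\mathfrak j=\Ia_i^{-1}$ and period $d_i\tilde\pi$, substitute $\Hcal_{\varphi_i}(d_i\tilde\pi)=d_ir_{\Ia_i}^\psi\Hcal_\psi(\tilde\pi)$ and cancel $d_i$, then conclude with \eqref{eq:class-subseries}. Your remark about clearing denominators by $\chi(b)$ only makes explicit what the paper already handles through \eqref{eq:lattice-product}.
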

\begin{proof}
Apply Theorem~\ref{thm:projective-pairing} to $\varphi_i$ with
$\mathfrak j=\Ia_i^{-1}$ and period $d_i\tilde{\pi}$. Substitution gives
\[
C_{\varphi_i}
 =-(d_i\tilde{\pi})^{-1}\Lcal_{\mathfrak j_i}(1)
       \Hcal_{\varphi_i}(d_i\tilde{\pi})
 =-\tilde{\pi}^{-1}r_{\Ia_i}^\psi\Lcal_{\mathfrak j_i}(1)
       \Hcal_\psi(\tilde{\pi}).
\]
Equation~\eqref{eq:class-subseries} now gives \eqref{eq:individual-class}.
\end{proof}

Summing \eqref{eq:individual-class} over the ideal classes gives
\begin{equation}\label{eq:principal-sum}
 {\LL_\psi(1)\Hcal_\psi(\tilde{\pi})
 =\tilde{\pi}\sum_{i=1}^h C_{\varphi_i}
 =\tilde{\pi}\sum_{i=1}^h
                  \frac{\widetilde\lambda_{\varphi_i}^{(1)}}{f_{\varphi_i}}.}
\end{equation}
Each differential in the sum is given by \eqref{eq:finite-algorithm}
for the target action $\varphi_i$.

Let $H_A^+$ be the sign class field for a fixed sign function and
$H_A$ the ordinary Hilbert class field, where $\infty$ splits completely.
The Hayes correspondence \cite[Theorems 14.7 and 15.6]{Hayes92} identifies
\[
\operatorname{Gal}(H_A^+/K)\simeq\Cl^+(A),\qquad
\operatorname{Gal}(H_A/K)\simeq\Cl(A).
\]
The kernel of restriction is
\[
G_0:=\operatorname{Gal}(H_A^+/H_A)
\simeq\mathbb F_\infty^\times/\Fq^\times.
\]
Thus $|G_0|=(q^{\Ninf}-1)/(q-1)$.
The group $G_0$ acts on sign-normalized rank-one modules
by scalar isomorphisms
\cite[Corollary 14.9 and Proposition 15.4]{Hayes92}; see also
\cite[Section 2.1 and Lemma 2.1]{ANDTR17}.
Following \cite[Sections 2--3]{ANDTR17}, we use the Artin convention
$\varphi^\Ia=\varphi^{\sigma_\Ia}$ for sign-normalized Hayes modules.
The principal lattice class is represented
by a standard module in the terminology of \cite[Lemma 2.2]{ANDTR17}.

For the coefficient action fixed in Section~\ref{sec:ideal-series},
\[
 (H_A^+(X))^{G_0}=H_A(X),\qquad
\bigl(\Omega^1_{H_A^+(X)/H_A^+}\bigr)^{G_0}
     =\Omega^1_{H_A(X)/H_A}.
\]
The trace of a rational differential is the sum under this action;
$\Tr_{H_A/K}$ has exactly $h$ summands.

\begin{corollary}\label{cor:hayes-trace}
Let $\psi$ be a rank-one Drinfeld $A$-module with free period
lattice $\Lambda_\psi=A\tilde{\pi}$. Then
\begin{enumerate}
\item[(i)] the coefficient fields satisfy
\begin{equation}\label{eq:hayes-fields}
C_\psi\in\Omega^1_{H_A(X)/H_A},\qquad
r_\Ia^\psi\in H_A(X)\quad(0\ne\Ia\subset A);
\end{equation}
\item[(ii)] the weight-one value is the ordinary trace
\begin{equation}\label{eq:hayes-trace}
\LL_\psi(1)\frac{\Hcal_\psi(\tilde{\pi})}{\tilde{\pi}}
       =\Tr_{H_A/K}(C_\psi).
\end{equation}
\end{enumerate} 
\end{corollary}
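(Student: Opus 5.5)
The plan is to reduce to a sign-normalized model, use Galois covariance (Lemma~\ref{lem:rational-galois-covariance}) to identify each class contribution $C_{\varphi_i}$ with a Galois conjugate of $C_\psi$, and then read \eqref{eq:principal-sum} as a trace. The key input is that $C$ and $r_\Ia$ are invariant under scalar isomorphisms. For $C$ this is Corollary~\ref{cor:finite-algorithm}; for $r_\Ia$ it is Lemma~\ref{lem:rescaling}.

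\emph{Reduction.} Every rank-one module is scalar-isomorphic to a sign-normalized Hayes module defined over $H_A^+$. Since $\Lambda_\psi=A\tilde\pi$ is free, we may take a sign-normalized $\psi_0=z^{-1}\psi z$ whose lattice $z^{-1}\tilde\pi A$ is free. In the terminology of \cite[Lemma 2.2]{ANDTR17}, this $\psi_0$ is a standard module. By scalar invariance, $C_\psi=C_{\psi_0}$ and $r_\Ia^\psi=r_\Ia^{\psi_0}$, so it suffices to prove (i) for $\psi_0$. The coefficients of $\psi_0$ lie in $H_A^+$, so Lemma~\ref{lem:rational-galois-covariance} applies with $H=H_A^+$. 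It shows that $C_{\psi_0}$ and the $r_\Ia^{\psi_0}$ have coefficients in $H_A^+$, and that for $\sigma\in G_0$ we have $\sigma(C_{\psi_0})=C_{\psi_0^\sigma}$ and $\sigma(r_\Ia^{\psi_0})=r_\Ia^{\psi_0^\sigma}$. The group $G_0$ acts on sign-normalized modules by scalar isomorphisms \cite[Corollary 14.9 and Proposition 15.4]{Hayes92}. Hence $\psi_0^\sigma$ is a scalar conjugate of $\psi_0$, and scalar invariance gives $\sigma(C_{\psi_0})=C_{\psi_0}$ and $\sigma(r_\Ia^{\psi_0})=r_\Ia^{\psi_0}$. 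By the $G_0$-invariants identity displayed before the corollary, this proves \eqref{eq:hayes-fields}.

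\emph{Trace.} Next I identify $C_{\varphi_i}$, where $\varphi_i=\Ia_i*\psi$, with $\sigma_{\Ia_i}(C_\psi)$. First, the ideal action commutes with scalar conjugation up to a scalar: the formula $(\varphi^z)^\Ia=z^{q^d}\varphi^\Ia z^{-q^d}$ from the proof of Lemma~\ref{lem:rescaling} gives this. So $\varphi_i$ is scalar-isomorphic to $\Ia_i*\psi_0$. By the Artin convention $\psi_0^{\Ia}=\psi_0^{\sigma_\Ia}$ and Lemma~\ref{lem:rational-galois-covariance},
\[
C_{\varphi_i}=C_{\Ia_i*\psi_0}=C_{\psi_0^{\sigma_{\Ia_i}}}=\sigma_{\Ia_i}(C_{\psi_0})=\sigma_{\Ia_i}(C_\psi).
\]
By part (i), the right side depends only on the restriction of $\sigma_{\Ia_i}$ to $H_A$. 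That restriction is the image of $[\Ia_i]$ under $\Cl(A)\simeq\operatorname{Gal}(H_A/K)$. As $i$ runs over representatives of $\Cl(A)$, these restrictions therefore run exactly once over $\operatorname{Gal}(H_A/K)$. Hence $\sum_iC_{\varphi_i}=\Tr_{H_A/K}(C_\psi)$, and \eqref{eq:principal-sum} gives \eqref{eq:hayes-trace}. This also confirms that the right side of \eqref{eq:principal-sum} does not depend on the chosen representatives, consistent with Proposition~\ref{prop:class-decomposition}.

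\emph{Main obstacle.} The step needing the most care is the compatibility of conventions. One must check that the Hayes Artin action on sign-normalized modules, $\psi_0^{\Ia}=\psi_0^{\sigma_\Ia}$, agrees with the monic ideal action $\Ia*$ used here to define $\varphi_i$, up to a scalar isomorphism rather than only up to isomorphism class. One must also check that Lemma~\ref{lem:rational-galois-covariance} applies to $\sigma\in\operatorname{Gal}(H_A^+/K)$ acting on coefficients while fixing the geometric variable. I would verify the first point directly: the kernel of $(\psi_0)_\Ia$ and its leading coefficient determine the target up to a scalar, and the identity $(\psi_0)_\Ia\psi_{0,a}=\psi^\Ia_{0,a}(\psi_0)_\Ia$ transports the sign normalization. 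I would then invoke \cite[Sections 2--3]{ANDTR17} for the precise matching with the Artin symbol.
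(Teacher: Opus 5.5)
Your proposal is correct and follows essentially the same route as the paper: reduce to a sign-normalized scalar model, use Lemma~\ref{lem:rational-galois-covariance} together with the fact that $G_0$ acts by scalar conjugation to get $G_0$-invariance of $C_\psi$ and $r_\Ia^\psi$, and then identify $C_{\varphi_i}=\sigma_{\Ia_i}(C_\psi)$ so that \eqref{eq:principal-sum} becomes the ordinary trace. The only cosmetic differences are two. First, the paper derives $\psi^\sigma=z_a\psi z_a^{-1}$ explicitly from $\psi_{(a)}=\LT_\psi(a)^{-1}\psi_a$, where you cite Hayes. Second, the paper reduces part (ii) as well to the sign-normalized model, via the invariance of $\Hcal_\psi(\tilde\pi)/\tilde\pi$; you instead transfer the ideal targets by scalar isomorphism, which is exactly what the paper itself does later in the proof of Theorem~\ref{thm:general-value}.
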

\begin{proof}
Choose a sign-normalized scalar model $\psi_0$ and
$v\in\Cinf^\times$ with $\psi=v\psi_0v^{-1}$, and put
$\tilde\pi_0=v^{-1}\tilde\pi$. Scalar invariance
(Lemma~\ref{lem:rescaling} and Corollary~\ref{cor:finite-algorithm})
and \eqref{eq:period-scalar} give
\[
r_\Ia^\psi=r_\Ia^{\psi_0},\qquad C_\psi=C_{\psi_0},\qquad
\frac{\Hcal_\psi(\tilde\pi)}{\tilde\pi}
 =\frac{\Hcal_{\psi_0}(\tilde\pi_0)}{\tilde\pi_0}.
\]
Thus it suffices to prove the assertions for a sign-normalized $\psi$,
which we assume for the remainder of the proof.

\emph{(i).} \eqref{eq:ideal-coordinate} and \eqref{eq:finite-algorithm}
give $r_\Ia^\psi\in H_A^+(X)$ and
$C_\psi\in\Omega^1_{H_A^+(X)/H_A^+}$.
Their Galois covariance is Lemma~\ref{lem:rational-galois-covariance}.
Take $\sigma\in G_0$ and represent it by a principal ideal $(a)$,
with $a\in A\setminus\{0\}$. Since
$\psi_{(a)}=\LT_\psi(a)^{-1}\psi_a$, the ideal-intertwining identity gives
\begin{equation}\label{eq:sign-kernel-conjugation}
\psi^\sigma=z_a\psi z_a^{-1},\qquad
z_a=\LT_\psi(a)^{-1}
    =\iota(\operatorname{sgn}(a))^{-q^{\xi_\psi}}
       \in\iota(\mathbb F_\infty^\times).
\end{equation}
Corollary~\ref{cor:finite-algorithm} and Lemma~\ref{lem:rescaling} now imply
\begin{equation}\label{eq:sign-kernel-invariance}
\sigma(C_\psi)=C_{z_a\psi z_a^{-1}}=C_\psi,\qquad
\sigma(r_\Ia^\psi)=r_\Ia^{z_a\psi z_a^{-1}}=r_\Ia^\psi.
\end{equation}
The fixed-field identities above prove \eqref{eq:hayes-fields}.

\emph{(ii).} $C_{\varphi_i}=\sigma_{\Ia_i}(C_\psi)$.
By \eqref{eq:sign-kernel-invariance}, this conjugate differential
depends only on the restriction of $\sigma_{\Ia_i}$ to $H_A$. The ordinary ideal classes
enumerate $\operatorname{Gal}(H_A/K)$, so \eqref{eq:principal-sum}
gives \eqref{eq:hayes-trace}.
\end{proof}

For the sign-normalized model $\psi_0$, the shtuka function satisfies
\[
\sigma_{(a)}(f_{\psi_0})
 =\LT_{\psi_0}(a)^{1-q}f_{\psi_0}.
\]
At higher-degree infinity this character is nontrivial; compare
\cite[proof of Lemma 3.3]{ANDTR17}.
Thus $f_{\psi_0}\notin H_A(X)$ when $\Ninf>1$.
This field assertion concerns the sign-normalized model;
$C_\psi$ and $r_\Ia^\psi$ descend to $H_A$ for every scalar model.

Combining $C_\psi=-\Lcal_A(1)\Hcal_\psi(\tilde{\pi})/\tilde{\pi}$
from Theorem~\ref{thm:projective-pairing} with Corollary~\ref{cor:hayes-trace}
gives the following direct relation.
\begin{corollary}\label{cor:trace-formula}
Let $\psi$ be a rank-one Drinfeld $A$-module with free period lattice $\Lambda_\psi=A\tilde{\pi}$. Then
\begin{equation}\label{eq:type-trace-one}
\LL_\psi(1)\frac{\Hcal_\psi(\tilde{\pi})}{\tilde{\pi}}
 = - \Tr_{H_A/K}\left(
       \Lcal_A(1)\frac{\Hcal_\psi(\tilde{\pi})}{\tilde{\pi}}\right).
\end{equation}
\end{corollary}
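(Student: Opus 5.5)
This is a direct substitution: combine the free-lattice case of Theorem~\ref{thm:projective-pairing} with the trace formula \eqref{eq:hayes-trace} of Corollary~\ref{cor:hayes-trace}.

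\emph{Step 1.} Apply Theorem~\ref{thm:projective-pairing} to $\psi$ itself. Since $\Lambda_\psi=A\tilde\pi$, we may take $\mathfrak j=A$ and the period $\tilde\pi$. This gives
\[
C_\psi=-\tilde\pi^{-1}\Lcal_A(1)\Hcal_\psi(\tilde\pi)
\]
in $\operatorname{Frac}(\mathbb T_A)\otimes_A\Omega_A$. By Corollary~\ref{cor:hayes-trace}(i), this element is a rational differential with coefficients in $H_A$.

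\emph{Step 2.} Substitute this expression into the right side of \eqref{eq:hayes-trace}. That yields
\[
\LL_\psi(1)\frac{\Hcal_\psi(\tilde\pi)}{\tilde\pi}
 =\Tr_{H_A/K}\bigl(C_\psi\bigr)
 =-\Tr_{H_A/K}\left(\Lcal_A(1)\frac{\Hcal_\psi(\tilde\pi)}{\tilde\pi}\right),
\]
where the minus sign is pulled out of the trace by $\iota(K)$-linearity.

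\emph{Step 3: meaning of the trace.} This is the only point needing care. The trace $\Tr_{H_A/K}$ is defined on rational differentials with coefficients in $H_A$ through the coefficient Galois action. Individually, the factors $\Lcal_A(1)$ and $\Hcal_\psi(\tilde\pi)/\tilde\pi$ are transcendental objects. The right side of \eqref{eq:type-trace-one} is therefore read as the trace of the product, which equals $C_\psi$ by Step~1 and so lies in $\Omega^1_{H_A(X)/H_A}$. No Galois action on the separate factors is required.

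I expect no real obstacle, since all the analytic and descent work is already contained in Theorem~\ref{thm:projective-pairing} and Corollary~\ref{cor:hayes-trace}. The only potential subtlety is the interpretation in Step~3, and identifying the product with $C_\psi$ settles it.
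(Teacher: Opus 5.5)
Your proposal is correct and is exactly the paper's argument: Theorem~\ref{thm:projective-pairing} with $\mathfrak j=A$ gives $C_\psi=-\Lcal_A(1)\Hcal_\psi(\tilde\pi)/\tilde\pi$, and you substitute this into \eqref{eq:hayes-trace}. You are also right to read the trace as applied to the product, which is $C_\psi\in\Omega^1_{H_A(X)/H_A}$, rather than to the two transcendental factors separately.
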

 For sign-normalized $\psi$ at rational infinity,
Corollary~\ref{cor:type-trace-higher} extends this identity to weights
$n\ge1$ with $n\equiv1\pmod{q-1}$. 

At rational infinity, the scalar form of \eqref{eq:hayes-trace}
is the counterpart of \cite[Theorem 8.9]{Ferraro24} for a free
period lattice. The differential formulation specifies each summand
by its Cauchy divisor and residue and applies to infinite points of
arbitrary degree.

\begin{remark}
 For the rational differentials in \eqref{eq:hayes-trace},
the ordinary trace agrees with the trace from the sign class
field. Indeed, for $D\in\Omega^1_{H_A(X)/H_A}$, 
\begin{equation}\label{eq:sign-trace-equality}
\Tr_{H_A^+/K}(D)
 =\frac{q^{\Ninf}-1}{q-1}\Tr_{H_A/K}(D)
 =\Tr_{H_A/K}(D),
\end{equation}
because $1+q+\cdots+q^{\Ninf-1}=1$ in the coefficient field.
Each summand of the ordinary trace occurs
$(q^{\Ninf}-1)/(q-1)$ times in the sign trace.
A further finite extension of the trace
field multiplies the trace by its relative degree.
\end{remark}

\subsection{Isogenies to a free period lattice}\label{sec:general-modules}

An isogeny to a free target transports the preceding trace
formula to an arbitrary rank-one source. Let $\varphi$ be a
rank-one module and choose a separable isogeny datum with target
$\psi$ whose period lattice is free:
\begin{equation}\label{eq:source-isogeny}
J=[\eta_J;h_J;\Ib,\mu_J]:\varphi\longrightarrow\psi,
\qquad \Lambda_\psi=A\tilde\pi.
\end{equation}
Every $\varphi$ admits such a datum. Indeed, write
$\Lambda_\varphi=\iota(\Ib)z$ with $z\in\Cinf^\times$
and $\Ib\subset A$ a nonzero integral ideal; the
inclusion $\iota(\Ib)z\subset Az$ gives an isogeny to the module
with lattice $Az$. Put $ \partial\eta_J=h_J(\Xi)$
and $z_J=\tilde\pi/\partial\eta_J$. The normalized motive multiplier is
\begin{equation}\label{eq:isogeny-normalization}
g_J=\frac{h_J}{\partial\eta_J}=r_\Ib^\varphi.
\end{equation}
The lattice relation \eqref{eq:isogeny-uniformization} becomes
\begin{equation}\label{eq:adapted-parameter-lattice}
\Lambda_\varphi=\iota(\Ib)z_J,\qquad
\iota(\Ib^{-1})\Lambda_\varphi=Az_J.
\end{equation}
Thus $z_J$ generates the enlarged lattice $\iota(\Ib^{-1})\Lambda_\varphi$.
The source periods are exactly $\iota(a)z_J$ with $a\in\Ib$.

The isogeny-adapted generating differential of
\cite[Notation 6.6 and Proposition 6.13]{HuCauchy} is defined on
the enlarged lattice $Az_J$ by
\begin{equation}\label{eq:adapted-generating-differential}
\Gcal_J(\iota(a)z_J)
 =\frac{\Hcal_\psi(\iota(a)\tilde\pi)}{h_J}
 =\chi(a)\frac{\Hcal_\psi(\tilde\pi)}{h_J}
\qquad(a\in A).
\end{equation}
Each $\Gcal_J(\iota(a)z_J)$ lies in
$\operatorname{Frac}(\mathbb T_A)\otimes_A\Omega_A$ and has residue
$\iota(a)z_J$ at $\Xi$. For $a\in\Ib$, period functoriality gives
\begin{equation}\label{eq:adapted-source-period}
\Hcal_\varphi(\iota(a)z_J)
 =\chi(a)\Gcal_J(z_J).
\end{equation}
Indeed, applying $\eta_J$ to the left side gives
$\Hcal_\psi(\iota(a)\tilde\pi)$ by \eqref{eq:period-isogeny},
and on $\sfm(\varphi)\otimes_A\Omega_A$ the action of
$\eta_J$ is multiplication by $h_J$.
In particular, $\Gcal_J(z_J)$ has residue $z_J$ even when $z_J$
is not a period of $\varphi$.

\begin{proposition}\label{prop:divisor-shift}
Put
$D_J=V_\psi-q_\psi^{(-1)}-\divisor(h_J)$ and
$\mathcal L_J=\OO(D_J)$. This line bundle is acyclic, and its
normalized Cauchy coordinate satisfies
\begin{equation}\label{eq:divisor-shift}
K_J(P,Q)=\frac{h_J(P)}{h_J(Q)}K_\psi(P,Q),\qquad
K_J(\Xi,\cdot)=\frac{C_\psi}{g_J},\qquad
\Res_\Xi K_J(\Xi,\cdot)=1.
\end{equation}
\end{proposition}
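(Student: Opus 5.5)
Three points need proof: $\mathcal L_J$ is acyclic, $K_J$ is the stated gauge transform of $K_\psi$, and its characteristic restriction is $C_\psi/g_J$ with residue one.

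\emph{Acyclicity.} Since $h_J$ is a nonzero rational function, $D_J$ is linearly equivalent to $V_\psi-q_\psi^{(-1)}$. Multiplication by $h_J$ gives an isomorphism $\OO(D_J)\simeq\OO(V_\psi-q_\psi^{(-1)})$. The latter bundle is acyclic by the argument preceding \eqref{eq:cauchy-differential}: it has degree $g-1$, any nonzero section would be a constant vanishing at $q_\psi^{(-1)}$, and Riemann--Roch then gives $h^1=0$. Hence $\mathcal L_J$ is acyclic.

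\emph{The kernel.} The normalized Cauchy (Szeg\H{o}) kernel of an acyclic degree $g-1$ bundle $\OO(D)$ is characterized as follows. It is a section of $\OO(D)\boxtimes\Omega^1(-D)$ on $X\times X$ with a simple pole along the diagonal of residue one, and it has no other singularities. This is the normalization of \cite[Section 4.3 and Theorem 6.2]{HuCauchy}; uniqueness holds because the difference of two such kernels is a section with no diagonal pole, hence lies in $H^0(\OO(D))\otimes H^0(\Omega^1(-D))=0$. Now consider $h_J(P)h_J(Q)^{-1}K_\psi(P,Q)$.
\begin{itemize}
\item In $P$, multiplying by $h_J(P)$ changes the allowed divisor from $-V_\psi+q_\psi^{(-1)}$ to $-V_\psi+q_\psi^{(-1)}+\divisor(h_J)=-D_J$. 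So the $P$-sections of $\OO(V_\psi-q_\psi^{(-1)})$ become sections of $\OO(D_J)$.
\item Dividing by $h_J(Q)$ transforms $\Omega^1(-V_\psi+q_\psi^{(-1)})$ into $\Omega^1(-D_J)$ in the same way.
\item On the diagonal the factor $h_J(P)/h_J(Q)$ equals one, so the diagonal residue is still one.
\end{itemize}
Uniqueness then gives the first identity in \eqref{eq:divisor-shift}.

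\emph{Specialization at $\Xi$.} Put $P=\Xi$. By Lemma~\ref{lem:motive-multiplier}, $h_J$ is regular at $\Xi$ with $h_J(\Xi)=\partial\eta_J\ne0$. Since $K_\psi(\Xi,\cdot)=C_\psi$,
\[
K_J(\Xi,\cdot)=\frac{\partial\eta_J}{h_J}\,C_\psi=\frac{C_\psi}{g_J}.
\]
Near $\Xi$ the factor $1/g_J$ is regular with value one, and $C_\psi$ has a simple pole at $\Xi$ with residue one. Therefore $\Res_\Xi K_J(\Xi,\cdot)=\Res_\Xi C_\psi=1$.

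\emph{Main obstacle.} The delicate step is making the divisor bookkeeping for the first-variable and second-variable bounds match the kernel convention of \cite{HuCauchy} exactly. In particular one must check that zeros or poles of $h_J$ at $\Xi$ or $q_\psi^{(-1)}$ do not interfere. Regularity and non-vanishing of $h_J$ at $\Xi$ handle the characteristic point. At the infinity point, the constraint is absorbed into $D_J$ by definition, so it causes no problem.
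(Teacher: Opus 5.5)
Your proof is correct and follows the same route as the paper. Acyclicity comes from the isomorphism $\OO(D_J)\simeq\OO(V_\psi-q_\psi^{(-1)})$ given by the rational function $h_J$. The kernel identity comes from transporting $K_\psi$ by the factor $h_J(P)/h_J(Q)$, which is one on the diagonal, and invoking uniqueness of the residue-normalized coordinate. The last two identities come from specializing at $P=\Xi$ using $h_J(\Xi)=\partial\eta_J\neq0$. You spell out the uniqueness argument (the K\"unneth vanishing $H^0(\OO(D))\otimes H^0(\Omega^1(-D))=0$) and the divisor bookkeeping in both variables more explicitly than the paper, which only cites the transport formula of \cite[equation (128)]{HuCauchy}, but the argument is the same.
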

\begin{proof}
Multiplication by $h_J^{-1}$ identifies $\mathcal L_J$ with
$\OO(V_\psi-q_\psi^{(-1)})$, which is acyclic.
The factor $h_J(P)/h_J(Q)$ changes the Cauchy divisor condition
from $V_\psi-q_\psi^{(-1)}$ to $D_J$ and is one on the diagonal.
Uniqueness of the residue-normalized coordinate gives the first identity;
specializing $P=\Xi$ gives the other two. This is the transport
formula of \cite[equation (128)]{HuCauchy}.
\end{proof}

\begin{theorem}\label{thm:general-value}
Let $J:\varphi\to\psi$ be as in \eqref{eq:source-isogeny}, with
the quantities in \eqref{eq:isogeny-normalization}. Then
\begin{enumerate}
\item[(i)] $g_J\in H_A(X)$ and
$K_J(\Xi,\cdot)\in\Omega^1_{H_A(X)/H_A}$;
\item[(ii)] the ideal value is the ordinary trace
\begin{equation}\label{eq:general-trace}
 \LL_\varphi(1)\Gcal_J(z_J)
       =z_J\Tr_{H_A/K}\bigl(K_J(\Xi,\cdot)\bigr); 
\end{equation}
\item[(iii)] equivalently, in terms of the generating differential at
$\tilde\pi\in\Lambda_\psi$,
\begin{equation}\label{eq:general-value}
\LL_\varphi(1)\Hcal_\psi(\tilde\pi)
 =\tilde\pi\,g_J\Tr_{H_A/K}\left(\frac{C_\psi}{g_J}\right).
\end{equation}
\end{enumerate}
\end{theorem}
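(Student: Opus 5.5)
The plan is to prove (iii) first and obtain (ii) from it. By \eqref{eq:adapted-generating-differential} with $a=1$ we have $\Gcal_J(z_J)=\Hcal_\psi(\tilde\pi)/h_J$. Also $z_J=\tilde\pi/\partial\eta_J$, $h_J=\partial\eta_J\,g_J$, and Proposition~\ref{prop:divisor-shift} gives $K_J(\Xi,\cdot)=C_\psi/g_J$. Substituting these and multiplying by $h_J$ turns \eqref{eq:general-trace} into \eqref{eq:general-value}, so (ii) and (iii) are equivalent. This is bookkeeping.

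For (i), the point is that $g_J=r_\Ib^\varphi$ by \eqref{eq:isogeny-normalization}, and $r_\Ib^\varphi$ depends on $\varphi$ only up to scalar isomorphism (Lemma~\ref{lem:rescaling}). Since $\psi$ is a scalar conjugate of $\Ib*\varphi$, the module $\varphi$ is, up to a scalar, the ideal-isogeny target $\mathfrak c*\psi$ for an integral $\mathfrak c$ in the class of $\Ib^{-1}$. I would choose a sign-normalized model of $\psi$, so that this target is a Galois conjugate $\psi^{\sigma_{\mathfrak c}}$ with $\sigma_{\mathfrak c}\in\operatorname{Gal}(H_A^+/K)$. Then Lemma~\ref{lem:rational-galois-covariance} and Corollary~\ref{cor:hayes-trace}(i) give $r_\Ib^\varphi=\sigma_{\mathfrak c}(r_\Ib^{\psi})\in H_A(X)$. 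Together with $C_\psi\in\Omega^1_{H_A(X)/H_A}$, this yields $K_J(\Xi,\cdot)=C_\psi/g_J\in\Omega^1_{H_A(X)/H_A}$.

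For (iii), I would first establish the cocycle for the normalized ratios. With $\varphi'=\Ib*\varphi$ and $\omega_{\varphi'}=\varphi_\Ib(\omega_\varphi)$, composition of monic ideal isogenies gives $u_{\Ia\Ib}^\varphi=u_\Ia^{\varphi'}u_\Ib^\varphi$, and the $d$'s multiply likewise. Hence
\[
r_{\Ia\Ib}^\varphi=r_\Ia^{\Ib*\varphi}\,r_\Ib^\varphi .
\]
By scalar invariance, $r_\Ia^{\Ib*\varphi}=r_\Ia^\psi$. I would then decompose $\LL_\varphi(1)$ by classes using Proposition~\ref{prop:class-decomposition}, with representatives $\Ia_i\Ib$ ranging over $\Cl(A)$. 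This gives
\[
\LL_\varphi(1)=-g_J\sum_i r_{\Ia_i}^\psi\,\Lcal_{(\Ia_i\Ib)^{-1}}(1).
\]
Each element series $\Lcal_{(\Ia_i\Ib)^{-1}}(1)$ should then be converted into a Cauchy differential via Theorem~\ref{thm:projective-pairing}, applied to the $\Ia_i\Ib$-target of $\varphi$ or an equivalent scalar model. Its periods and generating differential are related to $\Hcal_\psi(\tilde\pi)$ through \eqref{eq:period-isogeny} and \eqref{eq:adapted-source-period}; since $z_J$ need not be a period of $\varphi$, I would use the $A$-linearity trick of multiplying by $\chi(a)$ with $a\in\Ib$ and then dividing. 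The resulting summand should be $\tilde\pi\,g_J\,\sigma_i(C_\psi/g_J)$ divided by $\Hcal_\psi(\tilde\pi)$. Summing over $i$ and using Corollary~\ref{cor:hayes-trace} (descent to $H_A$, $h$ summands) would give \eqref{eq:general-value}.

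The main obstacle is exactly this matching of each class summand with the Galois conjugate $g_J\,\sigma_{\Ia_i}(C_\psi/g_J)$. There are two things to control. First, the lattice of the target has the shape $\tilde\pi'\,\Ia_i^{-1}$ relative to $\psi$, whereas the class series involves $(\Ia_i\Ib)^{-1}$, so the index set must be aligned. Second, the conjugate $\sigma_{\Ia_i}(g_J)$ differs from $g_J$ by a quotient of $r$-values. I would resolve both through the cocycle: I expect $\sigma_{\Ia}(r_\Ib^{\varphi})/r_\Ib^{\varphi}$ to equal $r_{\Ia}^{\Ib*\varphi}/r_\Ia^\varphi$, up to a factor $\chi(c)/\iota(c)$ coming from Lemma~\ref{lem:rescaling}. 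That factor is absorbed by reindexing $\Lcal$ over $c\,\mathfrak j$, using the fact that $\chi(c)/\iota(c)$ times $\Lcal_{c\mathfrak j}(1)$ equals $\Lcal_{\mathfrak j}(1)$. If this bookkeeping fails, the fallback is to reduce to the case where $\Ib$ is principal times a fixed representative, and to check the formula's independence of $J$ directly: changing $J$ rescales $h_J$ by a $\Cinf^\times$-constant times an element of $H_A(X)^\times$, and the constant cancels between the two sides.
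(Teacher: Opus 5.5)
Your reduction of (ii) to (iii) is correct, and so is your proof of (i). The identity $g_J=r_\Ib^\varphi=\sigma_{\mathfrak c}(r_\Ib^\psi)$, with $\mathfrak c=(a)\Ib^{-1}$ (the paper's $\mathfrak d$), is a valid substitute for the paper's reverse-isogeny identity $r_{\mathfrak c}^\psi=\chi(a)/(\iota(a)g_J)$; both rest on $\varphi$ being a scalar model of $\mathfrak c*\psi$. Your untwisted cocycle $r^\varphi_{\Ia\Ib}=r^{\Ib*\varphi}_\Ia\, r^\varphi_\Ib=g_J\,r^\psi_\Ia$ and the decomposition $\LL_\varphi(1)=-g_J\sum_i r^\psi_{\Ia_i}\Lcal_{(\Ia_i\Ib)^{-1}}(1)$ are also right.

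The gap is in (iii). The step that carries the theorem is only announced, and the concrete route you give for it fails. Since $\Lambda_\varphi=\iota(\Ib)z_J$, the $\Ia_i\Ib$-target of $\varphi$ has lattice proportional to $\iota(\Ia_i^{-1})$; it is a scalar model of $\Ia_i*\psi$. So Theorem~\ref{thm:projective-pairing} applied to it yields $\Lcal_{\Ia_i^{-1}}(1)$, not $\Lcal_{(\Ia_i\Ib)^{-1}}(1)$. For nonprincipal $\Ib$ these fractional ideals lie in different classes, and no rescaling $\mathfrak j\mapsto c\mathfrak j$ relates them. (Incidentally, the rescaling identity is $\Lcal_{c\mathfrak j}(1)=\frac{\chi(c)}{\iota(c)}\Lcal_{\mathfrak j}(1)$, the reverse of what you wrote.) Your fallback does not help either: $[\Ib]$ is the lattice class of $\varphi$ and is the same for every datum $J$, so independence of $J$ cannot reduce you to a free source.

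The missing ingredient is the Galois-twisted cocycle \eqref{eq:ideal-cocycle}, $r^\psi_{\Ia\mathfrak c}=r^\psi_\Ia\,\sigma_\Ia(r^\psi_{\mathfrak c})$. You can derive it from your untwisted cocycle together with the covariance $r_\Ia^{\mathfrak c*\psi}=\sigma_{\mathfrak c}(r^\psi_\Ia)$, which you already use in (i). The paper's proof then runs as follows:
\begin{enumerate}
\item[(a)] It keeps the representatives $\Ia_i$, so Theorem~\ref{thm:principal-class} applies with matching lattices and gives $\LL_{\varphi,[\Ia_i]}(1)\Hcal_\psi(\tilde\pi)=\tilde\pi\,(r^\varphi_{\Ia_i}/r^\psi_{\Ia_i})\,C_{\varphi_i}$.
\item[(b)] The twisted cocycle gives $r^\varphi_\Ia/r^\psi_\Ia=g_J/g_J^{\sigma_\Ia}$, as in \eqref{eq:induced-multiplier}.
\item[(c)] With $C_{\varphi_i}=C_\psi^{\sigma_{\Ia_i}}$, each summand becomes $\tilde\pi\,g_J(C_\psi/g_J)^{\sigma_{\Ia_i}}$.
\end{enumerate}
The ratio identity in (b) is exactly your ``expected'' one, with no extra $\chi(c)/\iota(c)$ factor. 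It is the heart of the proof, and you must establish it rather than expect it.

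Your representatives $\Ia_i\Ib$ also work once the lattice is matched. Apply Theorem~\ref{thm:principal-class} to $\psi$ with representative $\Ia_i\Ib$; its target $(\Ia_i\Ib)*\psi$ has lattice class $(\Ia_i\Ib)^{-1}$. The leftover factor is $r^\psi_{\Ia_i}/r^\psi_{\Ia_i\Ib}=1/\sigma_{\Ia_i}(r^\psi_\Ib)$. This equals $1/\sigma_{\Ia_i\Ib}(g_J)$, because $g_J=\sigma_{\mathfrak c}(r^\psi_\Ib)\in H_A(X)$ and $\mathfrak c\Ib$ is principal. The summand is then $\tilde\pi\,g_J(C_\psi/g_J)^{\sigma_{\Ia_i\Ib}}$.
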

\begin{proof}
\emph{(i).} A reverse isogeny relates $g_J$ to an ideal
ratio of the free target.
Choose $0\ne a\in\Ib$ and put $\mathfrak d=(a)\Ib^{-1}\subset A$.
Since $\eta_J$ kills $\varphi[\Ib]$, the action polynomial $\varphi_a$
factors as
\[
\varphi_a=\eta_a^\vee\eta_J,\qquad
\eta_a^\vee:\psi\longrightarrow\varphi.
\]
The lattice relation \eqref{eq:adapted-parameter-lattice} shows that
the annihilator ideal of $\ker\eta_a^\vee$ is $\mathfrak d$.
The isogeny $\eta_a^\vee$ has derivative $\iota(a)/\partial\eta_J$
and motive multiplier $\chi(a)/h_J$. Hence
\begin{equation}\label{eq:reverse-normalized-multiplier}
r_\mathfrak d^\psi
 =\frac{\chi(a)}{\iota(a)g_J}.
\end{equation}
Corollary~\ref{cor:hayes-trace} gives
$r_\mathfrak d^\psi\in H_A(X)$ and
$C_\psi\in\Omega^1_{H_A(X)/H_A}$.
Equation~\eqref{eq:reverse-normalized-multiplier} therefore gives
$g_J\in H_A(X)$ and $C_\psi/g_J\in\Omega^1_{H_A(X)/H_A}$.

\emph{(ii).} The target $\varphi$ of $\eta_a^\vee$ is
scalar-isomorphic to $\mathfrak d*\psi$.
Choose a sign-normalized scalar model
$\psi_0$ of $\psi$. Scalar invariance transfers the normalized
ideal cocycle of \cite[Lemma 3.1]{ANDTR17} from $\psi_0$ to $\psi$:
\begin{equation}\label{eq:ideal-cocycle}
r_{\Ia\mathfrak d}^\psi
 =r_\Ia^\psi(r_\mathfrak d^\psi)^{\sigma_\Ia}
 =r_\mathfrak d^\psi(r_\Ia^\psi)^{\sigma_\mathfrak d}.
\end{equation}
Scalar invariance and coefficient covariance
(Lemma~\ref{lem:rational-galois-covariance}) therefore give
\begin{equation}\label{eq:induced-multiplier}
\frac{r_\Ia^\varphi}{r_\Ia^\psi}
 =\frac{(r_\mathfrak d^\psi)^{\sigma_\Ia}}{r_\mathfrak d^\psi}
 =\frac{g_J}{g_J^{\sigma_\Ia}}.
\end{equation}
Here $\chi(a)/\iota(a)$ is fixed by the coefficient action, and all
Galois actions in these identities are on rational functions.

Let $\varphi_i=\Ia_i*\psi$. The class decomposition
\eqref{eq:class-subseries} and Theorem~\ref{thm:principal-class} give
\[
\LL_{\varphi,[\Ia_i]}(1)\Hcal_\psi(\tilde\pi)
 =\tilde\pi\frac{r_{\Ia_i}^\varphi}{r_{\Ia_i}^\psi}C_{\varphi_i}
 =\tilde\pi\,g_J
       \left(\frac{C_\psi}{g_J}\right)^{\sigma_{\Ia_i}}.
\]
The last equality uses \eqref{eq:induced-multiplier} and
$C_{\varphi_i}=C_\psi^{\sigma_{\Ia_i}}$. To obtain the latter
identity, apply the Hayes correspondence and
Lemma~\ref{lem:rational-galois-covariance} to $\psi_0$.
The ideal-isogeny targets of $\psi$ and $\psi_0$ are
scalar-isomorphic by Lemma~\ref{lem:rescaling}; hence the scalar
invariance in Corollary~\ref{cor:finite-algorithm} transfers the
identity to $\psi$.
Dividing by $h_J=\partial\eta_J g_J$ proves the individual class identity
\begin{equation}\label{eq:general-class}
\LL_{\varphi,[\Ia_i]}(1)\Gcal_J(z_J)
 =z_J K_J(\Xi,\cdot)^{\sigma_{\Ia_i}}.
\end{equation}
The ordinary classes enumerate $\operatorname{Gal}(H_A/K)$;
summing proves \eqref{eq:general-trace}.

\emph{(iii).}
Multiplication by $h_J$ gives \eqref{eq:general-value}.
\end{proof}

For $J=\mathrm{id}_\psi$, one has $\partial\eta_J=g_J=1$, $z_J=\tilde\pi$,
$\Gcal_J(z_J)=\Hcal_\psi(\tilde\pi)$ and $K_J(\Xi,\cdot)=C_\psi$;
the theorem recovers \eqref{eq:hayes-trace}.
For a general datum, multiplication of \eqref{eq:general-trace}
by $\chi(a)$ gives the formulation at any nonzero source period:
\begin{equation}\label{eq:general-source-period}
\LL_\varphi(1)\Hcal_\varphi(\iota(a)z_J)
 =z_J\chi(a)\Tr_{H_A/K}\bigl(K_J(\Xi,\cdot)\bigr)
\qquad(0\ne a\in\Ib).
\end{equation}
After division by $\Hcal_\varphi(\iota(a)z_J)$,
\eqref{eq:general-source-period} gives the same function $\LL_\varphi(1)$
for every $0\ne a\in\Ib$.
For $v\in\Cinf^\times$, compose $\eta_J$ with the scalar
isomorphism $\psi\to v\psi v^{-1}$ and choose $v\tilde\pi$ as the
new target period generator. Then $h_J,\partial\eta_J,\tilde\pi$, and
$\Hcal_\psi(\tilde\pi)$ are multiplied by $v$, so
$g_J,z_J,K_J$, and $\Gcal_J(z_J)$ are unchanged.
 Thus the transported formula is invariant under scalar
isomorphisms of the target.

At rational infinity, \cite[Remark 8.10]{Ferraro24} gives the
characteristic value of the Anderson zeta function. Here specialization
is also taken after meromorphic continuation: for $a\in A\setminus\Fq$,
$\chi(a)-\iota(a)$ is a Tate unit but vanishes at $\Xi$.
\begin{corollary}\label{cor:class-number}
For any rank-one module $\varphi$ and every ordinary ideal
class $[\Ia_i]$, equation~\eqref{eq:general-class} gives a meromorphic
continuation of $\LL_{\varphi,[\Ia_i]}(1)$ that is regular at $\Xi$
with value one. Consequently,
\begin{equation}\label{eq:class-number}
\LL_\varphi(1)|_\Xi=h(A)\cdot1_{\Cinf}.
\end{equation}
\end{corollary}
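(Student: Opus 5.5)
The plan is to read the meromorphic continuation directly off the class identity \eqref{eq:general-class}. That identity gives, in $\operatorname{Frac}(\mathbb T_A)\otimes_A\Omega_A$,
\[
\LL_{\varphi,[\Ia_i]}(1)=\frac{z_J\,K_J(\Xi,\cdot)^{\sigma_{\Ia_i}}}{\Gcal_J(z_J)}
=\frac{z_J\,K_J(\Xi,\cdot)^{\sigma_{\Ia_i}}\,h_J}{\Hcal_\psi(\tilde\pi)}.
\]
The numerator $K_J(\Xi,\cdot)^{\sigma_{\Ia_i}}$ is a rational differential by Theorem~\ref{thm:general-value}(i). The denominator $\Hcal_\psi(\tilde\pi)$ continues meromorphically with a simple pole at $\Xi$ of residue $\tilde\pi$ (\cite[Proposition 4.23]{HuCauchy}), and $h_J$ is regular at $\Xi$ with $h_J(\Xi)=\partial\eta_J\ne0$ (Lemma~\ref{lem:motive-multiplier}). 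Hence $\Gcal_J(z_J)=\Hcal_\psi(\tilde\pi)/h_J$ continues with a simple pole at $\Xi$ of residue $z_J$. The quotient of two differentials is a scalar function, so the right side gives a meromorphic continuation of the class subseries on the region where $\Hcal_\psi(\tilde\pi)$ continues.

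At $\Xi$ I will compare leading terms. By Proposition~\ref{prop:divisor-shift}, $K_J(\Xi,\cdot)$ has a simple pole at $\Xi$ with residue one. The step I expect to need care is the effect of $\sigma_{\Ia_i}$, which acts on arithmetic coefficients and could in principle move the pole. It does not: $K_J(\Xi,\cdot)^{\sigma_{\Ia_i}}=g_J(C_\psi/g_J)^{\sigma_{\Ia_i}}\cdot g_J^{-1}\cdot$ (by the proof of Theorem~\ref{thm:general-value}), and more directly $C_\psi^{\sigma_{\Ia_i}}=C_{\varphi_i}$ is the Cauchy differential of $\varphi_i$, with residue $1$ at $\Xi$. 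Concretely, the class term equals $\tilde\pi\, r^{\varphi}_{\Ia_i}C_{\varphi_i}/(r^\psi_{\Ia_i}\Hcal_\psi(\tilde\pi))$. Here $\Xi$ is $\iota(K)$-rational, so it is fixed by the coefficient Galois action, and the ratio $r^\varphi_{\Ia_i}/r^\psi_{\Ia_i}=g_J/g_J^{\sigma_{\Ia_i}}$ from \eqref{eq:induced-multiplier} is regular and nonvanishing at $\Xi$. At $\Xi$ each $r$ takes the value $u_\Ia(\Xi)/d_\Ia=1$ by Lemma~\ref{lem:motive-multiplier}, so the ratio equals $1$ there.

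Choosing a local parameter $t$ at $\Xi$, the ratio of two differentials with simple poles is regular. Its value at $\Xi$ is the ratio of the residues times the value of the regular scalar prefactor:
\[
\frac{\tilde\pi\cdot 1}{\tilde\pi}\cdot 1=1 .
\]
This is consistent with the weight-one case $\LL_{\varphi,[\Ia_i]}(1)$ being built from $r_\Ia$, each of which is regular with value one at $\Xi$. I will also note, via \eqref{eq:adapted-source-period}, that the continuation does not depend on the choice of $J$, since the class subseries itself does not. Summing the $h(A)$ class contributions via \eqref{eq:class-decomposition-sum} gives $\LL_\varphi(1)|_\Xi=h(A)\cdot 1_{\Cinf}$.

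The main obstacle is justifying that the residue ratio is legitimate: the continued $\Hcal_\psi(\tilde\pi)$ must be meromorphic near $\Xi$, and the division must be taken after continuation, since $\chi(a)-\iota(a)$ is a unit on the Tate domain but vanishes at $\Xi$. I will handle this by multiplying through by $\chi(a)-\iota(a)$ as in \eqref{eq:clearing}, so that both sides become regular near $\Xi$, and then comparing the values at $\Xi$.
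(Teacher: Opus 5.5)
Your argument is correct and is essentially the paper's proof: both read the continuation off \eqref{eq:general-class} and compare the simple poles at $\Xi$, where the numerator $z_JK_J(\Xi,\cdot)^{\sigma_{\Ia_i}}$ has residue $z_J$ and $\Gcal_J(z_J)=\Hcal_\psi(\tilde\pi)/h_J$ also has residue $z_J$. You verify the numerator's residue through $C_{\varphi_i}$ and $r^\varphi_{\Ia_i}(\Xi)=r^\psi_{\Ia_i}(\Xi)=1$, whereas the paper uses directly that $\sigma_{\Ia_i}$ fixes the $\iota(K)$-rational point $\Xi$ together with the residue normalization. Three small points: the displayed identity for $K_J(\Xi,\cdot)^{\sigma_{\Ia_i}}$ is garbled and should be deleted; the clearing by $\chi(a)-\iota(a)$ is unnecessary given the simple-pole continuation of $\Hcal_\psi(\tilde\pi)$; and if you do use it, choose $a$ so that $\chi(a)-\iota(a)$ is a local parameter at $\Xi$, since otherwise both cleared sides vanish there.
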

\begin{proof}
For a local parameter $t$ at $\Xi$, the residue normalizations give
\[
K_J(\Xi,\cdot)^{\sigma_{\Ia_i}}=(t^{-1}+O(1))\,dt,
\qquad \Gcal_J(z_J)=(z_Jt^{-1}+O(1))\,dt.
\]
Dividing $z_JK_J(\Xi,\cdot)^{\sigma_{\Ia_i}}$ by
$\Gcal_J(z_J)$ in \eqref{eq:general-class} therefore gives
$\LL_{\varphi,[\Ia_i]}(1)=1+O(t)$.
Summing over the finitely many classes gives \eqref{eq:class-number},
including when the characteristic divides $h(A)$.
\end{proof}

\section{Generating functions and higher-weight traces}\label{sec:general-value}

The Cauchy differential is the constant term of a meromorphic
lattice sum. The generating identity below identifies this sum
with $\Hcal_\varphi/\exp_\varphi$.
For a sign-normalized module with free period lattice, the
Galois characters of its Taylor coefficients determine the
higher-weight Hilbert-class traces and their character twists.

\subsection{Lattice generating series}\label{sec:higher-weights}
Let $\varphi$ be an arbitrary rank-one module.
\begin{notation}\label{not:higher-lattice-differential}
Choose $0\ne\tilde\pi\in\Lambda_\varphi$ and put
$\mathfrak j=\tilde\pi^{-1}\Lambda_\varphi\supset A$. For $n\ge1$, set
\begin{equation}\label{eq:higher-lattice-differential}
C_{\varphi,n}
 =-\tilde{\pi}^{-n}\Lcal_{\mathfrak j}(n)
           \Hcal_\varphi(\tilde{\pi})\in\mathbb M_A.
\end{equation}
In particular, $C_{\varphi,1}=C_\varphi$ by Theorem~\ref{thm:projective-pairing}.
For $n\not\equiv1\pmod{q-1}$, one has $C_{\varphi,n}=0$
because $\Lcal_{\mathfrak j}(n)=0$.
\end{notation}
\begin{lemma}\label{lem:higher-lattice-differential}
   The differential $C_{\varphi,n}$ has the period-sum expression 
    \[
       C_{\varphi,n}
 =-\sum_{0\ne\lambda\in\Lambda_\varphi}
           \frac{\Hcal_\varphi(\lambda)}{\lambda^n},
    \]
   which is independent of the choice of $\tilde{\pi}$.
\end{lemma}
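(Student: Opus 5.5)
The plan is to reproduce the argument behind \eqref{eq:lattice-product}, now at weight $n$. First, the right-hand side converges. By \eqref{eq:period-sum-bound} with $k=0$, the terms $\Hcal_\varphi(\lambda)/\lambda^n$ tend to zero in $\mathbb M_A$. Since the lattice has only finitely many elements of bounded norm, the sum $-\sum_{0\ne\lambda}\Hcal_\varphi(\lambda)\lambda^{-n}$ converges unconditionally in $\mathbb M_A$.

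Next, reparametrize the lattice by $\lambda=\tilde\pi\iota(c)$ with $0\ne c\in\mathfrak j$, so that $\lambda^n=\tilde\pi^n\iota(c)^n$. Choose $b\in A\setminus\{0\}$ with $b\mathfrak j\subset A$. The $A$-linearity of the period map, extended to $\operatorname{Frac}(\mathbb T_A)$ as in Section~\ref{sec:projective-pairing}, gives
\[
\chi(b)\Hcal_\varphi(\tilde\pi\iota(c))=\chi(bc)\Hcal_\varphi(\tilde\pi).
\]
Multiplying the period sum by $\chi(b)$ therefore turns it into
\[
-\tilde\pi^{-n}\sum_{0\ne c\in\mathfrak j}\frac{\chi(bc)}{\iota(c)^n}\,\Hcal_\varphi(\tilde\pi).
\]
The inner sum is $\chi(b)\Lcal_{\mathfrak j}(n)$, which equals $\iota(b)^n\Lcal_{b\mathfrak j}(n)$, a convergent element of $\mathbb T_A$. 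Multiplication by the fixed element $\Hcal_\varphi(\tilde\pi)$ of $\mathbb M_A$ is continuous, so the termwise identity passes to the sums. Dividing by the nonzero $\chi(b)$ in $\operatorname{Frac}(\mathbb T_A)\otimes_A\Omega_A$ then yields $C_{\varphi,n}$ as defined in \eqref{eq:higher-lattice-differential}.

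Independence of $\tilde\pi$ is then automatic, because the period-sum expression involves only $\Lambda_\varphi$ and the map $\Hcal_\varphi$, and neither depends on the choice of generator. Alternatively, one can check it directly: replacing $\tilde\pi$ by $\tilde\pi'=\tilde\pi\iota(e)$ with $e\in\mathfrak j$ changes $\mathfrak j$ to $e^{-1}\mathfrak j$, and the factors $\chi(e)$ and $\iota(e)^{-n}$ cancel.

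The only delicate point is justifying the interchange when $\mathfrak j\ne A$. The individual summands $\chi(c)\Hcal_\varphi(\tilde\pi)$ need not lie in $\mathbb M_A$. This is why the whole computation is carried out after multiplying by $\chi(b)$, where every term lies in $\mathbb M_A$ and convergence holds in its complete norm. Since $\chi(b)$ is not a zero divisor, dividing back is legitimate in the fraction space.
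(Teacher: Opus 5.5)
Your proof is correct and follows essentially the same route as the paper: convergence in $\mathbb M_A$ from \eqref{eq:period-sum-bound} at $k=0$, the reparametrization $\lambda=\tilde\pi\iota(c)$ with $0\ne c\in\mathfrak j$, and clearing a fixed denominator of $\mathfrak j$ in the $A$-linearity of $\Hcal_\varphi$. Your explicit bookkeeping is a more detailed version of what the paper leaves implicit. You multiply the whole sum by $\chi(b)$ so that every term lies in $\mathbb M_A$, then divide back in the fraction space, and you check the independence of $\tilde\pi$ directly.
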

\begin{proof}
Convergence in $\mathbb M_A$ follows from the uniform
bound \eqref{eq:period-sum-bound} at $k=0$.
Write $\Lambda_\varphi=\tilde\pi\iota(\mathfrak j)$. Then
\[
\mathfrak j\setminus\{0\}\longrightarrow \Lambda_\varphi\setminus\{0\},
\qquad c\longmapsto \tilde\pi\iota(c)
\]
is bijective. Clearing a fixed denominator of
$\mathfrak j$ in the $A$-linearity of $\Hcal_\varphi$ gives
\[
\mathcal H_\varphi(\tilde\pi\iota(c))
=\chi(c)\mathcal H_\varphi(\tilde\pi).
\]
Hence
\[
\begin{aligned}
-\sum_{0\ne\lambda\in\Lambda_\varphi}
\frac{\mathcal H_\varphi(\lambda)}{\lambda^n}
&=-\sum_{0\ne c\in\mathfrak j}
\frac{\chi(c)\mathcal H_\varphi(\tilde\pi)}
      {(\tilde\pi\iota(c))^n}\\
&=-\tilde\pi^{-n}\Lcal_{\mathfrak j}(n)
\mathcal H_\varphi(\tilde\pi)
=C_{\varphi,n}.
\end{aligned}
\]
\end{proof}

\begin{notation}
 Define the lattice generating series 
    \begin{equation}\label{eq:generating-coefficients}
    \mathscr{C}_\varphi (z)= \sum_{n\ge1}C_{\varphi,n}z^{n-1}.
    \end{equation}
    In particular, $\mathscr{C}_\varphi (0)= C_{\varphi}$. Theorem~\ref{thm:lattice-generating} identifies this series
with $\Hcal_\varphi(z)/\exp_\varphi(z)$.
\end{notation}
 
\begin{corollary}\label{cor:sumH}
Put $r_\varphi=\min_{0\ne\lambda\in\Lambda_\varphi}|\lambda|$.
For $|z|<r_\varphi$,
\begin{equation}\label{eq:lattice-taylor-sum}
\begin{aligned}
\sum_{0\ne\lambda\in\Lambda_\varphi} \frac{\Hcal_\varphi(\lambda)}{z-\lambda}
 &=\mathscr{C}_\varphi(z)\\
 &=-\Hcal_\varphi(\tilde\pi)
       \sum_{n\ge1}\tilde\pi^{-n}\Lcal_{\mathfrak j}(n)z^{n-1}.
\end{aligned}
\end{equation}
\end{corollary}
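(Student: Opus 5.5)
The plan is to expand each term $1/(z-\lambda)$ as a geometric series in $z/\lambda$ and then interchange the two sums. For $|z|<r_\varphi$ and $0\ne\lambda\in\Lambda_\varphi$ we have $|z/\lambda|\le|z|/r_\varphi<1$, so
\[
\frac{1}{z-\lambda}=-\sum_{n\ge1}\frac{z^{n-1}}{\lambda^n}.
\]
Hence formally
\[
\sum_{0\ne\lambda}\frac{\Hcal_\varphi(\lambda)}{z-\lambda}
=-\sum_{0\ne\lambda}\sum_{n\ge1}\frac{\Hcal_\varphi(\lambda)}{\lambda^n}z^{n-1}.
\]
By Lemma~\ref{lem:higher-lattice-differential}, summing over $\lambda$ first gives $\sum_{n\ge1}C_{\varphi,n}z^{n-1}=\mathscr C_\varphi(z)$, which is the first equality. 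The second equality then follows by substituting the definition \eqref{eq:higher-lattice-differential} of $C_{\varphi,n}$ term by term.

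The main obstacle is justifying the interchange of summations in the non-archimedean Banach module $\mathbb M_A$. For this I would show that the double family
\[
\bigl(\Hcal_\varphi(\lambda)z^{n-1}/\lambda^n\bigr)_{\lambda,n}
\]
tends to zero along the cofinite filter on the countable index set $(\Lambda_\varphi\setminus\{0\})\times\mathbb Z_{\ge1}$. In an ultrametric complete space this implies unconditional summability, so any order of summation is allowed.

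To get the decay, use the generators $\gamma_j$ and the constant $M_0$ from \eqref{eq:period-sum-bound}, which bound $\|\Hcal_\varphi(\lambda)\|\le M_0$ uniformly in $\lambda$. Each term then has norm at most
\[
M_0|z|^{-1}\,(|z|/|\lambda|)^n\le M_0|z|^{-1}\rho^n,\qquad \rho=|z|/r_\varphi<1,
\]
and also at most $M_0|z|^{-1}|z|/|\lambda|$. Fix $\epsilon>0$. Terms with $n\ge N$ are small for $N$ large, uniformly in $\lambda$. For $n<N$, only finitely many $\lambda$ have $|\lambda|$ bounded, so the remaining terms are small outside a finite set. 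The case $z=0$ is trivial, with both sides equal to $-\sum\Hcal_\varphi(\lambda)/\lambda=C_\varphi$.

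The same estimate shows that the left-hand sum converges for each such $z$, so the identity is an equality of convergent sums in $\mathbb M_A$. It also shows that $\mathscr C_\varphi$ converges on the disc $|z|<r_\varphi$.
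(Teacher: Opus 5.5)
Your proposal is correct and follows essentially the same route as the paper: expand $(z-\lambda)^{-1}=-\sum_{n\ge1}z^{n-1}/\lambda^n$, use the uniform bound \eqref{eq:period-sum-bound} to interchange the sums, and apply Lemma~\ref{lem:higher-lattice-differential} and then \eqref{eq:higher-lattice-differential}. The only difference is that you spell out the interchange explicitly, by showing that the double family tends to zero along the cofinite filter. The paper compresses this step into the remark that the bound gives normal convergence on closed disks of radius less than $r_\varphi$.
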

\begin{proof}
The bound \eqref{eq:period-sum-bound} gives normal convergence
on every closed disk of radius less than $r_\varphi$.
Expand $(z-\lambda)^{-1}=-\sum_{n\ge1}z^{n-1}/\lambda^n$
and apply Lemma~\ref{lem:higher-lattice-differential}.
The second equality is \eqref{eq:higher-lattice-differential}.
\end{proof}

Write the normalized exponential of $\varphi$ as in
\cite[Section 2.6]{HuCauchy}:
\[
\exp_\varphi(z)=\sum_{j\ge0}e_j(\varphi)z^{q^j},
\qquad e_0(\varphi)=1.
\]

Finite $\Fq$-linear interpolation gives relations among the
$C_{\varphi,n}$. The same method underlies the Carlitz generating
series of \cite[Sections 2.4--2.6]{Perkins14}.

\begin{lemma}\label{lem:weighted-interpolation}
Put $I_\varphi(w)=\exp_\varphi(w)\mathscr C_\varphi(w)$. By
\eqref{eq:lattice-taylor-sum},
\[
I_\varphi(w)
 =\exp_\varphi(w)\sum_{0\ne\lambda\in\Lambda_\varphi}
       \frac{\Hcal_\varphi(\lambda)}{w-\lambda}.
\]
\begin{enumerate}
\item[(i)] The power series $I_\varphi$ has nonzero coefficients only at
powers $w^{q^j}$.
\item[(ii)] $I_\varphi$ extends to an entire function satisfying
\[
\tau_w I_\varphi-f_\varphi I_\varphi=-f_\varphi C_\varphi\exp_\varphi.
\]

\end{enumerate}
\end{lemma}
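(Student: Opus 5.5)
Both parts come from expanding $I_\varphi$ lattice-wise and using the functional equations already established for $\exp_\varphi$ and $\Hcal_\varphi$.

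\emph{Part (i).} I will take a power-series approach near $w=0$. Write $I_\varphi(w)=\exp_\varphi(w)\mathscr C_\varphi(w)$ with $\mathscr C_\varphi(w)=\sum_{n\ge1}C_{\varphi,n}w^{n-1}$. By Lemma~\ref{lem:higher-lattice-differential},
\[
C_{\varphi,n}=-\sum_{\lambda\ne0}\Hcal_\varphi(\lambda)\lambda^{-n}.
\]
Expanding term by term (justified by \eqref{eq:period-sum-bound}), I get
\[
I_\varphi(w)=\sum_{\lambda\ne0}\Hcal_\varphi(\lambda)\,\frac{\exp_\varphi(w)}{w-\lambda}.
\]
For each fixed $\lambda$, the $\Fq$-linearity of $\exp_\varphi$ together with $\exp_\varphi(\lambda)=0$ gives $\exp_\varphi(w)=\exp_\varphi(w-\lambda)$. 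Hence
\[
\frac{\exp_\varphi(w)}{w-\lambda}=\sum_j e_j(\varphi)(w-\lambda)^{q^j-1}.
\]
This expansion alone does not isolate $q$-power monomials, so I will instead combine the $\Fq^\times$-orbits. By $A$-linearity, $\Hcal_\varphi(\zeta\lambda)=\zeta\Hcal_\varphi(\lambda)$ for $\zeta\in\Fq^\times$. Grouping over $\Fq\lambda$ gives
\[
\sum_{\zeta\in\Fq^\times}\frac{\zeta}{w-\zeta\lambda}=\frac{-\,w^{q-2}\cdots}{\cdots}.
\]
Rather than push this through, the cleaner route is the coefficient formula
\[
[w^{m}]\,I_\varphi=\sum_{q^j+n-1=m}e_jC_{\varphi,n}.
\]
Then I will show this vanishes unless $m$ is a $q$-power. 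The plan is to use the additive structure: $\sum_{\lambda\ne0}\Hcal_\varphi(\lambda)/(w-\lambda)$ times $\exp_\varphi(w)=w\prod_{\lambda\ne0}(1-w/\lambda)$ equals $\sum_\lambda\Hcal_\varphi(\lambda)\prod_{\mu\ne\lambda,0}(1-w/\mu)\cdot(-w/\lambda)$, up to normalization. This is a lattice interpolation of the $\Fq$-linear (indeed $A$-linear) map $\lambda\mapsto\Hcal_\varphi(\lambda)$ by an entire function vanishing at $0$.

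I will argue by finite truncation, exactly as in Perkins: replace $\Lambda_\varphi$ by finite $\Fq$-subspaces $W_N$ (the elements of norm $\le R_N$). The finite interpolant
\[
P_N(w)=\sum_{0\ne\lambda\in W_N}\Hcal_\varphi(\lambda)\frac{e_{W_N}(w)}{e_{W_N}'(\lambda)(w-\lambda)}
\]
is the unique polynomial of degree $<|W_N|$ taking value $\Hcal_\varphi(\lambda)$ at each $\lambda\in W_N$. Since that map is $\Fq$-linear, Lagrange interpolation of an $\Fq$-linear function on an $\Fq$-space yields an $\Fq$-linear polynomial, which has only $q$-power monomials (a standard Moore-determinant fact). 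Here $e_{W_N}$ is the subspace polynomial, and $e_{W_N}'(\lambda)=e_{W_N}'(0)$ is constant. As $N\to\infty$, $e_{W_N}$ normalized by its linear term tends to $\exp_\varphi$, and $P_N\to I_\varphi$ coefficientwise. This uses the Gauss-norm bounds from \eqref{eq:period-sum-bound} and the Hayes product expansion. The support on $q$-powers passes to the limit. I expect this limit interchange to be the main obstacle; I will control it with the uniform bound $M_0$ on $\|\Hcal_\varphi(\lambda)\|$ and the decay $|\lambda|^{-1}$.

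\emph{Part (ii).} Since $I_\varphi$ is $\Fq$-linear, the coefficient growth inherited from the limit shows that it converges on every disk. Indeed, each coefficient $[w^{q^j}]I_\varphi$ is a finite combination $\sum_{k\le j}e_kC_{\varphi,q^j-q^k+1}$, and entirety follows from the decay of $e_k$ and of $C_{\varphi,n}\tilde\pi^{n}$. For the functional equation, both $\tau_wI_\varphi$ and $f_\varphi I_\varphi$ are entire and $\Fq$-linear, so it suffices to compare them at lattice points and check the linear term. On the lattice, $I_\varphi(\lambda)=\Hcal_\varphi(\lambda)$: this is the interpolation property, since $\exp_\varphi$ has a simple zero with derivative one. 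Lemma~\ref{lem:generating-frobenius} then shows that $\tau_wI_\varphi-f_\varphi I_\varphi$ and $\tau_w\Hcal_\varphi-f_\varphi\Hcal_\varphi$ vanish on $\Lambda_\varphi$. The natural move is therefore to show $I_\varphi=\Hcal_\varphi$ outright.

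The difference $\Hcal_\varphi-I_\varphi$ is an entire $\Fq$-linear differential-valued function vanishing on $\Lambda_\varphi$, so it is divisible by $\exp_\varphi$. I will identify it using the Frobenius equation. Set $E=\Hcal_\varphi-I_\varphi$ and suppose it has the form $G\exp_\varphi$. Then Lemma~\ref{lem:generating-frobenius} gives the identity in (ii) for $I_\varphi$ precisely when
\[
\tau_w E-f_\varphi E=0 .
\]
I expect $E=0$ to follow by comparing linear coefficients: the coefficient of $w$ in both $\Hcal_\varphi$ and $I_\varphi$ equals $C_\varphi$, using $e_0=1$ and Theorem~\ref{thm:projective-pairing}. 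Once the leading coefficients agree, the recursion given by the Frobenius relation determines each coefficient from the previous one. Establishing that recursion for $I_\varphi$ directly, without circularity, is the point I would check most carefully.
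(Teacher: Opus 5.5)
Your part (i) follows the paper's argument: finite interpolants $E_V(w)\sum_{0\ne\lambda\in V}\Hcal_\varphi(\lambda)/(w-\lambda)$ on $\Fq$-subspaces $V\subset\Lambda_\varphi$, $\Fq$-linearity by uniqueness against the Moore-determinant interpolant, and a limit in the Gauss norm on $|w|\le r<r_\varphi$ using the uniform bound on $\|\Hcal_\varphi(\lambda)\|$ and $E_V\to\exp_\varphi$. Part (ii), however, has two gaps.

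The first concerns entirety. It does not follow from termwise estimates of $[w^{q^j}]I_\varphi=\sum_{k\le j}e_k(\varphi)C_{\varphi,q^j-q^k+1}$. The quantity $\tilde\pi^nC_{\varphi,n}=-\Lcal_{\mathfrak j}(n)\Hcal_\varphi(\tilde\pi)$ is bounded but does not decay: for $\mathfrak j=A$ and $n\equiv1\pmod{q-1}$ one has $\|\Lcal_A(n)+1\|<1$. So the $k=0$ term $C_{\varphi,q^j}$ alone has size about $|\tilde\pi|^{-q^j}$, and such bounds give convergence only on $|w|<r_\varphi$. Entirety rests on cancellation among the terms, and $\Fq$-linearity says nothing about growth. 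The paper argues on the lattice sum instead. On a disk of radius less than $R$, the tail over $|\lambda|>R$ is analytic with norm $O(R^{-1})$. The function $\exp_\varphi$ has simple zeros at the periods, so it cancels the finitely many remaining poles.

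The second, and decisive, gap is that you never derive the Frobenius equation for $I_\varphi$. Comparing values on $\Lambda_\varphi$ and linear terms proves nothing. Both $\tau_wI_\varphi-f_\varphi I_\varphi$ and $-f_\varphi C_\varphi\exp_\varphi$ vanish on $\Lambda_\varphi$ and have linear term $-f_\varphi C_\varphi$. Yet an entire $\Fq$-linear function with these properties can be nonzero, e.g.\ $B\exp_\varphi^q$ for a constant differential $B$.

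Your fallback is to prove $I_\varphi=\Hcal_\varphi$ by the coefficient recursion. That requires $\tau_wE=f_\varphi E$ for $E=\Hcal_\varphi-I_\varphi$, which by Lemma~\ref{lem:generating-frobenius} is exactly the identity to be proved. That uniqueness step is how the paper obtains Theorem~\ref{thm:lattice-generating} from (ii); it is not a route to (ii).

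The missing idea is to twist the lattice sum term by term:
\begin{itemize}
\item Use $\Hcal_\varphi(\lambda)^{(1)}=f_\varphi\Hcal_\varphi(\lambda)$, and note that $\tau_w$ sends $(w-\lambda)^{-1}$ to $(w^q-\lambda^q)^{-1}=(w-\lambda)^{-q}=D_w^{[q-1]}(w-\lambda)^{-1}$, since $(-1)^{q-1}=1$. This gives $\tau_w\mathscr C_\varphi=f_\varphi D_w^{[q-1]}\mathscr C_\varphi$.
\item By part (i), $I_\varphi(w+v)=I_\varphi(w)+C_\varphi v+O(v^q)$ and $\exp_\varphi(w+v)=\exp_\varphi(w)+v+O(v^q)$. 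The coefficient of $v^{q-1}$ in their quotient gives $D_w^{[q-1]}\mathscr C_\varphi=(I_\varphi-C_\varphi\exp_\varphi)/\exp_\varphi^q$.
\item Multiply by $\exp_\varphi^q$ and use $\tau_wI_\varphi=\exp_\varphi^q\,\tau_w\mathscr C_\varphi$. This yields $\tau_wI_\varphi=f_\varphi(I_\varphi-C_\varphi\exp_\varphi)$, which is (ii).
\end{itemize}
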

\begin{proof}

 Fix $0\ne\nu_0\in\Omega_A^{\mathrm{rat}}$ and write
$h(\lambda)=\Hcal_\varphi(\lambda)/\nu_0$.
This $\Fq$-linear map takes values in
$\operatorname{Frac}(\mathbb T_A)$; we take limits in the
Gauss completion of that field.
Division by $\nu_0$ defines a continuous map
$\mathbb M_A\to\operatorname{Frac}(\mathbb T_A)$, so
\eqref{eq:period-sum-bound} gives a uniform bound
$\|h(\lambda)\|\le M$ for $\lambda\in\Lambda_\varphi$.

 \emph{(i)}
For a nonzero finite-dimensional $\Fq$-subspace
$V\subset\Lambda_\varphi$, put
\[
E_V(w)=w\prod_{0\ne\lambda\in V}\Bigl(1-\frac{w}{\lambda}\Bigr),\qquad
I_V(w)=E_V(w)\sum_{0\ne\lambda\in V}\frac{h(\lambda)}{w-\lambda}.
\]
The polynomial $E_V$ is $\Fq$-linear of degree $|V|$ and
has derivative one. For $0\ne\lambda\in V$, the quotient
$E_V(w)/(w-\lambda)$ is a polynomial of degree $|V|-1$.
At $\mu\in V$, its value is one if $\mu=\lambda$ and zero
otherwise. Thus $I_V$ interpolates $h$ on $V$, including at zero.
The Moore determinant of an $\Fq$-basis of $V$ also gives an
$\Fq$-linear polynomial $P_V$ of degree at most $|V|/q$
interpolating $h$ on $V$. Uniqueness of interpolation in degree
less than $|V|$ gives $I_V=P_V$, so $I_V$ is $\Fq$-linear.

Now take $V=\{\lambda\in\Lambda_\varphi:|\lambda|\le R\}$ and let
$R\to\infty$. The lattice product expansion gives
$E_V\to\exp_\varphi$ uniformly on bounded disks
\cite[Section 8]{Hayes92}. Choose
$0<r<\min_{0\ne\lambda\in\Lambda_\varphi}|\lambda|$ and let
$\|\cdot\|_r$ be the Gauss norm on series converging on $|w|\le r$.
For every nonzero period,
\[
\left\|\frac{h(\lambda)}{w-\lambda}\right\|_r
\le\frac{M}{|\lambda|},\qquad
\left\|\sum_{|\lambda|>R}\frac{h(\lambda)}{w-\lambda}\right\|_r
\le\frac{M}{R}.
\]
Hence the sum converges in $\|\cdot\|_r$, and Corollary~\ref{cor:sumH}
gives
\begin{equation}\label{eq:sumLambda}
\sum_{0\ne\lambda\in\Lambda_\varphi}\frac{h(\lambda)}{w-\lambda}
 =\frac1{\nu_0}\sum_{n\ge1}C_{\varphi,n}w^{n-1}
 =\frac{\mathscr C_\varphi(w)}{\nu_0}.
\end{equation}
It follows that
$I_V\to\exp_\varphi\mathscr C_\varphi/\nu_0=I_\varphi/\nu_0$ in
$\|\cdot\|_r$, hence coefficientwise. Since every $I_V$ contains only
$q$-powers of $w$, so does $I_\varphi$.

\emph{(ii)}
On any bounded disk, choose $R$ larger than its radius.
By \eqref{eq:period-sum-bound}, the tail
$\sum_{|\lambda|>R}\Hcal_\varphi(\lambda)/(w-\lambda)$
has norm $O(R^{-1})$ there. Multiplication by $\exp_\varphi(w)$
cancels the finitely many remaining simple poles, since the
exponential has derivative one and simple zeros at the periods.
Thus $I_\varphi$ is entire. By (i), it is $\Fq$-linear, and
$I_\varphi'(0)=C_\varphi$ because $\mathscr C_\varphi(0)=C_\varphi$.

Let $D_w^{[m]}$
denote the Hasse derivative of order $m$ in $w$. Since $(-1)^{q-1}=1$
in the coefficient field,
$D_w^{[q-1]}(w-\lambda)^{-1}=(w-\lambda)^{-q}$. Twisting
\eqref{eq:lattice-taylor-sum} term by term and using the period relation
$\Hcal_\varphi(\lambda)^{(1)}=f_\varphi\Hcal_\varphi(\lambda)$ gives
\[
 \tau_w(\mathscr C_\varphi)
 =f_\varphi\sum_{0\ne\lambda\in\Lambda_\varphi}
       \frac{\Hcal_\varphi(\lambda)}{(w-\lambda)^q}
 =f_\varphi D_w^{[q-1]}\mathscr C_\varphi.
\]
 The additivity of $I_\varphi$ and $\exp_\varphi$,
together with their linear coefficients, gives
\[
I_\varphi(w+v)=I_\varphi(w)+C_\varphi v+O(v^q),\]
and 
\[ 
\exp_\varphi(w+v)=\exp_\varphi(w)+v+O(v^q).
\]
So taking the coefficient of $v^{q-1}$ in
$I_\varphi(w+v)/\exp_\varphi(w+v)$ yields
\[
D_w^{[q-1]}\mathscr C_\varphi
 =D_w^{[q-1]}\frac{I_\varphi}{\exp_\varphi}
 =\frac{I_\varphi-C_\varphi\exp_\varphi}{\exp_\varphi^q}.
\]
Substituting this into
$\tau_w (I_\varphi)=\exp_\varphi^q\,\tau_w (\mathscr C_\varphi) $ gives
\[
\tau_w (I_\varphi)
 =f_\varphi\bigl(I_\varphi-C_\varphi\exp_\varphi\bigr),
\]
which is the equation of (ii).
\end{proof}

 The interpolation numerator $I_\varphi$ and the generating
differential $\Hcal_\varphi$ satisfy the same inhomogeneous
Frobenius equation. Uniqueness yields the generating identity.

\begin{theorem}\label{thm:lattice-generating}
For every rank-one module $\varphi$, the following is an
identity of meromorphic differential-valued functions of $w$:
\begin{equation}\label{eq:lattice-generating} 
\frac{\Hcal_\varphi(w)}{\exp_\varphi(w)}
=\sum_{0\ne\lambda\in\Lambda_\varphi}
       \frac{\Hcal_\varphi(\lambda)}{w-\lambda}.
\end{equation}
The lattice sum converges normally away from the nonzero
periods. Its value at zero is $C_\varphi$, and its Taylor expansion
on $|w|<r_\varphi$ is $\mathscr C_\varphi(w)$.
Thus the quotient on the left has a removable singularity at zero
and gives the meromorphic continuation of $\mathscr C_\varphi$.
\end{theorem}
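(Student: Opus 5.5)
We show that $\Hcal_\varphi$ and the interpolation numerator $I_\varphi=\exp_\varphi\mathscr C_\varphi$ of Lemma~\ref{lem:weighted-interpolation} coincide as entire functions of $w$. Dividing by $\exp_\varphi$ then gives \eqref{eq:lattice-generating} wherever $\exp_\varphi(w)\ne0$. Put $\Delta=\Hcal_\varphi-I_\varphi$. This is an entire differential-valued function of $w$. By Lemma~\ref{lem:generating-frobenius} and Lemma~\ref{lem:weighted-interpolation}(ii), both functions satisfy the same inhomogeneous equation $\tau_wB-f_\varphi B=-f_\varphi C_\varphi\exp_\varphi$. Hence the difference satisfies the homogeneous equation
\[
\tau_w\Delta=f_\varphi\Delta .
\]
Both $\Hcal_\varphi$ and $I_\varphi$ are $\Fq$-linear: the first by its construction in \eqref{eq:entire-generating-differential}, the second by Lemma~\ref{lem:weighted-interpolation}(i). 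Write $\Delta(w)=\sum_{j\ge0}\Delta_jw^{q^j}$. The equation becomes $\Delta_{j}^{(1)}=f_\varphi\Delta_{j+1}$. Equivalently, $\Delta_{j+1}=\Delta_j^{(1)}/f_\varphi$ for $j\ge0$, so everything is determined by the linear coefficient $\Delta_0$.

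The linear coefficient of $I_\varphi$ is $C_\varphi$, by Lemma~\ref{lem:weighted-interpolation}(ii). I expect the linear coefficient of $\Hcal_\varphi$ to be $C_\varphi$ as well. To see this, compare the $w$-coefficients in Lemma~\ref{lem:generating-frobenius}. Since $\tau_w$ only produces powers $w^{q^j}$ with $j\ge1$, the coefficient of $w^1$ gives $-f_\varphi\cdot(\text{linear coeff of }\Hcal_\varphi)=-f_\varphi C_\varphi$. This forces the linear coefficient to equal $C_\varphi$, because $f_\varphi\ne0$. Thus $\Delta_0=0$, and the recursion gives $\Delta_j=0$ for all $j$, so $\Delta=0$.

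This step is what makes the argument work: the inhomogeneous term $-f_\varphi C_\varphi\exp_\varphi$ has the same linear coefficient structure on both sides, and the homogeneous equation has no nonzero solution with vanishing linear term. The key point to check carefully is that $I_\varphi$ is genuinely entire and equals $\exp_\varphi\mathscr C_\varphi$ on the small disk. Lemma~\ref{lem:weighted-interpolation} already provides both facts, so the identity of power series extends to all $w$ by analytic continuation.

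It remains to establish the convergence and removability claims. The lattice sum converges normally on compact sets avoiding nonzero periods. On such sets $|w-\lambda|=|\lambda|$ for $|\lambda|$ large, so the terms are bounded by $M/|\lambda|$ via \eqref{eq:period-sum-bound}; this is the same tail estimate used in the proof of Lemma~\ref{lem:weighted-interpolation}(ii). On $|w|<r_\varphi$, Corollary~\ref{cor:sumH} identifies the sum with $\mathscr C_\varphi(w)$, whose value at $0$ is $C_{\varphi,1}=C_\varphi$ by Theorem~\ref{thm:projective-pairing}. Since $\Hcal_\varphi(0)=0$ and $\exp_\varphi$ has a simple zero at $0$ with derivative one, the quotient $\Hcal_\varphi/\exp_\varphi$ has a removable singularity at zero, with value the linear coefficient $C_\varphi$. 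It agrees with $\mathscr C_\varphi$ there, and so provides its meromorphic continuation, with simple poles at most at the nonzero periods.

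The main obstacle I anticipate is justifying the coefficientwise manipulation for differential-valued entire functions with coefficients in $\mathbb M_A$. The difficulty is that division by $f_\varphi$ happens in $\operatorname{Frac}(\mathbb T_A)\otimes_A\Omega_A$, not in $\mathbb M_A$. I would handle it by fixing $\nu_0$ as in the proof of Lemma~\ref{lem:weighted-interpolation} and working with scalar coefficients in the Gauss completion of $\operatorname{Frac}(\mathbb T_A)$. There $f_\varphi$ is a nonzero element of a field, so the recursion and the injectivity argument are legitimate.
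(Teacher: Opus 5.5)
Your proposal is correct and follows the paper's proof. Both arguments show that $\Hcal_\varphi$ and $I_\varphi$ are entire $\Fq$-linear solutions of the same inhomogeneous Frobenius equation, so their difference $D$ satisfies $\tau_w D=f_\varphi D$, and the recursion $D_r=f_\varphi^{-1}D_{r-1}^{(1)}$ then kills every coefficient once $D_0=0$. Your separate computation of the two linear coefficients is harmless but unnecessary: the $w^1$ coefficient of the homogeneous equation already gives $f_\varphi\Delta_0=0$ directly.
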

\begin{proof}
Both $I_\varphi$ and $\Hcal_\varphi$ are entire
$\Fq$-linear functions vanishing at zero, and they satisfy the same
Frobenius equation by Lemma~\ref{lem:weighted-interpolation}(ii) and
Lemma~\ref{lem:generating-frobenius}. Their difference
$D(w)=\sum_{r\ge0}D_rw^{q^r}$ satisfies
$\tau_w D=f_\varphi D$.
The coefficient of $w$ gives $D_0=0$, and then
$D_r=f_\varphi^{-1}D_{r-1}^{(1)}=0$ successively.
Hence $I_\varphi=\Hcal_\varphi$, proving \eqref{eq:lattice-generating}.
\end{proof}

\begin{corollary}\label{cor:element-generating}
Let $0\ne\lambda_0\in\Lambda_\varphi$ and write
$\Lambda_\varphi=\lambda_0\iota(\mathfrak j)$, with
$\mathfrak j\supset A$. Then
\begin{equation}\label{eq:fractional-generating}
\sum_{0\ne a\in\mathfrak j}\frac{\chi(a)}{z-\iota(a)}
 =\frac{\lambda_0}{\exp_\varphi(\lambda_0z)}
       \frac{\Hcal_\varphi(\lambda_0z)}{\Hcal_\varphi(\lambda_0)}.
\end{equation}
In particular, if $\Lambda_\psi=A\tilde\pi$, then
\begin{equation}\label{eq:free-generating}
\sum_{0\ne a\in A}\frac{\chi(a)}{z-\iota(a)}
 =\frac{\tilde\pi}{\exp_\psi(\tilde\pi z)}
       \frac{\Hcal_\psi(\tilde\pi z)}{\Hcal_\psi(\tilde\pi)}.
\end{equation}
The latter identity holds in $\mathbb T_A$ for $|z|<1$,
and meromorphically in $z$, with the value at zero obtained
by continuation. Neither sign normalization nor freeness
of $\Omega_A$ is required.
\end{corollary}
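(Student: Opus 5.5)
The plan is to specialize Theorem~\ref{thm:lattice-generating} at the argument $w=\lambda_0z$ and divide by the scalar-free differential $\Hcal_\varphi(\lambda_0)$. By \eqref{eq:lattice-generating},
\[
\frac{\Hcal_\varphi(\lambda_0z)}{\exp_\varphi(\lambda_0z)}
=\sum_{0\ne\lambda\in\Lambda_\varphi}\frac{\Hcal_\varphi(\lambda)}{\lambda_0z-\lambda}.
\]
We reindex the right side by $\lambda=\lambda_0\iota(a)$, $0\ne a\in\mathfrak j$, which is a bijection as in the proof of Lemma~\ref{lem:higher-lattice-differential}. We then use $\Hcal_\varphi(\lambda_0\iota(a))=\chi(a)\Hcal_\varphi(\lambda_0)$, which follows from $A$-linearity after clearing a common denominator $\chi(b)$ with $b\mathfrak j\subset A$. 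This turns the sum into $\lambda_0^{-1}\Hcal_\varphi(\lambda_0)\sum_{0\ne a\in\mathfrak j}\chi(a)/(z-\iota(a))$. Since $\Hcal_\varphi(\lambda_0)$ has a simple pole at $\Xi$ with residue $\lambda_0\neq0$, it is a nonzero element of the one-dimensional $\operatorname{Frac}(\mathbb T_A)$-space, so the quotient notation applies. Dividing by it and multiplying by $\lambda_0$ gives \eqref{eq:fractional-generating}.

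Next we justify convergence and the domain of the identity. For $|z|<1$ we have $|\lambda_0z|<|\lambda_0|$. We need $|\lambda_0 z|<r_\varphi$ to use the Taylor form. Rather than rely on that, we work directly with the normally convergent lattice sum away from the periods: the bound \eqref{eq:period-sum-bound} at $k=0$ shows that the sum $\sum\chi(a)/(z-\iota(a))$, after multiplication by $\chi(b)$, converges in $\mathbb T_A$ for $z$ away from $\iota(\mathfrak j)$. In particular it converges for $|z|<1$ when $\mathfrak j=A$, since all nonzero $\iota(a)$ then have $|\iota(a)|\ge1$. Meromorphy in $z$ is inherited from the meromorphic identity of Theorem~\ref{thm:lattice-generating}. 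The value at $z=0$ is obtained from the removable singularity statement there: the left side at $z=0$ is the lattice sum evaluated at $0$, and the right side is its continuation, which equals $\lambda_0C_\varphi/\Hcal_\varphi(\lambda_0)$.

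The free case \eqref{eq:free-generating} is the special case $\lambda_0=\tilde\pi$, $\mathfrak j=A$. In that case no denominator is needed, so the identity holds in $\mathbb T_A$ itself for $|z|<1$. Finally, Theorem~\ref{thm:lattice-generating} and the period map use no sign normalization and no choice of a generator of $\Omega_A$, so neither hypothesis is required here.

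The only delicate point is the bookkeeping of where the division by $\Hcal_\varphi(\lambda_0)$ and the denominator $\chi(b)$ live. I would handle it by stating the identity first as an equality of differentials, before dividing, and only then passing to scalars in $\operatorname{Frac}(\mathbb T_A)$ and its Gauss completion.
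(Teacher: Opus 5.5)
Your proposal is correct and is the paper's proof: substitute $w=\lambda_0 z$ in \eqref{eq:lattice-generating}, use $\Hcal_\varphi(\lambda_0\iota(a))=\chi(a)\Hcal_\varphi(\lambda_0)$ (obtained by clearing a denominator of $\mathfrak j$), and divide by the nonzero differential $\Hcal_\varphi(\lambda_0)$ in $\operatorname{Frac}(\mathbb T_A)$. Your convergence discussion also agrees with the paper's: in the free case, $|z|<1$ is exactly the Taylor disk $|\tilde\pi z|<r_\psi$, and the sum lies in $\mathbb T_A$ because $|\iota(a)|\ge1$ for $0\ne a\in A$.
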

\begin{proof}
Clearing a denominator in the $A$-linearity of the period map gives
$\Hcal_\varphi(\lambda_0\iota(a))
=\chi(a)\Hcal_\varphi(\lambda_0)$ for $a\in\mathfrak j$.
Substitute $w=\lambda_0z$ in \eqref{eq:lattice-generating}
and divide by the nonzero period differential.
The quotient of differentials is taken in
$\operatorname{Frac}(\mathbb T_A)$.
For $\mathfrak j=A$, the left side belongs to $\mathbb T_A$
on $|z|<1$, since every nonzero $a\in A$ has $|\iota(a)|\ge1$.
For general $\mathfrak j$, the Taylor disk is
$|z|<r_\varphi/|\lambda_0|$, and a fixed denominator of
$\mathfrak j$ clears the geometric denominators.
\end{proof}

\subsection{Recurrences for the generating coefficients}
\begin{proposition}\label{prop:recurrence}
Let $\Hcal_\varphi(w)=\sum_{r\ge0}B_{\varphi,r}w^{q^r}$.
The coefficients $B_{\varphi,r}$ satisfy
\[
B_{\varphi,0}=C_\varphi,
\]
and, for every $r\ge 1$,
\begin{equation}\label{eq:recurrence}
B_{\varphi,r}
=e_r(\varphi)C_\varphi+f_\varphi^{-1}B_{\varphi,r-1}^{(1)}.
\end{equation}
Equivalently, iterating \eqref{eq:recurrence} gives
\[
B_{\varphi,r}
=\sum_{j=0}^{r}
\Bigl(\prod_{k=0}^{r-j-1}(f_\varphi^{(k)})^{-1}\Bigr)
e_j(\varphi)^{(r-j)}C_\varphi^{(r-j)},
\]
where the empty product is $1$ when $j=r$.
\end{proposition}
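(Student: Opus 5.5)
The plan is to compare coefficients of $w^{q^r}$ in the Frobenius equation of Lemma~\ref{lem:generating-frobenius},
\[
\tau_w(\Hcal_\varphi)-f_\varphi\Hcal_\varphi=-f_\varphi C_\varphi\exp_\varphi.
\]
By Theorem~\ref{thm:lattice-generating}, $\Hcal_\varphi=\exp_\varphi\mathscr C_\varphi$ is entire and $\Fq$-linear. Its linear coefficient is therefore $B_{\varphi,0}=\mathscr C_\varphi(0)\cdot e_0(\varphi)=C_\varphi$. This settles the initial value. Alternatively, it is the statement $I_\varphi'(0)=C_\varphi$ from Lemma~\ref{lem:weighted-interpolation}(ii), combined with $I_\varphi=\Hcal_\varphi$.

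For the recurrence, I substitute the expansions $\Hcal_\varphi(w)=\sum_r B_{\varphi,r}w^{q^r}$ and $\exp_\varphi(w)=\sum_j e_j(\varphi)w^{q^j}$. By the definition \eqref{eq:parameter-frobenius} of the parameter Frobenius,
\[
\tau_w\Hcal_\varphi=\sum_{r\ge0}B_{\varphi,r}^{(1)}w^{q^{r+1}},
\]
with the Frobenius twist acting only on arithmetic coefficients, so $f_\varphi$ and $C_\varphi$ are treated as untwisted multipliers. Comparing coefficients of $w^{q^r}$ for $r\ge1$ gives
\[
B_{\varphi,r-1}^{(1)}-f_\varphi B_{\varphi,r}=-f_\varphi e_r(\varphi)C_\varphi.
\]
The coefficient of $w$ gives $-f_\varphi B_{\varphi,0}=-f_\varphi C_\varphi$, consistent with the initial value. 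Dividing by $f_\varphi$ yields \eqref{eq:recurrence}. The division is taken in $\operatorname{Frac}(\mathbb T_A)\otimes_A\Omega_A$, where $f_\varphi$ is a nonzero rational function, so this is legitimate.

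The only point needing care is that comparing coefficients is justified. Both sides of the Frobenius equation are entire power series in $w$ with coefficients in $\mathbb M_A$, or after multiplication by $f_\varphi$, in the fraction space. The identity of entire functions therefore gives coefficientwise equality, and I expect no real obstacle here.

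The closed formula follows by induction on $r$. The case $r=0$ is $B_{\varphi,0}=C_\varphi$. For the inductive step, I twist the formula for $B_{\varphi,r-1}$ once. The twist shifts every index in $f_\varphi^{(k)}$, $e_j(\varphi)^{(r-1-j)}$ and $C_\varphi^{(r-1-j)}$ up by one. Multiplying by $f_\varphi^{-1}$ then supplies the new factor with $k=0$, so the product over $k$ runs up to $r-j-1$. Adding the $j=r$ term $e_r(\varphi)C_\varphi$, which carries the empty product, gives the stated sum.
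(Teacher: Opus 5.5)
Your proposal is correct and matches the paper's proof. Both compare coefficients of $w^{q^r}$ in the Frobenius equation of Lemma~\ref{lem:generating-frobenius}, read off $B_{\varphi,0}=C_\varphi$ from the linear term and the recurrence for $r\ge1$, and then obtain the closed formula by induction (your alternative derivation of $B_{\varphi,0}$ via $\mathscr C_\varphi(0)=C_\varphi$ is also valid, and your explicit inductive step just spells out what the paper calls applying the recurrence successively).
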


\begin{proof}
Substitute $\Hcal_\varphi(w)=\sum_{r\ge0}B_{\varphi,r}w^{q^r}$ into \eqref{eq:generating-frobenius}. Comparing the coefficients of $w^{q^r}$ gives
\[
B_{\varphi,r-1}^{(1)}-f_\varphi B_{\varphi,r}
=-f_\varphi C_\varphi e_r(\varphi)
\]
for $r\ge 1$, and
\[
-f_\varphi B_{\varphi,0}=-f_\varphi C_\varphi e_0(\varphi)=-f_\varphi C_\varphi
\]
for $r=0$. Hence $B_{\varphi,0}=C_\varphi$, and, for $r\ge 1$,
\[
f_\varphi B_{\varphi,r}
=B_{\varphi,r-1}^{(1)}+f_\varphi C_\varphi e_r(\varphi).
\]
Dividing by $f_\varphi$ yields \eqref{eq:recurrence}. The iterated formula follows by applying \eqref{eq:recurrence} successively and using $B_{\varphi,0}=C_\varphi$.
\end{proof}

 The expansion of $\Hcal_\varphi(w)$ and the identity
$\Hcal_\varphi(w)=\exp_\varphi(w)\mathscr C_\varphi(w)$ give
a recurrence for the quotient coefficients $C_{\varphi,n}$. 

\begin{proposition}\label{prop:higher-weight-recurrence}
Let $\varphi:A\to H\{\tau\} $ be a Drinfeld module over $ H $.
\begin{enumerate}
\item[(i)] $C_{\varphi,n}\in\Omega^1_{H(X)/H}$ for every $n\ge1$.
\item[(ii)] These differentials are determined by
\begin{align}
C_{\varphi,q^r}&=\frac{C_\varphi^{(r)}}{F_\varphi^{[r]}}
       &&(r\ge0),\label{eq:higher-weight-initial}\\
C_{\varphi,n}&=-\sum_{\substack{j\ge1\\q^j\le n}}
       e_j(\varphi)C_{\varphi,n-q^j+1}
       &&(n\notin\{q^r:r\ge0\}).\label{eq:higher-weight-recurrence}
\end{align}
\end{enumerate}
\end{proposition}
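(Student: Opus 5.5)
The plan is to compare coefficients in the generating identity $\Hcal_\varphi(w)=\exp_\varphi(w)\mathscr C_\varphi(w)$ of Theorem~\ref{thm:lattice-generating}, using the Taylor expansions $\Hcal_\varphi(w)=\sum_{r\ge0}B_{\varphi,r}w^{q^r}$ and $\mathscr C_\varphi(w)=\sum_{n\ge1}C_{\varphi,n}w^{n-1}$ valid on $|w|<r_\varphi$. Multiplying out the right side, the coefficient of $w^n$ is
\[
\sum_{j\ge0,\;q^j\le n}e_j(\varphi)\,C_{\varphi,n-q^j+1}.
\]
This must equal $B_{\varphi,r}$ if $n=q^r$ and $0$ otherwise.

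\emph{Recurrence.} When $n$ is not a power of $q$, the coefficient vanishes. The $j=0$ term is $C_{\varphi,n}$ since $e_0(\varphi)=1$, so isolating it gives \eqref{eq:higher-weight-recurrence} immediately.

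\emph{Initial values.} When $n=q^r$, the same comparison gives $B_{\varphi,r}=\sum_{j=0}^r e_j(\varphi)C_{\varphi,q^r-q^j+1}$. I want to show $C_{\varphi,q^r}=C_\varphi^{(r)}/F_\varphi^{[r]}$. Rather than extracting this from $B_{\varphi,r}$, I would read it off the lattice sum, which is cleaner. By Lemma~\ref{lem:higher-lattice-differential}, $C_{\varphi,q^r}=-\sum_\lambda\Hcal_\varphi(\lambda)/\lambda^{q^r}$. Twisting the defining sum for $C_\varphi$ by coefficient Frobenius $r$ times (continuity as in \eqref{eq:period-sum-bound}) gives
\[
C_\varphi^{(r)}=-\sum_\lambda\Hcal_\varphi(\lambda)^{(r)}/\lambda^{q^r}.
\]
Combining $\Hcal_\varphi(\lambda)^{(r)}=F_\varphi^{[r]}\Hcal_\varphi(\lambda)$ (from $\Hcal^{(1)}=f_\varphi\Hcal$) with \eqref{eq:pairing-product}-type reasoning then yields $C_\varphi^{(r)}=F_\varphi^{[r]}C_{\varphi,q^r}$. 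The only thing to check is that Frobenius acts on $\lambda^{-1}$ as $\lambda^{-q}$, which holds because $\lambda\in\Cinf$ is a coefficient. Together, \eqref{eq:higher-weight-initial} and \eqref{eq:higher-weight-recurrence} determine every $C_{\varphi,n}$ by induction on $n$: each index $n-q^j+1$ with $j\ge1$ is strictly less than $n$.

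\emph{Rationality.} For (i), I would induct along the same recursion. The base is $C_\varphi=C_{\varphi,1}\in\Omega^1_{H(X)/H}$, which holds by Corollary~\ref{cor:finite-algorithm}, whose trace formula has coefficients in $H$. The shtuka function satisfies $f_\varphi\in H(X)$ by the proof of Lemma~\ref{lem:rational-galois-covariance}, hence $F_\varphi^{[r]}\in H(X)$; the Frobenius twist $C_\varphi^{(r)}$ stays over $H$ as long as $H$ is perfect or at least stable under $q$-th powers, which it is. Finally, the exponential coefficients $e_j(\varphi)$ lie in $H$, since they are determined recursively from $\varphi_a$ via $\exp_\varphi\iota(a)=\varphi_a\exp_\varphi$. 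The recurrence then keeps everything in $\Omega^1_{H(X)/H}$.

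\emph{Main obstacle.} The delicate point is justifying the termwise Frobenius twist of the lattice sum in $\mathbb M_A$ and the identification of Taylor coefficients. The former uses continuity of coefficient Frobenius together with \eqref{eq:period-sum-bound}; the latter uses the normal convergence in Corollary~\ref{cor:sumH}. Everything else is formal coefficient comparison.
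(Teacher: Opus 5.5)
Your proposal is correct and follows essentially the same route as the paper. The initial values come from twisting the lattice sum and using $\Hcal_\varphi(\lambda)^{(r)}=F_\varphi^{[r]}\Hcal_\varphi(\lambda)$. The recurrence comes from the vanishing of the non-$q$-power coefficients of $\exp_\varphi\mathscr C_\varphi$: you obtain this via Theorem~\ref{thm:lattice-generating}, while the paper cites Lemma~\ref{lem:weighted-interpolation}(i) directly, which has the same content. Rationality follows by induction from $C_\varphi\in\Omega^1_{H(X)/H}$, $f_\varphi\in H(X)$, and $e_j(\varphi)\in H$, exactly as in the paper.
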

\begin{proof}
\emph{(ii).} Coefficient Frobenius is continuous on $\mathbb M_A$, so
\eqref{eq:higher-lattice-differential} at $n=1$ gives
\[
C_\varphi^{(r)}
 =-\sum_{0\ne\lambda\in\Lambda_\varphi}
       \frac{\Hcal_\varphi(\lambda)^{(r)}}{\lambda^{q^r}}
 =F_\varphi^{[r]}C_{\varphi,q^r}.
\]
Here the last equality uses
$\Hcal_\varphi(\lambda)^{(r)}=F_\varphi^{[r]}\Hcal_\varphi(\lambda)$
in $\operatorname{Frac}(\mathbb T_A)\otimes_A\Omega_A$.
Division by the nonzero function $F_\varphi^{[r]}$ proves
\eqref{eq:higher-weight-initial}.
For $n$ not a power of $q$, the coefficient of $w^n$ in
$\exp_\varphi(w)\mathscr C_\varphi(w)$ vanishes by
Lemma~\ref{lem:weighted-interpolation}(i). Since $e_0(\varphi)=1$,
this is precisely \eqref{eq:higher-weight-recurrence}.
Every index on its right is smaller than $n$.

\emph{(i).} Choose $a\in A\setminus\Fq$ and write
$\varphi_a=\sum_{i=0}^d\alpha_i\tau^i$, with $\alpha_0=\iota(a)$.
Comparing the coefficient of $z^{q^j}$ in
$\exp_\varphi(\iota(a) z)=\varphi_a(\exp_\varphi(z))$ gives
\begin{equation}\label{eq:exponential-coefficients}
 (\iota(a)^{q^j}-\iota(a))e_j(\varphi)
 =\sum_{i=1}^{\min(j,d)}\alpha_i e_{j-i}(\varphi)^{q^i}
\qquad(j\ge1).
\end{equation}
The factor $\iota(a)^{q^j}-\iota(a)$ is nonzero because
$\iota(a)$ is transcendental over $\Fq$. Induction gives $e_j(\varphi)\in H$.
Corollary~\ref{cor:finite-algorithm} and the calculation following it
give $C_\varphi\in\Omega^1_{H(X)/H}$ and $f_\varphi\in H(X)$.
Induction on $n$ in \eqref{eq:higher-weight-recurrence}, starting
from \eqref{eq:higher-weight-initial}, proves
$C_{\varphi,n}\in\Omega^1_{H(X)/H}$.
\end{proof}

In particular, 
\[
      C_{\varphi,q} = \frac{C_\varphi^{(1)}}{f_\varphi}
\]
and for $n = 2 q-1$, 
\begin{equation}\label{eq:2qn1}
      C_{\varphi,2 q -1 } = - e_1(\varphi) C_{\varphi, q} = - e_1(\varphi) \frac{C_\varphi^{(1)}}{f_\varphi}. 
\end{equation}
Substituting the definition \eqref{eq:higher-lattice-differential} of
$C_{\varphi,n}$, we obtain the explicit formula:
\[
        \tilde{\pi}^{-(2q-1)} \Lcal_{\mathfrak j}(2q-1)
        \Hcal_\varphi(\tilde{\pi})
        =e_1(\varphi)\frac{C_\varphi^{(1)}}{f_\varphi}.
\]
\subsection{Higher-weight Hilbert-class trace}
The coefficients $C_{\varphi,n}$ are rational over the field
of definition of $\varphi$. For a sign-normalized model, descent
from $H_A^+$ to $H_A$ is governed by their behavior under scalar
isomorphisms.
For $\varphi^z=z\varphi z^{-1}$,
the lattice is $z\Lambda_\varphi$, and \eqref{eq:period-scalar} gives
\begin{equation}\label{eq:higher-weight-covariance}
C_{\varphi^z,n}
 =-\sum_{0\ne\lambda\in\Lambda_\varphi}
       \frac{z\Hcal_\varphi(\lambda)}{(z\lambda)^n}
 =z^{1-n}C_{\varphi,n}.
\end{equation}
For an $\iota(K)$-embedding $\sigma:H\hookrightarrow\Cinf$,
\eqref{eq:exponential-coefficients} also gives
$\sigma(e_j(\varphi))=e_j(\varphi^\sigma)$ by induction.
Since $\sigma(x^{q^r})=\sigma(x)^{q^r}$, coefficient action commutes
with Frobenius twisting. Lemma~\ref{lem:rational-galois-covariance}
therefore proves covariance of the initial values
\eqref{eq:higher-weight-initial}. Applying $\sigma$ to the recurrence
and inducting on $n$ yields
\begin{equation}\label{eq:higher-galois-covariance}
\sigma(C_{\varphi,n})=C_{\varphi^\sigma,n}.
\end{equation}
\begin{theorem}\label{thm:higher-weight-trace}
Let $\psi$ be a sign-normalized Hayes module with
$\Lambda_\psi=A\tilde{\pi}$. If $n\ge1$ and
$n\equiv1\pmod{q^{\Ninf}-1}$, then
\begin{enumerate}
\item[(i)] $C_{\psi,n}\in\Omega^1_{H_A(X)/H_A}$;
\item[(ii)] the trace formula holds:
\begin{equation}\label{eq:higher-weight-trace}
\LL_\psi(n)\frac{\Hcal_\psi(\tilde{\pi})}{\tilde{\pi}^{n}}
       =\Tr_{H_A/K}(C_{\psi,n}).
\end{equation}
\end{enumerate}
\end{theorem}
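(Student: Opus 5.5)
The plan is to mirror the weight-one argument of Theorem~\ref{thm:principal-class} and Corollary~\ref{cor:hayes-trace}, using the higher-weight class decomposition \eqref{eq:all-weight-unweighted} in place of \eqref{eq:class-subseries}. Since $n\equiv1\pmod{q^{\Ninf}-1}$, the sign factors drop out and
\[
\LL_{\psi,[\Ia_i]}(n)=-\frac{u_{\Ia_i}^\psi}{(d_{\Ia_i}^\psi)^n}\Lcal_{\Ia_i^{-1}}(n).
\]

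\emph{Class terms as lattice differentials.} Put $\varphi_i=\Ia_i*\psi$ and $d_i=\partial\psi_{\Ia_i}$. Then $\Lambda_{\varphi_i}=d_i\tilde\pi\Ia_i^{-1}$ and $\Hcal_{\varphi_i}(d_i\tilde\pi)=u_{\Ia_i}^\psi\Hcal_\psi(\tilde\pi)$. Apply \eqref{eq:higher-lattice-differential} to $\varphi_i$ with period $d_i\tilde\pi$ and $\mathfrak j=\Ia_i^{-1}$:
\[
C_{\varphi_i,n}=-(d_i\tilde\pi)^{-n}\Lcal_{\Ia_i^{-1}}(n)\,u_{\Ia_i}^\psi\Hcal_\psi(\tilde\pi)
=\LL_{\psi,[\Ia_i]}(n)\frac{\Hcal_\psi(\tilde\pi)}{\tilde\pi^n}.
\]
Summing over the classes gives
\[
\LL_\psi(n)\Hcal_\psi(\tilde\pi)\tilde\pi^{-n}=\sum_i C_{\varphi_i,n}.
\]

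\emph{Galois identification and descent.} This step is the crux. By the Hayes correspondence with the Artin convention, $\varphi_i=\psi^{\sigma_{\Ia_i}}$ for the sign-normalized $\psi$, so \eqref{eq:higher-galois-covariance} gives $C_{\varphi_i,n}=\sigma_{\Ia_i}(C_{\psi,n})$. By Proposition~\ref{prop:higher-weight-recurrence}(i), $C_{\psi,n}$ is rational over $H_A^+$. For part (i), take $\sigma\in G_0$ represented by a principal ideal $(a)$. By \eqref{eq:sign-kernel-conjugation}, $\psi^\sigma=z_a\psi z_a^{-1}$ with $z_a\in\iota(\mathbb F_\infty^\times)$. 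Then \eqref{eq:higher-galois-covariance} and \eqref{eq:higher-weight-covariance} give
\[
\sigma(C_{\psi,n})=C_{z_a\psi z_a^{-1},n}=z_a^{1-n}C_{\psi,n}=C_{\psi,n},
\]
because $z_a^{q^{\Ninf}-1}=1$ and $(q^{\Ninf}-1)\mid(n-1)$. The fixed-field identity $(\Omega^1_{H_A^+(X)/H_A^+})^{G_0}=\Omega^1_{H_A(X)/H_A}$ then proves (i).

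Consequently $\sigma_{\Ia_i}(C_{\psi,n})$ depends only on the restriction of $\sigma_{\Ia_i}$ to $H_A$. The ordinary classes enumerate $\operatorname{Gal}(H_A/K)$, so the sum above equals $\Tr_{H_A/K}(C_{\psi,n})$, which proves (ii).

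The main obstacle I expect is justifying $\varphi_i=\psi^{\sigma_{\Ia_i}}$ exactly, not merely up to scalar. At weight one, scalar invariance made this harmless. Here $C_{\cdot,n}$ picks up the factor $z^{1-n}$, so I would rely on the Artin convention fixed before Corollary~\ref{cor:hayes-trace}, under which the monic ideal-isogeny target of a sign-normalized module is exactly its Galois conjugate. Any residual scalar lies in $\iota(\mathbb F_\infty^\times)$ and is killed by the congruence on $n$. I would also check that the Frobenius twists in the recurrence \eqref{eq:higher-weight-recurrence} commute with $\sigma$, which was already noted before \eqref{eq:higher-galois-covariance}.
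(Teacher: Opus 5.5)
Your proposal is correct and follows the paper's proof essentially step by step. Like the paper, you identify each class term with $C_{\varphi_i,n}$ via \eqref{eq:higher-lattice-differential} and \eqref{eq:all-weight-unweighted}, derive $G_0$-invariance from \eqref{eq:sign-kernel-conjugation}, \eqref{eq:higher-weight-covariance} and \eqref{eq:higher-galois-covariance} with $z_a^{1-n}=1$, and assemble the trace through the Artin convention $\Ia*\psi=\psi^{\sigma_{\Ia}}$ (your safeguard that any residual scalar lies in $\iota(\mathbb F_\infty^\times)$ and is killed by the congruence is harmless but not needed there).
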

\begin{proof}
Put $\varphi_i=\Ia_i*\psi$, $d_i=\partial\psi_{\Ia_i}$, and
$u_i=u_{\Ia_i}^\psi$. The lattice of $\varphi_i$ is
$d_i\tilde{\pi}\Ia_i^{-1}$, and
$\Hcal_{\varphi_i}(d_i\tilde{\pi})=u_i\Hcal_\psi(\tilde{\pi})$.
Equations~\eqref{eq:higher-lattice-differential} and
\eqref{eq:all-weight-unweighted} therefore give
\begin{equation}\label{eq:higher-weight-class}
C_{\varphi_i,n}
 =-\frac{u_i}{(d_i\tilde{\pi})^n}
       \Lcal_{\Ia_i^{-1}}(n)\Hcal_\psi(\tilde{\pi})
 =\LL_{\psi,[\Ia_i]}(n)
       \frac{\Hcal_\psi(\tilde{\pi})}{\tilde{\pi}^n}.
\end{equation}
\emph{(i).} For $\sigma\in G_0$, equations~\eqref{eq:sign-kernel-conjugation},
\eqref{eq:higher-weight-covariance}, and~\eqref{eq:higher-galois-covariance} give
\[
\sigma(C_{\psi,n})=C_{\psi^\sigma,n}
                  =z_a^{1-n}C_{\psi,n}=C_{\psi,n},
\]
where the last equality uses $n\equiv1\pmod{q^{\Ninf}-1}$.
Thus $C_{\psi,n}\in\Omega^1_{H_A(X)/H_A}$.
\emph{(ii).} Galois covariance identifies $C_{\varphi_i,n}$ with
$C_{\psi,n}^{\sigma_{\Ia_i}}$. Summing
\eqref{eq:higher-weight-class} proves the formula.
\end{proof}

 Substitution of \eqref{eq:higher-lattice-differential} gives
the corresponding trace relation with the element series. 
\begin{corollary}\label{cor:type-trace-higher}
Let $\psi$ be as in Theorem~\ref{thm:higher-weight-trace}.
For $n\ge1$ satisfying $n\equiv1\pmod{q^{\Ninf}-1}$,
\begin{equation}\label{eq:type-trace-higher1}
\LL_\psi(n)\frac{\Hcal_\psi(\tilde{\pi})}{\tilde{\pi}^n}
 = - \Tr_{H_A/K}\left(
       \Lcal_A(n)\frac{\Hcal_\psi(\tilde{\pi})}{\tilde{\pi}^n}\right).
\end{equation}
\end{corollary}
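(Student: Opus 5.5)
The corollary follows by substituting the period-sum normalization of $C_{\psi,n}$ into Theorem~\ref{thm:higher-weight-trace}(ii). By Notation~\ref{not:higher-lattice-differential} with $\Lambda_\psi=A\tilde\pi$, hence $\mathfrak j=A$, we have
\[
C_{\psi,n}=-\tilde\pi^{-n}\Lcal_A(n)\Hcal_\psi(\tilde\pi)
 =-\Lcal_A(n)\frac{\Hcal_\psi(\tilde\pi)}{\tilde\pi^n}.
\]
This is an identity in $\operatorname{Frac}(\mathbb T_A)\otimes_A\Omega_A$. Here $\Lcal_A(n)\in\mathbb T_A$ converges by the discussion after \eqref{eq:all-element-series}, and the product lies in $\mathbb M_A$ by Lemma~\ref{lem:higher-lattice-differential}.

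Inserting this expression into \eqref{eq:higher-weight-trace} turns the right side $\Tr_{H_A/K}(C_{\psi,n})$ into $-\Tr_{H_A/K}\bigl(\Lcal_A(n)\Hcal_\psi(\tilde\pi)/\tilde\pi^n\bigr)$. The left side is unchanged, so this gives \eqref{eq:type-trace-higher1}.

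The one point to check is that the trace makes sense on the product. We are not claiming that $\Lcal_A(n)$ and $\Hcal_\psi(\tilde\pi)/\tilde\pi^n$ individually have coefficients in $H_A$, so the trace is not being applied factor by factor. The argument of the trace is literally the element $-C_{\psi,n}$. Theorem~\ref{thm:higher-weight-trace}(i) places it in $\Omega^1_{H_A(X)/H_A}$, since $n\equiv1\pmod{q^{\Ninf}-1}$, so $\Tr_{H_A/K}$ is defined on it. The minus sign passes outside the trace by $K$-linearity.

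There is no real obstacle here; the step to state carefully is this well-definedness remark. When $\Ninf=1$, one can also rewrite the corollary using \eqref{eq:monic-all-element}: $\Lcal_A(n)=-L_+(A;n)$ for $n\equiv1\pmod{q-1}$, which gives the form of the identity with $L_+(A;n)$.
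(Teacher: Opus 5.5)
Your proposal is correct and follows essentially the same route as the paper: substitute $C_{\psi,n}=-\tilde\pi^{-n}\Lcal_A(n)\Hcal_\psi(\tilde\pi)$ from \eqref{eq:higher-lattice-differential} (with $\mathfrak j=A$) into \eqref{eq:higher-weight-trace}. Your added remark, that the trace is applied to the single element $-C_{\psi,n}\in\Omega^1_{H_A(X)/H_A}$ and not factor by factor, is a useful clarification that the paper leaves implicit.
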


At rational infinity (i.e., $\Ninf=1$) and $n\equiv1\pmod{q-1}$, the equality
\eqref{eq:monic-all-element} gives
\[ 
C_{\psi,n}=L_+(A;n)\Hcal_\psi(\tilde{\pi})/\tilde{\pi}^n.\]
Thus \eqref{eq:type-trace-higher1} becomes
\begin{equation}\label{eq:type-trace-higher}
\LL_\psi(n)\frac{\Hcal_\psi(\tilde{\pi})}{\tilde{\pi}^n}
 =\Tr_{H_A/K}\left(
       L_+(A;n)\frac{\Hcal_\psi(\tilde{\pi})}{\tilde{\pi}^n}\right).
\end{equation}
At the weight $2q-1$, Theorem~\ref{thm:higher-weight-trace}
and \eqref{eq:2qn1} give
\begin{equation}\label{eq:first-nonfrobenius-trace}
\LL_\psi(2q-1)\frac{\Hcal_\psi(\tilde{\pi})}{\tilde{\pi}^{2q-1}}
 = \Tr_{H_A/K}(C_{\psi,2q-1})
 = -\Tr_{H_A/K}\left(e_1(\psi)\frac{C_\psi^{(1)}}{f_\psi}\right)
\qquad(\Ninf=1).
\end{equation}

\subsection{A generating trace for ideal values}
\label{sec:ideal-generating}
The free-lattice identity \eqref{eq:free-generating} describes
the element sum. To generate ideal-type values, sum the corresponding
lattice quotients over the ideal classes and retain the exponents
that descend to $H_A$.
Put $Q=q^{\Ninf}$ and define, for a meromorphic
differential-valued function $B(w)$,
\begin{equation}\label{eq:weight-projector}
 (\mathcal P_QB)(w)=\frac1{Q-1}
       \sum_{\varepsilon\in\iota(\mathbb F_\infty^\times)}
                     B(\varepsilon w).
\end{equation}
On Taylor series this retains exactly the powers divisible by
$Q-1$. The scalar $Q-1$ is invertible in the coefficient field.

\begin{theorem}\label{thm:ideal-generating-trace}
Let $\psi$ be a sign-normalized Hayes module with
$\Lambda_\psi=A\tilde\pi$.
Choose integral representatives $\Ia_1,\ldots,\Ia_h$ of
$\Cl(A)$ and put $\varphi_i=\Ia_i*\psi$. Then
\begin{equation}\label{eq:ideal-generating-trace}
\begin{aligned}
\sum_{k\ge0}\LL_\psi(1+k(Q-1))z^{k(Q-1)}
 &=\frac{\tilde\pi}{\Hcal_\psi(\tilde\pi)}
       \sum_{i=1}^h(\mathcal P_Q\mathscr C_{\varphi_i})(\tilde\pi z)\\
 &=\frac{\tilde\pi}{\Hcal_\psi(\tilde\pi)}
       \left.
       \Tr_{H_A/K}\bigl((\mathcal P_Q\mathscr C_\psi)(w)\bigr)
       \right|_{w=\tilde\pi z}.
\end{aligned}
\end{equation}
The series converges and the identities hold for
$|z|<|\tilde\pi|^{-1}\min_i r_{\varphi_i}$.
The trace acts coefficientwise on the Taylor series, fixing $w$,
before the substitution $w=\tilde\pi z$.
The finite class sum on the right gives the meromorphic
continuation of the generating series on the left.
\end{theorem}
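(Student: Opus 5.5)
The plan is to work coefficientwise in $z$. Everything reduces to the per-class identity \eqref{eq:higher-weight-class}, which is already available for every $n\equiv1\pmod{Q-1}$, together with the Taylor expansion of Corollary~\ref{cor:sumH} for each target $\varphi_i=\Ia_i*\psi$.

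\emph{Step 1: the projector.} For each $i$, Corollary~\ref{cor:sumH} gives $\mathscr C_{\varphi_i}(w)=\sum_{n\ge1}C_{\varphi_i,n}w^{n-1}$ on $|w|<r_{\varphi_i}$. The set $\iota(\mathbb F_\infty^\times)$ consists of the $(Q-1)$th roots of unity, so
\[
\sum_{\varepsilon\in\iota(\mathbb F_\infty^\times)}\varepsilon^{n-1}
=\begin{cases}Q-1,& Q-1\mid n-1,\\ 0,&\text{otherwise.}\end{cases}
\]
Averaging therefore gives $(\mathcal P_Q\mathscr C_{\varphi_i})(w)=\sum_{k\ge0}C_{\varphi_i,1+k(Q-1)}w^{k(Q-1)}$ on the same disk. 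The rescaling $w\mapsto\varepsilon w$ preserves the disk because $|\varepsilon|=1$.

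\emph{Step 2: the first equality.} Substitute \eqref{eq:higher-weight-class} at $n=1+k(Q-1)$, namely $C_{\varphi_i,n}=\LL_{\psi,[\Ia_i]}(n)\Hcal_\psi(\tilde\pi)/\tilde\pi^n$. With $w=\tilde\pi z$ one has $\tilde\pi^{-n}w^{n-1}=\tilde\pi^{-1}z^{n-1}$. Multiplying by $\tilde\pi/\Hcal_\psi(\tilde\pi)$ and summing over $i$ via \eqref{eq:class-decomposition-sum} then gives the first line of \eqref{eq:ideal-generating-trace}, termwise in $k$. Convergence for $|z|<|\tilde\pi|^{-1}\min_ir_{\varphi_i}$ follows from normal convergence of the finitely many series in Corollary~\ref{cor:sumH}. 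Division by $\Hcal_\psi(\tilde\pi)$ takes place in $\operatorname{Frac}(\mathbb T_A)$ and commutes with the coefficientwise sums. The meromorphic continuation statement follows from Theorem~\ref{thm:lattice-generating}: each $\mathscr C_{\varphi_i}=\Hcal_{\varphi_i}/\exp_{\varphi_i}$ is meromorphic, so the finite average $\mathcal P_Q$ and the finite class sum are meromorphic as well.

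\emph{Step 3: the trace equality.} This is the main point. By Theorem~\ref{thm:higher-weight-trace}(i), each coefficient $C_{\psi,1+k(Q-1)}$ lies in $\Omega^1_{H_A(X)/H_A}$, so $\Tr_{H_A/K}$ may be applied coefficientwise to $\mathcal P_Q\mathscr C_\psi$. By \eqref{eq:higher-galois-covariance} and the proof of Theorem~\ref{thm:higher-weight-trace}(ii), we have $C_{\varphi_i,n}=\sigma_{\Ia_i}(C_{\psi,n})$. Since the restrictions of the $\sigma_{\Ia_i}$ enumerate $\operatorname{Gal}(H_A/K)$, summing over $i$ gives
\[
\sum_{i=1}^h C_{\varphi_i,n}=\Tr_{H_A/K}(C_{\psi,n})
\]
for each $n$ in the progression.

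The expected obstacle is bookkeeping rather than substance. The trace must be taken before substituting $w=\tilde\pi z$, because $\tilde\pi$ is not in $H_A$ and the Galois action fixes $w$. That action only moves arithmetic coefficients, so the coefficientwise trace of the projected series is exactly $\sum_i\mathcal P_Q\mathscr C_{\varphi_i}$ as formal series in $w$. On the common disk this is also an identity of functions, which finishes the proof.
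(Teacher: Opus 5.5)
Your proposal is correct and follows essentially the same route as the paper. You expand each $\mathcal P_Q\mathscr C_{\varphi_i}$ as $\sum_{k}C_{\varphi_i,1+k(Q-1)}w^{k(Q-1)}$, substitute the class identity \eqref{eq:higher-weight-class} at $n=1+k(Q-1)$, identify $\sum_iC_{\varphi_i,n}$ with $\Tr_{H_A/K}(C_{\psi,n})$ via Theorem~\ref{thm:higher-weight-trace}(i) and \eqref{eq:higher-galois-covariance}, and take convergence and meromorphic continuation from Corollary~\ref{cor:sumH} and Theorem~\ref{thm:lattice-generating}; the only difference is that you spell out the roots-of-unity computation for $\mathcal P_Q$ and why the trace must precede the substitution $w=\tilde\pi z$, which the paper leaves implicit.
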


\begin{proof}
By \eqref{eq:generating-coefficients} and \eqref{eq:weight-projector},
\[
 (\mathcal P_Q\mathscr C_{\varphi_i})(w)
 =\sum_{k\ge0}C_{\varphi_i,1+k(Q-1)}w^{k(Q-1)}.
\]
Theorem~\ref{thm:higher-weight-trace} puts the coefficients for
$\psi$ in $\Omega^1_{H_A(X)/H_A}$, and
\eqref{eq:higher-galois-covariance} identifies their conjugates
with the displayed coefficients for $\varphi_i$.
For $n=1+k(Q-1)$, equation~\eqref{eq:higher-weight-class} gives
\[
\frac{\tilde\pi}{\Hcal_\psi(\tilde\pi)}
       C_{\varphi_i,n}(\tilde\pi z)^{n-1}
 =\LL_{\psi,[\Ia_i]}(n)z^{n-1}.
\]
Sum over $i$ and $k$.
Normal convergence on every smaller closed disk follows from
Theorem~\ref{thm:lattice-generating}; the same theorem gives
the asserted meromorphic continuation.
\end{proof}

At $z=0$, \eqref{eq:ideal-generating-trace} is the weight-one
trace \eqref{eq:hayes-trace}.
When $\Ninf=1$, $\Fq$-linearity of $\Hcal_\varphi$ and
$\exp_\varphi$ gives
$\mathscr C_\varphi(\zeta w)=\mathscr C_\varphi(w)$ for
$\zeta\in\Fq^\times$. Thus $\mathcal P_q\mathscr C_\varphi
=\mathscr C_\varphi$, and no projection is needed.
For higher-degree infinity, Section~\ref{sec:twisted-weights} identifies
the obstruction to dropping $\mathcal P_Q$.

\subsection{Fields of definition}\label{sec:twisted-weights}
The scalar covariance of $C_{\psi,n}$ determines its coefficient
field and the character needed to obtain an ordinary Hilbert-class trace.
Let $\psi$ be a sign-normalized Hayes module with
$\Lambda_\psi=A\tilde{\pi}$.
\begin{definition}
For $n\equiv1\pmod{q-1}$, define
the character
\begin{equation}\label{eq:weight-character}
\kappa_{\psi,n}:G_0\longrightarrow\iota(\mathbb F_\infty^\times),
\qquad
\kappa_{\psi,n}(\sigma_{(a)})
 =\iota(\operatorname{sgn}(a))^{(n-1)q^{\xi_\psi}}.
\end{equation}
The congruence makes it independent of the representative in
$\mathbb F_\infty^\times/\Fq^\times$. Put
$H_n=(H_A^+)^{\ker\kappa_{\psi,n}}$.
\end{definition}

\begin{proposition}\label{prop:weight-field}
For $n\ge1$ with $n\equiv1\pmod{q-1}$:
\begin{enumerate}
\item[(i)] the differential $C_{\psi,n}$ is nonzero;
\item[(ii)] the smallest field $H_A\subset E\subset H_A^+$ for which
$C_{\psi,n}\in\Omega^1_{E(X)/E}$ is $H_n$;
\item[(iii)] the degree of $H_n/H_A$ is
\begin{equation}\label{eq:weight-field-degree}
 [H_n:H_A]=\frac{q^{\Ninf}-1}{\gcd(n-1,q^{\Ninf}-1)}.
\end{equation}
\end{enumerate}
\end{proposition}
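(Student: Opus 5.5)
The plan is to compute the $G_0$-action on $C_{\psi,n}$ exactly, show it is given by the character $\kappa_{\psi,n}$, and then use nonvanishing to read off the coefficient field. The degree count is then elementary.

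\emph{The $G_0$-action.} Take $\sigma=\sigma_{(a)}\in G_0$. By \eqref{eq:sign-kernel-conjugation} we have $\psi^\sigma=z_a\psi z_a^{-1}$ with $z_a=\iota(\operatorname{sgn}(a))^{-q^{\xi_\psi}}$. Equations \eqref{eq:higher-galois-covariance} and \eqref{eq:higher-weight-covariance} then give
\[
\sigma(C_{\psi,n})=C_{\psi^\sigma,n}=z_a^{1-n}C_{\psi,n}=\kappa_{\psi,n}(\sigma)\,C_{\psi,n}.
\]
The same computation shows that $\kappa_{\psi,n}$ is a well-defined character on $G_0\simeq\mathbb F_\infty^\times/\Fq^\times$. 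Its values are $(q^{\xi_\psi})$-powers composed with the $(n-1)$st power, and $(n-1)$ is divisible by $q-1$.

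\emph{Nonvanishing (i).} This is the step I expect to be the main obstacle. By \eqref{eq:higher-lattice-differential}, $C_{\psi,n}=-\tilde\pi^{-n}\Lcal_A(n)\Hcal_\psi(\tilde\pi)$. Since $\Hcal_\psi(\tilde\pi)\neq0$ (its residue at $\Xi$ is $\tilde\pi$), it suffices to show $\Lcal_A(n)\ne0$ for $n\equiv1\pmod{q-1}$. By Lemma~\ref{lem:vanishing}(ii), $\Lcal_A(n)=-\sum_{c\in A/\Fq^\times}\chi(c)/\iota(c)^n$.

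I would argue by Gauss norm, isolating a dominant term. The term $c=1$ contributes $1$. Every other orbit has $|\iota(c)|\ge q^{\Ninf}>1$ and numerator of Gauss norm one, so each remaining term has norm at most $q^{-n\Ninf}<1$. Hence $\|\Lcal_A(n)\|=1$ and $\Lcal_A(n)\neq0$.

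Viewed in $\mathbb T_A$, this amounts to the observation that the numerators $\chi(c)$ are $\Fq$-linearly independent geometric functions. Their coefficients in $\iota$ cannot cancel the constant term $1$, because every other coefficient has absolute value less than $1$. So the dominant-term argument is rigorous, and $C_{\psi,n}$ is a nonzero element of $\mathbb M_A$. By Proposition~\ref{prop:higher-weight-recurrence}(i) it also lies in $\Omega^1_{H_A^+(X)/H_A^+}$.

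\emph{The coefficient field (ii).} For $H_A\subset E\subset H_A^+$, we have $E=(H_A^+)^{G_E}$ with $G_E\subset G_0$. By the fixed-field identity for the coefficient action, $C_{\psi,n}\in\Omega^1_{E(X)/E}$ if and only if $\sigma(C_{\psi,n})=C_{\psi,n}$ for all $\sigma\in G_E$. By the first step this says $\kappa_{\psi,n}(\sigma)C_{\psi,n}=C_{\psi,n}$, and since $C_{\psi,n}\ne0$ this holds if and only if $\kappa_{\psi,n}(\sigma)=1$. Thus the admissible $E$ are exactly those with $G_E\subset\ker\kappa_{\psi,n}$, and the minimal one is $H_n$.

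\emph{The degree (iii).} We have $[H_n:H_A]=|\operatorname{im}\kappa_{\psi,n}|$. The map $\varepsilon\mapsto\varepsilon^{(n-1)q^{\xi_\psi}}$ on the cyclic group $\mathbb F_\infty^\times$ of order $Q-1$ has the same image as $\varepsilon\mapsto\varepsilon^{n-1}$, because $q^{\xi_\psi}$-power is an automorphism. That image has order $(Q-1)/\gcd(n-1,Q-1)$. Since $q-1\mid n-1$, the map factors through $\mathbb F_\infty^\times/\Fq^\times$ without changing the image, which gives \eqref{eq:weight-field-degree}.
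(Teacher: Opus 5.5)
Your proof is correct and follows the paper's argument essentially step for step. For (i) you use the ultrametric dominant-term bound $\|\Lcal_A(n)+1\|\le q^{-\Ninf n}<1$ together with $\Res_\Xi\Hcal_\psi(\tilde\pi)=\tilde\pi\ne0$. For (ii) you use the character identity $\sigma(C_{\psi,n})=\kappa_{\psi,n}(\sigma)C_{\psi,n}$, obtained from scalar and Galois covariance, plus the fixed-field identity. For (iii) you use the order count, relying on $q^{\xi_\psi}$ being prime to $q^{\Ninf}-1$.

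Your aside about $\Fq$-linear independence of the $\chi(c)$ is unnecessary. It is also not literally true for orbit representatives, e.g.\ $\chi(1)+\chi(a)=\chi(1+a)$. The norm bound alone already settles (i).
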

\begin{proof}
\emph{(i).} The constant elements in $\Lcal_A(n)$ contribute $-1$. Every
nonconstant $a\in A$ has $|\iota(a)|\ge q^{\Ninf}$, so
\[
 \|\Lcal_A(n)+1\|\le q^{-\Ninf n}<1.
\]
Thus $\Lcal_A(n)\ne0$. Since
$\Res_\Xi\Hcal_\psi(\tilde{\pi})=\tilde{\pi}\ne0$,
equation~\eqref{eq:higher-lattice-differential} gives
$C_{\psi,n}\ne0$.
\emph{(ii).} Equations~\eqref{eq:sign-kernel-conjugation},
\eqref{eq:higher-weight-covariance}, and~\eqref{eq:higher-galois-covariance}
give
\begin{equation}\label{eq:weight-character-action}
\sigma(C_{\psi,n})=\kappa_{\psi,n}(\sigma)C_{\psi,n}
\qquad(\sigma\in G_0).
\end{equation}
Since $C_{\psi,n}\ne0$, its stabilizer in $G_0$ is exactly
$\ker\kappa_{\psi,n}$. The fixed-space identity for rational
differentials therefore identifies the smallest coefficient field
of $C_{\psi,n}$ over $H_A$ with $H_n$. \emph{(iii).} The character in
\eqref{eq:weight-character} has order
$(q^{\Ninf}-1)/\gcd(n-1,q^{\Ninf}-1)$, since
$q^{\xi_\psi}$ is prime to $q^{\Ninf}-1$.
This proves \eqref{eq:weight-field-degree}.
\end{proof}

For $n\ge1$ with $n\equiv1\pmod{q-1}$, the nonzero differential
$C_{\psi,n}$ thus has coefficients in $H_A$ if and only if
$n\equiv1\pmod{q^{\Ninf}-1}$, the congruence in
Theorem~\ref{thm:higher-weight-trace}. 
If $n\not\equiv1\pmod{q-1}$, then $C_{\psi,n}=0$ by the
$\Fq^\times$-action; this vanishing does not apply to the sign-one
series $L_+(A;n)$.

\subsection{Twisted Pellarin series}\label{sec:twisted-traces}
Character twists remove the descent obstruction at weights
$n\equiv1\pmod{q-1}$ identified in Section~\ref{sec:twisted-weights}.
Throughout this subsection, let $\psi$ be a sign-normalized Hayes
module with free period lattice $\Lambda_\psi=A\tilde\pi$.
\begin{definition}
For any $n\ge1$ and $\mu\in (H_A^+)^{\times}$, define
\begin{equation}\label{eq:twisted-ideal-series}
\LL_\psi^\mu(n)=\sum_{0\ne\Ia\subset A}
       \sigma_\Ia(\mu)\frac{u_\Ia^\psi}{(d_\Ia^\psi)^n}.
\end{equation}
Such sums with Artin conjugates of $\mu$ occur in
\cite[Section 5]{ANDTR17}. 
\end{definition}
The coefficients $\sigma_\Ia(\mu)$ range over a finite set, so
Proposition~\ref{prop:all-weight-comparison} implies unconditional
convergence of $\LL_\psi^\mu(n)$. On each narrow ideal class, $\sigma_\Ia(\mu)$ is constant.
Consequently Corollary~\ref{cor:narrow-weight-comparison} gives,
for all $n\ge1$,
\begin{equation}\label{eq:twisted-narrow-comparison}
\LL_\psi^\mu(n)=\sum_{j=1}^{h^+}
       \sigma_{\Ib_j}(\mu)\frac{u_{\Ib_j}^\psi}{(d_{\Ib_j}^\psi)^n}
       L_+(\Ib_j^{-1};n).
\end{equation}

\begin{theorem}\label{thm:twisted-weight-trace}
Let $\psi$ be a sign-normalized Hayes module with
$\Lambda_\psi=A\tilde\pi$. 
Let $n\ge1$ satisfy $n\equiv1\pmod{q-1}$, and let $\mu\in(H_A^+)^\times$ satisfy
\begin{equation}\label{eq:twist-character}
\sigma(\mu)=\kappa_{\psi,n}(\sigma)^{-1}\mu
\quad(\sigma\in G_0).
\end{equation}
Then
\begin{enumerate}
\item[(i)] $\mu C_{\psi,n}\in\Omega^1_{H_A(X)/H_A}$;
\item[(ii)] the twisted trace formula holds:
\begin{equation}\label{eq:twisted-weight-trace}
\LL_\psi^\mu(n)\frac{\Hcal_\psi(\tilde{\pi})}{\tilde{\pi}^n}
 =\Tr_{H_A/K}(\mu C_{\psi,n}).
\end{equation}
\end{enumerate}
\end{theorem}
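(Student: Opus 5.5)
The plan mirrors the proof of Theorem~\ref{thm:higher-weight-trace}, with the twist $\mu$ cancelling the character $\kappa_{\psi,n}$. Part (i) is a descent statement. For $\sigma\in G_0$, equation~\eqref{eq:weight-character-action} gives $\sigma(C_{\psi,n})=\kappa_{\psi,n}(\sigma)C_{\psi,n}$. The coefficient action on $\Omega^1_{H_A^+(X)/H_A^+}$ is semilinear over the arithmetic coefficients and fixes geometric differentials. Hence, by \eqref{eq:twist-character},
\[
\sigma(\mu C_{\psi,n})=\sigma(\mu)\sigma(C_{\psi,n})
 =\kappa_{\psi,n}(\sigma)^{-1}\mu\,\kappa_{\psi,n}(\sigma)C_{\psi,n}=\mu C_{\psi,n}.
\]
The fixed-space identity $(\Omega^1_{H_A^+(X)/H_A^+})^{G_0}=\Omega^1_{H_A(X)/H_A}$ then gives (i).

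For (ii), I will first obtain a per-class formula valid for $n\equiv1\pmod{q-1}$, not just modulo $Q-1$. Equation~\eqref{eq:higher-weight-class} was derived via \eqref{eq:all-weight-unweighted}, and that step is no longer available. Instead, fix a class representative $\Ia_i$ and decompose $[\Ia_i]$ into narrow classes $c\Ia_i$ with $\operatorname{sgn}(c)=\varepsilon$, for $\varepsilon\in S$. On each such narrow class, $\sigma_{c\Ia_i}(\mu)=\sigma_{(c)}\sigma_{\Ia_i}(\mu)$. Here $\sigma_{(c)}\in G_0$ acts on $\sigma_{\Ia_i}(\mu)$ by the same character as on $\mu$, because $G_0$ is normal with abelian quotient and conjugation is trivial in the abelian group $\operatorname{Gal}(H_A^+/K)$. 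This gives
\[
\sigma_{c\Ia_i}(\mu)=\iota(\varepsilon)^{-(n-1)q^{\xi_\psi}}\sigma_{\Ia_i}(\mu).
\]
I must check that this is consistent with the sign factor $\iota(\varepsilon)^{(n-1)q^{\xi_i}}$ in \eqref{eq:weight-rescaling}, where $\xi_i=\xi_\psi+\deg\Ia_i$. Here I would use the sign-normalization exponent of the target together with the Artin convention $\varphi^\Ia=\varphi^{\sigma_\Ia}$. Conjugating $\kappa_{\psi,n}$ by $\sigma_{\Ia_i}$ gives the character $\kappa_{\varphi_i,n}$, with exponent $q^{\xi_i}$. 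The cleanest route is probably to avoid exponent bookkeeping altogether: apply $\sigma_{\Ia_i}$ to (i), so that $\sigma_{\Ia_i}(\mu)C_{\varphi_i,n}$ is the conjugate of the $H_A$-rational differential $\mu C_{\psi,n}$.

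With that in hand, the per-class identity I aim for is
\[
\sigma_{\Ia_i}(\mu)\,C_{\varphi_i,n}=\LL^{\mu}_{\psi,[\Ia_i]}(n)\frac{\Hcal_\psi(\tilde\pi)}{\tilde\pi^n},
\]
where $\LL^\mu_{\psi,[\Ia_i]}$ denotes the twisted subseries over $[\Ia_i]$. To prove it, write $C_{\varphi_i,n}$ by Lemma~\ref{lem:higher-lattice-differential} as the lattice sum over $\Lambda_{\varphi_i}=d_i\tilde\pi\Ia_i^{-1}$, using $\Hcal_{\varphi_i}(d_i\tilde\pi)=u_i\Hcal_\psi(\tilde\pi)$. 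This yields $-u_i(d_i\tilde\pi)^{-n}\Lcal_{\Ia_i^{-1}}(n)\Hcal_\psi(\tilde\pi)$. Now split $\Lcal_{\Ia_i^{-1}}(n)$ by sign as $-\sum_{\varepsilon\in S}L_\varepsilon(\Ia_i^{-1};n)$. On the other side, \eqref{eq:all-weight-comparison} together with the twist values above gives the twisted subseries as
\[
\frac{u_i}{d_i^n}\sum_{\varepsilon\in S}\sigma_{\Ia_i}(\mu)\,\iota(\varepsilon)^{-(n-1)q^{\xi_\psi}}\,\iota(\varepsilon)^{(n-1)q^{\xi_i}}\,L_\varepsilon(\Ia_i^{-1};n).
\]
For the two sides to match, these sign factors must cancel. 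This is the main obstacle: it holds exactly when the Artin convention makes the twist character of the $\Ia_i$-conjugate carry exponent $q^{\xi_i}$, that is, when $\sigma_{\Ia_i}(\mu)$ transforms under $G_0$ through $\kappa_{\varphi_i,n}^{-1}$ rather than $\kappa_{\psi,n}^{-1}$.

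I would try first to verify this through the Hayes identity $\LT_{\varphi_i}(a)=\LT_\psi(a)^{q^{\deg\Ia_i}}$. This identity follows from $\psi_{\Ia_i}\psi_a=(\varphi_i)_a\psi_{\Ia_i}$ with $\psi_{\Ia_i}$ monic. Since $\kappa$ is determined intrinsically by the module via \eqref{eq:sign-kernel-conjugation} and \eqref{eq:higher-weight-covariance}, applying $\sigma_{\Ia_i}$ to \eqref{eq:weight-character-action} then forces the correct exponent. Once the per-class identity holds, sum over $i$. By (i) and \eqref{eq:higher-galois-covariance}, each left side equals $\sigma_{\Ia_i}(\mu C_{\psi,n})$, and this depends only on the restriction of $\sigma_{\Ia_i}$ to $H_A$. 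As the $\Ia_i$ run over $\Cl(A)\simeq\operatorname{Gal}(H_A/K)$, the sum is $\Tr_{H_A/K}(\mu C_{\psi,n})$, which is \eqref{eq:twisted-weight-trace}.
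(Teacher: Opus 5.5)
Your route is the paper's. You prove part (i) by $G_0$-invariance of $\mu C_{\psi,n}$, then derive the per-class identity $\LL^\mu_{\psi,[\Ia_i]}(n)\Hcal_\psi(\tilde\pi)/\tilde\pi^n=\sigma_{\Ia_i}(\mu)C_{\varphi_i,n}$ from the sign decomposition and \eqref{eq:weight-rescaling}, and then sum over $\Cl(A)$. However, your first computation of $\sigma_{c\Ia_i}(\mu)$ is false, and the error lies in its justification. Commutativity of $\operatorname{Gal}(H_A^+/K)$ only gives
\[
\sigma_{(c)}\bigl(\sigma_{\Ia_i}(\mu)\bigr)
 =\sigma_{\Ia_i}\bigl(\sigma_{(c)}(\mu)\bigr)
 =\sigma_{\Ia_i}\bigl(\kappa_{\psi,n}(\sigma_{(c)})\bigr)^{-1}\sigma_{\Ia_i}(\mu).
\]
The character values are not fixed by $\sigma_{\Ia_i}$. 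They lie in $\iota(\mathbb F_\infty^\times)$, where the Artin symbol acts through the constant field extension by $\sigma_{\Ia_i}(\iota(\varepsilon))=\iota(\varepsilon)^{q^{\deg\Ia_i}}$. So $\sigma_{\Ia_i}(\mu)$ is an eigenvector for a different character, namely
\[
\sigma_{c\Ia_i}(\mu)=\iota(\operatorname{sgn}(c))^{(1-n)q^{\xi_\psi+\deg\Ia_i}}\sigma_{\Ia_i}(\mu).
\]
The exponent is $q^{\xi_i}$, which is exactly the inverse of the sign factor in \eqref{eq:weight-rescaling}. This one-line verification is how the paper obtains \eqref{eq:twisted-principal-rescaling}. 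Delete your display with exponent $q^{\xi_\psi}$ rather than trying to reconcile it. As written, the proposal asserts two incompatible transformation laws for $\sigma_{\Ia_i}(\mu)$, and with the first one the sign factors do not cancel in general.

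With this correction there is no remaining obstacle, and the rest of your argument is correct. That includes the splitting $\Lcal_{\Ia_i^{-1}}(n)=-\sum_{\varepsilon\in S}L_\varepsilon(\Ia_i^{-1};n)$, which is valid because $n\equiv1\pmod{q-1}$, and the final summation.

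Your fallback also works, but only if you carry it out rather than merely invoke $G_0$-invariance. It runs as follows:
\begin{itemize}
\item $\sigma_{\Ia_i}(\mu)C_{\varphi_i,n}=\sigma_{\Ia_i}(\mu C_{\psi,n})$ is $G_0$-fixed.
\item $C_{\varphi_i,n}=\sigma_{\Ia_i}(C_{\psi,n})\ne0$ satisfies $\sigma_{(a)}(C_{\varphi_i,n})=\LT_{\varphi_i}(a)^{n-1}C_{\varphi_i,n}$. This follows from the argument of \eqref{eq:sign-kernel-conjugation} applied to the sign-normalized module $\varphi_i$, together with \eqref{eq:higher-weight-covariance}.
\item $\LT_{\varphi_i}(a)=\LT_\psi(a)^{q^{\deg\Ia_i}}$, from comparing leading coefficients in $\psi_{\Ia_i}\psi_a=(\varphi_i)_a\psi_{\Ia_i}$.
\end{itemize}
Together these force the character of $\sigma_{\Ia_i}(\mu)$ to have exponent $q^{\xi_i}$. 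This is the same Frobenius action on constants, now encoded in the leading coefficients. Invariance alone does not let you skip the exponent bookkeeping, because evaluating the twisted subseries needs the individual ratios $\sigma_{c\Ia_i}(\mu)/\sigma_{\Ia_i}(\mu)$.
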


\begin{proof}
\emph{(i).} Equations~\eqref{eq:weight-character-action} and
\eqref{eq:twist-character} show that $\mu C_{\psi,n}$ is fixed by $G_0$,
which proves the asserted coefficient field.

\emph{(ii).} Fix an ordinary ideal-class representative $\Ia_i$. Every integral
ideal $\Ia$ in the ordinary class $[\Ia_i]$ can be written uniquely as
$\Ia=c\Ia_i$ with $c\in \Ia_i^{-1}\setminus\{0\}$, modulo
multiplication by units in $\Fq^\times$. Hence the subseries of
\eqref{eq:twisted-ideal-series} over $[\Ia_i]$ is
\[
\LL_{\psi,[\Ia_i]}^\mu(n)
 =\sum_{c\in \Ia_i^{-1}/\Fq^\times}
   \sigma_{c\Ia_i}(\mu)
   \frac{u_{c\Ia_i}^\psi}{(d_{c\Ia_i}^\psi)^n}.
\]
The ratio
$\sigma_{c\Ia_i}(\mu)/\sigma_{\Ia_i}(\mu)$ is the inverse of
the sign factor in \eqref{eq:weight-rescaling}.
To verify this relation, use the Artin action on the residue
field constants:
\[
\sigma_{\Ia_i}(\iota(\varepsilon))
 =\iota(\varepsilon)^{q^{\deg\Ia_i}}
\quad(\varepsilon\in\mathbb F_\infty).
\]
For $0\ne c\in\Ia_i^{-1}$, it follows that
\[
\sigma_{c\Ia_i}(\mu)
 =\iota(\operatorname{sgn}(c))^{(1-n)q^{\xi_\psi+\deg\Ia_i}}
   \sigma_{\Ia_i}(\mu).
\]
Multiplication by \eqref{eq:weight-rescaling} cancels the sign factor:
\begin{equation}\label{eq:twisted-principal-rescaling}
\sigma_{c\Ia_i}(\mu)
   \frac{u_{c\Ia_i}^\psi}{(d_{c\Ia_i}^\psi)^n}
 =\sigma_{\Ia_i}(\mu)
   \frac{u_{\Ia_i}^\psi}{(d_{\Ia_i}^\psi)^n}
   \frac{\chi(c)}{\iota(c)^n}.
\end{equation}

Summing
\eqref{eq:twisted-principal-rescaling}
over $c\in \Ia_i^{-1}/\Fq^\times$, and using \eqref{eq:all-element-series-signed} for $ \mathfrak j = \Ia_i^{-1} $, we obtain
\[
\LL_{\psi,[\Ia_i]}^\mu(n)
 =-\sigma_{\Ia_i}(\mu)
   \frac{u_{\Ia_i}^\psi}{(d_{\Ia_i}^\psi)^n}
   \Lcal_{\Ia_i^{-1}}(n).
\]

For $\varphi_i=\Ia_i*\psi$, the period-map isogeny identity and
\eqref{eq:higher-lattice-differential} now give
\[
\LL_{\psi,[\Ia_i]}^\mu(n)
       \frac{\Hcal_\psi(\tilde{\pi})}{\tilde{\pi}^n}
 =\sigma_{\Ia_i}(\mu)C_{\varphi_i,n}
 =\sigma_{\Ia_i}(\mu C_{\psi,n}).
\]
The last equality is \eqref{eq:higher-galois-covariance}.
Since $\mu C_{\psi,n}$ has coefficients in $H_A$, summing over the
ordinary ideal classes proves \eqref{eq:twisted-weight-trace}.
\end{proof}

\begin{example}
When $n\equiv1\pmod{q^{\Ninf}-1}$, one may take $\mu=1$,
and \eqref{eq:twisted-weight-trace} recovers
Theorem~\ref{thm:higher-weight-trace}. 
\end{example}

\begin{remark}
Let $n\ge1$ satisfy $n\equiv1\pmod{q-1}$.
The character $\kappa_{\psi,n}$ takes values in
$\iota(\mathbb F_\infty^\times)\subset H_A^\times$.
For a normal-basis generator $x$ of $H_A^+/H_A$, the element
\[
\mu=\sum_{\sigma\in G_0}\kappa_{\psi,n}(\sigma)\sigma(x)
\]
is nonzero because the conjugates $\sigma(x)$ are linearly
independent over $H_A$. Reindexing the sum gives
\eqref{eq:twist-character}. Thus a twisting scalar exists at every
such weight, and it belongs to $H_n$ because it is fixed by
$\ker\kappa_{\psi,n}$.
\end{remark}
\begin{example}\label{ex:gamma-twist}
Choose $\gamma\in(H_A^+)^\times$ satisfying
\eqref{eq:twist-character} at $n=q$.
Since $\sigma(f_\psi)=\kappa_{\psi,q}(\sigma)^{-1}f_\psi$
for $\sigma\in G_0$, we have
\[
      \gamma C_{\psi,q} = \gamma \frac{C_\psi^{(1)}}{f_\psi}, \qquad \frac{f_\psi}{\gamma}\in H_A(X).
\]
Such scalar normalizations of the shtuka function occur in
\cite[Lemma 3.3 and Remark 5.4]{ANDTR17}.
For any $n\equiv1\pmod{q-1}$, set
\begin{equation}\label{eq:twist-power-choice}
\mu=\gamma^{(n-1)/(q-1)}.
\end{equation}
The element $\mu$ satisfies \eqref{eq:twist-character} because
$\kappa_{\psi,n}=\kappa_{\psi,q}^{(n-1)/(q-1)}$.

For $q=2$, every $n\ge1$
admits the choice $\mu=\gamma^{n-1}$ at every degree of infinity.
The weights $q$ and $2q-1$ in Theorem~\ref{thm:twisted-weight-trace} give
\begin{align*}
\LL_\psi^\gamma(q)\frac{\Hcal_\psi(\tilde{\pi})}{\tilde{\pi}^q}
 &=\Tr_{H_A/K}\left(\gamma\frac{C_\psi^{(1)}}{f_\psi}\right),\\
\LL_\psi^{\gamma^2}(2q-1)
       \frac{\Hcal_\psi(\tilde{\pi})}{\tilde{\pi}^{2q-1}}
 &=-\Tr_{H_A/K}\left(\gamma^2e_1(\psi)
       \frac{C_\psi^{(1)}}{f_\psi}\right).
\end{align*}
These identities follow from \eqref{eq:higher-weight-initial}
and \eqref{eq:higher-weight-recurrence}, and hold for every
$\Ninf\ge1$.

\end{example}

The powers of $\gamma$ in \eqref{eq:twist-power-choice}
can be incorporated into the generating parameter.
This gives one ordinary Hilbert-class trace for the whole
family of twisted values.

\begin{corollary}\label{cor:twisted-generating-trace}
Let $\psi$ be a sign-normalized Hayes module with
$\Lambda_\psi=A\tilde\pi$, and let $\gamma\in(H_A^+)^\times$
satisfy \eqref{eq:twist-character} at $n=q$. Choose
$\beta\in\Cinf^\times$ with $\beta^{q-1}=\gamma$, and put
\begin{equation}\label{eq:twisted-generating-input}
\mathscr C_{\psi,\gamma}(w):=\mathscr C_\psi(\beta w)
 =\sum_{k\ge0}\gamma^kC_{\psi,1+k(q-1)}w^{k(q-1)}.
\end{equation}
\begin{enumerate}
\item[(i)] $\mathscr C_{\psi,\gamma}$ is independent of the choice of
$\beta$ and has Taylor coefficients in $\Omega^1_{H_A(X)/H_A}$;
\item[(ii)] near $z=0$,
\begin{equation}\label{eq:twisted-generating-trace}
\sum_{k\ge0}\LL_\psi^{\gamma^k}(1+k(q-1))z^{k(q-1)}
 =\frac{\tilde\pi}{\Hcal_\psi(\tilde\pi)}
       \left.\Tr_{H_A/K}\bigl(\mathscr C_{\psi,\gamma}(w)\bigr)
       \right|_{w=\tilde\pi z},
\end{equation}
where, as in \eqref{eq:ideal-generating-trace}, the trace is taken
coefficientwise before substitution;
\item[(iii)] put $\varphi_i=\Ia_i*\psi$ and choose
$\beta_i$ with $\beta_i^{q-1}=\sigma_{\Ia_i}(\gamma)$.
The generating series on the left of
\eqref{eq:twisted-generating-trace} has meromorphic continuation
\begin{equation}\label{eq:twisted-generating-class-sum}
\frac{\tilde\pi}{\Hcal_\psi(\tilde\pi)}
       \sum_{i=1}^h
       \frac{\Hcal_{\varphi_i}(\beta_i\tilde\pi z)}
            {\exp_{\varphi_i}(\beta_i\tilde\pi z)}.
\end{equation}
\end{enumerate}
\end{corollary}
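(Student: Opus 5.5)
The plan is to reduce everything to the Taylor expansion of $\mathscr C_\psi$ from Theorem~\ref{thm:lattice-generating} and then apply Theorem~\ref{thm:twisted-weight-trace} coefficientwise.

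\emph{Part (i).} By \eqref{eq:generating-coefficients} and Notation~\ref{not:higher-lattice-differential}, $C_{\psi,n}=0$ unless $n\equiv1\pmod{q-1}$. Substituting $w\mapsto\beta w$ therefore gives
\[
\mathscr C_\psi(\beta w)=\sum_{k\ge0}\beta^{k(q-1)}C_{\psi,1+k(q-1)}w^{k(q-1)}=\sum_{k\ge0}\gamma^kC_{\psi,1+k(q-1)}w^{k(q-1)}.
\]
This proves \eqref{eq:twisted-generating-input}. It also shows that the series depends on $\beta$ only through $\beta^{q-1}=\gamma$. By Example~\ref{ex:gamma-twist}, $\mu=\gamma^k$ satisfies \eqref{eq:twist-character} at $n=1+k(q-1)$, because $\kappa_{\psi,n}=\kappa_{\psi,q}^k$. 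Theorem~\ref{thm:twisted-weight-trace}(i) then places each coefficient $\gamma^kC_{\psi,1+k(q-1)}$ in $\Omega^1_{H_A(X)/H_A}$.

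\emph{Part (ii).} Apply \eqref{eq:twisted-weight-trace} with $n=1+k(q-1)$ and $\mu=\gamma^k$, multiply by $\tilde\pi^{n}\Hcal_\psi(\tilde\pi)^{-1}z^{k(q-1)}$, and sum over $k$. Since $\tilde\pi^n z^{n-1}=\tilde\pi\,w^{n-1}$ with $w=\tilde\pi z$, the right side becomes the coefficientwise trace of $\mathscr C_{\psi,\gamma}$ evaluated at $w=\tilde\pi z$, multiplied by $\tilde\pi/\Hcal_\psi(\tilde\pi)$. Convergence near $z=0$ comes from the Taylor disk of Corollary~\ref{cor:sumH}. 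Rescaling by $|\beta|=1$ (since $\gamma$ is a root of unity times a unit; otherwise just shrink the disk), we need $|\beta\tilde\pi z|<r_{\varphi_i}$ for the conjugate series. Once the identity is established for each coefficient, it holds for the power series on any common disk.

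\emph{Part (iii).} Write the trace as the sum over classes, $\sum_i\sigma_{\Ia_i}(\gamma^kC_{\psi,n})=\sum_i\sigma_{\Ia_i}(\gamma)^kC_{\varphi_i,n}$, using \eqref{eq:higher-galois-covariance} together with $\psi^{\sigma_{\Ia_i}}=\varphi_i$. This uses the Artin convention and scalar invariance as in the proof of Theorem~\ref{thm:twisted-weight-trace}. With $\beta_i^{q-1}=\sigma_{\Ia_i}(\gamma)$, the same parity argument as in part (i) identifies the $i$th summand with $\mathscr C_{\varphi_i}(\beta_i w)$. Theorem~\ref{thm:lattice-generating} then gives the meromorphic continuation $\Hcal_{\varphi_i}(\beta_iw)/\exp_{\varphi_i}(\beta_iw)$, and substituting $w=\tilde\pi z$ yields \eqref{eq:twisted-generating-class-sum}.

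The main subtlety is the bookkeeping in (iii): checking that the Galois conjugate of $\gamma^kC_{\psi,n}$ is exactly $\sigma_{\Ia_i}(\gamma)^kC_{\varphi_i,n}$ for the chosen targets $\varphi_i$. This is handled precisely as in the last display of the proof of Theorem~\ref{thm:twisted-weight-trace}.
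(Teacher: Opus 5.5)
Your proposal is correct and follows essentially the same route as the paper: expand $\mathscr C_\psi(\beta w)$ using the vanishing of $C_{\psi,n}$ for $n\not\equiv1\pmod{q-1}$ (the paper phrases this as $\Fq^\times$-invariance of $\mathscr C_\psi$), apply Theorem~\ref{thm:twisted-weight-trace} coefficientwise with $\mu=\gamma^k$, and use \eqref{eq:higher-galois-covariance} to identify the coefficientwise trace with $\sum_i\mathscr C_{\varphi_i}(\beta_iw)$. One small correction: your aside that $|\beta|=1$ is not justified in general. For instance, $|\gamma|\neq1$ in the projective-line computation of Lemma~\ref{lem:normalizing-scalar}, and the conjugates $\sigma_{\Ia_i}(\gamma)$ need not share an absolute value. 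The convergence disk should therefore be stated with the $\beta_i$, as in the paper, namely $|z|<\min_i r_{\varphi_i}/|\beta_i\tilde\pi|$; your fallback of shrinking the disk already covers this.
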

\begin{proof}
\emph{(i).} The quotient $\mathscr C_\psi$ is invariant under
$w\mapsto\zeta w$, $\zeta\in\Fq^\times$.
Thus $\mathscr C_\psi(\beta w)$ depends on $\beta$ only
through $\beta^{q-1}=\gamma$.
Equation~\eqref{eq:generating-coefficients} gives
\eqref{eq:twisted-generating-input}, and
Theorem~\ref{thm:twisted-weight-trace}(i) gives the asserted
coefficient field for each Taylor coefficient.

\emph{(ii).} For $n=1+k(q-1)$, the character of $\gamma^k$ is
$\kappa_{\psi,n}^{-1}$. Theorem~\ref{thm:twisted-weight-trace}(ii)
therefore gives \eqref{eq:twisted-generating-trace} by comparison
of coefficients.

\emph{(iii).} Galois covariance (\eqref{eq:higher-galois-covariance}) gives
\[
\sigma_{\Ia_i}\bigl(\gamma^kC_{\psi,1+k(q-1)}\bigr)
 =\beta_i^{k(q-1)}C_{\varphi_i,1+k(q-1)}.
\]
Consequently the coefficientwise trace is
$\sum_i\mathscr C_{\varphi_i}(\beta_iw)$.
This proves \eqref{eq:twisted-generating-class-sum}, with
convergence on
$|z|<\min_i r_{\varphi_i}/|\beta_i\tilde\pi|$.
\end{proof}

At rational infinity one may take $\gamma=\beta=1$,
recovering \eqref{eq:ideal-generating-trace}.
For $q=2$, equation~\eqref{eq:twisted-generating-trace}
generates every positive weight for any $\Ninf$.
For $q>2$, the range remains $n\equiv1\pmod{q-1}$,
in agreement with the vanishing of the other lattice coefficients.

\section{The projective line}\label{sec:scalar}
On $\mathbb P^1$, the Cauchy differential $C_\psi$ is
determined by its two simple poles and their residues.
The Hu--Huang--Yau modules give closed ideal-value formulas for
both free and nonfree period lattices, together with explicit
character twists at higher weights. At rational infinity, the
generating identity specializes to the Carlitz identity of Perkins.
\subsection{The geometric setting of the HHY ring}
Let $\rho(t)\in\Fq[t]$ be monic irreducible of degree
$\Ninf\ge2$, and set
\[
A=\Gamma(\mathbb P^1\setminus P_\rho,\OO),\qquad
\theta=\iota(t),\qquad
I_\infty=(1/\rho,t/\rho,\ldots,t^{\Ninf-1}/\rho).
\]
Here $I_\infty$ is the ideal of the usual rational point
$\infty_0=(t=\infty)$, whereas the removed infinite point is $P_\rho$.
The divisor class sequence gives
$\Cl(A)=\langle[I_\infty]\rangle\simeq\mathbb Z/\Ninf\mathbb Z$
and $I_\infty^{\Ninf}=(1/\rho)$.
The ordinary Hilbert class field is
$H_A=\mathbb F_{q^{\Ninf}}(\theta)$: this constant extension is
unramified, splits $P_\rho$ completely, and has degree $|\Cl(A)|$
\cite[Chapters 8 and 14]{Rosen02} and
\cite[Theorem 15.6]{Hayes92}.
The HHY modules over $H_A$ and their sign-normalized
scalar conjugates are constructed in \cite{HHY26}.

Let $\eta\in\mathbb F_{q^{\Ninf}}$ be the root of $\rho$
chosen in \cite{HHY26}, and put $\eta_i=\eta^{q^i}$.
Write $\varphi$ for the Hu--Huang--Yau source module, denoted
$\Psi$ in \cite{HHY26}, whose shtuka pole is
$q_\varphi=(t=\eta)$. Let $\sigma$ be the coefficient
automorphism of $H_A$ fixing $\theta$ and sending $\eta$ to
$\eta^q$; its action on $H_A(t)$ fixes the geometric variable $t$.
Set $\psi=\varphi^{\sigma^2}$, the module denoted by $\Psi^{(2)}$
in \cite{HHY26}. Thus $q_\psi=(t=\eta_2)$ and
$q_\psi^{(-1)}=(t=\eta_1)$.

\subsection{The value for a free period lattice}\label{sec:p1-free}
The module $\psi=\varphi^{\sigma^2}$ has shtuka pole
$q_\psi=(t=\eta_2)$ and shtuka function
\[
      f_\psi = \frac{1}{t - \eta_2 } - \frac{1}{\theta - \eta_2}.
\]
Choose the embedding of $\mathbb F_{q^{\Ninf}}$ into
$\Cinf$ so that $\theta$ reduces to $\eta_0$; the fixed absolute-value
normalization gives $|\theta-\eta_0|=q^{-\Ninf}$.
Use the compatible root and period choices of
\cite[Theorems 3.32 and 6.8]{HHY26}. Put 
$\Theta_i=(\theta-\eta_i)^{-1}$, and $\Theta=\Theta_0$.
A generator of $\Lambda_\psi$ is
\begin{equation}\label{eq:p1-special-period}
\tilde\pi
 =\sqrt[q-1]{-\Theta_2}\,\Theta_1\prod_{k\ge1}
   \frac{\Theta^{q^k}}{\Theta^{q^k}-\Theta_k},
\end{equation}
see the product formula of \cite[Theorem 3.32]{HHY26}, whose proof also gives the freeness
$\Lambda_\psi=A\tilde\pi$.

The identity datum $J=\mathrm{id}_\psi$ gives
\begin{equation}\label{eq:p1-free-cauchy}
K_J(\Xi,\cdot)=C_\psi
 =\left(\frac1{t-\theta}-\frac1{t-\eta_1}\right)dt.
\end{equation}
This is the unique differential with
only simple poles at $t=\theta$ and $t=\eta_1$ and residues $1$
and $-1$, respectively.
Taking the coefficient trace gives the ideal value by
Corollary~\ref{cor:hayes-trace}.
\begin{proposition}\label{prop:p1-free-ideal-value}
With the notation above, the integral-ideal Pellarin value is
\begin{equation}\label{eq:rational-closed-infinity}
{\LL_\psi(1)\frac{\Hcal_{\psi}(\tilde{\pi})}{\tilde{\pi}}
 = \Tr_{H_A / K} (C_\psi)= \left(\frac{\Ninf}{t-\theta}-\frac{\rho'(t)}{\rho(t)}\right)dt.}
\end{equation}
\end{proposition}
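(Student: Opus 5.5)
The first equality is Corollary~\ref{cor:hayes-trace}(ii) applied to $\psi$, whose period lattice $\Lambda_\psi=A\tilde\pi$ is free by \cite[Theorem 3.32]{HHY26}. It remains to compute $\Tr_{H_A/K}(C_\psi)$ from the explicit formula \eqref{eq:p1-free-cauchy}. I would check that formula first. On $\mathbb P^1$ we have $g=0$, so $V_\psi=0$, and the line \eqref{eq:cauchy-line} consists of differentials with at most simple poles at $\Xi=(t=\theta)$ and at $q_\psi^{(-1)}=(t=\eta_1)$. The residue theorem then forces residues $1$ and $-1$ at these two points, and there is no holomorphic term because $H^0(\mathbb P^1,\Omega^1)=0$. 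This gives $C_\psi=\bigl((t-\theta)^{-1}-(t-\eta_1)^{-1}\bigr)dt$, with coefficients in $H_A=\mathbb F_{q^{\Ninf}}(\theta)$, as Corollary~\ref{cor:hayes-trace}(i) predicts.

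Next I would compute the trace. Since $H_A=\mathbb F_{q^{\Ninf}}(\theta)$ is a constant field extension of $K=\Fq(\theta)$ of degree $\Ninf$, its Galois group is generated by the automorphism $\sigma$ that fixes $\theta$ and sends $\eta\mapsto\eta^q$. The coefficient action fixes $t$ and $dt$. The summand $(t-\theta)^{-1}dt$ is therefore $\operatorname{Gal}(H_A/K)$-invariant, and its trace is $\Ninf(t-\theta)^{-1}dt$. For the second summand, the conjugates of $\eta_1$ are $\eta_{1+j}$ for $j=0,\ldots,\Ninf-1$, which run over all $\Ninf$ distinct roots of $\rho$. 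Because $\rho$ is monic, irreducible and separable over $\Fq$, the logarithmic derivative identity
\[
\sum_{j=0}^{\Ninf-1}\frac{1}{t-\eta_j}=\frac{\rho'(t)}{\rho(t)}
\]
holds. Combining the two summands gives the right-hand side of \eqref{eq:rational-closed-infinity}.

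There is one consistency point to check: the trace in Corollary~\ref{cor:hayes-trace} is indexed by ideal classes through the Artin map, whereas here I sum over $\operatorname{Gal}(H_A/K)=\langle\sigma\rangle$. The two agree because Artin reciprocity identifies $\Cl(A)$ with the Galois group, so any enumeration of the group gives the same sum. I therefore expect no real obstacle. The only steps needing care are the identification of $q_\psi^{(-1)}$ as $t=\eta_1$ and the fact that, since $\rho$ is separable, the conjugates of $\eta_1$ are exactly the $\Ninf$ distinct roots of $\rho$.
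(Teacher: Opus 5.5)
Your proposal is correct and follows the paper's argument: the first equality is Corollary~\ref{cor:hayes-trace}(ii) for the free lattice, and the second comes from tracing the explicit differential \eqref{eq:p1-free-cauchy} over the constant extension $\mathbb F_{q^{\Ninf}}(\theta)/\Fq(\theta)$ and applying the logarithmic derivative identity for $\rho$. Your additional check of \eqref{eq:p1-free-cauchy} via $V_\psi=0$, the residue theorem, and $H^0(\mathbb P^1,\Omega^1)=0$ simply spells out the uniqueness statement that the paper takes from \eqref{eq:cauchy-line}.
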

\begin{proof}
       The logarithmic derivative identity
$\rho'/\rho=\sum_{\rho(\alpha)=0}(t-\alpha)^{-1}$ proves the
formula, which also holds when $p\mid\Ninf$.
\end{proof}

\subsection{The HHY ideal value}\label{sec:hhy-ideal-value}

The source shtuka function and special function are
\begin{equation}\label{eq:hhy-special-function}
f_\varphi(t)=\frac1{t-\eta}-\Theta,\qquad
\omega_\varphi(t)=\sqrt[q-1]{-\Theta}
    \prod_{k=0}^{\infty}
       \frac{\Theta^{q^k}}
            {\Theta^{q^k}-(t-\eta_k)^{-1}}.
\end{equation}
 Put 
\begin{equation}\label{eq:hhy-period-constant}
C_{0,2}=\sqrt[q-1]{\Theta_2/\Theta}\,\frac{\Theta_1}{\Theta},
\qquad \kappa=\frac{\tilde\pi}{C_{0,2}\Theta^2}.
\end{equation}

\begin{lemma}[The HHY special function]\label{lem:hhy-special-function}
For the module $\varphi$ and choices above:
\begin{enumerate}
\item[(i)] the product $\omega_\varphi$ converges to a unit of
$\mathbb T_A$ and satisfies
$\omega_\varphi^{(1)}=f_\varphi\omega_\varphi$, and
$\sfm(\varphi)=A\omega_\varphi$;
\item[(ii)] the residues at the characteristic point are
\begin{equation}\label{eq:hhy-residues}
\Res_{t=\theta}\left(\omega_\varphi\,d\frac1{t-\eta}\right)
 =-\frac{\tilde\pi}{C_{0,2}},\qquad
\Res_{t=\theta}(\omega_\varphi dt)=\kappa,
\end{equation}
and $\Lambda_\varphi=\kappa\,\iota(I_\infty^2)$ with
\begin{equation}\label{eq:hhy-period-differential}
\Hcal_\varphi(\iota(a)\kappa)=\chi(a)\omega_\varphi dt
\qquad(a\in I_\infty^2).
\end{equation}
\end{enumerate}
\end{lemma}
\begin{proof}
Each $(t-\eta_i)^{-1}$ lies in
$A\otimes_{\Fq}\mathbb F_{q^{\Ninf}}$ and has Gauss norm one.
Since $|\Theta|>1$, the product defining $\omega_\varphi$ in
\eqref{eq:hhy-special-function} converges in $\mathbb T_A$ to a unit,
and $\omega_\varphi^{(1)}=f_\varphi\omega_\varphi$; the quotient of
any special function by $\omega_\varphi$ is Frobenius-fixed in
$\mathbb T_A$, hence belongs to $A$. This proves (i).
The residue formula \cite[Theorem 6.8]{HHY26} gives
\eqref{eq:hhy-residues}. Since $\Omega_A=I_\infty^2dt$, the residue
isomorphism yields the lattice description and
\eqref{eq:hhy-period-differential}.
\end{proof}

\begin{proposition}[The HHY ideal value]\label{prop:hhy-ideal-value}
 For the module $\varphi$ and choices above, the integral-ideal Pellarin value is
\begin{equation}\label{eq:hhy-ideal-value}
 \LL_\varphi(1)\omega_\varphi(t)
 =\kappa\left(\frac{\Ninf}{t-\theta}
                     +\frac{\rho'(\theta)}{\rho(\theta)}\right),
\end{equation}
where $\Ninf$ is read in $\Fq$ and $\rho'(\theta)$ is evaluated
in the arithmetic variable.
\end{proposition}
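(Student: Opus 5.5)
The plan is to apply Theorem~\ref{thm:general-value}, in the form \eqref{eq:general-trace}, to the monic ideal isogeny
\[
J=[\varphi_{I_\infty^2};h_J;I_\infty^2,1]:\varphi\longrightarrow\psi .
\]
Since $\deg I_\infty=1$, the Artin convention $\varphi^{\Ia}=\varphi^{\sigma_\Ia}$ together with $\sigma_{I_\infty}=\sigma$ identifies the target $I_\infty^2*\varphi$ with $\varphi^{\sigma^2}=\psi$. This is the module of Section~\ref{sec:p1-free}, whose lattice $A\tilde\pi$ is free. By Lemma~\ref{lem:hhy-special-function}, $\Lambda_\varphi=\kappa\,\iota(I_\infty^2)$, so \eqref{eq:adapted-parameter-lattice} gives $Az_J=\iota(I_\infty^{-2})\Lambda_\varphi=A\kappa$. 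Hence $z_J=\varepsilon\kappa$ for some $\varepsilon\in\Fq^\times$.

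\textbf{Step 1: the multiplier $h_J$.} I will determine $h_J$ from its divisor rather than from the coefficients of $\varphi_{I_\infty^2}$. In genus zero we have $V_\varphi=V_\psi=0$. The affine divisor formula therefore says that on $U$ the divisor of $h_J$ is the pullback of $\operatorname{div}_U(I_\infty^2)$, namely $2(t=\infty)$. By Lemma~\ref{lem:motive-multiplier}, $h_J=\alpha_0+\alpha_1f_\varphi+\alpha_2f_\varphi f_\varphi^{(1)}$. The poles of this expression are at most simple poles at $t=\eta$ and $t=\eta_1$, using $f_\varphi$ from \eqref{eq:hhy-special-function}. A degree count then forces
\[
h_J=\frac{c}{(t-\eta)(t-\eta_1)}
\]
for a constant $c$. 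Since $g_J=h_J/h_J(\Xi)$, the normalization at $t=\theta$ gives
\[
g_J=\frac{(\theta-\eta)(\theta-\eta_1)}{(t-\eta)(t-\eta_1)}.
\]
This is the step I expect to need the most care. One must check that $h_J$ genuinely has both poles and the double zero, i.e.\ that $\alpha_2\neq0$ and that no pole cancels. I would confirm this from $\deg I_\infty^2=2$, which makes $\varphi_{I_\infty^2}$ of $\tau$-degree $2$, and from the fact that the pole points $\eta,\eta_1$ are distinct.

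\textbf{Step 2: the trace.} By Proposition~\ref{prop:divisor-shift} and \eqref{eq:p1-free-cauchy},
\[
K_J(\Xi,\cdot)=\frac{C_\psi}{g_J}
=\frac{\theta-\eta_1}{(t-\theta)(t-\eta_1)}\cdot\frac{(t-\eta)(t-\eta_1)}{(\theta-\eta)(\theta-\eta_1)}\,dt
=\left(\frac1{t-\theta}+\frac1{\theta-\eta}\right)dt .
\]
Its coefficients lie in $H_A=\mathbb F_{q^{\Ninf}}(\theta)$, in agreement with Theorem~\ref{thm:general-value}(i). The group $\operatorname{Gal}(H_A/K)$ permutes $\eta\mapsto\eta_i$ and fixes both $\theta$ and the geometric variable $t$. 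Therefore
\[
\Tr_{H_A/K}K_J(\Xi,\cdot)
=\left(\frac{\Ninf}{t-\theta}+\sum_{i}\frac1{\theta-\eta_i}\right)dt
=\left(\frac{\Ninf}{t-\theta}+\frac{\rho'(\theta)}{\rho(\theta)}\right)dt,
\]
where the last equality is the logarithmic derivative of $\rho$ in the arithmetic variable.

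\textbf{Step 3: the left side.} For $0\ne a\in I_\infty^2$, combining \eqref{eq:adapted-source-period} with \eqref{eq:hhy-period-differential} gives
\[
\chi(a)\Gcal_J(z_J)=\Hcal_\varphi(\iota(a)\varepsilon\kappa)=\varepsilon\chi(a)\,\omega_\varphi\,dt,
\]
so $\Gcal_J(z_J)=\varepsilon\,\omega_\varphi\,dt$. Substituting this and $z_J=\varepsilon\kappa$ into \eqref{eq:general-trace}, the factor $\varepsilon$ cancels from both sides. Removing the common factor $dt$ then yields \eqref{eq:hhy-ideal-value}.
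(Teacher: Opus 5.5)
Your argument is correct in substance, and Steps 2 and 3 match the paper's computation. One claim in the set-up is wrong as stated, however. The paper uses the Artin convention $\varphi^{\Ia}=\varphi^{\sigma_\Ia}$ only for sign-normalized modules, and $\varphi$ is not one: $f_\varphi\in H_A(X)$, whereas a sign-normalized model has shtuka function outside $H_A(X)$ when $\Ninf>1$. The paper's isogeny is $\eta_J=C_{0,2}(\tau+\Theta)\circ(\tau+\Theta)$ with $\mu_J=C_{0,2}$. So the monic target $I_\infty^2*\varphi$ is the scalar conjugate $C_{0,2}^{-1}\psi\,C_{0,2}$, not $\psi$ itself.

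This does not damage the proof. By \eqref{eq:isogeny-uniformization}, the monic target has lattice $h_J(\Xi)\kappa\,\iota(A)$, which is free and homothetic to $A\tilde\pi$. Hence Theorem~\ref{thm:general-value} applies to it. Its Cauchy differential equals $C_\psi$ by scalar invariance (Corollary~\ref{cor:finite-algorithm}). By the remark after Theorem~\ref{thm:general-value}, $g_J$, $z_J$ and $\Gcal_J(z_J)$ are unchanged by composing with a scalar isomorphism of the target. Alternatively, your $h_J$ and \eqref{eq:intertwiner} give $\divisor(f_{\mathrm{target}})=[\Xi]-[t=\eta_2]$ directly, which identifies the target's Cauchy differential with \eqref{eq:p1-free-cauchy}.

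Where you genuinely differ from the paper is in how you obtain $h_J$ and $z_J$. The paper takes the explicit HHY isogeny and expands $h_J=C_{0,2}\bigl(F_\varphi^{[2]}+(\Theta^q+\Theta)f_\varphi+\Theta^2\bigr)$. It simplifies this using $\Theta-\Theta_1=(\eta-\eta_1)\Theta\Theta_1$, and reads off $z_J=\kappa$ from $\partial\eta_J=C_{0,2}\Theta^2$ and \eqref{eq:hhy-period-constant}.

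You instead determine $h_J$ up to a constant from the affine divisor formula and a degree count, then normalize by $g_J(\Xi)=1$. You fix $z_J$ up to $\Fq^\times$ from $\Lambda_\varphi=\kappa\,\iota(I_\infty^2)$ and let the unit cancel. Your route needs only $\deg_\tau\varphi_{I_\infty^2}=2$, not the coefficients of the isogeny.

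The degree count also settles the worry you flag. The divisor of $h_J$ on $U$ is exactly $2[\infty_0]$, and the poles are bounded by $[t=\eta]+[t=\eta_1]$. So both poles must occur, and no further zeros above $P_\rho$ are possible. The paper's route, by contrast, produces the isogeny onto $\psi$ itself with exact constants, which explains the normalization $\kappa=\tilde\pi/(C_{0,2}\Theta^2)$.
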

\begin{proof}
The isogeny of \cite[Proposition 5.1 and Corollary 5.3]{HHY26}
from the source $\varphi$ to the free target $\psi$ is
\begin{equation}\label{eq:hhy-isogeny-datum}
\eta_J=C_{0,2}(\tau+\Theta)\circ(\tau+\Theta),\qquad
\mathfrak i_J=I_\infty^2,\qquad \mu_J=C_{0,2}.
\end{equation}
In particular $\partial\eta_J=C_{0,2}\Theta^2$ and $z_J=\kappa$.
The motive multiplier of $\eta_J$ is computed in the source frame:
\begin{align*}
h_J
 &=C_{0,2}\bigl(F_\varphi^{[2]}
                 +(\Theta^q+\Theta)f_\varphi+\Theta^2\bigr)\\
 &=\frac{C_{0,2}\Theta}
          {\Theta_1(t-\eta)(t-\eta_1)}.
\end{align*}
The second equality follows by inserting
$f_\varphi^{(1)}=(t-\eta_1)^{-1}-\Theta^q$ and
$\Theta-\Theta_1=(\eta-\eta_1)\Theta\Theta_1$.
Thus
\begin{equation}\label{eq:hhy-normalized-datum}
g_J=\frac{(\theta-\eta)(\theta-\eta_1)}
           {(t-\eta)(t-\eta_1)},\qquad
\Gcal_J(\kappa)=\omega_\varphi dt.
\end{equation}
For the last identity, combine \eqref{eq:hhy-period-differential}
with \eqref{eq:adapted-source-period} for any nonzero
$a\in I_\infty^2$ and cancel $\chi(a)$.

Since $q_\psi^{(-1)}=(t=\eta_1)$, $C_\psi$ is the
differential of \eqref{eq:p1-free-cauchy}. Consequently
\begin{equation}\label{eq:hhy-trace-input}
K_J(\Xi,\cdot)=\frac{C_\psi}{g_J}
 =\frac{t-\eta}
        {(\theta-\eta)(t-\theta)}\,dt.
\end{equation}
The coefficient trace fixes $t,dt$ and runs through
the roots $\eta_0,\ldots,\eta_{\Ninf-1}$ of $\rho$. Since
\[
\sum_{i=0}^{\Ninf-1}\frac{t-\eta_i}{\theta-\eta_i}
 =\Ninf+(t-\theta)\frac{\rho'(\theta)}{\rho(\theta)},
\]
we obtain
\begin{equation}\label{eq:hhy-trace-evaluated}
\Tr_{H_A/K}\bigl(K_J(\Xi,\cdot)\bigr)
 =\left(\frac{\Ninf}{t-\theta}
                +\frac{\rho'(\theta)}{\rho(\theta)}\right)dt.
\end{equation}
Substitution of $z_J=\kappa$ and
$\Gcal_J(\kappa)=\omega_\varphi dt$ in
\eqref{eq:general-trace} proves the value formula.

\end{proof}

The lattice description explains the normalization in
\eqref{eq:hhy-ideal-value}: $\kappa$ is the residue of
$\omega_\varphi dt$ and generates
$\iota(I_\infty^{-2})\Lambda_\varphi$. The source periods form
$\kappa\,\iota(I_\infty^2)$ even though $\sfm(\varphi)$ is free.

The generating identity also applies directly to the
source lattice, including the nonprincipal cases $\Ninf>2$.
Combining \eqref{eq:lattice-generating} with
\eqref{eq:hhy-period-differential} gives
\begin{equation}\label{eq:hhy-generating}
\sum_{0\ne a\in I_\infty^2}\frac{\chi(a)}{z-\iota(a)}
 =\frac{\kappa}{\exp_\varphi(\kappa z)}
       \frac{\Hcal_\varphi(\kappa z)}{\omega_\varphi\,dt}.
\end{equation}
Here the denominator is the adapted differential
$\Gcal_J(\kappa)=\omega_\varphi dt$.
Since $\kappa$ need not belong to $\Lambda_\varphi$, this
denominator need not be a value of $\Hcal_\varphi$ at a source
period.

\subsection{Higher weights}
For higher weights, use the sign function determined by
$\operatorname{sgn}(\rho)=1$, as in
\cite[Example 2.8]{HHY26}, and choose a sign-normalized scalar
conjugate $\psi^+$ of $\psi$. The shtuka divisor gives
\[
f_{\psi^+}=\gamma\frac{t-\theta}{ t-\eta^{q^2}},
\qquad \gamma\in(H_A^+)^\times.
\]
Write $\psi^+=z\psi z^{-1}$ with $z\in\Cinf^\times$.
Then $\Lambda_{\psi^+}=Az\tilde\pi$, and scalar covariance gives
$f_{\psi^+}=z^{q-1}f_\psi$. Since
$f_\psi=-\Theta_2(t-\theta)/(t-\eta_2)$, we obtain
$\gamma=-z^{q-1}\Theta_2$.

\begin{lemma}\label{lem:normalizing-scalar}
        The following hold for the normalizing scalar $\gamma$ of the sign-normalized module $\psi^+$.
\begin{enumerate}
\item[(i)] Put $m=(q^{\Ninf}-1)/(q-1)$. The scalar $\gamma$ satisfies
\begin{equation}\label{eq:gamma-kummer}
 \gamma^m=\prod_{j=2}^{\Ninf+1}(-\Theta_j)^{q^{\Ninf+1-j}}\in H_A,
\end{equation}
and $H_A^+=H_A(\gamma)$.
\item[(ii)] Every $\sigma\in G_0=\operatorname{Gal}(H_A^+/H_A)$ acts
by $\gamma\mapsto\varepsilon\gamma$ with
$\varepsilon\in\iota(\mathbb F_\infty^\times)$,
$\varepsilon^m=1$.
\item[(iii)] One has
$\sigma(\gamma)=\kappa_{\psi^+,q}(\sigma)^{-1}\gamma$ for
$\sigma\in G_0$; that is, $\gamma$ satisfies
\eqref{eq:twist-character} at $n=q$.
\end{enumerate}
\end{lemma}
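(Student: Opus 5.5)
The plan works with the relation $\gamma=-z^{q-1}\Theta_2$ and with the Galois action on $f_{\psi^+}$. Note that $f_{\psi^+}\in H_A^+(X)$ and $(t-\theta)/(t-\eta_2)\in H_A(X)$, so $\gamma\in H_A^+$. I would prove (iii) and (ii) first and then deduce (i).

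For (iii), apply the character identity stated after Corollary~\ref{cor:hayes-trace}: for $\sigma=\sigma_{(a)}\in G_0$,
\[
\sigma(f_{\psi^+})=\LT_{\psi^+}(a)^{1-q}f_{\psi^+}
=\iota(\operatorname{sgn}(a))^{(1-q)q^{\xi_{\psi^+}}}f_{\psi^+}
=\kappa_{\psi^+,q}(\sigma)^{-1}f_{\psi^+}.
\]
Since $\sigma$ fixes the geometric factor $(t-\theta)/(t-\eta_2)$, which has coefficients in $H_A$ because $\theta,\eta_2\in H_A$, the same factor is carried by $\gamma$. This gives (iii), consistent with Example~\ref{ex:gamma-twist}.

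Part (ii) follows from (iii). The value $\varepsilon=\kappa_{\psi^+,q}(\sigma)^{-1}=\iota(\operatorname{sgn}(a))^{(1-q)q^{\xi}}$ lies in $\iota(\mathbb F_\infty^\times)$. Moreover $\varepsilon^m=\operatorname{sgn}(a)^{-(q^{\Ninf}-1)q^\xi}=1$.

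For (i), the membership $\gamma^m\in H_A$ follows from (ii), since $\sigma(\gamma^m)=\varepsilon^m\gamma^m=\gamma^m$. To get the explicit value, the plan is to compute the norm-type product $\prod_{j=0}^{\Ninf-1}\gamma^{q^j}$ in two ways; the main obstacle is identifying $z^{q^{\Ninf}-1}$.

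\emph{First way.} Use $F^{[\Ninf]}_{\psi^+}=\prod_{j<\Ninf}f_{\psi^+}^{(j)}$. Its geometric part is $\prod_j(t-\theta^{q^j})/(t-\eta_{2+j})$, and the indices $\eta_{2+j}$ run over all roots of $\rho$.

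\emph{Second way.} Use sign normalization. The leading coefficient of $\psi^+_\rho$ equals $\operatorname{sgn}(\rho)^{q^\xi}=1$, and $\rho$ has degree $\Ninf$ in $\tau$, so $\psi^+_\rho(\omega)=\chi(\rho)\omega$ expresses $\chi(\rho)$ as $\sum_i\alpha_iF^{[i]}$ with $\alpha_{\Ninf}=1$. Comparing orders at $\Xi^{(j)}$ and the poles then forces
\[
F^{[\Ninf]}_{\psi^+}=\frac{\prod_j(t-\theta^{q^j})}{\rho(t)}
\]
up to lower terms.

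\emph{Comparison.} From $\gamma=-z^{q-1}\Theta_2$ we get $\prod_j\gamma^{q^j}=z^{q^{\Ninf}-1}\prod_j(-\Theta_{2+j})^{q^j}$. Matching the two expressions pins $z^{q^{\Ninf}-1}$ down to a unit, and hence pins down $\gamma^m$. Reindexing $j\mapsto \Ninf+1-j$ should produce exactly the displayed product. If this matching is awkward, a fallback is to compute $z$ from the leading coefficient of $\psi_\rho$ directly, since $\psi^+_\rho=z\psi_\rho z^{-1}$ gives $\LT_{\psi}(\rho)z^{q^{\Ninf}-1}=1$.

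Finally, $H_A^+=H_A(\gamma)$. The character $\kappa_{\psi^+,q}$ has order $(q^{\Ninf}-1)/\gcd(q-1,q^{\Ninf}-1)=m=|G_0|$ by Proposition~\ref{prop:weight-field}(iii), so it is faithful on $G_0$. By (iii), the stabilizer of $\gamma$ in $G_0$ is therefore trivial, and Galois theory gives the equality.
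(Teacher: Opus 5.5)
There is a genuine gap in the explicit formula for $\gamma^m$ in (i). Your arguments for (iii), for (ii), and for $H_A^+=H_A(\gamma)$ are sound, but the computation of $\gamma^m$ fails as written, for three reasons.

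\emph{First}, $\rho\notin A$. The ring $A$ consists of functions regular away from $P_\rho$, and $\rho(t)$ has a pole at the affine point $t=\infty$. So $\psi^+_\rho$ is undefined, and $\rho(t)$ is not in the span of the $F^{[i]}_{\psi^+}$, whose poles lie above $P_\rho$. The correct element is $1/\rho\in A$, which has $\operatorname{sgn}(1/\rho)=1$ and $\deg_\tau\psi^+_{1/\rho}=\Ninf$.

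\emph{Second}, sign normalization then gives $1/\rho(t)=\sum_{i=0}^{\Ninf}\alpha_iF^{[i]}_{\psi^+}$ with $\alpha_{\Ninf}=1$. Hence $F^{[\Ninf]}_{\psi^+}$ agrees modulo lower terms with $1/\rho(t)$, not with $\prod_j(t-\theta^{q^j})/\rho(t)$. Read literally together with your first way, your version would force $\gamma^m=1$.

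\emph{Third}, the comparison through $z$ has an index error. Since $\gamma^{q^j}=z^{(q-1)q^j}(-\Theta_2)^{q^j}$, the product is $z^{q^{\Ninf}-1}(-\Theta_2)^m$, not $z^{q^{\Ninf}-1}\prod_j(-\Theta_{2+j})^{q^j}$. The comparison also never actually isolates $\gamma^m$. Your fallback has the exponent reversed: $z\,c\tau^{\Ninf}z^{-1}=cz^{1-q^{\Ninf}}\tau^{\Ninf}$ gives $z^{q^{\Ninf}-1}=\LT_\psi(1/\rho)$, and you would still have to compute $\LT_\psi(1/\rho)$.

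The repair needs no $z$. Your first way gives exactly $F^{[\Ninf]}_{\psi^+}=\gamma^m\prod_{k=0}^{\Ninf-1}(t-\theta^{q^k})/\rho(t)$. For $i<\Ninf$ the poles of $F^{[i]}_{\psi^+}$ lie among $t=\eta_2,\ldots,t=\eta_{\Ninf}$, so these functions are regular at $t=\eta_1=\eta_{\Ninf+1}$. Multiply $1/\rho(t)=\sum_i\alpha_iF^{[i]}_{\psi^+}$ by $\rho(t)$ and evaluate at $t=\eta_1$. This gives $1=\gamma^m\prod_{k=0}^{\Ninf-1}(\eta_1-\theta^{q^k})$. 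The identity $(\eta_j-\theta)^{q^{\Ninf+1-j}}=\eta_1-\theta^{q^{\Ninf+1-j}}$ then turns this into \eqref{eq:gamma-kummer}.

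With this correction your proof is complete, and it is genuinely different from the paper's. The paper imports $\gamma^m=(-\Theta_2)^{\gamma_{\Ninf}(\sigma)}$ from \cite[Proposition 4.1]{HHY26}. It obtains $[H_A(\gamma):H_A]=m$ from the valuation $-1$ of $\gamma^m$ at $\theta=\eta_1$, and it gets (ii) from Kummer theory. You instead derive (ii) directly from (iii), and $H_A(\gamma)=H_A^+$ from the faithfulness of $\kappa_{\psi^+,q}$ on $G_0$. In your route the explicit formula is needed only for its own sake, and it is proved internally from sign normalization.
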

\begin{proof}
\emph{(i).} Applied to the $\sigma^2$-conjugate,
\cite[Proposition 4.1]{HHY26} gives
$\psi^+=z\psi z^{-1}$ with $z^{q-1}=-\gamma/\Theta_2$.
In the notation of that reference, it also gives
$\gamma^m=(-\Theta_2)^{\gamma_\Ninf(\sigma)}$.
Here $\gamma_\Ninf(\sigma)=\sum_{i=0}^{\Ninf-1}q^i\sigma^{\Ninf-1-i}$
acts multiplicatively on $-\Theta_2$, with
$\sigma(\eta)=\eta^q$ and $\sigma(\theta)=\theta$.
Expanding this action yields \eqref{eq:gamma-kummer}.
The right side lies in $H_A=\mathbb F_{q^{\Ninf}}(\theta)$
and has valuation $-1$ at $\theta=\eta_{\Ninf+1}$, since the
$\Ninf$ roots in the product are distinct. Consequently
$[H_A(\gamma):H_A]=m$. The shtuka function of the sign-normalized
module has coefficients in $H_A^+$, so $\gamma\in H_A^+$.
As $[H_A^+:H_A]=m$, this proves $H_A(\gamma)=H_A^+$,
in agreement with \cite[Section 4.1]{HHY26}.

\emph{(ii).} By (i), $H_A^+/H_A$ is Kummer of degree $m$, so every
$\sigma\in G_0$ sends $\gamma$ to $\varepsilon\gamma$ with
$\varepsilon^m=1$; being a constant, $\varepsilon$ lies in
$\iota(\mathbb F_\infty^\times)$.

\emph{(iii).} For $\sigma=\sigma_{(a)}\in G_0$,
\eqref{eq:sign-kernel-conjugation} gives $(\psi^+)^\sigma=z_a\psi^+
z_a^{-1}$ with $z_a=\LT_{\psi^+}(a)^{-1}$, and scalar conjugation
multiplies the shtuka function by $z_a^{q-1}$
(Section~\ref{sec:ideal-series}); hence
\[
\sigma(f_{\psi^+})=z_a^{q-1}f_{\psi^+}=\LT_{\psi^+}(a)^{1-q}f_{\psi^+}
=\kappa_{\psi^+,q}(\sigma)^{-1}f_{\psi^+},
\]
using $\LT_{\psi^+}(a)=\iota(\operatorname{sgn}(a))^{q^{\xi}}$ with $\xi = \xi_{\psi^+}$ and
\eqref{eq:weight-character} at $n=q$. Since the rational factor
$(t-\theta)/(t-\eta^{q^2})$ lies in $H_A(X)$, it is $G_0$-invariant,
and therefore
$\sigma(\gamma)=\kappa_{\psi^+,q}(\sigma)^{-1}\gamma$.
\end{proof}
For $r\ge1$, put $s_r=(q^r-1)/(q-1)$.
Lemma~\ref{lem:normalizing-scalar}(iii) gives the twist
$\gamma^{s_r}$. Using $f_{\psi^+}=\gamma(t-\theta)/(t-\eta_2)$
and $C_{\psi^+}=C_\psi$, equation~\eqref{eq:higher-weight-initial} gives
\begin{equation}\label{eq:projective-line-twisted-differential}
\gamma^{s_r}C_{\psi^+,q^r}
 =\frac{(\theta^{q^r}-\eta^{q^{r+1}})
             \displaystyle\prod_{j=2}^{r}(t-\eta^{q^j})}
           {\displaystyle\prod_{j=0}^{r}(t-\theta^{q^j})}\,dt.
\end{equation}
The coefficient trace of $\gamma^{s_r}C_{\psi^+,q^r}$ over
$\mathbb F_{q^{\Ninf}}(\theta)/\mathbb F_q(\theta)$ determines
$\LL_{\psi^+}^{\gamma^{s_r}}(q^r)$
by Theorem~\ref{thm:twisted-weight-trace}, for every $r\ge1$.

\begin{example}\label{ex:twisted-degree-three}
Take $q=2$ and $\rho(t)=t^3+t+1$. The roots of $\rho$ satisfy
$\Tr_{\mathbb F_8/\mathbb F_2}(\eta)=0$ and
$\Tr_{\mathbb F_8/\mathbb F_2}(\eta^6)=1$.
Equation~\eqref{eq:projective-line-twisted-differential} therefore gives
\begin{align*}
\LL_{\psi^+}^\gamma(2)\frac{\Hcal_{\psi^+}(z\tilde{\pi})}{(z\tilde{\pi})^{2}}
 &=\frac{\theta^2}{(t-\theta)(t-\theta^2)}\,dt,\\
\LL_{\psi^+}^{\gamma^3}(4)\frac{\Hcal_{\psi^+}(z\tilde{\pi})}{(z\tilde{\pi})^{4}}
 &=\frac{\theta^4t+1}
        {(t-\theta)(t-\theta^2)(t-\theta^4)}\,dt.
\end{align*}
Both weights lie outside $n\equiv1\pmod7$.
In each case, $[H_n:H_A]=7$ by
Proposition~\ref{prop:weight-field}, so $C_{\psi^+,n}$ itself has
smallest coefficient field $H_A^+$ over $H_A$.
\end{example}

\subsection{Carlitz specializations}
Take $A=\Fq[t]$ with its usual rational infinite point,
and put $\theta=\iota(t)$. Write $C$ for the Carlitz module,
$A\tilde\pi_C$ for its period lattice, and $\omega_C$ for the
Anderson--Thakur special function.

\begin{example}[Perkins's generating identity]\label{ex:carlitz-generating}
Write
\[
s_C(w;t)=\sum_{j\ge0}\exp_C(w/\theta^{j+1})t^j
\]
for the Anderson generating function, so that
$s_C(\tilde\pi_C;t)=\omega_C(t)$.
The shtuka function is $f_C=t-\theta$, and the Cauchy
normalization gives
$\Hcal_C(w)=-s_C(w;t)\,dt$.
Thus \eqref{eq:free-generating} becomes
\begin{equation}\label{eq:carlitz-perkins-generating}
\sum_{0\ne a\in A}\frac{\chi(a)}{z-\iota(a)}
 =\frac{\tilde\pi_C}{\exp_C(\tilde\pi_C z)}
       \frac{s_C(\tilde\pi_C z;t)}{\omega_C(t)}.
\end{equation}
This is the $s=1$ specialization of \cite[Theorem 1.1]{Perkins14};
the $a=0$ term may be inserted as zero, and
\cite{PP16} proves functional identities for this family of lattice
generating functions.
It holds in $\Cinf\langle t\rangle$ for $|z|<1$, with
the quotient continued at zero.
Since $\Cl(A)$ is trivial and $\Ninf=1$,
\eqref{eq:ideal-generating-trace} also identifies the right side with
\begin{equation}\label{eq:carlitz-ideal-generating}
\sum_{k\ge0}\LL_C(1+k(q-1))z^{k(q-1)}.
\end{equation}
 The constant coefficient is Pellarin's value at one;
the higher coefficients are the ideal-type values at weights
$1+k(q-1)$, $k\ge1$. 
\end{example}

\begin{example}[Pellarin's identity and Frobenius values]
At the usual rational infinity, the Carlitz normalization is
$C_C=dt/(t-\theta)$ and $\Hcal_C(\tilde{\pi}_C)=-\omega_Cdt$.
Here the sign agrees with the residue formula for $\omega_C$ in
\cite[Section 2.5]{AT90} and \cite[equations (1.1)--(1.2)]{EP14}.
We recover Pellarin's formula
$\LL_C(1)=-\tilde{\pi}_C/((t-\theta)\omega_C)$
\cite[Theorem 1]{Pellarin12}.
Since $f_C=t-\theta$, Theorem~\ref{thm:higher-weight-trace}
and \eqref{eq:higher-weight-initial} give, for every $r\ge0$,
\begin{equation}\label{eq:carlitz-frobenius-weight}
\LL_C(q^r)
 =-\frac{\tilde{\pi}_C^{q^r}}
        {\omega_C\displaystyle\prod_{j=0}^{r}(t-\theta^{q^j})}.
\end{equation}
This agrees with twisting the element-type identity, since all ideals are
principal and $\omega_C^{(r)}=F_C^{[r]}\omega_C$.
\end{example}
\begin{example}\label{ex:weight-congruence}
Let $A=\Fq[t]$, $q>2$, and let $n\ge1$ satisfy
$n\not\equiv1\pmod{q-1}$.
Although $\LL_C(n)=L_+(A;n)$, the normalized element-type differential
\[
B_n=L_+(A;n)\frac{\Hcal_C(\tilde{\pi}_C)}{\tilde{\pi}_C^n}
     =-\frac{L_+(A;n)\omega_C}{\tilde{\pi}_C^n}\,dt
\]
is not rational over $H_A=K=\Fq(\theta)$.
Indeed, choose $\mu^{q-1}=-\theta$. The product formulas
for the Anderson--Thakur special function and the Carlitz period are 
\[
\omega_C=\mu\prod_{j\ge0}(1-t/\theta^{q^j})^{-1},\]
 \[
\tilde{\pi}_C=\mu\theta
        \prod_{j\ge1}(1-\theta^{1-q^j})^{-1}.
\]
Thus $\omega_C/\tilde\pi_C^n$ belongs to
$K_\infty(\mu)\langle t\rangle$. Since
$L_+(A;n)\in K_\infty\langle t\rangle$, it follows that
$B_n\in K_\infty(\mu)\langle t\rangle\,dt$.

The totally ramified extension $K_\infty(\mu)/K_\infty$ has
automorphisms $\sigma_\zeta(\mu)=\zeta\mu$, $\zeta\in\Fq^\times$.
 These automorphisms act continuously on
$K_\infty(\mu)\langle t\rangle$ by acting on coefficients and
fixing $t$. They fix $L_+(A;n)$, so
$\sigma_\zeta(B_n)=\zeta^{1-n}B_n$. 
Moreover $B_n\ne0$, since
$\|L_+(A;n)-1\|\le q^{-n}<1$.
Choose $\zeta\in\Fq^\times$ with $\zeta^{1-n}\ne1$.
Then $B_n\notin K_\infty(t)\,dt$.
Thus $q=3$, $n=2$ gives a counterexample to extending
\eqref{eq:type-trace-higher} to every weight as an ordinary rational
Hilbert-class trace. The series comparison
\eqref{eq:type-one-type-two} remains valid.
\end{example}

\section{Elliptic curves}\label{sec:elliptic}
On an elliptic curve, the poles and residues determine the
Cauchy differential up to a holomorphic differential; the shtuka
divisor determines the remaining term. At rational infinity, the
resulting trace recovers the Green--Papanikolas formula.
A degree-three infinite point gives an explicit ideal value with
a nonzero holomorphic contribution and a residue-dual basis for
the finite torsion-trace formula.

\subsection{The Green--Papanikolas value at rational infinity}
Let $E/\Fq$ be the elliptic curve in Weierstrass form
\[
E:\quad y^2+a_1ty+a_3y=t^3+a_2t^2+a_4t+a_6,
\]
with $a_i\in\Fq$ and origin $O$. Put
$A=\Gamma(E\setminus\{O\},\OO_E)$ and $K=\Fq(E)$, so the removed
point is $\infty=O$. Write $\theta=\iota(t)$ and $\eta=\iota(y)$.
Normalize $|a|_\infty=q^{-v_O(a)}$ for $a\in K^\times$; then
$\Xi=(\theta,\eta)$ satisfies $|\theta|=q^2$ and $|\eta|=q^3$.
For the Green--Papanikolas module, denoted here by $\psi$, the chosen
shtuka divisor is $V_\psi=[(\alpha,\beta)]$ with $|\alpha|=q^2$.
The shtuka function of $\psi$ and the invariant differential on $E$ are
\[
f_\psi=\frac{y-\eta-m(t-\theta)}{t-\alpha},\qquad
m=\frac{\eta-\beta^q}{\theta-\alpha^q},\qquad
\nu=\frac{dt}{2y+a_1t+a_3}.
\]
These are the conventions of \cite[equation (18)]{GP18}.
Put $\kappa=(\eta-m\theta)/\alpha$ and choose
$\beta_*^{q-1}=\kappa$ as in the Green--Papanikolas normalization of the special function.
The estimate of \cite[Example 7.4]{HuCauchy} is
\[
 |\kappa|=q^q,\qquad
 \|f_\psi/\kappa-1\|_{\mathrm G}\le q^{-q-1}<1.
\]
Thus \cite[Proposition 7.3 and Theorem 7.6]{HuCauchy} give
\[
\omega_\psi=\beta_*\prod_{j\ge0}
             \frac{\kappa^{q^j}}{f_\psi^{(j)}}\in\mathbb T_A^\times,
\qquad \sfm(\psi)=A\omega_\psi.
\]
Since $\Omega_A=A\nu$, the residue isomorphism proves that
$\Lambda_\psi$ is free.  With the period convention of
\cite[equation (66)]{GP18}, 
\[
\tilde\pi=-\Res_\Xi(\omega_\psi\nu),\qquad
\Lambda_\psi=A\tilde\pi,\qquad
\Hcal_\psi(\tilde\pi)=-\omega_\psi\nu.
\]
The module $\psi$ and period $\tilde\pi$ are denoted by
$\rho$ and $\pi_\rho$ in \cite{GP18}.
With $\delta=t-\alpha$, the Green--Papanikolas
element-type formula
\cite[Theorem 7.1]{GP18} identifies
$C_\psi=(\delta^{(1)}/f_\psi)\nu$.
Corollary~\ref{cor:hayes-trace} therefore gives 
\[
\LL_\psi(1)
 =-\frac{\tilde\pi}{\omega_\psi}
   \sum_{\sigma\in\operatorname{Gal}(H_A/K)}
           \left(\frac{\delta^{(1)}}{f_\psi}\right)^\sigma .
\]
This identity recovers \cite[Theorem 7.3]{GP18}.
Corollary~\ref{cor:class-number} recovers the characteristic value
$\LL_\psi(1)|_\Xi=h(A)\cdot1$ of \cite[Corollary 7.4]{GP18}. 

\subsection{The shtuka datum and the free lattice}

Consider
\begin{equation}\label{eq:elliptic-curve}
E/\mathbb F_2:\qquad y^2+y=x^3+x+1,
\end{equation}
with rational origin $O$. Put $\rho(T)=T^3+T+1$, choose a root
$\alpha\in\mathbb F_8$, and let $Q$ be the degree-three closed
point with geometric points
$(\alpha,0),(\alpha^2,0),(\alpha^4,0)$.
We remove $Q$ and set $A=\Gamma(E\setminus\{Q\},\OO_E)$;
thus $\infty=Q$ and $\Ninf=3$, while $O$ lies on the affine curve.
 Put $\theta=\iota(x)$ and $\epsilon=\iota(y)$.
Then $\Xi=(\theta,\epsilon)$ and
\[ \epsilon^2+\epsilon=\rho(\theta). 
\]
 Use the normalization $|a|_\infty=2^{-3v_Q(a)}$.
Label $\alpha$ so that $\theta$ reduces to $\alpha^4$ in
the fixed completion. Since $y$ is a uniformizer at $Q$ and
$\rho'(\alpha^4)\ne0$, one has
$|\epsilon|=|\theta-\alpha^4|=2^{-3}$. In particular, the
arithmetic point has reduction

\begin{equation}\label{eq:elliptic-local-normalization}
\overline{\Xi}=(\overline\theta,\overline\epsilon)
 =q_0=(\alpha^4,0).
\end{equation}

The affine Weierstrass equation \eqref{eq:elliptic-curve}
has no solutions over $\mathbb F_2$, so $E(\mathbb F_2)=\{O\}$.
Let $\mathfrak m_O\subset A$ be the maximal ideal of $O$.
The degree map and
$\divisor(y)=Q-3O$ give
\[
\Cl(A)\simeq\mathbb Z/3\mathbb Z,\qquad
\mathfrak m_O^3=(1/y),\qquad
H_A=\mathbb F_8(\theta,\epsilon).
\]
Indeed, the degree-zero divisor class group is $E(\mathbb F_2)$
\cite[Chapter III, Section 3]{Silverman09}; the displayed constant
extension is unramified and splits $Q$ completely
\cite[Chapter 8]{Rosen02} and
\cite[Theorem 15.6]{Hayes92}.
The classes are represented by $A,\mathfrak m_O,\mathfrak m_O^2$.
Write $\sigma$ for the coefficient automorphism fixing
$\theta,\epsilon$ and sending $\alpha$ to $\alpha^2$.

Let $\mathcal F(x,y)=(x^2,y^2)$ be the Frobenius
endomorphism of $E$. Choose the shtuka pole
$q_\psi=\mathcal F(q_0)=(\alpha,0)$ and set $S=(s,w)$ with
\begin{equation}\label{eq:elliptic-V}
m=\frac{\epsilon+1}{\theta+\alpha},\qquad
u=m^2+\alpha+\theta,\qquad
s=u+1,\qquad w=m(\alpha+u)+1+u.
\end{equation}
Let $\ell$ and $k$ be the slopes of the chords through $(S,q_\psi)$
and $(S^{(1)},\Xi)$, respectively. Define $f_\psi$ as the linear
form of the second chord divided by that of the first:
\begin{equation}\label{eq:elliptic-shtuka}
\ell=\frac{w}{s+\alpha},\qquad
k=\frac{w^2+\epsilon}{s^2+\theta},\qquad
f_\psi(x,y)=\frac{y+\epsilon+k(x+\theta)}
                   {y+\ell(x+\alpha)}.
\end{equation}
Both linear forms have leading term $y$ at $O$, so
$f_\psi(O)=1$. The resulting module $\psi$ has free period
lattice by Proposition~\ref{prop:elliptic-module}.
\begin{lemma}\label{lem:elliptic-shtuka}
The geometric data $V_\psi=[S]$, $q_\psi$, and $f_\psi$ determine
a rank-one module over $H_A = \mathbb F_8(\theta,\epsilon)$.
\end{lemma}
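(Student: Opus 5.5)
We need to verify the shtuka data conditions: $V_\psi\ge0$, $\deg V_\psi=g=1$, $h^0(V_\psi)=1$, $\Xi,q_\psi^{(-1)}\notin\operatorname{supp}V_\psi$, and $\divisor(f_\psi)=V_\psi^{(1)}-V_\psi+\Xi-q_\psi$. We then invoke the Drinfeld–Mumford/Thakur correspondence used throughout the paper, as in \cite[Section 2.2]{HuCauchy}: such a shtuka datum determines a rank-one Drinfeld $A$-module, whose coefficients lie in any field containing $\iota(K)$ and the coordinates of the data.

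\textbf{Divisor identity.} The identity is the heart of the check and reduces to the group law on $E$. Since $E$ is elliptic, $h^0([S])=1$ is automatic for a single point, and $\deg V_\psi=1=g$. The numerator $y+\epsilon+k(x+\theta)$ is the chord through $S^{(1)}=(s^2,w^2)$ and $\Xi=(\theta,\epsilon)$; note that $+$ equals $-$ in characteristic $2$, and $k$ is exactly its slope. Its divisor is $S^{(1)}+\Xi+R_1-3O$, where $R_1=-(S^{(1)}+\Xi)$ in the group law. Similarly, the denominator has divisor $S+q_\psi+R_2-3O$ with $R_2=-(S+q_\psi)$. The required identity therefore becomes $R_1=R_2$, that is,
\[
S^{(1)}+\Xi=S+q_\psi \quad\text{in } E(\overline{H}).
\]
Equivalently, $S^{(1)}-S=q_\psi-\Xi$, i.e.\ $(\mathcal F-1)(S)=q_\psi-\Xi$, where the twist acts on coordinates by squaring (the Frobenius $\mathcal F$). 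The explicit formulas \eqref{eq:elliptic-V} are presumably a solution of this Lang-isogeny equation. I would first compute $P:=q_\psi-\Xi=q_\psi+(-\Xi)$, where $-\Xi=(\theta,\epsilon+1)$. The chord from $(\alpha,0)$ to $(\theta,\epsilon+1)$ has slope $m=(\epsilon+1)/(\theta+\alpha)$. The addition law for $y^2+y=x^3+x+1$ (here $a_1=a_2=0$, $a_3=1$) gives $x_3=m^2+\alpha+\theta=u$ and $y_3=m(\alpha+u)+1$ after negation, since $-(x,y)=(x,y+1)$.

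I then need $S$ with $S^{(1)}=S+P$. The given $s=u+1$ and $w=y_3+u$ suggest $S$ is obtained from $P$ by a simple translation. The cleanest route is to verify directly that $S^{(1)}+S=P$ under the addition law (using $-S=S$ negation coordinates appropriately), checking the equation $s^2,w^2$ against $s,w$ by expanding with the relation $\epsilon^2+\epsilon=\rho(\theta)$ and $\alpha^3=\alpha+1$. This algebraic verification is the main obstacle. I would first check that $S$ lies on $E$, then verify the $x$-coordinate of $S^{(1)}-S$ via the chord formula, using $(s^2+s)=u^2+u$ simplifications, and finally check the $y$-coordinate.

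\textbf{Support conditions and field of definition.} Next I check $S\neq\Xi$ and $S\neq q_\psi^{(-1)}=(\alpha^4,0)$ by comparing absolute values. We have $|m|=|\epsilon+1|/|\theta+\alpha|=1$, since $\theta$ reduces to $\alpha^4\ne\alpha$, so $u$ and $s$ reduce to explicit constants in $\mathbb F_8$. This distinguishes $S$ from $\Xi$ (which reduces to $(\alpha^4,0)$) once the reduction of $S$ is computed, and likewise from $q_\psi^{(-1)}$. The denominators $s+\alpha$ and $s^2+\theta$ are nonzero for the same reason. Finally, all of $m,u,s,w,\ell,k$ are rational expressions in $\alpha,\theta,\epsilon$, so $f_\psi\in H_A(X)$ with $H_A=\mathbb F_8(\theta,\epsilon)$. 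The module coefficients are then obtained from $f_\psi$ and the basis $F_\psi^{[k]}$ of $\Gamma(U,\OO(V_\psi))$ by expanding $\chi(a)\cdot1$ in that basis. These expansions are linear algebra over $H_A$, so the module is defined over $H_A$.
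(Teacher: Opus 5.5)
Your reduction of the divisor identity to $S^{(1)}+\Xi=S+q_\psi$, i.e.\ $(\mathcal F-1)(S)=q_\psi-\Xi$, via the two chords, and your computation $q_\psi-\Xi=(u,m(\alpha+u)+1)$, are exactly the paper's. Note that the identity to check is $S^{(1)}-S=P$, not $S^{(1)}+S=P$, and $-S=(s,w+1)\neq S$.

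The step you call the main obstacle goes away if you compute the Lang map on a general affine point, as the paper does. The chord through $(x^2,y^2)$ and $(x,y+1)$ has slope $(y^2+y+1)/(x^2+x)=x+1$. Hence $\mathcal F(P)-P=(x+1,y+x+1)$ and $(\mathcal F-1)^{-1}(u,v)=(u+1,v+u)$. This produces \eqref{eq:elliptic-V} directly and also gives $S\in E$ without a separate check.

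\textbf{The gap: separating $S$ from $\Xi$ and $q_\psi^{(-1)}$ by reduction fails.} You propose to show $S\neq\Xi$ and $S\neq q_\psi^{(-1)}=(\alpha^4,0)$ by comparing reductions, but all three points have the same reduction.

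\begin{itemize}
\item Explicitly: from $\overline\theta=\alpha^4$ and $\alpha^4+\alpha=\alpha^2$ one gets $\overline m=\alpha^{-2}$, $\overline s=\alpha^4$ and $\overline w=0$.
\item Conceptually: reduction commutes with $(\mathcal F-1)^{-1}$, and $\overline{q_\psi-\Xi}=(\mathcal F-1)(q_0)$. So $\overline S=q_0=\overline\Xi=q_\psi^{(-1)}$. The paper records $\overline S=q_0$ in the proof of Proposition~\ref{prop:elliptic-module}.
\end{itemize}

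Reduction does suffice for the slope denominators, since $\overline{s+\alpha}=\overline{s^2+\theta}=\alpha^2\neq0$. It cannot give the support conditions.

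\textbf{The missing idea is to use the Lang relation itself.}

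\begin{itemize}
\item If $S$ were a constant point, so would be $(\mathcal F-1)(S)=q_\psi-\Xi$, and it is not. Hence $S\neq q_\psi^{(-1)}$, and no twist of $S$ lies above $Q$.
\item If $S=\Xi$, then $\mathcal F(\Xi)=q_\psi$, which is impossible.
\item Equivalently, $s-\theta=(m+\alpha^{-2})^2\neq0$, because $m$ is not constant.
\end{itemize}

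The fact that the twists of $S$ avoid $Q$ is also what the paper uses in place of your black-box correspondence. With $D_n=[S]+\sum_{j<n}[q_\psi^{(j)}]$, Riemann--Roch gives $h^0(D_n)=n+1$. The function $F_\psi^{[n]}$ acquires a new pole at $q_\psi^{(n-1)}$ with no cancellation. So $1,F_\psi^{[1]},F_\psi^{[2]},\ldots$ is an $H_A$-basis of $\Gamma(E_{H_A}\setminus Q,\OO(S))$.

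Expanding $\chi(a)$ in this basis defines $\psi_a$ over $H_A$. Evaluation at $\Xi$ gives $\psi_{a,0}=\iota(a)$, and pole orders give $\deg_\tau\psi_a=\deg(aA)$. Citing the correspondence is acceptable, but only once these avoidance conditions have been established correctly.
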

\begin{proof}
On this curve, the Lang map $\mathcal F-1$ is an
automorphism. On affine points $P=(x,y)$, it is given by
\[
\mathcal F(P)-P=(x+1,y+x+1),
\]
with inverse 
\begin{equation}\label{eq:elliptic-lang}
 (\mathcal F-1)^{-1}(u,v)=(u+1,v+u).
\end{equation}
For the first identity, the chord joining $(x^2,y^2)$ and
$-(x,y)=(x,y+1)$ has slope
\[
\frac{y^2+y+1}{x^2+x}
 =\frac{x^3+x}{x^2+x}=x+1.
\]
 Both displayed coordinate maps extend to $E$ by fixing $O$.
The addition formulas prove the first identity on a dense open set,
and hence as an identity of morphisms on $E$. Composing the two
maps verifies the inverse. 
The point $(u,m(\alpha+u)+1)$ is $q_\psi-\Xi$.
Thus \eqref{eq:elliptic-V} gives
$(\mathcal F-1)(S)=q_\psi-\Xi$.
The numerator and denominator of \eqref{eq:elliptic-shtuka}
define the chords through $(S^{(1)},\Xi)$ and $(S,q_\psi)$,
respectively. These chords have the same third intersection point
with $E$, because $S^{(1)}+\Xi=S+q_\psi$.
Cancelling the common zero in their ratio gives
\[
\divisor(f_\psi)=[S^{(1)}]-[S]+[\Xi]-[q_\psi].
\]
The relation $(\mathcal F-1)(S)=q_\psi-\Xi$ shows that
$S$ is not a constant point, so neither $S$ nor any of its
Frobenius twists lies above $Q$. Moreover, $S\ne\Xi$, since
otherwise $\mathcal F(\Xi)=q_\psi$. Thus $V_\psi=[S]$ avoids
$\Xi$ and $q_\psi^{(-1)}$, as required by the geometric realization.

Put
$D_n=[S]+\sum_{j=0}^{n-1}[q_\psi^{(j)}]$ for $n\ge0$.
Riemann--Roch gives $h^0(D_n)=n+1$. For $n\ge1$,
\eqref{eq:product-divisor} puts $F_\psi^{[n]}$ in
$H^0(E_{H_A},\OO(D_n))$ and gives it a pole at
$q_\psi^{(n-1)}$ of order one greater than allowed by $D_{n-1}$.
There is no cancellation at this point, since the twists of
$S$ and $\Xi$ avoid $Q$. Induction therefore shows that
$1,F_\psi^{[1]},\ldots,F_\psi^{[n]}$ form a basis of
$H^0(E_{H_A},\OO(D_n))$. Their union is a basis of
$\Gamma(E_{H_A}\setminus Q,\OO(S))$, which is therefore free
over $H_A\{\tau\}$ with generator $1$ for
$\tau(g)=f_\psi g^{(1)}$.

For $a\in A$, expand
$\chi(a)=\sum_i\psi_{a,i}F_\psi^{[i]}$ and define
$\psi_a=\sum_i\psi_{a,i}\tau^i$, as in
\cite[Section 2.2]{HuCauchy}.
Multiplication by $\chi(a)$ commutes with $\tau$, so these
expansions respect addition and multiplication in $A$.
The coordinates of $S$ and $f_\psi$ lie in $H_A$, so the
coefficients $\psi_{a,i}$ also lie in $H_A$.
Evaluation at $\Xi$ gives $\psi_{a,0}=\iota(a)$, and the
pole orders give $\deg_\tau\psi_a=\deg(aA)$ for $a\ne0$.
Thus the data define a rank-one Drinfeld module over $H_A$.
\end{proof}

\begin{proposition}
\label{prop:elliptic-module}
For the module specified by \eqref{eq:elliptic-shtuka},
define the special function and period by
\begin{equation}\label{eq:elliptic-unit-period}
\omega_\psi=\prod_{j\ge0}(f_\psi^{(j)})^{-1},\qquad
\tilde\pi=\Res_\Xi(\omega_\psi dx).
\end{equation}
Then $\omega_\psi\in\mathbb T_A^\times$,
$\sfm(\psi)=A\omega_\psi$, $\Lambda_\psi=A\tilde\pi$, and
$\Hcal_\psi(\tilde\pi)=\omega_\psi dx$.
\end{proposition}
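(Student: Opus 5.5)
We follow the pattern of the Green--Papanikolas and HHY cases: first show that the product $\omega_\psi$ is a Tate unit, then identify the special functions, and then transport this through the residue isomorphism.

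\emph{Step 1: $\omega_\psi$ is a unit.} We show $\|f_\psi-1\|_{\mathrm G}<1$. Then each factor $(f_\psi^{(j)})^{-1}$ is a unit with $\|(f_\psi^{(j)})^{-1}-1\|\le\|f_\psi-1\|^{q^j}$, so the product converges in $\mathbb T_A$ to a unit with $\omega_\psi^{(1)}=f_\psi^{-1}\cdot\ldots$. The functional equation needs care with the direction: the requirement is $\omega_\psi^{(1)}=f_\psi\omega_\psi$, so we must check how the product is normalized (inverse product versus $\prod f^{(j)}$). If needed we rewrite $\omega_\psi$ via the scalar-free normalization, because $f_\psi(O)=1$ means no constant $\kappa$ appears, unlike the GP case.

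For the estimate we work with the explicit chords in \eqref{eq:elliptic-shtuka}. Write
\[
f_\psi-1=\frac{\epsilon+k(x+\theta)-\ell(x+\alpha)}{y+\ell(x+\alpha)}.
\]
We compute the absolute values of $m,u,s,w,\ell,k$ from $|\theta-\alpha^4|=|\epsilon|=2^{-3}$. In particular $|\theta+\alpha|=1$, so $|m|=1$ and $u,s,w$ are integral, and we expect $|s+\alpha|$ and $|s^2+\theta|$ to be of size one. We then bound the Gauss norm on $A\otimes\mathbb T$, using the Gauss norm of functions in $x,y$ on the affine ring of $E\setminus Q$. This estimate is the main obstacle: the reductions of these quantities over $\mathbb F_8$ must be tracked to confirm $\|f_\psi-1\|<1$. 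We would first check that the reduction of $f_\psi$ modulo the maximal ideal of $\Cinf$ is the constant function $1$. This amounts to verifying that the two reduced chords coincide, which should follow because $\overline\Xi=q_0$ and the Lang relation reduces to $(\mathcal F-1)(\overline S)=q_\psi-q_0$.

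\emph{Step 2: $\sfm(\psi)=A\omega_\psi$.} Once $\omega_\psi^{(1)}=f_\psi\omega_\psi$ holds, the motive interpretation \cite[Section 2.2]{HuCauchy} shows that $\omega_\psi\in\sfm(\psi)$. For any $\omega\in\sfm(\psi)$, the quotient $\omega/\omega_\psi\in\mathbb T_A$ is fixed by coefficient Frobenius, hence lies in $A$. This is the argument of Lemma~\ref{lem:hhy-special-function}(i).

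\emph{Step 3: the lattice.} On $E$ the invariant differential gives $\Omega_A=A\,dx$. Indeed, $dx/(2y+1)=dx$ in characteristic $2$ is regular and nonvanishing on all of $E$, and in particular on $E\setminus Q$. Hence $\sfm(\psi)\otimes_A\Omega_A=A\,\omega_\psi dx$. The residue isomorphism \cite[Theorem 5.8]{GM24} maps this module isomorphically onto $\Lambda_\psi$, so $\Lambda_\psi=A\tilde\pi$ with $\tilde\pi=\Res_\Xi(\omega_\psi dx)$. Inverting the residue map, as in \eqref{eq:period-differential}, gives $\Hcal_\psi(\tilde\pi)=\omega_\psi dx$. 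Finally, $\tilde\pi\neq0$ follows from injectivity of the residue map on the nonzero element $\omega_\psi dx$.
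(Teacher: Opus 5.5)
Your overall structure matches the paper's proof. A strict-unit bound $\|f_\psi-1\|_{\mathrm G}<1$ makes the product a Tate unit. The Frobenius-fixed quotient gives $\sfm(\psi)=A\omega_\psi$. Then $\Omega_A=A\,dx$ and the residue isomorphism give $\Lambda_\psi=A\tilde\pi$ and $\Hcal_\psi(\tilde\pi)=\omega_\psi dx$. Steps 2 and 3 are fine. The direction of the functional equation that worried you is immediate and needs no rescaling: $\omega_\psi^{(1)}=\prod_{j\ge1}(f_\psi^{(j)})^{-1}=f_\psi\,\omega_\psi$.

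\textbf{The gap.} The one substantive step, the strict-unit estimate, is left open, and your set-up would not work as written. Here $\infty=Q$, not $O$, so $O$ lies on $U=E\setminus Q$ and $x,y\notin A$: both have poles at $O$. Consequently neither chord, nor your displayed expression for $f_\psi-1$ with denominator $y+\ell(x+\alpha)$, lies in $\mathbb T_A$. There is no Gauss norm ``of functions in $x,y$ on the affine ring'' with which to bound it.

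Your fallback, that $f_\psi$ reduces to $1$ for the Gauss valuation on $\Cinf(E)$, does not by itself place $f_\psi$ in $\mathbb T_A^\times$. For example, $1+\epsilon/(1/y-c)$ with $0<|c|<1$ has this property, yet it has poles at points with $y=1/c$. Those points reduce to $O\in U$, so they lie in the affinoid of $\mathbb T_A$. Also, integrality of $w$ is not enough; you need $|w|<1$ to get $|k|,|\ell|<1$.

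\textbf{How to close it.} Normalize both chords by $y$. Since $\divisor(y)=Q-3O$, the functions $1/y$ and $x/y$ lie in $A$ and have Gauss norm one, and
\[
f_\psi=\frac{1+k\,x/y+(\epsilon+k\theta)/y}{1+\ell\,x/y+\ell\alpha/y}.
\]
It then suffices to show $|w|<1$ and $|s+\alpha|=|s^2+\theta|=1$.

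Your Lang-map idea gives this once made precise. Reducing $S^{(1)}-S=q_\psi-\Xi$ (good reduction) gives
\[
(\mathcal F-1)(\overline S)=q_\psi-q_0=(\mathcal F-1)(q_0),
\]
so $\overline S=q_0$ because $\mathcal F-1$ is an automorphism. Hence $\overline w=0$, and both $\overline s+\alpha$ and $\overline s^{\,2}+\overline\theta$ equal $\alpha^4+\alpha=\alpha^2\ne0$. This yields $|k|,|\ell|<1$, and the numerator and denominator above are $1$-units of $\mathbb T_A$.

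The paper instead reads $\overline S=q_0$ off the explicit formulas \eqref{eq:elliptic-V}. It then uses $w\in\iota(K_\infty)$, with uniformizer $\epsilon$, to get the sharper bound $|w|\le2^{-3}$.

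Your observation that the reduced chords coincide is useful only because the common reduced chord is exactly the line $y=0$. Its three zeros are the geometric points of $Q$, and that is what makes division by it legitimate inside $\mathbb T_A$.
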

\begin{proof}
The coordinates $s,w$ lie in $\iota(K_\infty)$, and reduction in
\eqref{eq:elliptic-local-normalization} gives
$\overline S=q_0$.
Since $\epsilon$ is a uniformizer of $\iota(K_\infty)$,
$|w|\le|\epsilon|=2^{-3}$. The denominators $s+\alpha$ and
$s^2+\theta$ have absolute value one, so $|\ell|,|k|\le2^{-3}$.
Since $1/y,x/y\in A$, rewrite \eqref{eq:elliptic-shtuka} as
\[
f_\psi=\frac{1+k\,x/y+(\epsilon+k\theta)/y}
              {1+\ell\,x/y+\ell\alpha/y}.
\]
The numerator and denominator each differ from $1$ by an element
of Gauss norm at most $2^{-3}$. Hence
\begin{equation}\label{eq:elliptic-strict-unit}
f_\psi\in\mathbb T_A^\times,\qquad
 \|f_\psi-1\|_{\mathrm G}\le2^{-3}<1.
\end{equation}
 The strict-unit bound and \cite[Proposition 7.3 and
Theorem 7.6]{HuCauchy}, with normalizing scalar $1$, imply that
the product in \eqref{eq:elliptic-unit-period} converges to
$\omega_\psi\in\mathbb T_A^\times$ and that
$\sfm(\psi)=A\omega_\psi$. 
The differential $dx$ has no zeros or poles on $E$, so $\Omega_A=A\,dx$.
The residue isomorphism now gives
$\Lambda_\psi=A\tilde\pi$ and
$\Hcal_\psi(\tilde\pi)=\omega_\psi dx$.
The inverse Frobenius twist of the shtuka pole is
$q_\psi^{(-1)} =(\alpha^4,0)$.  

\end{proof}


\subsection{An explicit value at a degree-three infinite point}
\label{sec:elliptic-closed}

\begin{theorem}
\label{thm:elliptic-explicit}
For the module $\psi$ of
Proposition~\ref{prop:elliptic-module}:
\begin{enumerate}
\item[(i)] the ideal Pellarin value of $\psi$ satisfies
\begin{equation}\label{eq:elliptic-explicit}
\frac{\LL_\psi(1)\omega_\psi}{\tilde\pi}
 =\frac{y+\epsilon+1}{x+\theta}
  +\frac{x^2+1}{y}
  +\frac{(\theta^2+\theta+1)(\epsilon+1)}{\epsilon^2}.
\end{equation}
After multiplication by $dx$, the sum of the first two terms
has poles only at $\Xi$ and the points above $Q$.
The last term, multiplied by $dx$, is the coefficient trace of the
holomorphic correction in $C_\psi$.
\item[(ii)] $\LL_\psi(1)|_\Xi=1$.
\end{enumerate}
\end{theorem}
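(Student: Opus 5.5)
We will derive (i) from Corollary~\ref{cor:hayes-trace}(ii): since $\Lambda_\psi=A\tilde\pi$ and $\Hcal_\psi(\tilde\pi)=\omega_\psi\,dx$ by Proposition~\ref{prop:elliptic-module}, we have
\[
\LL_\psi(1)\frac{\omega_\psi\,dx}{\tilde\pi}=\Tr_{H_A/K}(C_\psi),
\]
so it suffices to compute $C_\psi$ explicitly and then take the coefficient trace over $\mathbb F_8(\theta,\epsilon)/\mathbb F_2(\theta,\epsilon)$, i.e.\ the sum over $\alpha\mapsto\alpha,\alpha^2,\alpha^4$ with $x,y,\theta,\epsilon$ fixed.

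\emph{Step 1: identifying $C_\psi$.} By \eqref{eq:cauchy-line}, $C_\psi$ is the unique differential in $H^0(E,\Omega^1(-[S]+\Xi+q_\psi^{(-1)}))$ with residue $1$ at $\Xi$, where $q_\psi^{(-1)}=(\alpha^4,0)$. On the genus-one curve, we first take the third-kind differential
\[
\frac{y+\epsilon+1}{x+\theta}\,dx-\frac{y+1}{x+\alpha^4}\,dx .
\]
Here the chord form through $\Xi$ and $-\Xi=(\theta,\epsilon+1)$ has simple poles at $\Xi$ and $O$; we check its residue $1$ at $\Xi$ using $dx$ and $\partial_y(y^2+y)=1$. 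We then subtract the analogous term at $(\alpha^4,0)$. The combination cancels the pole at $O$, leaving residues $\pm1$ at $\Xi$ and $q_\psi^{(-1)}$. Next we add $c\,dx$ with $c\in H_A$ chosen so that the whole differential vanishes at $S=(s,w)$. This gives
\[
c=\frac{w+1}{s+\alpha^4}-\frac{w+\epsilon+1}{s+\theta}.
\]
Uniqueness then forces this expression to equal $C_\psi$. Alternatively, we could cross-check it against $\widetilde\lambda_\psi^{(1)}/f_\psi$ or against the finite algorithm of Corollary~\ref{cor:finite-algorithm}.

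\emph{Step 2: taking the trace.} The first term is fixed by the trace, and $3=1$ in characteristic $2$. The second term gives
\[
-\sum_{i}\frac{y+1}{x+\alpha^{4\cdot 2^i}}=(y+1)\frac{\rho'(x)}{\rho(x)}=\frac{(y+1)(x^2+1)}{y^2+y}=\frac{x^2+1}{y},
\]
using $\rho(x)=y^2+y$ on $E$ and $\rho'=x^2+1$. This yields the first two terms of \eqref{eq:elliptic-explicit}, whose poles lie at $\Xi$ and above $Q$, as claimed. The remaining task, which is the main obstacle, is to show
\[
\Tr_{H_A/K}(c)=\frac{(\theta^2+\theta+1)(\epsilon+1)}{\epsilon^2}.
\]
To do this, we express $s,w$ through $m=(\epsilon+1)/(\theta+\alpha)$ via \eqref{eq:elliptic-V} and simplify $c$ to a rational function of $\alpha,\theta,\epsilon$ with denominator a power of $\theta+\alpha$. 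We then reduce using $\epsilon^2+\epsilon=\rho(\theta)=\prod_i(\theta+\alpha^{2^i})$. The traces of $1/(\theta+\alpha)^j$ follow from Newton/logarithmic-derivative identities for $\rho$, namely $\sum_i(\theta+\alpha_i)^{-1}=\rho'(\theta)/\rho(\theta)$ and its Frobenius powers. If the direct simplification becomes unwieldy, we will instead evaluate the trace of $c$ by residues: $\Tr(c)\,dx$ equals $\Tr(C_\psi)$ minus the known part, so comparing Laurent expansions at the point $O$, where $x/y$ is a local parameter, determines the constant.

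\emph{Part (ii).} This is Corollary~\ref{cor:class-number} with $h(A)=3=1$ in $\mathbb F_2$. Directly, the right side of \eqref{eq:elliptic-explicit} times $dx$ has residue $1$ at $\Xi$, while $\omega_\psi\,dx/\tilde\pi$ has residue $1$ there, so the quotient is regular at $\Xi$ with value $1$.
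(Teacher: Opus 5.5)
Your outline matches the paper's own proof step for step:
\begin{itemize}
\item the same third-kind differential $G_\alpha\,dx$;
\item the holomorphic correction fixed by vanishing at $S$ (your $c$ is the paper's $\gamma_\alpha=G_\alpha(s,w)$, since the characteristic is two);
\item the identity $(y+1)\rho'(x)/\rho(x)=(x^2+1)/y$ for the geometric terms;
\item Corollary~\ref{cor:class-number} with $h(A)=3=1$ for (ii).
\end{itemize}
The gap is that the only non-formal content of \eqref{eq:elliptic-explicit}, namely $\Tr_{H_A/K}(\gamma_\alpha)=(\theta^2+\theta+1)(\epsilon+1)/\epsilon^2$, is stated as a goal but never established. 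The route you sketch for it also rests on a wrong picture of the expression.

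After substituting $m=(\epsilon+1)/(\theta+\alpha)$, the denominators $s+\theta=m^2+\alpha+1$ and $s+\alpha^4=m^2+\alpha^2+\theta+1$ are not powers of $\theta+\alpha$. Multiplied by $(\theta+\alpha)^2$ and reduced by $\epsilon^2=\epsilon+\rho(\theta)$, they become linear in $\epsilon$, so they must be rationalized under $\epsilon\mapsto\epsilon+1$. For $(s+\theta)(\theta+\alpha)^2$, the norm to $\mathbb F_8(\theta)$ is $(\theta+\alpha)^2(\theta+\alpha^4)^2(\theta+\alpha^5)^2$. The factor $\theta+\alpha^5$ does not divide $\rho(\theta)$. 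It disappears only because $(w+\epsilon+1)/(s+\theta)$ is of the form $0/0$ where $S=-\Xi$, for instance at $\Xi=(\alpha^5,\alpha^5)$. So Newton-type power sums $\sum_i(\theta+\alpha_i)^{-j}$ alone do not finish the computation.

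Your fallback via Laurent expansions at $O$ is circular. Each fraction in $G_\alpha$ has expansion $z^{-1}+c'z+O(z^2)$ at $O$, so $G_\alpha$ vanishes there. The expansion of $\Tr(C_\psi)$ at $O$ therefore just returns the unknown $\Tr(\gamma_\alpha)\,dx$, and you have no independent description of $\Tr(C_\psi)$ there to compare with.

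The paper closes this step with a normal form rather than power sums. Reducing $G_\alpha(s,w)$ modulo $\alpha^3=\alpha+1$ and $\epsilon^2=\epsilon+\rho(\theta)$ gives
\[
\gamma_\alpha=\frac{D_0+D_1\alpha+D_2\alpha^2}{\epsilon^2},\qquad
D_0=(\theta^2+\theta+1)(\epsilon+1),\quad
D_1=\epsilon^2+(\theta+1)\epsilon+\theta^2,\quad
D_2=\epsilon^2+\theta\epsilon+1,
\]
with $D_i\in\mathbb F_2[\theta,\epsilon]$. Since $1,\alpha,\alpha^2$ is a basis of $\mathbb F_8(\theta,\epsilon)$ over $\mathbb F_2(\theta,\epsilon)$, with $\Tr(1)=1$ and $\Tr(\alpha)=\Tr(\alpha^2)=0$, the trace is simply the $1$-coordinate $D_0/\epsilon^2$. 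Once this reduction is supplied, the rest of your argument goes through as written.
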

\begin{proof}
\emph{(i).} Define
\[
G_\alpha(x,y)=\frac{y+\epsilon+1}{x+\theta}
                    +\frac{y+1}{x+\alpha^4},\qquad
\gamma_\alpha=G_\alpha(s,w).
\]
The differential $G_\alpha dx$ has simple poles at $\Xi$
and $q_\psi^{(-1)}$ with residues $1$ and $-1$.
The apparent poles at $-\Xi$ and $-q_\psi^{(-1)}$ are removable:
the corresponding numerators in $G_\alpha$ vanish there.
At $O$, using the uniformizer $z=x/y$, each fraction has expansion
$z^{-1}+O(z)$, so their principal parts cancel.
Adding $\gamma_\alpha dx$ imposes the required zero at $S$,
since $\gamma_\alpha=G_\alpha(s,w)$ and the characteristic is two.
The normalized Cauchy differential is consequently
\begin{equation}\label{eq:elliptic-C}
C_\psi=(G_\alpha+\gamma_\alpha)dx.
\end{equation}
The poles and residues determine $C_\psi$ up to an additive
multiple of $dx$; vanishing at $S$ determines this holomorphic correction.

Substitute \eqref{eq:elliptic-V} in $G_\alpha(s,w)$ and reduce by
$\alpha^3=\alpha+1$ and $\epsilon^2=\epsilon+\rho(\theta)$.
This gives
\[
\gamma_\alpha=\frac{D_0+D_1\alpha+D_2\alpha^2}{\epsilon^2},
\]
where
\begin{align*}
D_0&=(\theta^2+\theta+1)(\epsilon+1),\\
D_1&=\epsilon^2+(\theta+1)\epsilon+\theta^2,\\
D_2&=\epsilon^2+\theta\epsilon+1.
\end{align*}
For the coefficient trace $\Tr=\Tr_{H_A/K}$, one has
$\Tr(1)=1$ and $\Tr(\alpha)=\Tr(\alpha^2)=0$.
Thus $\Tr_{H_A/K}(\gamma_\alpha)=D_0/\epsilon^2$.
For the geometric terms,
\[
\sum_{j=0}^2\frac1{x+\alpha^{2^j}}
 =\frac{\rho'(x)}{\rho(x)},\qquad
 (y+1)\frac{\rho'(x)}{\rho(x)}=\frac{x^2+1}{y}.
\]
Taking the coefficient trace of \eqref{eq:elliptic-C} and using
\eqref{eq:hayes-trace} gives \eqref{eq:elliptic-explicit}.
\emph{(ii).} Finally, $h(A)=3=1$ in characteristic two, so
Corollary~\ref{cor:class-number} gives the characteristic value.
\end{proof}

For a general rank-one source $\varphi$, choose a datum
$J:\varphi\to\psi$ with this free target. The trace in Theorem~\ref{thm:general-value} is the sum of
the three coefficient conjugates of $C_\psi/g_J$:
\begin{equation}\label{eq:elliptic-three-trace}
\LL_\varphi(1)\Gcal_J(z_J)
 =z_J\sum_{j=0}^2
   \frac{G_{\alpha^{2^j}}+\gamma_{\alpha^{2^j}}}
        {g_J^{\sigma^j}}\,dx.
\end{equation}
Here $g_J=h_J/\partial\eta_J$ is computed from the chosen isogeny
by Lemma~\ref{lem:motive-multiplier}.
Taking $J=\mathrm{id}_\psi$, so that $g_J=1$ and
$z_J=\tilde\pi$, recovers \eqref{eq:elliptic-explicit}.

\subsection{Residue-dual data for the finite algorithm}
To apply \eqref{eq:finite-algorithm} to this curve, take $a=1/y$,
whose pole order at $Q$ is one. The space $H^0(E,\Omega_E^1(2Q))$
has the basis $\nu_j=v_j dx$ in the table below. Put
$b=1/y^3$ and $z=x/y$. Then $\deg(bA)=9>6$, as required by
Lemma~\ref{lem:dual-denominator}.  Explicit dual numerators subject to $\langle c_i/b,\nu_j\rangle _{Q}=\delta_{ij}$ are 
\[
\begin{array}{c|c|l}
j&v_j&c_j\\\hline
 1&1&z^8\\
 2&1/y&z^5+z^8\\
 3&x/y&z^3+z^6\\
 4&1/y^2&z^2+z^6\\
 5&x/y^2&z^3+z^4+z^6+z^8\\
 6&x^2/y^2&z^6
\end{array}
\]
The functions $v_j$ and $c_j$ belong to $A$. To verify that $(\nu_j)$ is a basis, use $\Omega_A=A\,dx$
and multiply each $v_j$ by $y^2$. Riemann--Roch and the distinct pole
orders $0,2,3,4,5,6$ show that $1,x,y,x^2,xy,x^3$ form a basis of
$H^0(E,\OO(6O))$. The relation $x^3=y^2+y+x+1$ shows that the
six functions $y^2v_j$ form another basis of this space.

For the duality calculation, $\divisor(b)=9O-3Q$, so
$A/bA\simeq\mathbb F_2[z]/(z^9)$. The global residue theorem reduces
the pairing to
\[
\langle z^k/b,\nu_j\rangle _{Q}
       =\res_O(z^k y^3v_jdx).
\]
Substituting $y=x/z$ and $u=1/x$ in \eqref{eq:elliptic-curve} gives
$u=z^2+(z+z^2)u^2+z^2u^3$. Successive substitution yields
\[
x=z^{-2}+z+z^2+z^6+z^7+z^8+O(z^{10}),\qquad
dx=(1+z^6+O(z^{10}))\,dz.
\]
The residue matrix
$\bigl(\langle z^k/b,\nu_j\rangle _Q\bigr)_{k,j}$, with rows
$k=0,\ldots,8$ and columns $j=1,\ldots,6$, is
\[
\begin{pmatrix}
 0&0&0&0&0&0\\0&0&0&0&0&0\\
 0&0&0&1&0&1\\0&0&1&0&0&1\\
 1&0&1&0&1&0\\1&1&0&0&0&0\\
 0&0&0&0&0&1\\0&0&1&0&0&0\\1&0&0&0&0&0
\end{pmatrix}.
\]
Form a $6\times9$ matrix whose rows are the coordinates
of $c_1,\ldots,c_6$ in the basis $(1,z,\ldots,z^8)$ of $A/bA$.
Its product with the residue matrix is $I_6$, verifying the
claimed duality.
For a rank-one action $\varphi:A\to H\{\tau\}$, put
$\mathscr T=H[Z]/(\varphi_{1/y^3}(Z)/Z)$.
Since $\iota(a)=\epsilon^{-1}$ and $\iota(b)=\epsilon^{-3}$,
substitution in \eqref{eq:finite-algorithm} gives
\[
C_\varphi=\frac{\epsilon^4y}{y+\epsilon}
\sum_{j=1}^6\Tr_{\mathscr T/H}
\left(\frac{\varphi_{c_j/y}(Z)+\epsilon^{-1}\varphi_{c_j}(Z)}{Z}\right)
v_j\,dx.
\]
The residue-dual basis is independent of $\varphi$.
The displayed formula reconstructs $C_\varphi$ from the action
polynomials $\varphi_{1/y^3}$, $\varphi_{c_j/y}$, and
$\varphi_{c_j}$ by traces in the algebra $\mathscr T$ of
dimension $2^9-1=511$ over $H$.

\subsection{Acknowledgments}
ChatGPT and OpenAI Codex (OpenAI) were used during manuscript preparation for structural organization, English-language refinement, and LaTeX formatting. These tools were also employed to support preliminary discussions of proof strategies, logical consistency checks, and alternative argument formulations. All content, revisions, and mathematical reasoning presented in the manuscript have been fully reviewed, verified, and validated by the authors. The authors take exclusive and full responsibility for all mathematical statements, proofs, analytical arguments, references, and every part of the final manuscript, and ensure full adherence to all publication ethics standards.
 
\bibliographystyle{amsplain} 
\bibliography{paper}

\end{document}